\let\@addpunct\@gobble
\newsavebox{\measure@tikzpicture}
	\def\tikz@width{#1}%
\newcommand{\ffids}{\ensuremath{\mathrm{(ffid^{[c,d]})}}}
\newcommand{\ffid}{\ensuremath{\mathrm{(ffid)}}}
\newcommand{\fid}{\ensuremath{\mathrm{(fid)}}}
\newcommand{\cfid}{\ensuremath{\mathrm{(cfid)}}}
\newcommand{\cid}{\ensuremath{\mathrm{(cid)}}}
\newcommand{\id}{\ensuremath{\mathrm{(id)}}}
\newcommand{\pfd}{\ensuremath{\mathrm{(pfd)}}}
\newcommand{\eph}{\ensuremath{\mathrm{(eph)}}}
\newcommand{\qtame}{\ensuremath{\mathrm{(qtame)}}}
\newcommand{\pers}{\ensuremath{\mathrm{(pm)}}}
\newcommand{\zm}{\ensuremath{\mathrm{(0)}}}
\newcommand{\kid}{\ensuremath{\operatorname{(\kappa-id})}}
\newcommand{\rid}{\ensuremath{\mathrm{(rid)}}}
\newcommand{\pd}{\ensuremath{\mathrm{(pd)}}}
\newcommand{\R}{\underline{\bf{R}}}
\newcommand{\vect}{\text{\underline{Vect}}_\gf}
\newcommand{\Zero}{0}
\numberwithin{equation}{section}
\theoremstyle{theorem}
	\newtheorem{theorem}[equation]{Theorem}
	\newtheorem{lemma}[equation]{Lemma}
	\newtheorem{corollary}[equation]{Corollary}
	\newtheorem{proposition}[equation]{Proposition}
	\newtheorem*{theorem*}{Theorem}
\theoremstyle{definition}
	\newtheorem{definition}[equation]{Definition}
	\newtheorem{example}[equation]{Example}
\theoremstyle{remark}
	\newtheorem{remark}[equation]{Remark}
	\newtheorem*{remark*}{Remark}
\newcommand{\Id}{1}
\newcommand{\To}{\Rightarrow}
\newcommand{\From}{\Leftarrow}
\newcommand{\xto}{\xrightarrow}
\newcommand{\incl}{\hookrightarrow}
\newcommand{\isom}{\cong}
\newcommand{\eps}{\varepsilon}
\newcommand{\gf}{\mathbf{k}}
\newcommand{\st}{\ \mid \ }
\newcommand{\A}{\mathcal{A}}
\newcommand{\B}{\mathcal{B}}
\newcommand{\C}{\mathcal{C}}
\newcommand{\D}{\mathcal{D}}
\newcommand\abs[1]{\lvert#1\rvert}
\newcommand\norm[1]{\lVert#1\rVert}
\DeclareMathOperator{\im}{im}
\DeclareMathOperator{\rad}{rad}
\DeclareMathOperator{\Lan}{Lan}
\DeclareMathOperator{\Ran}{Ran}
\DeclareMathOperator{\rank}{rank}
\DeclareMathOperator{\length}{length}
\DeclareMathOperator{\diam}{diam}
\begin{document}

\title{Topological spaces of persistence modules and their properties}
\author{Peter Bubenik}
\address{Department of Mathematics, University of Florida}
\email{peter.bubenik@ufl.edu}
\author{Tane Vergili}
\address{Department of Mathematics, Ege University}
\email{tane.vergili@ege.edu.tr}

\maketitle

\begin{abstract}
  Persistence modules are a central algebraic object arising in topological data analysis. The notion of interleaving provides a natural way to measure distances between persistence modules. We consider various classes of persistence modules, including many of those that have been previously studied, and describe the relationships between them. In the cases where these classes are sets, interleaving distance induces a topology. We undertake a systematic study the resulting topological spaces and their basic topological properties.
\end{abstract}
 

\section{Introduction}

A standard tool in topological data analysis is persistent homology~\cite{ghrist:survey,carlsson:topologyAndData,ghrist:survey2,Chazal:2017,Wasserman:2016}. It is often applied as follows. One starts with some data, constructs an increasing family of complexes or spaces, and applies homology with coefficients in some fixed field to obtain a persistence module. Next, one computes a summary (e.g. barcode~\cite{Collins2004881}, persistence diagram~\cite{cseh:stability}, or persistence landscape~\cite{bubenik:landscapes,bubenikDlotko}) which determines this persistence module up to isomorphism. In practice, one computes these summaries directly from the increasing family of complexes or spaces. Nevertheless, the persistence module is the central algebraic object in this pipeline, and has been a focus of research.

A key discovery in the study of persistence modules is the notion of \emph{interleaving}~\cite{ccsggo:2009} which provides a way of measuring the distance between persistence modules. For many persistence modules, this distance equals the \emph{bottleneck distance}~\cite{cseh:stability} between the corresponding persistence diagrams~\cite{Lesnick:2015,bubenikScott:1}. 
Interleavings and the resulting interleaving distance have been extensively studied both for the persistence modules considered here~\cite{Lesnick:2015,bubenikScott:1,Bauer:2015,Bauer:2016,cdsgo:book,bdsn,Blumberg:2017}, for Reeb graphs~\cite{deSilva:2016,Munch:2015}, for zig-zag persistence modules~\cite{Botnan:2016}, for multiparameter persistence modules~\cite{Lesnick:2015}, and for more general persistence modules~\cite{bdss:1,bdss:2,deSilva:2017,Bjerkevik:2017,Meehan:2017b,Meehan:2017a}.
     
For sets of persistence modules, the interleaving distance induces a topology. The main goal of the research reported here is to study the basic topological properties of the resulting topological spaces. 

Unfortunately, this research program runs into an immediate difficulty: the collection of persistence modules is not a set, but a proper class. While it is possible the consider this class with the interleaving distance~\cite{bdss:1,bdss:2,bdsn}, here we want to work with actual topological spaces.

So to start, we consider various classes of persistence modules. These include  classes that have been previously considered in theoretical work, such as pointwise finite-dimensional persistence modules~\cite{Crawley-Boevey:2015}, q-tame persistence modules~\cite{cdso:geometric}, interval-decomposable persistence modules, ephemeral persistence modules~\cite{ccbds}, and constructible persistence modules~\cite{Patel:2016a,curry:thesis}, as well as classes of persistence modules that arise in applications, such as those decomposable into finitely many interval modules, where each interval lies in some fixed bounded closed interval.  

We determine various relationships between these classes, such as inclusion (Figure~\ref{fig:sets-and-classes}). We also identify pairs of classes where for each element of one, there is an element of the other that has interleaving distance $0$ from the first (Section~\ref{sec:almost-inclusion}). We define and calculate an asymmetric distance we call \emph{enveloping distance} that measures how far one needs to expand a given class to include another (Section~\ref{sec:enveloping}). These two results are summarized in Figure~\ref{fig:enveloping-distances}. 

Next, we determine which of these classes are sets and which are proper classes. We show that the classes of interval-decomposable persistence modules and q-tame persistence modules are not sets (Corollary~\ref{cor:not-set}), though the classes of pointwise finite-dimensional persistence modules and persistence modules decomposable into countable-many interval modules are sets (Propositions \ref{prop:cid-set} and \ref{prop:pfd-set}). We introduce a set of persistence modules containing these two sets that consists of persistence modules decomposable into a set of interval modules with cardinality of the continuum (Definition~\ref{def:kid} and Proposition~\ref{prop:kid-set}).

For the remainder, we restrict ourselves to the identified sets of persistence modules and the topologies induced by the interleaving distance (Figure~\ref{fig:sets}). We identify which of the inclusions in Figure~\ref{fig:sets} are inclusions of open sets (Proposition~\ref{prop:open}).

We show that these topological spaces are large and poorly behaved in the following ways. They do not have the $T_0$ or Kolmogorov property (Corollary~\ref{cor:t0}), they are not locally compact (Corollary~\ref{cor:locally-compact}), and their topological dimension is infinite (Corollary~\ref{cor:top-dim}). In fact, we prove the following.

\begin{theorem}[Cube Theorem (Theorem~\ref{thm:embedding-cube})]
  Let $N \geq 1$. There exists an $\eps>0$ such that there is an isometric embedding of the cube $[0,\eps]^N$ with the $L^{\infty}$ distance into each of our topological spaces of persistence modules.
\end{theorem}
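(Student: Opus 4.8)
The plan is to construct, for each coordinate $i \in \{1, \dots, N\}$, an interval module whose endpoints depend on a parameter $t_i \in [0,\eps]$, and to take the direct sum over $i$. If the intervals are chosen to live in disjoint regions of the plane (or line), then the interleaving distance of a direct sum decouples into a maximum over the summands, which is exactly what produces the $L^\infty$ metric on the cube. So first I would fix a ``large'' ambient interval $[c,d]$ (or work over $\R$) and pick $N$ pairwise-disjoint closed subintervals $J_1, \dots, J_N$, separated by gaps of width at least, say, $4\eps$. For a point $(t_1,\dots,t_N) \in [0,\eps]^N$, define $M_{(t_1,\dots,t_N)} = \bigoplus_{i=1}^N \gf_{I_i(t_i)}$, where $I_i(t_i)$ is the interval $J_i$ with its left endpoint shifted right by $t_i$ (or some similar one-parameter family of nested intervals inside $J_i$ whose ``width'' changes linearly in $t_i$). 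Each such module is pfd with finitely many summands, with all intervals inside $[c,d]$, so it lies in every one of our classes in Figure~\ref{fig:sets}; thus it suffices to prove the embedding statement once, for the interleaving distance, and it holds for all the spaces simultaneously.

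Next I would compute the interleaving distance between $M_{\vec{s}}$ and $M_{\vec{t}}$. The upper bound is easy: shifting each summand individually gives a componentwise interleaving, and since the summands occupy disjoint bands one can assemble these into a single $\delta$-interleaving with $\delta = \max_i |s_i - t_i|$; this uses only functoriality of direct sums and the fact that an interleaving of each summand yields an interleaving of the sum. For the lower bound, I would use the known computation of the interleaving distance between two interval modules (this is standard — it equals half the $L^\infty$ distance between endpoint vectors, or more precisely the endpoint-matching formula from \cite{Lesnick:2015,bubenikScott:1}), together with the fact that a $\delta$-interleaving of $M_{\vec s}$ with $M_{\vec t}$, when restricted to the $i$-th band, must (because the other bands are more than $2\delta$ away and so cannot interact) restrict to a $\delta$-interleaving of $\gf_{I_i(s_i)}$ with $\gf_{I_i(t_i)}$, forcing $\delta \geq |s_i - t_i|$ for each $i$, hence $\delta \geq \max_i|s_i-t_i|$. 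Choosing the family $I_i(t_i)$ so that the interleaving distance of $\gf_{I_i(s_i)}$ and $\gf_{I_i(t_i)}$ is exactly $|s_i - t_i|$ (not a constant multiple) pins down the isometry.

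The main obstacle is the lower bound — specifically, the ``decoupling'' step that says a $\delta$-interleaving of the direct sums cannot do better than a $\delta$-interleaving of each summand separately. The subtlety is that the interleaving morphisms $M_{\vec s} \to M_{\vec t}[\delta]$ and back need not respect the direct-sum decomposition; a priori they could mix the bands. The fix is the separation: if the bands $J_i$ are separated by gaps exceeding $2\delta$ (which we can guarantee by shrinking $\eps$, since all relevant shifts are at most $\eps$), then any morphism between the modules shifted by $\delta$ must send the $i$-th band-component into the $i$-th band-component, because $\gf_{I_i}$ and $\gf_{I_j}[\delta]$ have no nonzero morphisms between them when $i \neq j$ and the gap is large. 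So I would first establish this ``Hom-vanishing across bands'' lemma, conclude that any $\delta$-interleaving is block-diagonal, and then apply the one-interval computation blockwise. Finally I would record that the constructed map $[0,\eps]^N \to$ (space of persistence modules) is injective (distinct parameter vectors give non-isomorphic modules, since the interval multiset is a complete invariant), so that it is a genuine isometric embedding, and remark that $\eps$ depends on $N$ only through the need to fit $N$ separated bands into the ambient interval when that interval is bounded — over $\R$ any $\eps$ works.
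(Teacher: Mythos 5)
Your construction is essentially the same as the paper's: a one\-parameter family of intervals in each of $N$ well\-separated bands inside $[c,d]$, with finite direct sums, so the interleaving distance decouples coordinate\-by\-coordinate and the map lands in every space of Figure~\ref{fig:sets} simultaneously. The paper's explicit choice is $\eps < \frac{1}{100N}$ and $I_i(x_i) = \left[\frac{i}{N},\ \frac{i}{N}+\frac{1}{10N}+x_i\right)$, for which $d_I(I_i(x_i),I_i(x_i')) = \abs{x_i-x_i'}$ while $d_I(I_i,0)$ and $d_I(I_i,I_j)$ for $i\neq j$ are both at least $\frac{1}{20N}$; it then asserts $d_I(M(x),M(x')) = \norm{x-x'}_\infty$ without spelling out the lower bound. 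The decoupling argument you propose is exactly the right way to fill that in: separation guarantees there is no nonzero morphism from $I_i(s_i)$ to $I_j(t_j)[\delta]$ for $i\neq j$ and all sufficiently small $\delta$ (disjoint interval modules admit no nonzero maps, Proposition~\ref{prop:nonzero-map}), so any $\delta$\-interleaving of the direct sums is block\-diagonal and restricts to a $\delta$\-interleaving of each corresponding pair of summands, forcing $\delta \geq \abs{s_i - t_i}$ for every $i$. An equivalent route, closer to what the paper invokes elsewhere, is the algebraic stability/matching theorem: when the bands are separated by more than twice the allowable perturbation, a $\delta$\-matching must pair each bar with its counterpart in the same band rather than with the diagonal or another bar. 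Both are correct, and yours has the advantage of staying at the level of morphisms. Your closing remark about injectivity is harmless but unnecessary: once the map is an isometry from a genuine metric space, distinct cube points land at positive distance and hence at distinct persistence modules automatically.
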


On the other hand, our topological spaces of persistence modules do have the following nice properties. They are paracompact (Lemma~\ref{lem:paracompact}), first countable (Lemma~\ref{lem:first-countable}), and are compactly generated (Lemma~\ref{lem:compactly-generated}).

We determine which of these topological spaces are separable (Theorems \ref{thm:separable} and \ref{thm:not-separable}), as well as second countable and Lindel\"of (Lemma~\ref{lem:tfae}).
We show that the space of pointwise finite-dimensional persistence modules is not complete (Theorem~\ref{thm:not-complete}), but that the space of persistence modules that are both q-tame and that decompose into countably-many intervals is complete (Theorem~\ref{thm:complete}).
We prove a Baire category theorem for complete extended pseudometric spaces (Theorem~\ref{thm:baire}) that implies that this space is also a Baire space (Corollary~\ref{cor:baire}).

We also identify the path components of the zero module in our topological spaces (Propositions \ref{prop:path-component-fid} and \ref{prop:path-component-cfid}), and show that they are contractible (Proposition~\ref{prop:contractible}).

Along the way, we observe the following mild strengthening of the structure theorem for persistent homology~\cite{ccbds}, which may be of independent interest.

\begin{theorem}[Structure Theorem (Theorem~\ref{thm:rad})]
  The radical of a q-tame persistence module is a countable direct sum of interval modules. 
\end{theorem}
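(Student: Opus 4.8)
The plan is to deduce this from the structure theorem of~\cite{ccbds} (itself resting on~\cite{Crawley-Boevey:2015}) together with a counting argument powered by q-tameness. First I would note that $\rad M$ is a submodule of $M$ whose structure maps are the restrictions of those of $M$, so $\rank\big((\rad M)(s<t)\big) \le \rank\big(M(s<t)\big)$ for all $s < t$; hence $\rad M$ is again q-tame. Applying the structure theorem to $\rad M$ yields a decomposition $\rad M \cong \bigoplus_{i \in I} \gf_{J_i}$ into interval modules, and after discarding zero summands I may assume every $J_i$ is a nonempty interval of $\R$. It then remains to show that $I$ may be taken countable.

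The main obstacle is to rule out point--interval summands, i.e.\ to show no $J_i$ is a single point. I would first check that $\rad$ is idempotent: for any persistence module $N$ and any $t$, one computes $(\rad(\rad N))_t = \sum_{s<t}\im\big(N(s<t)|_{(\rad N)_s}\big) = \sum_{s<t}\sum_{r<s}\im N(r<t) = \sum_{r<t}\im N(r<t) = (\rad N)_t$, the penultimate equality using that $\R$ is dense. Hence $\rad(\rad M) = \rad M$. Now if some summand were $\gf_{\{x\}}$, with inclusion $\iota$ and projection $\pi$, then $v := \iota_x(1) \in (\rad M)_x$ is nonzero with $\pi_x(v) = 1$; but $(\rad M)_x = (\rad(\rad M))_x = \sum_{s<x}\im\big((\rad M)(s<x)\big)$ expresses $v$ as a finite sum $\sum_j (\rad M)(s_j<x)(u_j)$, and applying $\pi_x$ and using naturality gives $1 = \pi_x(v) = \sum_j \gf_{\{x\}}(s_j<x)\big(\pi_{s_j}(u_j)\big) = 0$, since $\gf_{\{x\}}(s_j<x) = 0$; a contradiction. (Alternatively, one may simply invoke the fact that the structure theorem of~\cite{ccbds} already produces a decomposition free of ephemeral summands.)

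Hence every $J_i$ has more than one point, so it has nonempty interior, and I may choose rationals $p_i < q_i$ lying in $J_i$. This defines a map $\Phi\colon I \to \{(p,q) \in \mathbb{Q}^2 : p < q\}$, $i \mapsto (p_i,q_i)$. For fixed rationals $p < q$ we have $\Phi^{-1}(p,q) \subseteq \{i \in I : p,q \in J_i\}$, and since $\rank\big(\gf_{J_i}(p<q)\big)$ equals $1$ when $p,q \in J_i$ and $0$ otherwise, additivity of rank over the direct sum gives $\#\{i \in I : p,q \in J_i\} = \rank\big((\rad M)(p<q)\big) < \infty$ by q-tameness. Thus $I$ is a disjoint union, over the countable set $\{(p,q) \in \mathbb{Q}^2 : p < q\}$, of finite fibers of $\Phi$, hence countable, and $\rad M \cong \bigoplus_{i \in I}\gf_{J_i}$ is a countable direct sum of interval modules. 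Granting the structure theorem, the remaining steps are routine bookkeeping: the idempotency computation above, the identity $\rad\big(\bigoplus_i N^{(i)}\big) = \bigoplus_i \rad N^{(i)}$, the rank computation for interval modules, and the fact that a countable union of finite sets is countable; one should also confirm that $\gf$ and $\R$ satisfy the hypotheses under which~\cite{ccbds} establishes interval-decomposability of the radical.
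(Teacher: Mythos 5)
Your proof is correct and follows essentially the same approach as the paper: show $\rad M$ is q-tame as a submodule, invoke~\cite{ccbds} for an interval decomposition of $\rad M$, eliminate singleton summands, and then count via a choice of rationals $p<q$ inside each remaining interval together with q-tameness. The paper eliminates singleton summands by directly factoring any $x\in(\rad M)(a)$ through $(\rad M)(b)$ for some intermediate $b\in(c,a)$, which is the same underlying mechanism as your idempotency computation $\rad(\rad M)=\rad M$ and naturality argument, just phrased without the contradiction.
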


\subsection*{Persistence modules and persistence diagrams}

Topological data analysis tends to focus on persistence diagrams~\cite{cseh:stability} rather than persistence modules. Readers more familiar with persistence diagrams may wonder why we work with persistence modules and what our results imply for persistence diagrams.

Let us present three responses.
First, persistent homology produces persistence modules. In many but not all cases, these persistence modules may be represented by a persistence diagram. Mathematically, persistence modules are the fundamental object of study. 
Second, one of our main motivations was to develop a theory that could be extended to multiparameter persistence modules~\cite{carlssonZomorodian:multidimP,Lesnick:2015} and generalized persistence modules~\cite{bdss:1,deSilva:2017,bdss:2}. In this more general setting there is no hope for an analog of the persistence diagram.
Third, our results for persistence modules may be used to obtain results for persistence diagrams as corollaries.

To be more precise, consider persistence modules that are
pointwise finite-dimensional (see Section~\ref{sec:class})
with the interleaving distance.
This forms an extended pseudometric space that we label \pfd.
If we take the quotient obtained by identifying persistence modules with zero interleaving distance, then we obtain an extended metric space that is isometric with a space of persistence diagrams with the bottleneck distance~\cite{cseh:stability}. This is the celebrated isometry theorem~\cite{ccsggo:interleaving,Lesnick:2015,cdsgo:book,Bauer:2015,bubenikScott:1}. 
Call this extended metric space \pd.

Now \pd\ inherits many of the properties of \pfd.
Specifically, 
it is not totally bounded,
any element of \pd\ does not have a compact neighborhood,
it is not path connected,
the path component of the empty persistence diagram consists of persistence diagrams without points with infinite persistence,
and this path component is contractible.
Furthermore, \pd\ is not separable and is not complete.
In addition, for each $N$ there is an $\eps>0$ such that there is an isometric embedding of the $N$-cube with diameter $\eps$  and the $L^{\infty}$ distance into \pd.
So the topological dimension of \pd\ is infinite.

\subsection*{For the data scientist}

For the reader primarily interested in topological data analysis, we would summarize our results by stating that the extended metric space of persistence diagrams with the bottleneck distance is ``big''. Say we fix $c<d$ and restrict ourselves to persistence diagrams with finitely many points $(a_i,b_i)$ each of which satisfies $c \leq a_i < b_i \leq d$.
This is a metric space. 
However, every neighborhood of every persistence diagram in this metric space 
is not compact.
Also, the topological dimension of this metric space is infinite.

In order to apply certain statistical and machine learning tools, one may be tempted to start with a compact set of persistence diagrams. In light of these results, this is a drastic step.

\subsection*{Extended pseudometric spaces}

The results presented here for extended pseudometric spaces are straight-forward extensions of the standard results for metric spaces 
(Lemmas \ref{lem:paracompact}, \ref{lem:first-countable}, \ref{lem:compactly-generated}, and \ref{lem:tfae} and Theorem~\ref{thm:baire}).
However, in order to keep the material accessible to applied mathematicians without a background in point-set topology, we include the proofs.

\subsection*{Related work}
 
Mileyko, Mukherjee, and Harer~\cite{mmh:probability} consider the set of persistence diagrams with countably many points in $\mathbb{R}^2$ together with the topology induced by the $p$-Wasserstein distance for $1 \leq p < \infty$. They show that the subspace consisting of persistence diagrams with finite distance to the empty persistence diagram is complete and separable.
We show the corresponding space for the bottleneck distance $(p=\infty)$ is complete (Theorem~\ref{thm:complete}) but not separable (Theorem~\ref{thm:not-separable}).
In a subsequent paper with Turner~\cite{tmmh:frechet-means} they study geometric properties of the same set with a slightly different metric. 
 
Blumberg, Gal, Mandell, and Pancia~\cite{Blumberg:2014} show that the set of persistence diagrams with finitely many points with the bottleneck distance is separable and that its Cauchy completion is separable. This completion is the set of persistence diagrams with the property that for every $\eps > 0$ there are only finitely many points with persistence at least $\eps$.
 
The authors have been informed of related work that is in preparation. Perea, Munch, and Khasawneh~\cite{Perea:2018} have characterized (pre)compact sets of persistence diagrams with the bottleneck distance. Their results imply that compact sets have empty interior. Cruz~\cite{Cruz:2018} has results on metric properties for generalized persistence diagrams with interleaving distance.


\subsection*{Organization of the paper}

In Section~\ref{sec:background}, we provide background on persistence modules, indecomposable modules, interleaving distance, and pseudometric spaces. In Section~\ref{sec:set-theory}, we define the classes of persistence modules that we consider, study the relationships between them, and identify which of them are sets. In Section~\ref{sec:top}, we study the basic topological properties of our topological spaces of persistence modules. 
Throughout, most of our arguments are elementary, except our proof of completeness which uses basic ideas from category theory.
We also provide an appendix where we examine interleavings of interval modules.

\section{Background} \label{sec:background}

In this section we define persistence modules and interleaving distance, giving examples and basic properties. We also define extended pseudometric spaces and their induced topological spaces.

\subsection{Persistence modules}

Let $\gf$ be a fixed field. 
A \emph{persistence module} $M$ is a set of 
$\gf$-vector spaces $\{M(a) \ | \ a \in \mathbb{R}\}$ together with $\gf$-linear maps $\{v_a^b: M(a) \to M(b) \ | \ a\leq b \}$ such that 
\begin{description}
	\item[i)] for all $a$, $v_a^a : M(a) \to M(a)$ is the identity map, and 
	\item[ii)] if $a \leq b \leq c$ then $v_a^c=v_b^c\circ v_a^b$.  \end{description}
Equivalently, a persistence module is a functor $M: \R \to \vect$, where $\R$ is the category whose set of objects is $\mathbb{R}$ and whose morphisms are the inequalities $a \leq b$, and $\vect$ is the category of $\gf$-vector spaces and $\gf$-linear maps.


\begin{example}
	Let $X$ be a topological space and $f: X \to \mathbb{R}$ be a function. For each $a\in \mathbb{R}$ the subset \[F_a:=\{x\in X \ | \ f(x)\leq a\} \subset X\]  is called a \emph{sublevel set}. Note that $a\leq b$ implies $F_a\subset F_b$ so that we have an inclusion map $i_a^b: F_a \xhookrightarrow{} F_b$ for all $a\leq b$. This inclusion map induces a linear map 
	\[H_n(i_a^b): H_n(F_a;\gf) \to H_n(F_b;\gf)\]
	on singular homology groups  with a coefficients in $\gf$ of degree  $n\geq 0$. We thus have a persistence module $HF: \R \to \vect$ given by  
$HF(a) = H_n(F_a;\gf)$ and $HF(a\leq b) =  H_n(i_a^b)$.
\end{example} 

\begin{example}
	Consider the half open interval $[0,2)$ in $\mathbb{R}$ and define the persistence module $\chi: \R \to \vect$ given by
\begin{equation*}
\chi(a) = \begin{cases}  \gf & a \in [0,2) \\ 0  &  \text{otherwise}   \end{cases} \quad \text{and} \quad
	\chi(a\leq b) = \begin{cases}  \Id & a,b \in [0,2) \\ 0  & \text{otherwise}   \end{cases} 
      \end{equation*}
      where $\Id$ is the identity map on $\gf$. For simplicity, we will abuse notation and denote this persistence module by $[0,2)$.
\end{example}

\begin{example}
Replacing $[0,2)$ in the above with an arbitrary interval $J \subset \mathbb{R}$ we obtain a persistence module that we call an \emph{interval module} and we will also denote by $J$.
\end{example}

\begin{example}
  A trivial but important example is the \emph{zero module}, denoted $\Zero$, that has $\Zero(a) = 0$ for all $a$.
\end{example}

A \emph{morphism of persistence modules} $M$ and $N$ is a collection of linear maps 
$\{\varphi_a: M(a) \to N(a) \ | \ a \in \mathbb{R}\}$ such that the following diagram commutes for each pair $a\leq b$.

\begin{equation}\label{cd:morphism}
\begin{tikzcd}[row sep=scriptsize]
  M(a) \ar[r, "v_a^b"] \ar[d,"\varphi_a",swap]
  & M(b) \ar[d, "\varphi_b"] \\
  N(a) \ar[r, "w_a^b"]
  & N(b) 
\end{tikzcd}
\end{equation}
Equivalently, a morphism of persistence modules is a natural transformation $\varphi:M \To N$.
We will often denote a morphism of persistence modules as $\varphi:M \to N$.
Such a morphism is an \emph{isomorphism} if and only if each linear map $\varphi_a$ is an isomorphism. 

\begin{example} \label{ex:morphism-intervals}
  It is a good exercise to check that because of the constraints due to the commutative squares in \eqref{cd:morphism}, there is a nonzero morphism from the interval module $[a,b)$ to the interval module $[c,d)$ only if $c \leq a \leq d \leq b$.
\end{example}

In the appendix, we present a more thorough discussion of interval modules (Section~\ref{sec:interval-relations}) and maps between them (Section~\ref{sec:nonz-maps-interv}).


\subsection{Indecomposables}

Given two persistence modules $M$ and $N$, their \emph{direct sum} is the persistence module $M \oplus N$ given by $(M \oplus N)(a) = M(a) \oplus N(a)$ and $(M \oplus N)(a \leq b) = M(a \leq b) \oplus N(a \leq b)$. In the same way we can define the direct sum of a collection of persistence modules indexed by an arbitrary set.


A persistence module is said to be \emph{indecomposable} if it is not isomorphic to a nontrivial direct sum.
For example, interval modules are indecomposable.
However, not all indecomposable persistence modules are interval modules (see \cite[Theorem 2.5, Remark 2.6]{cdsgo:book} for a discussion of examples due to do Webb~\cite{Webb:1985}, Lesnick, and Crawley-Boevey).




A special case of the following theorem follows from work of Gabriel~\cite{Gabriel:1972}, but the general case was proved by Crawley-Boevey~\cite{Crawley-Boevey:2015}.

\begin{theorem}[Structure Theorem] \label{thm:structure}
Let $M: \R \to \vect$ be a persistence module. If $M(a)$ is finite dimensional  for each $a\in \mathbb{R}$,  then $M$ is isomorphic to a direct sum of interval modules.
\end{theorem}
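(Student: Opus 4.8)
The plan is to prove the Structure Theorem (Theorem~\ref{thm:structure}) — that a pointwise finite-dimensional persistence module over $\R$ decomposes as a direct sum of interval modules. Strictly speaking this is Crawley-Boevey's theorem, so a full proof is not expected in this survey-style section; what I would present is a sketch of the Krull--Remak--Schmidt--Azumaya style argument specialized to this totally ordered index category. The starting point is the observation that every indecomposable direct summand of a pointwise finite-dimensional module $M$ must itself be pointwise finite dimensional, so it suffices to (i) show that every indecomposable pointwise finite-dimensional module is an interval module, and (ii) show that $M$ actually decomposes into indecomposables.

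For step (i), I would argue as follows. Let $N$ be an indecomposable persistence module with each $N(a)$ finite dimensional. The key is a local-to-global analysis of the endomorphism ring of $N$ together with control of the images of the structure maps $v_a^b$. Concretely, for $a \le b$ write the structure map as a composition through its image; finite-dimensionality guarantees that on any bounded interval the collection of subspaces $\{ \operatorname{im}(v_a^b) \}$ and $\{ \ker(v_a^b) \}$ stabilizes in a suitable sense, which lets one produce a nonzero idempotent endomorphism whenever the ``support'' of $N$ fails to be an interval or whenever some structure map fails to be either injective or surjective at the level of the relevant subquotients. Indecomposability forces all such idempotents to be $0$ or $\Id$, and unwinding this forces $\dim N(a) \le 1$ for all $a$ with $N(a)$ in the support forming an interval $J$ and all structure maps within $J$ isomorphisms; that is exactly an interval module $J$. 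Example~\ref{ex:morphism-intervals} is the baby case of the morphism constraints that drive this argument.

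For step (ii) — existence of the decomposition — the issue is that $M$ may be supported on all of $\mathbb{R}$, so one cannot simply induct. The standard device is to exhibit a maximal family of pairwise-independent indecomposable submodules whose internal direct sum is all of $M$, using a Zorn's-lemma argument; the pointwise finite-dimensionality is what makes the ``no room left over'' step work, since at each $a$ the module $M(a)$ is a finite-dimensional vector space and a strictly increasing chain of subspaces must terminate. Azumaya's theorem (or a direct functional-analytic argument as in Crawley-Boevey) then upgrades this to a genuine direct sum decomposition indexed by a possibly infinite set, and combined with step (i) every summand is an interval module.

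The main obstacle is step (ii): controlling the potentially uncountable, $\mathbb{R}$-indexed gluing so that a maximal independent family of interval submodules genuinely exhausts $M$ rather than merely being dense or cofinal in some weaker sense. This is precisely the delicate point that distinguishes Crawley-Boevey's general theorem from Gabriel's finite-quiver case, and in the paper I would simply cite~\cite{Crawley-Boevey:2015} for this step while sketching step (i) in enough detail to make the interval-module conclusion plausible to a reader coming from applied topology.
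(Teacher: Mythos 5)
The paper does not prove this theorem; it is stated and immediately attributed to Crawley-Boevey~\cite{Crawley-Boevey:2015} (with the observation that the special case follows from Gabriel), and then used as a black box throughout. You correctly identify this and propose to cite rather than prove, which is exactly what the paper does, so in that sense the approach matches.

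Since you also supplied a sketch, a few cautions so that it is not mistaken for the actual argument. Your step~(ii), a Zorn's-lemma maximal family of pairwise-independent indecomposable submodules, has a real gap: maximality of such a family does not force its internal direct sum to be all of $M$, because a submodule of a persistence module need not be a direct summand, so there is no ``no room left over'' step available. Azumaya's theorem does not rescue this — it gives \emph{uniqueness} of a decomposition into modules with local endomorphism rings, not existence. Your step~(i) also front-loads the hard part: the claim that an indecomposable pointwise finite-dimensional module satisfies $\dim N(a)\le 1$ at each $a$ is essentially as hard as the full theorem, and the proposed idempotent-from-stabilization mechanism does not yield it over an uncountable totally ordered index. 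Crawley-Boevey's actual proof sidesteps both issues: rather than classifying indecomposables and summing them up, he constructs a functorial filtration of each $V(t)$ by subspaces built from kernels and images of structure maps indexed by cuts of $\mathbb{R}$, and splits that filtration compatibly, which produces the interval decomposition directly. Because you explicitly defer to~\cite{Crawley-Boevey:2015} for the difficult step, your proposal is appropriate for the paper; just be aware the ``indecomposables plus Zorn'' route you sketch is precisely the naive route that fails, which is why the theorem was open until 2015 in this generality.
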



\subsection{Interleaving distance} \label{sec:interleaving}

Interleaving distance was introduced in~\cite{ccsggo:2009} and further studied in the context of multiparameter persistence in~\cite{Lesnick:2015}. Here we also adopt the categorical point of view from~\cite{bubenikScott:1}.


\begin{definition} \label{def:interleaving}
Let $\eps \geq 0$. 
An $\eps$-interleaving between persistence modules $M$ and $N$ consists of morphisms
$\varphi_a: M(a) \to N(a + \eps)$ and $\psi_a:N(a)\to M(a+\eps)$ for all $a$ 
such that the following four diagrams commute for all $a \leq b$, where the horizontal maps are given by the respective persistence modules. 
\begin{equation} \label{cd:parallelogram}
\begin{tikzcd}[row sep=scriptsize]
M(a) \ar[r] \ar[rd, "\varphi_a",swap] 
& M(b) \ar[rd, "\varphi_b"] \\
& N(a+\eps) \ar[r] 
& N(b+\eps)
\end{tikzcd}\hspace{30pt}%
\begin{tikzcd}[row sep=scriptsize]
& M(a+\eps) \ar[r] \ar[r]  
& M(b+\eps)   \\
N(a) \ar[r] \ar[ru, "\psi_a"] 
& N(b) \ar[ru, "\psi_b", swap] 
&
\end{tikzcd}
\end{equation} \\
\begin{equation} \label{cd:triangle}
\begin{tikzcd}[row sep=scriptsize]
M(a)  \ar[rr]  \ar[rd, "\varphi_a", swap] 
& 
&  M(a+2\eps) \\
& N(a+\eps) \ar[ru, "\psi_{a+\eps}", swap]  
& 
\end{tikzcd}\hspace{30pt}%
\begin{tikzcd}[row sep=scriptsize]
& 
M(a+\eps)  \ar[rd, 	"\varphi_{a+\eps}"]
&    \\
N(a)   \ar[ru, "\psi_{a}"] \ar[rr]  
& 
& N(a+2\eps) 
\end{tikzcd}
\end{equation}
Equivalently, we may describe this in terms of natural transformations.
First, for $x \in \mathbb{R}$ let $T_{x}: \R \to \R$ denote the functor given by $T_x(a) = a + x$.
Next if $x \geq 0$, let $\eta_x: \Id_{\R} \To T_{x}$ denote the natural transformation from the identity functor on $\R$ to $T_x$ that has components $(\eta_x)_a: a \leq a+x$.
Then an $\eps$-interleaving consists of natural transformations $\varphi: M \To N T_{\eps}$ and $\psi: N \To M T_{\eps}$ such that
$(\psi T_{\eps}) \varphi = M \eta_{2\eps}$ and $(\varphi T_{\eps}) \psi = N \eta_{2\eps}$.
See~\cite[Section 3]{bubenikScott:1} for more details.
We say $M$ and $N$ are \emph{$\eps$-interleaved}.
\end{definition}

\begin{remark}  \label{int} Two persistence modules are $0$-interleaved if and only if they are isomorphic.
  If persistence modules $M$ and $N$ are $\eps$-interleaved and $N$ and $P$ are $\delta$-interleaved then $M$ and $P$ are $(\eps+\delta)$-interleaved.
\end{remark}

\begin{definition}
	Let $M$ and $N$ be two persistence modules. Then the \emph{interleaving distance} $d_I(M,N)$ between $M$ and $N$ is defined as 
	\[d_I(M,N):= \inf\big(\eps \in [0,\infty) \ | \ M \ \text{and} \ N  \ \text{are} \  \ \eps\text{-interleaved} \ \big)  \] If no such $\eps$ exists, then $d_I(M,N)=\infty$. 
\end{definition}

\begin{example} \label{intzero}
  The interval modules $[0,2]$ and $(0,2)$ are not $0$-interleaved. In fact, there are no nonzero maps between $[0,2]$ and $(0,2)$. 
However they are $\eps$-interleaved for all $\eps>0$. Thus, $d_I([0,2],(0,2)) = 0$.
%
\end{example}

\begin{example} 
  The interval modules $M=[0,1)$ and $N=[0,\infty)$ are not $\eps$-interleaved for any $\eps \geq 0$. Indeed, assume $\varphi$ and $\psi$ provide such an interleaving. 
Consider the following trapezoid. 
\begin{equation*}
\begin{tikzcd}[row sep=scriptsize]
& M(\eps) \ar[rr, "M(\eps \leq 2+\eps)"] & & M(2+\eps) \ar[dr, "\varphi_{2+\eps}"] \\
N(0) \ar[ru, "\psi_0"]  \ar[rrrr, "N(0\leq2+2\eps)"]
& & & & N(2+2\eps)
\end{tikzcd}
\end{equation*}
It decomposes into a commutative parallelogram and commutative triangle from \eqref{cd:parallelogram} and \eqref{cd:triangle} in two different ways. In either case, this diagram commutes.
Furthermore, the bottom horizontal arrow is the identity on $\gf$ and the top horizontal arrow is $0$, which is a contradiction.
\end{example}

In the appendix, we give a careful study of interleavings of interval modules (Section~\ref{sec:interl-interv-modul}).

We will make use of the following lemma without reference.

\begin{lemma}[Converse Algebraic Stability Theorem~{\cite[Theorem 3.4]{Lesnick:2015}}]
 Let $\eps \geq 0$. If for all $\alpha \in A$, the persistence modules $I_{\alpha}$ and $J_{\alpha}$ are $\eps$-interleaved, then $\bigoplus_{\alpha \in A} I_{\alpha}$ and $\bigoplus_{\alpha \in A} J_{\alpha}$ are $\eps$-interleaved. 
Thus $d_I(\bigoplus_{\alpha \in A} I_{\alpha},\bigoplus_{\alpha \in A} J_{\alpha}) \leq \sup_{\alpha \in A}d_I(I_{\alpha},J_{\alpha})$.
\end{lemma}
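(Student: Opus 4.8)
The plan is to construct the two families of morphisms giving the $\eps$-interleaving of the direct sums directly from the given interleavings of the summands, and then argue commutativity componentwise. Suppose that for each $\alpha \in A$ we have an $\eps$-interleaving of $I_\alpha$ and $J_\alpha$, consisting of natural transformations $\varphi^\alpha: I_\alpha \To J_\alpha T_\eps$ and $\psi^\alpha: J_\alpha \To I_\alpha T_\eps$ satisfying the two triangle identities $(\psi^\alpha T_\eps)\varphi^\alpha = I_\alpha \eta_{2\eps}$ and $(\varphi^\alpha T_\eps)\psi^\alpha = J_\alpha \eta_{2\eps}$. I would set $I = \bigoplus_{\alpha} I_\alpha$ and $J = \bigoplus_\alpha J_\alpha$, and at each real number $a$ define $\varphi_a := \bigoplus_\alpha \varphi^\alpha_a : \bigoplus_\alpha I_\alpha(a) \to \bigoplus_\alpha J_\alpha(a+\eps)$, and similarly $\psi_a := \bigoplus_\alpha \psi^\alpha_a$. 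Since $(I \oplus J)(a \leq b)$ is by definition the direct sum of the structure maps, each of the four interleaving diagrams in \eqref{cd:parallelogram} and \eqref{cd:triangle} for $(\varphi,\psi)$ is literally the direct sum over $\alpha$ of the corresponding diagram for $(\varphi^\alpha,\psi^\alpha)$, so it commutes because each summand does. Equivalently, in the natural-transformation formulation, $(\psi T_\eps)\varphi = \bigoplus_\alpha (\psi^\alpha T_\eps)\varphi^\alpha = \bigoplus_\alpha I_\alpha \eta_{2\eps} = I\eta_{2\eps}$, and likewise for the other identity. Hence $I$ and $J$ are $\eps$-interleaved.

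The only subtlety — and the point where I expect the bookkeeping, rather than any real difficulty, to lie — is the interaction between the infinite direct sum and the morphism condition: one should check that $\bigoplus_\alpha \varphi^\alpha_a$ is well-defined on the direct sum (an element of $\bigoplus_\alpha I_\alpha(a)$ has only finitely many nonzero components, so its image does too) and that the collection $\{\varphi_a\}_a$ really is a morphism of persistence modules, i.e. is natural in $a$; this is the componentwise naturality just described. No uniformity in $\alpha$ of the interleaving morphisms is needed beyond the fact that the same $\eps$ works for all $\alpha$, which is the hypothesis.

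For the second assertion, let $\delta := \sup_{\alpha \in A} d_I(I_\alpha, J_\alpha)$; I may assume $\delta < \infty$, else there is nothing to prove. Fix any $\eps > \delta$. Then for every $\alpha$ we have $d_I(I_\alpha,J_\alpha) < \eps$, so $I_\alpha$ and $J_\alpha$ are $\eps$-interleaved (here one uses that the interleaving relation is monotone in $\eps$: an $\eps'$-interleaving with $\eps' \leq \eps$ yields an $\eps$-interleaving by composing the structure maps, so the infimum defining $d_I$ is achieved up to any slack). By the first part, $\bigoplus_\alpha I_\alpha$ and $\bigoplus_\alpha J_\alpha$ are $\eps$-interleaved, whence $d_I\big(\bigoplus_\alpha I_\alpha, \bigoplus_\alpha J_\alpha\big) \leq \eps$. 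Letting $\eps \to \delta^+$ gives the claimed bound.
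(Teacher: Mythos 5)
Your proposal is correct and takes essentially the same approach as the paper: the paper's one-line proof simply takes $\bigoplus_\alpha \varphi^\alpha$ and $\bigoplus_\alpha \psi^\alpha$ to be the interleaving morphisms. You have merely spelled out the componentwise commutativity check and the passage from the first assertion to the distance bound, both of which the paper leaves implicit.
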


\begin{proof}
  For $\alpha \in A$, let $\varphi_{\alpha}$ and $\psi_{\alpha}$ be maps giving an $\eps$-interleaving of $I_{\alpha}$ and $J_{\alpha}$. Then $\bigoplus \varphi_{\alpha}$ and $\bigoplus \psi_{\alpha}$ provide the desired $\eps$-interleaving.
\end{proof}

\subsection{Pseudometric spaces}
\label{sec:pseudometric}

\begin{definition} 
	A \emph{pseudometric} on a set $X$ is a map $d: X\times X \to [0,\infty)$ that satisfies 
	\begin{description}   
		\item[M1)] $d(x,x)=0$,
		\item[M2)] $d(x,y)=d(y,x)$, and 
		\item[M3)] $d(x,y)\leq d(x,z)+d(z,y)$ 
	\end{description}	for all $x,y,z \in X$.  
Note that we have omitted the condition $d(x,y)=0$ implies $x=y$ required of metric.
More generally, an \emph{extended pseudometric} on $X$ is a map $d: X \times X \to [0,\infty]$ satisfying the same three axioms.
We call a set with an (extended) pseudometric an \emph{(extended) pseudometric space}.
\end{definition}

\begin{theorem}[\cite{ccsggo:2009,Lesnick:2015,bubenikScott:1}]
  The interleaving distance is an extended pseudometric on any set of (isomorphism classes of) persistence modules.
\end{theorem}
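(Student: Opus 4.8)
The plan is to verify the three extended pseudometric axioms M1, M2, and M3 directly for the interleaving distance $d_I$, using the two elementary facts recorded in Remark~\ref{int}: that two persistence modules are $0$-interleaved if and only if they are isomorphic, and that $\eps$-interleavings compose to $(\eps+\delta)$-interleavings. Throughout I work with a fixed set $X$ of isomorphism classes of persistence modules, so that $d_I: X \times X \to [0,\infty]$ is genuinely a function on a set.

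For \textbf{M1}, given $M \in X$, the identity morphism together with itself provides a $0$-interleaving of $M$ with $M$ (alternatively, $M$ is $0$-interleaved with itself since $M \isom M$, by Remark~\ref{int}); hence $0$ lies in the set over which the infimum is taken, and $d_I(M,M) = 0$. For \textbf{M2}, symmetry is immediate from the symmetry of the definition of an $\eps$-interleaving: if $(\varphi,\psi)$ is an $\eps$-interleaving of $M$ and $N$, then $(\psi,\varphi)$ is an $\eps$-interleaving of $N$ and $M$, so the two sets of admissible $\eps$ coincide and therefore $d_I(M,N) = d_I(N,M)$, with the convention that both are $\infty$ simultaneously when neither set is nonempty.

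For the triangle inequality \textbf{M3}, fix $M, N, P \in X$. If either $d_I(M,N) = \infty$ or $d_I(N,P) = \infty$, the inequality $d_I(M,P) \leq d_I(M,N) + d_I(N,P)$ holds trivially since the right-hand side is $\infty$. Otherwise, let $\eps > d_I(M,N)$ and $\delta > d_I(N,P)$ be arbitrary. By definition of the infimum there exist $\eps' \le \eps$ and $\delta' \le \delta$ such that $M$ and $N$ are $\eps'$-interleaved and $N$ and $P$ are $\delta'$-interleaved; note that an $\eps'$-interleaving is in particular obtained for the given $\eps'$, and one checks readily that $\eps'$-interleaved modules are also $\eps''$-interleaved for any $\eps'' \ge \eps'$ (pad the interleaving morphisms with the structure maps of the modules). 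Hence $M$ and $N$ are $\eps$-interleaved and $N$ and $P$ are $\delta$-interleaved, and by Remark~\ref{int} $M$ and $P$ are $(\eps+\delta)$-interleaved. Therefore $\eps + \delta \ge d_I(M,P)$. Taking the infimum over all such $\eps$ and $\delta$ gives $d_I(M,P) \le d_I(M,N) + d_I(N,P)$.

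None of the steps presents a real obstacle; the only point requiring a little care is the set-theoretic hypothesis — the statement is asserted for a \emph{set} of isomorphism classes precisely so that $d_I$ is a function with a set as domain — and the small monotonicity remark that an $\eps'$-interleaving can be regarded as an $\eps$-interleaving for any $\eps \ge \eps'$, which is what lets us pass from the infima to concrete interleavings before invoking the composition property in Remark~\ref{int}. If desired one could instead phrase M3 using sequences $\eps_n \downarrow d_I(M,N)$ and $\delta_n \downarrow d_I(N,P)$ of admissible values and pass to the limit, but the argument above via arbitrary upper bounds is cleaner.
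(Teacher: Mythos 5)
Your proof is correct, and the paper itself offers no proof of this statement — it is cited to prior work — so there is nothing internal to compare against. Your verification of M1 (via the identity $0$-interleaving or Remark~\ref{int}) and M2 (the definition of an $\eps$-interleaving is symmetric in the two modules upon swapping $\varphi$ and $\psi$) is exactly right, and your M3 argument is sound.

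One small comment on the triangle inequality: the monotonicity step you flag (``an $\eps'$-interleaving can be regarded as an $\eps$-interleaving for any $\eps \ge \eps'$'') is true — one post-composes $\varphi$ with $N\eta_{\eps-\eps'}T_{\eps'}$ and similarly for $\psi$, and the triangle identities persist by functoriality — but it is not actually needed here. Once you have $\eps' \leq \eps$ with $M,N$ $\eps'$-interleaved and $\delta' \leq \delta$ with $N,P$ $\delta'$-interleaved, Remark~\ref{int} (equivalently Lemma~\ref{lem:interleaving-additivity}) gives an $(\eps'+\delta')$-interleaving of $M$ and $P$ directly, so $d_I(M,P) \leq \eps'+\delta' \leq \eps+\delta$, and you can take the infimum without ever upgrading the interleavings. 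Eliminating that step removes the one point you yourself identify as requiring care. Otherwise the argument, including the observation that the set-theoretic hypothesis is what makes $d_I$ a genuine function on a set, is clean and matches the standard treatment in the cited sources.
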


\begin{remark}
  A proper class of persistence modules with the interleaving distance is not an extended pseudometric space since it is not a set. However it is a symmetric Lawvere space~\cite{bdss:1,bdsn,bdss:2}. 
\end{remark}

In an extended (pseudo)metric space, the condition $d(x,y)<\infty$ defines an equivalence relation. As a result, such a space has a natural partition into (pseudo)metric spaces. 

In an (extended) pseudometric space one can consider equivalence classes of the equivalence relation $x \sim y$ if $d(x,y)=0$ to obtain an (extended) metric space. However, for persistence modules, one may be interested in distinguishing nonisomorphic modules with zero interleaving distance, so we will not apply this simplification.

Any extended pseudometric on a set induces a \emph{topology} on it. Indeed, for any $x\in X$ and a real number $r>0$ consider the \emph{open ball} $B_r(x)$ centered at $x$ with radius $r$,
\[B_r(x):=\{y\in X \ | \ d(x,y)<r \}.\]  
We call a set $O$  \emph{open} in $X$ if for each $x\in O$, there exists $r>0$ such that $B_r(x)\subset O$. Then it is easy to check that the collection of all open sets is a topology on $X$.

Note that each open ball $B_r(x)$ is also an open set in $X$ and the collection of all open balls forms a base for this topology $X$ since each open set $O$ in $X$ can be written as a union of open balls.

\begin{example}
  Consider the interval module $[0,5)$ and let $\eps >1$. Then the ball 
$B_{\eps}([0,5))$
contains the interval modules $[-1,6]$ and $(1,4)$.
\end{example}

In the appendix, we study the interval modules in an $\eps$-neighborhood of an interval module (Section~\ref{sec:neighb-interv-modul}).

A sequence $(x_n)_{n \geq 1}$ in an extended pseudometric space $X$ is said to  \emph{converge} to $x \in X$ if for all $\eps > 0$ there exists $N>0$ such that for all $n \geq N$, $d(x_n,x) < \eps$. The point $x$ is called a \emph{limit} of the sequence. Note that in an extended pseudometric space we no longer have unique limits, but we do have that if $x$ and $x'$ are limits, then by the triangle inequality $d(x,x') = 0$.

A sequence $(x_n)_{n \geq 1}$ in an extended pseudometric space is a
\emph{Cauchy sequence} if for all $\eps>0$ there exists an $N>0$ such that for all $n,m \geq N$, $d(x_n,x_m)< \eps$.
If a subsequence of a Cauchy sequence has a limit $x$, then by the triangle inequality, $x$ is also a limit of the Cauchy sequence.

\section{Sets and classes of persistence modules}
\label{sec:set-theory}

In this section we define classes of persistence modules that contain many of the persistence modules considered in the literature. We study the relationships between these classes and determine which of them are in fact sets.

For the remainder of the paper, we will only consider isomorphism classes of persistence modules. That is, whenever we say `persistence module', we really mean `isomorphism class of persistence modules'. This is standard when discussing both vector spaces and persistence modules.

\subsection{Classes of persistence modules}
\label{sec:class}

In this section, we consider the classes of persistence modules in Figure~\ref{fig:sets-and-classes}, which we now describe.

\begin{figure}
	\begin{tikzpicture} [
	auto,
	decision/.style = { diamond, draw=blue, thick, fill=blue!10,
		text width=4em, text badly centered,
		inner sep=1pt, rounded corners },
	block/.style    = { rectangle, draw=blue, thick, 
		fill=blue!9, text width=4em, text centered,
		rounded corners, minimum height=2em },
	line/.style     = { draw, thick, ->, shorten >=2pt },
	]
	\matrix [column sep=15mm, row sep=8mm] {
		&  &  & \node[block] (pers) {$\pers$};  &  &  \\
		&  &  \node [block] (id) {$\id$}; &  &  \node [block] (q-tame) {$\qtame$};  & \\
		&   \node [block] (cid) {$\cid$}; &  &  \node [block] (pfd) {$\pfd$};  &  &  \\
		\node [block] (cfid) {$\cfid$}; & & \node [block] (fid) {$\fid$}; & &  &  &\\
		&  \node [block] (ffid) {$\ffid$}; &  &  &  &  \\
		& \node [block] (ffids) {$\ffids$}; &  & \node [block] (eph) {$\eph$}; &  & \\
		&  & \node [block] (zm) {$\zm$};  &  &  & \\
	};
	\begin{scope} [every path/.style=line]
	\path (zm) -- (ffids);
	\path (zm) -- (eph);
	\path (eph) -- (q-tame);
	\path (eph) -- (id);
	\path (ffids) -- (ffid);
	\path (ffid) -- (fid);
	\path (ffid) -- (cfid);
	\path (cfid) -- (cid);
	\path (fid) -- (cid);
	\path (fid) -- (pfd);
	\path (cid) -- (id);
	\path (pfd) -- (id);
	\path (pfd) -- (q-tame);
	\path (id) -- (pers);
	\path (q-tame) -- (pers);
	\end{scope}


\end{tikzpicture}
\caption{Hasse diagram of sets and classes of persistence modules.}
\label{fig:sets-and-classes}
\end{figure}
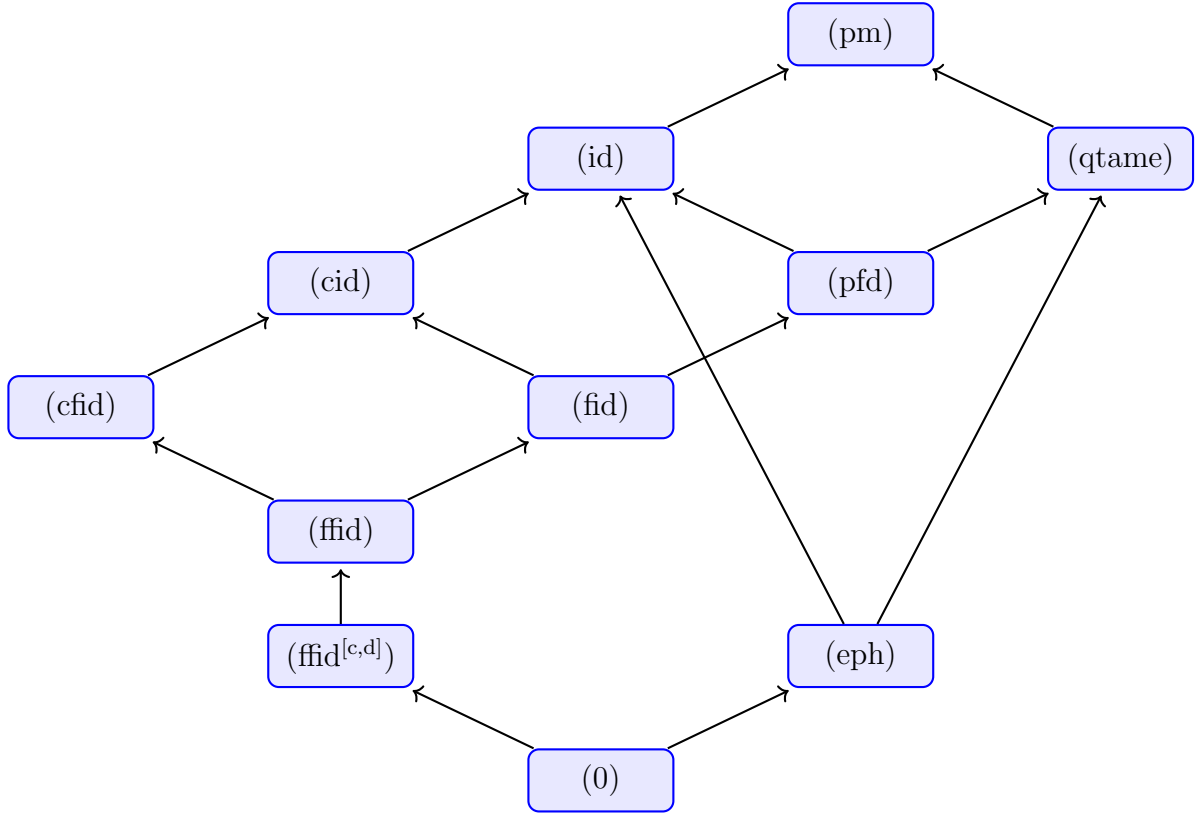


\begin{itemize}  
\item $\pers$ is the class of persistence modules.
\item $\id$ is the class of \emph{interval-decomposable} persistence modules: those isomorphic to $\bigoplus_{\alpha \in A} I_{\alpha}$, where $A$ is some indexing set, and each $I_{\alpha}$ is an interval module.
\item $\cid$, the \emph{countably interval-decomposable} persistence modules, is the subclass of $\id$ where the index set $A$ is countable.
\item $\cfid$, the \emph{countably finite-interval decomposable} persistence modules, is the subclass of $\cid$ in which each interval $I_{\alpha}$ is finite.
\item $\fid$, the \emph{finitely interval-decomposable} persistence modules, is the class of persistence modules isomorphic to 
$\bigoplus_{k=1}^N I_k$ for some $N$, where each $I_k$ is an interval module.
\item $\ffid$, the \emph{finitely finite-interval decomposable} persistence modules, is the subclass of $\fid$ in which each $I_k$ is a finite interval.
\item Given $c<d$, $\ffids$ is the subclass of  $\ffid$ in which each $I_k \subset [c,d]$.
\item $\pfd$, the \emph{pointwise finite dimensional} persistence modules, is the class of all persistence modules $M$ with each $M(a)$  finite dimensional.
\item $\qtame$, the \emph{q-tame} persistence modules, is the class of all persistence modules $M$ where each $a<b$ the linear map $v_a^b: M(a) \to M(b)$  has a finite rank.
\item $\eph$, the \emph{ephemeral} persistence modules, is the class of all persistence modules $M$ where for each $a<b$ the linear map
$v_a^b: M(a) \to M(b)$ is zero. 
\item $\zm$ is the class consisting of only the zero persistence module.
\end{itemize}

\begin{remark}
  The class $\fid$ is a slight generalization of the class of \emph{constructible} persistence modules.
  A persistence module $M$ is said to be \emph{constructible}~\cite{Patel:2016a} if there exists a finite subset $A=\{a_1,\ldots,a_n\}$ of $\mathbb{R}$ such that
  \begin{itemize}
  \item for $t< a_1$, $M(t)=0$, 
  \item for $a_i \leq s \leq t < a_{i+1}$, $M(s\leq t)$ is an isomorphism
  where $i\in \{1,\ldots,n-1\}$ , and
  \item for $a_n\leq s\leq t$, $M(s\leq t)$ is an isomorphism. 
  \end{itemize}
A constructible module $M$, satisfies $M \isom \bigoplus_{k=1}^N I_k$ where each $I_k$ is of the form $[a_i,a_j)$ or $[a_i,\infty)$.\footnote{In particular, the multiplicity of $[a_i,a_j)$ can be calculated 
using the inclusion/exclusion formula $\rank M(a_i \leq a_{j-1}) - \rank M(a_i \leq a_j) - \rank M(a_{i-1} \leq a_{j-1}) + \rank M(a_{i-1} \leq a_j)$~\cite{cseh:stability}, which is an example of M\"obius inversion~\cite{Patel:2016a}.} 
\end{remark}


\subsection{Inclusions}

\begin{lemma} \label{lem:eph-decomposition}
  Let $M$ be an ephemeral module. Then $M \isom \bigoplus_{\alpha \in A} M_{\alpha}$, where each $M_{\alpha} \isom [r,r]$ for some $r \in \mathbb{R}$.
\end{lemma}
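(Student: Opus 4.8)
The plan is to exhibit an explicit interval decomposition of an ephemeral module $M$. Since $v_a^b = 0$ for all $a < b$, the module $M$ carries essentially no ``transport'' information; all that remains is the family of vector spaces $\{M(r)\}_{r \in \mathbb{R}}$. So for each $r \in \mathbb{R}$, fix a basis $B_r$ of $M(r)$, and to each basis vector $e \in B_r$ associate a copy of the point-interval module $[r,r]$ sitting in degree $r$. I would then claim that the obvious map $\bigoplus_{r \in \mathbb{R}} \bigoplus_{e \in B_r} [r,r] \to M$, which in degree $r$ sends the generator corresponding to $e \in B_r$ to $e \in M(r)$ and is zero in all other degrees, is an isomorphism of persistence modules.

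The verification breaks into two routine checks. First, this collection of maps is a morphism of persistence modules: for $a < b$ the square \eqref{cd:morphism} commutes trivially because the bottom map $v_a^b$ is zero (as $M$ is ephemeral) and the top map in $\bigoplus_r \bigoplus_{e} [r,r]$ is also zero (a point-interval module $[r,r]$ has zero internal maps, and there is no nonzero map between distinct point modules by Example~\ref{ex:morphism-intervals}), so both composites around the square vanish. The $a = b$ case is the identity on both sides. Second, in each degree $r$ the component map is, by construction, the linear isomorphism $\bigoplus_{e \in B_r} \gf \xrightarrow{\isom} M(r)$ determined by the chosen basis. A morphism of persistence modules all of whose components are isomorphisms is an isomorphism, so we are done.

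There is no real obstacle here; the only thing to be careful about is the set-theoretic bookkeeping of the index set, namely that the index set $A$ is the disjoint union $\coprod_{r \in \mathbb{R}} B_r$ and each summand $M_\alpha$ for $\alpha \in B_r$ is $[r,r]$, which matches the form required by the statement. One could alternatively phrase the argument by noting that an ephemeral module is the same data as a functor to $\vect$ that kills every nonidentity morphism, i.e.\ a family of vector spaces indexed by $\mathbb{R}$ with no further structure, and that such a family is manifestly a direct sum of point modules; but the explicit basis argument above is the cleanest to write out.
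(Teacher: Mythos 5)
Your proof is correct and follows essentially the same approach as the paper: the paper first decomposes $M$ as $\bigoplus_{r \in \mathbb{R}} M_r$ where $M_r$ is supported only at $r$, then decomposes each $M_r$ over a basis of $M(r)$, which is exactly your one-step version via choosing bases $B_r$.
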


\begin{proof}
  Let $M \in \eph$. 
  For $r \in \mathbb{R}$, let $M_r$ be the persistence module with $M_r(x) = M(r)$ if $x=r$ and otherwise $M_r(x)=0$.
Then $M \isom \oplus_{r \in \mathbb{R}} M_r$.
Furthermore each $M(r)$ has a basis, so $M_r$ decomposes over this basis into $[r,r]$ interval modules.
\end{proof}

\begin{proposition}
  The diagram in Figure~\ref{fig:sets-and-classes} is a Hasse diagram for the poset structure of these classes of persistence modules under the inclusion order.
\end{proposition}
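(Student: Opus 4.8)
The plan is to verify that the diagram in Figure~\ref{fig:sets-and-classes} records exactly the covering relations of the inclusion poset on these ten classes. This breaks into three tasks: (1) every drawn edge is a genuine inclusion; (2) every drawn edge is a \emph{covering} relation, i.e. no class strictly fits between its endpoints; and (3) there are no inclusions not forced by transitivity from the drawn edges, i.e. the non-edges correspond to genuine incomparabilities.

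For task (1), most inclusions are immediate from the definitions: $\cid \subseteq \id$, $\cfid \subseteq \cid$, $\ffid \subseteq \fid$, $\ffids \subseteq \ffid$, $\fid \subseteq \cid$, $\fid \subseteq \pfd$, $\cfid \subseteq \ffid$ (wait---check direction; $\ffid \subseteq \cfid$ since a finite direct sum of finite intervals is a countable one), $\cid \subseteq \id$, $\pfd \subseteq \id$ requires the Structure Theorem (Theorem~\ref{thm:structure}), $\pfd \subseteq \qtame$ is clear since finite-dimensional spaces force finite-rank maps, $\id \subseteq \pers$ and $\qtame \subseteq \pers$ are trivial, $\eph \subseteq \qtame$ is clear (zero maps have rank $0$), $\eph \subseteq \id$ is exactly Lemma~\ref{lem:eph-decomposition}, $\zm \subseteq \ffids$ and $\zm \subseteq \eph$ are trivial (the empty direct sum). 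I would list these in a short paragraph, citing the Structure Theorem and Lemma~\ref{lem:eph-decomposition} for the two nontrivial ones, and then note that all edges of the Hasse diagram are covered by this list.

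For tasks (2) and (3) I would exhibit explicit separating examples, since a Hasse diagram is correct iff (a) each edge is a strict inclusion and (b) for each pair of classes not connected by a directed path, neither contains the other. A single well-chosen family of interval modules handles most cases: the module $[0,\infty)$ lies in $\ffid$-ish classes? No---it lies in $\fid$ but not $\ffid$ (infinite interval), and not in $\eph$; the module $(0,0)=[0,0]$ scaled, i.e. $\bigoplus_{r\in\mathbb{R}}[r,r]$, lies in $\eph \setminus \cid$ (uncountably many summands, and being ephemeral it is not pfd---actually each stalk can be $\gf$ so it is pfd; instead take an uncountable sum of $[r,r]$'s which is in $\eph$ and $\id$ but not $\cid$ and not $\pfd$ only if stalks are infinite-dimensional---so use $\bigoplus_{r\in\mathbb{R}}[r,r]$ for $\eph\setminus\cid$); a single $[0,1)$ sits in $\ffid\setminus\ffids$ for suitable $c,d$; an infinite direct sum $\bigoplus_{n\geq 1}[n,n+1)$ lies in $\cfid \setminus \fid$; and for a q-tame module that is not pfd (nor interval-decomposable with countable index, etc.) I would use a standard example---e.g. a module with $M(a)$ infinite-dimensional but all transition maps of finite rank, which exists and is q-tame but not pfd, showing $\qtame \not\subseteq \pfd$ and that $\qtame$ properly contains $\eph$ and $\pfd$; conversely $[0,\infty)$ (or any module with an injective non-surjective infinite-rank map? simpler: $[0,\infty)$ is pfd but not q-tame since $v_a^b$ is the identity on $\gf$, rank $1$---that's finite, so $[0,\infty)$ \emph{is} q-tame; instead $\bigoplus_{n\geq1}[0,n)$ is pfd? no, its stalk at $0$ is infinite-dimensional; $\bigoplus_{n\geq 1}[n,\infty)$ is pfd, all stalks finite-dim, but $v_0^b$ has infinite rank, so it is pfd but not q-tame) witnesses $\pfd\not\subseteq\qtame$. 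I would organize these witnesses into a compact enumeration, one incomparability/strictness at a time, checking each against the diagram.

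The main obstacle will be task (3): confirming that \emph{every} pair of classes that the diagram leaves incomparable really is incomparable, since there are $\binom{10}{2}$ pairs and many require a dedicated example. The subtle ones are the relations among $\pfd$, $\qtame$, $\id$, $\cid$, $\cfid$, $\ffid$: e.g. showing $\pfd\not\subseteq\cid$ (an uncountable direct sum of distinct interval modules that is still pointwise finite-dimensional---take $\bigoplus_{r\in\mathbb{R}}[r,r+1]$, whose stalk at any point is infinite-dimensional, so that does not work; instead one needs a pfd module that genuinely requires uncountably many indecomposable summands while staying pointwise finite-dimensional, which is delivered by something like $\bigoplus_{r\in\mathbb{R}}[r,r]$---pfd with all stalks $\gf$, ephemeral, and not in $\cid$), and symmetrically $\cid\not\subseteq\pfd$ (the countable sum $\bigoplus_{n\geq1}[0,n)$ has infinite-dimensional stalk at $0$). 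I would handle these by systematically pairing each "north-west" class with a "does not go south/east" witness and vice versa, and present the whole verification as a table or terse list rather than prose, flagging that the reader can check each line directly from the definitions and Example~\ref{ex:morphism-intervals}.
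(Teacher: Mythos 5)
Your overall plan—verify all drawn edges are inclusions, then exhibit witnesses for non-inclusions—is the same one the paper follows, and your handling of task (1) (citing the Structure Theorem for $\pfd \subseteq \id$ and Lemma~\ref{lem:eph-decomposition} for $\eph \subseteq \id$) matches exactly. But there are two issues.

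First, a genuine mathematical error: near the end you try to find a witness for $\pfd \not\subseteq \qtame$. That relation is \emph{not} supposed to fail—there is an edge $\pfd \to \qtame$ in Figure~\ref{fig:sets-and-classes}, and you yourself state correctly in the first paragraph that ``$\pfd \subseteq \qtame$ is clear since finite-dimensional spaces force finite-rank maps.'' Your proposed witness $\bigoplus_{n\geq 1}[n,\infty)$ is in fact q-tame: the stalk at $0$ is zero (no $[n,\infty)$ with $n\geq 1$ contains $0$), so $v_0^b = 0$; more generally, any transition map whose domain is finite-dimensional has finite rank, so \emph{every} pfd module is q-tame. You were hunting for a counterexample to a true statement, and the ``counterexample'' reflects a miscalculation of the stalks.

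Second, an efficiency gap. You correctly identify that task (3) threatens $\binom{10}{2}$ cases, but you do not observe the reduction the paper uses: if $\A \subseteq \B$, $\C \subseteq \D$, and $\A \not\subseteq \D$, then $\B \not\subseteq \C$. This monotonicity argument propagates a non-inclusion at small classes up to non-inclusions at larger ones, and it collapses the verification to just seven explicit witnesses: $\eph \not\subseteq \pfd$ (via $\bigoplus_{k=1}^{\infty}[0,0]$), $\eph \not\subseteq \cid$ (via $\bigoplus_{r\in\mathbb{R}}[0,0]$), $\ffids \not\subseteq \eph$, $\fid \not\subseteq \cfid$, $\pfd \not\subseteq \cid$ (via $\bigoplus_{r\in\mathbb{R}}[r,r]$—you found this one), $\cfid \not\subseteq \qtame$ (via $\bigoplus_{k=1}^{\infty}[0,1)$), and $\qtame \not\subseteq \id$ (via $\prod_{k=1}^{\infty}[0,\frac{1}{k})$, citing~\cite{ccbds}). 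Without the reduction, your proposal would have to manually check many more pairs, and the confusion about $\pfd$ versus $\qtame$ shows this case-by-case hunting is error-prone.
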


\begin{proof}
  By Theorem~\ref{thm:structure}, $\pfd$ is in $\id$.
  By Lemma~\ref{lem:eph-decomposition}, $\eph \subset \id$.
  It is easy to check that all of the other arrows indicated in the diagram are inclusions and that in fact all of the inclusions are proper. With the observation that if $A \subset B$, $C \subset D$ and $A \not\subset D$ then $B \not\subset C$, it remains to check the following cases.
  \begin{enumerate}
  \item $\eph \not\subset \pfd$: $\bigoplus_{k=1}^{\infty} [0,0]$ is in $\eph$ but not in $\pfd$.
  \item $\eph \not\subset \cid$: $\bigoplus_{r \in \mathbb{R}} [0,0]$ is in $\eph$ but not in $\cid$.
  \item $\ffids \not\subset \eph$: $[c,d]$ is in \ffids \ but is not in \eph.
  \item $\fid \not\subset \cfid$: $[0,\infty)$ is in \fid \ but is not in  \cfid. 
  \item $\pfd \not\subset \cid$: $\bigoplus_{r \in \mathbb{R}} [r,r]$ is in \pfd \ but is not in \cid. 
  \item $\cfid \not\subset \qtame$: $\bigoplus_{k=1}^{\infty}[0,1)$ is in \cfid \ but is not in \qtame.
  \item $\qtame \not\subset \id$: $\prod_{k=1}^{\infty}[0,\frac{1}{k})$ is in \qtame \ but is not in \id~\cite{ccbds}.  
  \end{enumerate}
\end{proof}

	




\subsection{Almost inclusions} \label{sec:almost-inclusion}

\begin{definition}
  Say that a class of persistence modules $\A$ \emph{almost includes} in a class of persistence modules $\B$ if for each $A \in \A$ there exists an element $B \in \B$ such that $d_I(A,B) = 0$.
\end{definition}

\begin{lemma} \label{lem:almost-inclusion}
  A finite sequence of inclusions and almost inclusions is an almost inclusion.
\end{lemma}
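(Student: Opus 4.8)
The plan is to reduce the statement to two elementary facts: that inclusion is a special case of almost inclusion (with the identity map witnessing $d_I(A,A) = 0$), and that almost inclusion is transitive. Given these, any finite composite of inclusions and almost inclusions is a finite composite of almost inclusions, which is again an almost inclusion by induction on the length of the sequence.

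First I would observe that if $\A \subseteq \B$ then $\A$ almost includes in $\B$: for each $A \in \A$ we have $A \in \B$ and $d_I(A,A) = 0$ by Remark~\ref{int} (a module is $0$-interleaved with itself). So it suffices to prove that almost inclusion is transitive. Suppose $\A$ almost includes in $\B$ and $\B$ almost includes in $\C$. Given $A \in \A$, choose $B \in \B$ with $d_I(A,B) = 0$, and then choose $C \in \C$ with $d_I(B,C) = 0$. By the triangle inequality for the extended pseudometric $d_I$,
\[
  d_I(A,C) \leq d_I(A,B) + d_I(B,C) = 0,
\]
so $d_I(A,C) = 0$ and $\A$ almost includes in $\C$.

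Finally I would assemble these: given a finite sequence $\A_0, \A_1, \ldots, \A_n$ where each consecutive pair is related by an inclusion or an almost inclusion, replace every inclusion by the corresponding almost inclusion using the first observation, and then apply transitivity $n-1$ times (formally, induct on $n$) to conclude that $\A_0$ almost includes in $\A_n$. There is no real obstacle here; the only thing to be slightly careful about is that the argument uses the triangle inequality for $d_I$, which is exactly axiom \textbf{M3)} for the extended pseudometric, and that $d_I$ genuinely takes the value $0$ on isomorphic (in particular equal) modules, which is Remark~\ref{int}.
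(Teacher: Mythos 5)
Your proof is correct and uses the same underlying idea as the paper, which simply states that the result follows from the triangle inequality; you have usefully spelled out the reduction (inclusion is a special case of almost inclusion since $d_I(A,A)=0$, and almost inclusion is transitive by \textbf{M3)}) and the induction, but there is no substantive difference in approach.
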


\begin{proof}
  This follows from the triangle inequality.
\end{proof}

\begin{lemma} \label{lem:eph-almost-inclusion} 
$M$ is an ephemeral persistence module if and only if $d_I(M,\Zero)=0$. That is, $\eph$ almost includes in $\zm$.
\end{lemma}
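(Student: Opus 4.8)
The plan is to prove both directions of the equivalence, since the ``that is'' clause follows immediately once the equivalence is established.

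For the easy direction, suppose $M$ is ephemeral. Then $M \isom \bigoplus_{\alpha \in A} M_\alpha$ with each $M_\alpha \isom [r_\alpha, r_\alpha]$ by Lemma~\ref{lem:eph-decomposition}. A single point-interval module $[r,r]$ and the zero module $\Zero$ are $\eps$-interleaved for every $\eps > 0$: the required morphisms $\varphi \colon [r,r] \to \Zero T_\eps$ and $\psi \colon \Zero \to [r,r] T_\eps$ are both forced to be zero, and the four interleaving diagrams commute trivially because the relevant composite $[r,r] \eta_{2\eps}$ is the internal map $[r,r](r \leq r + 2\eps) = 0$ (as $\eps > 0$), which matches $(\psi T_\eps)\varphi = 0$. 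Hence $d_I([r,r],\Zero) = 0$. Applying the Converse Algebraic Stability Theorem (quoted above) with $I_\alpha = M_\alpha$ and $J_\alpha = \Zero$ for all $\alpha$, and noting $\bigoplus_\alpha \Zero \isom \Zero$, we get $d_I(M,\Zero) \leq \sup_\alpha d_I(M_\alpha, \Zero) = 0$.

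For the converse, suppose $d_I(M,\Zero) = 0$ and fix $a < b$; I must show $v_a^b \colon M(a) \to M(b)$ is zero. Pick $\eps > 0$ with $2\eps < b - a$, so that $a + 2\eps < b$. Since $d_I(M,\Zero)=0$, there is an $\eps$-interleaving $(\varphi,\psi)$ of $M$ with $\Zero$. The triangle identity $(\psi T_\eps)\varphi = M\eta_{2\eps}$ says that the internal map $M(a \leq a + 2\eps) \colon M(a) \to M(a+2\eps)$ factors through $\Zero(a+\eps) = 0$, hence $M(a \leq a+2\eps) = 0$. By functoriality, $v_a^b = M(a+2\eps \leq b) \circ M(a \leq a+2\eps) = 0$. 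Since $a<b$ was arbitrary, $M$ is ephemeral. Finally, the ``$\eph$ almost includes in $\zm$'' statement is just the forward direction restated: for every $M \in \eph$, the element $\Zero \in \zm$ satisfies $d_I(M,\Zero) = 0$.

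I do not anticipate a serious obstacle here; the only point requiring a little care is making the interleaving diagrams for $[r,r]$ and $\Zero$ explicit (or, equivalently, invoking the appendix's analysis of interleavings of interval modules), but this is routine since all the maps in sight are zero and the key composite vanishes precisely because $\eps > 0$ forces $r < r + 2\eps$. The argument uses only Lemma~\ref{lem:eph-decomposition}, the Converse Algebraic Stability Theorem, and the triangle identities in Definition~\ref{def:interleaving}.
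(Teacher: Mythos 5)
Your proof is correct, but the forward direction follows a different route than the paper's. Where the paper simply observes that for an ephemeral $M$ the zero maps $\varphi = 0 \colon M \to \Zero T_{\eps}$ and $\psi = 0 \colon \Zero \to M T_{\eps}$ already give an $\eps$-interleaving for every $\eps > 0$ (the triangle identities reduce to $M\eta_{2\eps} = 0$, which holds because all strict shift maps in an ephemeral module vanish), you instead decompose $M$ into point-interval modules via Lemma~\ref{lem:eph-decomposition}, verify the interleaving summand-by-summand, and reassemble with the Converse Algebraic Stability Theorem. Both arguments are valid, but yours carries extra baggage: you invoke a structure theorem and a stability theorem to reach a conclusion that the paper gets in one line by the same observation you eventually make at the level of $[r,r]$ --- namely, that all the required composites are zero because the interleaving parameter is positive. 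The reverse direction in your proposal matches the paper's essentially verbatim (you pick $2\eps < b-a$ and compose, the paper picks $2\eps = b-a$; no substantive difference), and the final ``that is'' clause is correctly unpacked.
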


\begin{proof} 
Let $M$ be an ephemeral persistence module. Then $M$ and $\Zero$ are $\eps$-interleaved for all $\eps>0$ 
by the zero maps. 

Next assume $d_I(M,\Zero)=0$. Consider $a<b$. Let $\eps = \frac{b-a}{2}$.
Since $M$ and $\Zero$ are $\eps$-interleaved, the map $M(a<b)$ factors through 0, and is thus the zero map.
\begin{equation*}
\begin{tikzcd}[row sep=small]
M(a)  \ar[rr,"M(a<b)"]  \ar[rd, "\varphi_a", swap] & &  M(b) \\
& 0 \ar[ru, "\psi_{\frac{a+b}{2}}", swap]  & 
\end{tikzcd}
\end{equation*}
Therefore $M$ is an ephemeral persistence module.
\end{proof}



For a persistence module $M$, define the \emph{radical} of $M$ by $(\rad M)(a) = \sum_{c < a} \im M(c<a)$~\cite{ccbds}.
Note that $\rad M \subset M$ and inherits the structure of a persistence module.

\begin{proposition} \label{prop:rad}
  Let $M$ be a persistence module. Then $d_I(M,\rad M) = 0$.
\end{proposition}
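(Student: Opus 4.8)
The plan is to construct, for each $\eps > 0$, an explicit $\eps$-interleaving between $M$ and $\rad M$. The key observation is that $(\rad M)(a) = \sum_{c<a}\im M(c<a)$ already sits inside $M(a)$ via the inclusion, so we get the inclusion natural transformation $\iota : \rad M \hookrightarrow M$ for free; the content is in producing a map the other way that lands in $\rad M$ after shifting by $\eps$. The natural candidate for the map $M \To (\rad M)T_\eps$ is the structure map of $M$ itself: for each $a$, the map $M(a<a+\eps)\colon M(a)\to M(a+\eps)$ has image contained in $\sum_{c<a+\eps}\im M(c<a+\eps) = (\rad M)(a+\eps)$, since $a < a+\eps$. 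So define $\varphi_a \colon M(a) \to (\rad M)(a+\eps)$ to be $M(a<a+\eps)$ corestricted to the radical, and $\psi_a \colon (\rad M)(a) \to M(a+\eps)$ to be the composite of the inclusion $(\rad M)(a)\hookrightarrow M(a)$ with $M(a<a+\eps)$.

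First I would verify that $\varphi$ and $\psi$ are genuine morphisms of persistence modules, i.e. that the relevant squares commute; this is immediate from functoriality of $M$ (composition of structure maps) and from the fact that the radical inclusion is a morphism. Next I would check the two interleaving triangle identities from \eqref{cd:triangle}: the composite $(\psi T_\eps)\varphi \colon M(a) \to M(a+2\eps)$ is $M(a+\eps < a+2\eps)\circ M(a < a+\eps) = M(a<a+2\eps)$, which is exactly $M\eta_{2\eps}$ at $a$; and symmetrically $(\varphi T_\eps)\psi \colon (\rad M)(a)\to(\rad M)(a+2\eps)$ is the restriction to the radical of $M(a<a+2\eps)$, which is $(\rad M)\eta_{2\eps}$ at $a$. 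The two parallelogram identities from \eqref{cd:parallelogram} follow similarly by functoriality. Hence $M$ and $\rad M$ are $\eps$-interleaved for every $\eps>0$, so $d_I(M,\rad M) \leq \inf_{\eps>0}\eps = 0$.

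I expect no serious obstacle here: the entire proof is a diagram-chase exploiting that $\rad M$ is a submodule and that the radical at level $a+\eps$ absorbs the image of any structure map into $M(a+\eps)$ from a strictly earlier index. The one point requiring a word of care is confirming that the image of $\varphi_a$ really lies in $(\rad M)(a+\eps)$ and not merely in $M(a+\eps)$ — but this is precisely the defining condition of the radical, using $a < a+\eps$ (which holds since $\eps > 0$) — and that $\varphi$ is natural with respect to this corestriction, which again is just functoriality of $M$.
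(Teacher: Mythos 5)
Your proof is correct and takes essentially the same approach as the paper: both directions of the interleaving are given by the structure map $M(a<a+\eps)$, corestricted to $\rad M$ in one direction (which lands in the radical precisely because $a<a+\eps$) and precomposed with the inclusion in the other, with the triangle identities following from functoriality. The paper states this more tersely (and with the roles of $\varphi$ and $\psi$ swapped, which is only a labeling choice), but the argument is identical.
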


\begin{proof}
  Let $\eps >0$. For all $a \in \mathbb{R}$,
  let $\varphi_a = M(a<a+\eps): (\rad M)(a) \to M(a+\eps)$, and
  let $\psi_a = M(a<a+\eps): M(a) \to (\rad M)(a+\eps)$.
  Then by the functoriality of $M$, this is an $\eps$-interleaving of $\rad M$ and $M$.
  Therefore $d_I(\rad M, M) = 0$.
\end{proof}

\begin{theorem} \label{thm:rad}
  Let $M \in \qtame$. Then $\rad M \in \qtame$ and $\rad M \in \cid$.
\end{theorem}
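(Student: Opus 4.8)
The plan is to treat the two claims separately. For $\rad M\in\qtame$: note that $\rad M$ is a submodule of $M$ (each $(\rad M)(a)\subseteq M(a)$, and the structure maps of $\rad M$ are the restrictions of those of $M$, their images landing in the radical by the defining formula), so for $a<b$ the map $(\rad M)(a<b)$ is a restriction and corestriction of $M(a<b)$ and hence $\rank(\rad M)(a<b)\le\rank M(a<b)<\infty$. This part is routine.

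For $\rad M\in\cid$ the essential input is interval-decomposability. By Proposition~\ref{prop:rad} we have $d_I(M,\rad M)=0$, so $\rad M$ is again q-tame (or use the paragraph above), and by the structure theorem for q-tame persistence modules~\cite{ccbds}, $\rad M\cong\bigoplus_{\alpha\in A}I_\alpha$ for some interval modules $I_\alpha$. (Explicitly, $\rad M=\bigcup_n M_n$ where $M_n(a)=\im M(a-\tfrac1n<a)$ is a pointwise finite-dimensional, hence interval-decomposable, submodule by Theorem~\ref{thm:structure}; it is precisely this passage to the increasing union that the structure theorem controls, and I would cite it rather than reprove it.) It then remains to show that the index set $A$ may be taken countable, which I expect to be the genuinely new content.

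Two elementary observations do this. First, $\rad$ is idempotent: a direct computation with the defining formula and the functoriality of $M$ gives $(\rad\rad M)(a)=\sum_{c<a}M(c<a)\big((\rad M)(c)\big)=\sum_{c<a}\sum_{c'<c}\im M(c'<a)=\sum_{c'<a}\im M(c'<a)=(\rad M)(a)$, compatibly with the submodule structure. Since $\rad$ commutes with arbitrary direct sums, applying it to $\rad M\cong\bigoplus_\alpha I_\alpha$ yields $\bigoplus_\alpha I_\alpha=\rad\rad M\cong\bigoplus_\alpha\rad I_\alpha$; projecting onto each summand forces $\rad I_\alpha=I_\alpha$, and as $\rad[r,r]=0\neq[r,r]$ this excludes single-point summands, so every $I_\alpha$ has nonempty interior. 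Second, by the first part $\rad M\in\qtame$, so for rationals $p<q$ the set $A_{p,q}:=\{\alpha\in A\st [p,q]\subseteq I_\alpha\}$ satisfies $\abs{A_{p,q}}=\rank(\rad M)(p<q)<\infty$. Since every $I_\alpha$ contains a closed interval with rational endpoints, $A=\bigcup_{p<q\in\mathbb Q}A_{p,q}$ is a countable union of finite sets, hence countable, and $\rad M\in\cid$.

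The main obstacle is the interval-decomposability of $\rad M$: granting the q-tame structure theorem of~\cite{ccbds} it is immediate, and the rest of the argument — idempotence of $\rad$, the removal of point summands, and the rational-interval counting — is elementary bookkeeping. A fully self-contained treatment would instead have to show directly that the increasing union $\bigcup_n M_n$ of pointwise finite-dimensional interval-decomposable submodules is itself interval-decomposable, which is exactly the hard heart of the structure theorem.
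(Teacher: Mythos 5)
Your proof is correct and, globally, follows the same strategy as the paper: invoke the q-tame structure theorem of Chazal--Crawley-Boevey--de Silva to get $\rad M \cong \bigoplus_{\alpha \in A} I_\alpha$, then show the index set is countable by counting against the rationals, after first ruling out one-point summands. The interesting difference is in how you exclude point intervals. The paper's proof does this directly from the definition of $\rad$: for any $a \in \mathbb{R}$ and $x \in (\rad M)(a)$, write $x = M(c\leq a)(y)$ for some $c<a$, then factor through an intermediate $b\in(c,a)$ to see that $x$ already lies in the image of $(\rad M)(b\leq a)$, which forces any interval module appearing in the decomposition to have more than one point. Your route is instead structural: $\rad$ is idempotent and commutes with arbitrary direct sums, so applying $\rad$ to the decomposition $\rad M \cong \bigoplus I_\alpha$ forces $\rad I_\alpha = I_\alpha$ for each $\alpha$, and since $\rad$ kills one-point interval modules no such summand can occur. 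This actually gives a slightly stronger conclusion (every $I_\alpha$ has no left endpoint), and has the advantage of isolating a reusable fact about $\rad$. One small caution on phrasing: the theorem in \cite{ccbds} is not that q-tame modules are interval-decomposable (they need not be, cf.\ $\prod_k[0,1/k)$), but that the \emph{radical} of a q-tame module is interval-decomposable; your parenthetical makes clear you know this, but the sentence ``$\rad M$ is q-tame, so by the structure theorem $\rad M$ is interval-decomposable'' is, as written, circular in a way that depends on the idempotence you only establish afterwards. It is cleaner to apply the structure theorem directly to $M$, exactly as the paper does.
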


\begin{proof}
  Let $M \in \qtame$. Since $\rad M$ is a submodule of $M$, it follows that $\rad M \in \qtame$ as well. 
By \cite[Corollary 3.6]{ccbds}, $\rad M \in \id$. We will strengthen this to show that $\rad M \in \cid$.

Since $\rad M \in \id$, $\rad M \isom \bigoplus_{\alpha \in A} I_{\alpha}$.
For $q,r \in \mathbb{Q}$ with $q<r$, let $A_{q,r} = \{ \alpha \in A \st q,r \in I_{\alpha}\}$, and let $A' = \bigcup_{q<r \in \mathbb{Q}} A_{q,r}$.
Since $\rad M \in \qtame$, for each $q<r \in \mathbb{Q}$, $\abs{A_{q,r}} < \infty$.
Therefore $A'$ is countable.

Furthermore, by definition, for each $a \in \mathbb{R}$ and for each $x \in (\rad M)(a)$ there exists $c<a$ and $y \in M(c)$ such that $M(c\leq a)(y) = x$.
Choose $b \in (c,a)$. Then $z:= M(c\leq b)(y) \in (\rad M)(b)$ and $(\rad M)(b\leq a)(z) = x$.
Hence the interval decomposition of $\rad M$ does not contain any one-point intervals, and thus $A = A'$.
Therefore $\rad M \in \cid$. 
\end{proof}

Combining the previous two results we have the following.

\begin{corollary}
  Let $M \in \qtame$. Then there exists $N \in \cid$ such that $d_I(M,N) = 0$. That is, $\qtame$ almost includes in $\cid$.
\end{corollary}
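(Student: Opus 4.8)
The plan is to exhibit the witness $N = \rad M$ directly; the corollary is then an immediate consequence of the two preceding results, so there is essentially no obstacle left — all the real work lies in Theorem~\ref{thm:rad}.

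First I would invoke Theorem~\ref{thm:rad}: since $M \in \qtame$, its radical satisfies $\rad M \in \cid$, so $N := \rad M$ is a legitimate element of $\cid$. Second, I would invoke Proposition~\ref{prop:rad}, which gives $d_I(M, \rad M) = 0$, i.e.\ $d_I(M, N) = 0$. Together these say exactly that for the given $M \in \qtame$ there is an $N \in \cid$ with $d_I(M,N) = 0$, which is the definition of $\qtame$ almost including in $\cid$. (Alternatively, one could observe that $M \mapsto \rad M$ realizes an ``almost inclusion'' of $\pers$ into itself by Proposition~\ref{prop:rad}, which restricts to an almost inclusion of $\qtame$ into $\cid$ by Theorem~\ref{thm:rad}, and then quote Lemma~\ref{lem:almost-inclusion}; but the direct two-line argument is cleaner.)

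The only bookkeeping to double-check is that the $N$ produced genuinely lies in $\cid$ — not merely in $\id$ — which is precisely the strengthening established in Theorem~\ref{thm:rad}, and that, since $d_I$ is symmetric, $d_I(M,N) = 0$ indeed matches the direction required by the definition of almost inclusion. Both are routine, so I expect the proof to be two sentences long.
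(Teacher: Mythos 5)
Your proof is correct and is exactly the paper's argument: the paper presents the corollary with the one-line justification ``Combining the previous two results,'' meaning Proposition~\ref{prop:rad} for $d_I(M,\rad M)=0$ and Theorem~\ref{thm:rad} for $\rad M\in\cid$. Nothing to add.
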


\subsection{Enveloping distance} \label{sec:enveloping}

In this section, we define a non-symmetric distance between classes of persistence modules and calculate its value for most of the pairs in Figure~\ref{fig:sets-and-classes}.

\begin{definition} \label{def:ed}
  Let $\A$ and $\B$ be classes of persistence modules.
  We define the \emph{enveloping distance} from $\A$ to $\B$ as follows.
  \begin{equation*}
    E(\A,\B) = \inf (r \st \forall B \in \B \text{ and } s > r, \exists A \in \A\text{ such that } A,B \text{ are $s$-interleaved})
  \end{equation*}
If there is no such $r$, we set $E(\A,\B) = \infty$.
\end{definition}

For example, as we will demonstrate later in this section, $E(\zm,\ffids) = \frac{d-c}{2}$ and $E(\ffids,\zm) = 0$.

We will use the following basic fact about interleavings.

\begin{lemma}[\cite{Lesnick:2015,bubenikScott:1}] \label{lem:interleaving-additivity}
  If persistence modules $A$ and $B$ are $s$-interleaved and persistence modules $B$ and $C$ are $t$-interleaved, then $A$ and $C$ are $(s+t)$-interleaved.
\end{lemma}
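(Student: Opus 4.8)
The plan is simply to compose the two given interleavings and verify the axioms of Definition~\ref{def:interleaving}. Write $\varphi_a\colon A(a)\to B(a+s)$ and $\psi_a\colon B(a)\to A(a+s)$ for the maps of an $s$-interleaving of $A$ and $B$, and $\varphi'_a\colon B(a)\to C(a+t)$ and $\psi'_a\colon C(a)\to B(a+t)$ for those of a $t$-interleaving of $B$ and $C$. Define
\[
\Phi_a = \varphi'_{a+s}\circ\varphi_a\colon A(a)\to C(a+s+t),\qquad
\Psi_a = \psi_{a+t}\circ\psi'_a\colon C(a)\to A(a+s+t),
\]
or, in the categorical notation of Definition~\ref{def:interleaving}, $\Phi=(\varphi' T_s)\varphi$ and $\Psi=(\psi T_t)\psi'$. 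The claim is that $\Phi$ and $\Psi$ witness an $(s+t)$-interleaving of $A$ and $C$.

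First I would note that $\Phi$ and $\Psi$ are morphisms of persistence modules: $\varphi$ is a natural transformation and $\varphi' T_s$ is a natural transformation, so their composite $\Phi$ is one, and likewise for $\Psi$. Concretely, the first parallelogram diagram for $\Phi$ at a pair $a\le b$ is obtained by stacking the corresponding parallelogram for $\varphi$ on top of the $s$-shifted parallelogram for $\varphi'$; since both commute, so does the composite, and the three remaining parallelogram diagrams are handled identically.

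The substantive step is the two triangle identities. For the first, I would compute, at index $a$,
\[
\Psi_{a+s+t}\circ\Phi_a
= \psi_{a+s+2t}\circ\psi'_{a+s+t}\circ\varphi'_{a+s}\circ\varphi_a .
\]
The triangle identity of the $t$-interleaving of $B$ and $C$ gives $\psi'_{a+s+t}\circ\varphi'_{a+s}=B(a+s\le a+s+2t)$; naturality of $\psi$ then moves this structure map across $\psi$, producing $A(a+2s\le a+2s+2t)\circ\psi_{a+s}$; and finally the triangle identity of the $s$-interleaving of $A$ and $B$ gives $\psi_{a+s}\circ\varphi_a=A(a\le a+2s)$. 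Composing the two structure maps of $A$ yields $A(a\le a+2(s+t))$, which is exactly the component at $a$ of $A\eta_{2(s+t)}$. The second triangle identity follows by the same computation with the roles of $(A,\varphi,\psi)$ and $(C,\varphi',\psi')$ interchanged, giving $C\eta_{2(s+t)}$.

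I do not expect a genuine obstacle: the only thing to watch is the bookkeeping of the shift amounts and indices, so that each appeal to naturality or to a triangle identity is made at exactly the right index. (The statement is the same composition fact already recorded in Remark~\ref{int}.)
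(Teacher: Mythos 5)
Your proof is correct, and the index bookkeeping in the key computation of $\Psi_{a+s+t}\circ\Phi_a$ is exactly right: the $t$-triangle identity at $a+s$ gives $B(a+s\le a+s+2t)$, naturality of $\psi$ moves it across, and the $s$-triangle identity at $a$ closes the argument. Note that the paper does not prove this lemma itself — it is stated with citations to Lesnick and Bubenik--Scott (and reappears without proof as Remark~\ref{int}) — but your composition-of-interleavings argument is precisely the standard one those references give.
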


The enveloping distance has the following properties.

\begin{lemma} \label{lem:ed}
  $E(\A,\A) = 0$ and $E(\A,\C) \leq E(\A,\B) + E(\B,\C)$.
\end{lemma}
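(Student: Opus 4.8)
The plan is to verify the two claimed properties of the enveloping distance directly from Definition~\ref{def:ed}, mirroring the usual proof that interleaving distance is an extended pseudometric. The identity $E(\A,\A)=0$ should be immediate: for every $B \in \A$ and every $s>0$, the module $B$ itself is an element of $\A$ that is $s$-interleaved with $B$ (indeed, any module is $0$-interleaved, hence $s$-interleaved, with itself by Remark~\ref{int}). So $r=0$ satisfies the defining condition, and since $r=0$ is clearly the infimum of the nonnegative reals that work, $E(\A,\A)=0$.

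For the triangle inequality $E(\A,\C) \le E(\A,\B)+E(\B,\C)$, I would argue as follows. If either term on the right is $\infty$ there is nothing to prove, so assume both are finite; write $p = E(\A,\B)$ and $q = E(\B,\C)$. Fix any $s > p+q$. Choose a splitting $s = s_1 + s_2$ with $s_1 > p$ and $s_2 > q$ (possible since $s - p - q > 0$). Let $C \in \C$ be arbitrary. By the definition of $q = E(\B,\C)$ applied with the threshold $s_2 > q$, there exists $B \in \B$ such that $B$ and $C$ are $s_2$-interleaved. Now apply the definition of $p = E(\A,\B)$ to this particular $B$, with the threshold $s_1 > p$: there exists $A \in \A$ such that $A$ and $B$ are $s_1$-interleaved. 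By Lemma~\ref{lem:interleaving-additivity}, $A$ and $C$ are then $(s_1+s_2)$-interleaved, i.e.\ $s$-interleaved. Since $C \in \C$ and $s > p+q$ were arbitrary, every $s > p+q$ witnesses that $p+q$ belongs to the set over which the infimum defining $E(\A,\C)$ is taken (or more precisely, that $p+q$ is an upper bound condition), so $E(\A,\C) \le p+q = E(\A,\B) + E(\B,\C)$.

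One small subtlety to get right is the exact logical form of the infimand in Definition~\ref{def:ed}: the condition on $r$ is ``for all $B$ and all $s>r$, there exists $A$\ldots'', so a real number $r$ lies in the infimized set precisely when the ``$\forall B\, \forall s>r\, \exists A$'' statement holds. The argument above shows this statement holds for $r = p+q$, which gives $E(\A,\C)\le p+q$. I would also note that the order of the two existential choices matters — one must pick $B$ first (depending on $C$) and then pick $A$ depending on that $B$ — and that this is exactly what the nesting of quantifiers permits. I do not anticipate a genuine obstacle here; the only thing to be careful about is the bookkeeping of the strict inequalities $s_1 > p$, $s_2 > q$ versus the non-strict conclusion, which is handled cleanly by taking $s > p+q$ arbitrary and splitting.
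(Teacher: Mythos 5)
Your proof is correct and follows the same approach as the paper: reflexivity from self-interleaving and the triangle inequality from Lemma~\ref{lem:interleaving-additivity}. You simply spell out the quantifier bookkeeping (splitting $s = s_1 + s_2$) that the paper leaves implicit.
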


\begin{proof}
  For reflexivity, each persistence module is $s$-interleaved with itself for all $s \geq 0$. The triangle inequality follows from Lemma~\ref{lem:interleaving-additivity}.
\end{proof}

\begin{definition} \label{def:red}
In the case that $E(\A,\B) = \infty$, we write that $E(\A,\B) = \infty^{-}$ 
if $\forall B \in \B \ \exists s$ and $A \in \A$ such that $A,B$ are $s$-interleaved. From now on we reserve $E(\A,\B)=\infty$ for the case that this condition is not satisfied.
\end{definition}

\begin{lemma} \label{lem:almost-inclusion-env}
  If $\A$ (almost) includes in $\B$ then $E(\B,\A) = 0$. 
\end{lemma}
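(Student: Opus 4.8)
The plan is to show directly that $r=0$ lies in the set over which the infimum defining $E(\B,\A)$ is taken; since the enveloping distance is nonnegative, this forces $E(\B,\A)=0$.

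First I would dispatch the parenthetical case: if $\A$ includes in $\B$, then for every $A\in\A$ we have $A\in\B$ and $d_I(A,A)=0$, so $\A$ almost includes in $\B$; hence it suffices to treat the almost-inclusion case. Next I would record the elementary monotonicity fact that if $M$ and $N$ are $\eps$-interleaved and $s\ge\eps$, then $M$ and $N$ are $s$-interleaved: one way is to post-compose the interleaving morphisms $\varphi_a,\psi_a$ with the appropriate structure maps of $N$ and $M$; alternatively it follows from Remark~\ref{int} together with the observation (also used in the proof of Lemma~\ref{lem:ed}) that every persistence module is $t$-interleaved with itself for all $t\ge0$, applied with $t=s-\eps$. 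In particular, $d_I(M,N)=0$ implies that $M$ and $N$ are $s$-interleaved for every $s>0$.

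With these in hand the argument is short. Fix $A\in\A$ and $s>0$. By almost inclusion, choose $B\in\B$ with $d_I(A,B)=0$; by the monotonicity step, $A$ and $B$ are $s$-interleaved. Since $A\in\A$ and $s>0$ were arbitrary, $r=0$ satisfies the defining condition for $E(\B,\A)$ in Definition~\ref{def:ed}, so $E(\B,\A)\le 0$; combined with $E(\B,\A)\ge 0$ (the defining set contains only nonnegative reals, as $s$-interleavings require $s\ge 0$) this gives $E(\B,\A)=0$.

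I do not expect any real obstacle here; the only step meriting a line of care is the monotonicity of $\eps$-interleavings in $\eps$, which is routine (and arguably already implicit in the definition of $d_I$), so I would either spell out the structure-map verification or cite Remark~\ref{int} as above.
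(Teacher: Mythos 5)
Your proof is correct and takes essentially the same route as the paper, which simply asserts that the claim follows immediately from the definitions; you have spelled out the relevant details (reducing inclusion to almost inclusion, and noting that $d_I(A,B)=0$ gives $s$-interleavings for all $s>0$ via monotonicity of interleavings in $\eps$).
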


\begin{proof}
  This follows immediately from the definitions.
\end{proof}

\begin{corollary} \label{cor:env-zm-eph}
  $E(\zm,\eph) = 0$ and $E(\eph,\zm) = 0$.
\end{corollary}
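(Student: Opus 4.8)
The plan is to derive Corollary~\ref{cor:env-zm-eph} directly from Lemma~\ref{lem:almost-inclusion-env} together with the almost-inclusion established in Lemma~\ref{lem:eph-almost-inclusion}. Recall Lemma~\ref{lem:eph-almost-inclusion} states precisely that $\eph$ almost includes in $\zm$: for every ephemeral module $M$ we have $d_I(M,\Zero)=0$. Applying Lemma~\ref{lem:almost-inclusion-env} with $\A = \eph$ and $\B = \zm$ immediately yields $E(\zm,\eph) = 0$. That disposes of the first equality.

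For the second equality, $E(\eph,\zm)=0$, the cleanest route is to note that $\zm$ genuinely includes in $\eph$ (not just almost): the zero module $\Zero$ is trivially ephemeral, since every structure map $v_a^b$ out of the zero vector space is the zero map. Hence $\zm \subset \eph$ as classes, and Lemma~\ref{lem:almost-inclusion-env} applied with $\A = \zm$ and $\B = \eph$ gives $E(\eph,\zm) = 0$. Alternatively one could argue unwound from the definition: given the unique $B = \Zero \in \zm$, we must find, for every $s > 0$, an $A \in \eph$ with $A$ and $B$ being $s$-interleaved; taking $A = \Zero$ works since $\Zero$ is $0$-interleaved (indeed $s$-interleaved for all $s \geq 0$) with itself by Remark~\ref{int}, so the infimum in Definition~\ref{def:ed} is $0$.

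There is essentially no obstacle here — both halves are one-line consequences of results already in place. The only thing worth being careful about is citing the right direction of the almost-inclusion: Lemma~\ref{lem:almost-inclusion-env} says that if $\A$ (almost) includes in $\B$ then $E(\B,\A) = 0$, with the arguments of $E$ in the \emph{reverse} order, so the $\eph \hookrightarrow \zm$ almost-inclusion controls $E(\zm,\eph)$ and the $\zm \hookrightarrow \eph$ inclusion controls $E(\eph,\zm)$. I would write the proof as a single sentence: ``The first equality follows from Lemmas~\ref{lem:eph-almost-inclusion} and~\ref{lem:almost-inclusion-env}; the second follows since $\Zero \in \eph$, so $\zm$ includes in $\eph$, and again applying Lemma~\ref{lem:almost-inclusion-env}.''
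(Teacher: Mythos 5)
Your proof is correct and follows exactly the intended route: both halves are immediate applications of Lemma~\ref{lem:almost-inclusion-env}, using the almost-inclusion $\eph \hookrightarrow \zm$ from Lemma~\ref{lem:eph-almost-inclusion} for $E(\zm,\eph)=0$ and the genuine inclusion $\zm \subset \eph$ for $E(\eph,\zm)=0$. You also correctly flagged the order-reversal in the statement of Lemma~\ref{lem:almost-inclusion-env}, which is the only place one could slip.
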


\begin{lemma} \label{lem:enveloping-distance}
  If $\A$ (almost) includes in $\B$, $E(\B,\C) = \infty$, and $\C$ (almost) includes in $\D$, then $E(\A,\D) = \infty$.
\end{lemma}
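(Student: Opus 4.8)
The plan is to reduce everything to the triangle inequality for the enveloping distance (Lemma~\ref{lem:ed}) together with the fact that (almost) inclusion forces a vanishing enveloping distance in one direction (Lemma~\ref{lem:almost-inclusion-env}). First I would record the two consequences of the hypotheses on inclusions: since $\A$ (almost) includes in $\B$, Lemma~\ref{lem:almost-inclusion-env} gives $E(\B,\A) = 0$; and since $\C$ (almost) includes in $\D$, the same lemma gives $E(\D,\C) = 0$. The goal $E(\A,\D) = \infty$ will then follow by contradiction: suppose $E(\A,\D) < \infty$. Applying the triangle inequality twice, $E(\B,\C) \leq E(\B,\A) + E(\A,\D) + E(\D,\C) = 0 + E(\A,\D) + 0 = E(\A,\D) < \infty$, contradicting the hypothesis $E(\B,\C) = \infty$.

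One subtlety to address is that Lemma~\ref{lem:ed} is stated for three classes, so I would apply it in the chained form $E(\B,\C) \leq E(\B,\A) + E(\A,\C)$ and then again $E(\A,\C) \leq E(\A,\D) + E(\D,\C)$, combining to get $E(\B,\C) \leq E(\B,\A) + E(\A,\D) + E(\D,\C)$; this only uses the stated binary triangle inequality twice and the convention that $\infty$ is absorbing for the sum, so the inequality $E(\B,\C) \leq (\text{something finite})$ is genuinely contradictory. A second point worth a sentence is that ``(almost) includes'' is used in the parenthetical sense throughout: whichever of the two readings (inclusion, or almost-inclusion) is intended in the hypotheses, Lemma~\ref{lem:almost-inclusion-env} already handles both uniformly, so no case split is needed.

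I do not expect any real obstacle here; the only thing to be careful about is the bookkeeping with $\infty$ in the triangle inequality, and perhaps noting explicitly that the argument does not require the refined notion $\infty^{-}$ from Definition~\ref{def:red} — we only need that $E(\B,\C)$ is not finite. The whole proof is three lines: extract the two zero enveloping distances, apply the triangle inequality twice, and conclude that finiteness of $E(\A,\D)$ would contradict $E(\B,\C) = \infty$.
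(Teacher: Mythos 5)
Your proof is correct. The paper uses the same underlying strategy — argue the contrapositive, chaining from $\C$ through $\D$, $\A$, and $\B$ to conclude $E(\B,\C)<\infty$ — but it does so by unpacking the definitions directly: pick $C\in\C$, use almost-inclusion of $\C$ in $\D$ to get a nearby $D$, use $E(\A,\D)<\infty$ to get an $A$ within some fixed $s$ of $D$, use almost-inclusion of $\A$ in $\B$ to get a nearby $B$, and chain the interleavings via Remark~\ref{int}. You instead factor this through the already-established Lemma~\ref{lem:almost-inclusion-env} (giving $E(\B,\A)=0$ and $E(\D,\C)=0$) and the triangle inequality of Lemma~\ref{lem:ed}, obtaining $E(\B,\C)\leq E(\B,\A)+E(\A,\D)+E(\D,\C)=E(\A,\D)$ in one line. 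That is a genuine, if modest, streamlining: it makes visible that the lemma is a purely formal consequence of the (pseudo-quasi-metric-like) axioms for $E$, rather than something requiring a fresh chaining argument. Your remark that one must only use $E(\B,\C)=\infty$ in the weak sense (not $\infty^{-}$) is apt, and your observation that Lemma~\ref{lem:almost-inclusion-env} subsumes both the inclusion and almost-inclusion readings is exactly how the paper treats the parenthetical.
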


\begin{proof}
  Assume $E(\A,\D) < \infty$.
  Then there is some $s \geq 0$ such that for all $D \in \D$ there exists an $A \in \A$ such that $D$ and $A$ are $s$-interleaved.

  Let $\eps>0$.
  Let $C \in \C$.
  Since $\C$ (almost) includes in $\D$, there is a $D \in \D$ such that $C$ and $D$ are $\eps$-interleaved.
  By our first observation, there is an $A \in \A$ such that $D$ and $A$ are $s$-interleaved.
  Since $\A$ (almost) includes in $\B$, there is a $B \in \B$ such that $A$ and $B$ are $\eps$-interleaved.
  Therefore by Remark~\ref{int}, $C$ and $B$ are $(s+2\eps)$-interleaved.
  So for all $C \in \C$ there is a $B \in B$ such that $C$ and $B$ are $(s+2\eps)$-interleaved.
  Thus $E(\B,\C)<\infty$.
\end{proof}

\begin{proposition} \label{prop:env}
  \begin{enumerate}
  \item We have the following enveloping distances: $E(\zm,\ffids) = \frac{d-c}{2}$ and  $E(\ffids,\ffid) = \infty^-$.
    Also, $E(\zm,\ffid) = \infty^-$, $E(\eph,\ffid) = \infty^-$ and $E(\eph,\ffids) = \frac{d-c}{2}$. 
  \item In addition, 
$E(\cfid,\fid)=\infty$ and  $E(\qtame,\cfid)=\infty$.
  \item With the exception of $\zm \subset \eph$, $\zm \subset \ffids$, $\ffids \subset \ffid$ and the possible exception of $\pfd \subset \qtame$, all of the other inclusions $\A \subset \B$ in Figure~\ref{fig:sets-and-classes} have enveloping distance $E(\A,\B)=\infty$. Also $E(\qtame,\cid) = \infty$.
  \end{enumerate}
\end{proposition}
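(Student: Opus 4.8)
The plan is to break the proof into three parts corresponding to the three items. For item (1), I would first establish $E(\zm,\ffids) = \frac{d-c}{2}$. For the upper bound, given any $M \in \ffids$ with $M \isom \bigoplus_{k=1}^N I_k$ and each $I_k \subset [c,d]$, each interval module $I_k$ is $\frac{d-c}{2}$-interleaved with the zero module (since $I_k$ is contained in an interval of length $d-c$, hence $\frac{d-c}{2}$-trivial), so by the Converse Algebraic Stability Theorem $M$ is $\frac{d-c}{2}$-interleaved with $\Zero$; strictness of the inequality $s>r$ in Definition~\ref{def:ed} then gives $E(\zm,\ffids)\le\frac{d-c}{2}$. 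For the lower bound, I would exhibit the witness $[c,d] \in \ffids$ and show (using the appendix's analysis of interleavings of interval modules, or a direct trapezoid argument as in the examples) that $[c,d]$ is not $s$-interleaved with $\Zero$ for any $s < \frac{d-c}{2}$, which forces $E(\zm,\ffids)\ge\frac{d-c}{2}$. The claim $E(\eph,\ffids) = \frac{d-c}{2}$ then follows from Corollary~\ref{cor:env-zm-eph}, Lemma~\ref{lem:ed}, and Lemma~\ref{lem:almost-inclusion-env} (sandwiching $E(\eph,\ffids)$ between $E(\eph,\zm)+E(\zm,\ffids) = \frac{d-c}{2}$ and the lower bound transported back, using that $\zm$ almost includes in $\eph$ so $E(\eph,\ffids) \ge E(\zm,\ffids)$... more carefully, via $E(\zm,\ffids) \le E(\zm,\eph)+E(\eph,\ffids) = E(\eph,\ffids)$ and $E(\eph,\ffids) \le E(\eph,\zm) + E(\zm,\ffids) = \frac{d-c}{2}$).

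For the $\infty^-$ statements in item (1), note that $\ffid$ contains modules with arbitrarily long intervals, e.g. $[-n,n] \in \ffid$ or $[0,\infty) \in \ffid$. To show $E(\zm,\ffid) = \infty$ one shows that for any candidate radius $r$, picking an interval module $J$ in $\ffid$ long enough (length $> 2r$, or $J=[0,\infty)$) forces $J$ not to be $s$-interleaved with $\Zero$ for $s$ slightly above $r$; and to confirm the superscript ``$-$'', every $M \in \ffid$ is a finite sum of interval modules each of which, if finite, is $s$-interleaved with $\Zero$ for $s$ large enough, and $[0,\infty)$-type summands can be matched — wait, $[0,\infty)$ is \emph{not} interleaved with $\Zero$ at any finite distance; so for $E(\zm,\ffid) = \infty^-$ to hold via Definition~\ref{def:red} I must produce, for each $B \in \ffid$, \emph{some} finite $s$ and some $A \in \zm$ (i.e. $A = \Zero$) that are $s$-interleaved — this fails for $B = [0,\infty)$. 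I would therefore need to recheck: the intended reading is likely that the relevant $B \in \ffid$ with $B\not\sim_s\Zero$ for all finite $s$ are only the ones with an infinite ray, and for those one compares with a nonzero $A$; but $\A = \zm$ has only $\Zero$. So I expect the correct justification of $E(\zm,\ffid)=\infty^-$ uses that $\ffid$ almost includes... no. The cleanest route: use Lemma~\ref{lem:enveloping-distance} with the chain $\zm \subset \eph$ or directly verify Definition~\ref{def:red} using that for $B$ a finite-interval module $\Zero$ and $B$ are $s$-interleaved for $s = \frac{1}{2}\diam(B)$, and for $B = \bigoplus I_k$ take the max; the genuinely unbounded case $[0,\infty)$ must be handled by observing it is $s$-interleaved with $\Zero$ for no finite $s$ but is $s$-interleaved with itself — hmm. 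I would resolve this by following the appendix's interleaving computations carefully; this reconciliation is the first place I expect friction.

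For items (2) and (3): $E(\cfid,\fid) = \infty$ and $E(\qtame,\cfid) = \infty$ I would prove by contradiction using a "diagonal" counterexample in the target class that cannot be approximated from the source class. For $E(\cfid,\fid)=\infty$: suppose it is some finite $s$; take $B = [0,\infty) \in \fid$; then some $A \in \cfid$ (a countable sum of finite intervals) is $s$-interleaved with $[0,\infty)$, but no countable sum of finite interval modules can be $s$-interleaved with $[0,\infty)$ for any finite $s$ — this needs a lemma that an $s$-interleaving with $[0,\infty)$ forces a summand carrying an infinite ray, which I'd extract from Example~\ref{ex:morphism-intervals} / the appendix (an $s$-interleaving gives maps producing a rank-one map $A(t) \to A(t')$ for all $t' - t = 2s$ with $t$ large, and tracking which summand supports this). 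For $E(\qtame,\cfid)=\infty$: take $B = \bigoplus_{k=1}^\infty [0,1) \in \cfid$ (it is in $\cfid$ but not $\qtame$, as noted in the Hasse-diagram proof); show no q-tame module is $s$-interleaved with it for finite $s$, because an $s$-interleaving would let one bound the ranks of the maps of $B$ by those of the q-tame module plus shift, contradicting that $B(0 \le 1-\delta)$ has infinite rank. Finally, for item (3), the blanket statement "all other inclusions have $E(\A,\B) = \infty$" I would deduce \emph{en masse} from Lemma~\ref{lem:enveloping-distance}: each remaining inclusion $\A \subset \B$ sits in a chain where it (almost-)includes into one of $\cfid$, $\fid$, $\cid$, or $\qtame$ on the appropriate side of one of the already-established $\infty$ gaps ($E(\cfid,\fid)$, $E(\qtame,\cfid)$, and deriving $E(\qtame,\cid) = \infty$ from $\cfid \subset \cid$ so that $\cfid$ almost... rather, $\cid$ includes nothing new — use $E(\qtame,\cid) \ge$ something; actually $\cfid \subset \cid$ gives, with $E(\qtame,\cfid)=\infty$ and Lemma~\ref{lem:enveloping-distance} applied as $\qtame \subset \qtame$, $E(\qtame,\qtame)=0$... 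I need $\C = \cfid$ includes in $\D = \cid$, and $\A = \B = \qtame$, giving $E(\qtame,\cid) = \infty$ directly). The main obstacle I anticipate is the bookkeeping in item (3): verifying that \emph{every} non-excepted edge of the Hasse diagram can be routed through one of the four established infinite gaps via a valid application of Lemma~\ref{lem:enveloping-distance}, and simultaneously nailing down the $\infty$ versus $\infty^-$ distinction uniformly (which hinges on whether the source class contains a module at \emph{finite} interleaving distance from each target module — true when the source contains $\fid$-like or $\cid$-like modules but subtle for $\zm$ as source, as flagged above).
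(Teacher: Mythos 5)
Your overall strategy matches the paper's in several places — same witness $[c,d]$ for the $\frac{d-c}{2}$ bound, same witness $[0,\infty)$ for $E(\cfid,\fid) = \infty$, and the correct routing of $E(\qtame,\cid)=\infty$ through Lemma~\ref{lem:enveloping-distance}. But there are three concrete errors.

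First, $[0,\infty) \notin \ffid$. By definition $\ffid$ is the subclass of $\fid$ in which every interval summand is a \emph{finite} interval, so $[0,\infty)$ is excluded. The entire reconciliation paragraph about $E(\zm,\ffid) = \infty^-$ is chasing a phantom: every $B \in \ffid$ is a finite sum of finite intervals, so $d_I(B,\Zero) < \infty$ (which gives the superscript $-$), and $d_I([0,2n],\Zero) = n$ is unbounded over $\ffid$ (which gives $\infty$). For $E(\ffids,\ffid)$ the paper uses a further witness, $(d,d+2z] \in \ffid$: there are no nonzero maps from any $M \in \ffids$ to $(d,d+2z]$, so $d_I(M,(d,d+2z]) \geq z$, while $d_I(M,N) < \infty$ always.

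Second, your witness $B = \bigoplus_{k=1}^\infty [0,1)$ for $E(\qtame,\cfid) = \infty$ fails. For any $s > \tfrac12$, the zero module is q-tame and $s$-interleaved with $B$, since each summand $[0,1)$ is $\tfrac12$-interleaved with $\Zero$ and the Converse Algebraic Stability Theorem assembles the interleaving. Your rank argument only controls $\rank B(a \leq b)$ for $b - a > 2s$, and when $s \geq \tfrac12$ that imposes nothing on $B$. You need a module in $\cfid$ whose summand lengths are unbounded; the paper uses $\bigoplus_{k=1}^\infty [0,k)$, for which $B(0 \leq b)$ has infinite rank for \emph{every} $b > 0$, so an $s$-interleaving with a q-tame module is impossible for any finite $s$.

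Third, the ``en masse'' reduction of item (3) to Lemma~\ref{lem:enveloping-distance} cannot be completed from the two gaps $E(\cfid,\fid)=\infty$ and $E(\qtame,\cfid)=\infty$ alone. In particular $E(\fid,\pfd)$, $E(\cid,\id)$, and $E(\id,\pers)$ are not reachable, because no established $E(\B,\C)=\infty$ has $\B$ a superclass of the source and $\C$ an almost-subclass of the target ($\cfid$ does not almost include in $\pfd$, and neither $\cfid$ nor $\fid$ is a superclass of $\cid$ or $\id$). The paper handles each with a fresh witness: $\bigoplus_{k=0}^\infty [2^k,2^{k+1}) \in \pfd$, whose summands have unbounded length so it cannot be finitely approximated from $\fid$; $\bigoplus_{r\in\mathbb{R}}[0,\infty) \in \id$, whose uncountable rank cannot be matched at any finite interleaving distance by a module in $\cid$; and a non-interval-decomposable persistence module in $\pers$. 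You correctly flagged the bookkeeping as the main obstacle, but the resolution requires new examples, not just careful routing through the lemma.
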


\begin{proof}
  \begin{enumerate}
  \item 
    \begin{itemize}
    \item $\zm \subset \ffids$: $d_I([c,d],0) = \frac{d-c}{2}$ and for all $M \in \ffids$, $d_I(M,0) \leq \frac{d-c}{2}$.
    \item $\ffids \subset \ffid$: For all $M \in \ffids$ and $N \in \ffid$, $d_I(M,N) \leq d_I(M,0) + d_I(0,N) < \infty$. Let $z\geq 0$. For all $M \in \ffids$, there are no nontrivial maps from $M$ to $(d,d+2z]$. Thus $d_I(M,(d,d+2z]) \geq d_I((d,d+2z],0) \geq z$.
    \item The other three cases follow from the same arguments.
    \end{itemize}
\item
  \begin{itemize}
  \item $\cfid$ to $\fid$: Consider $[0,\infty)$.
  \item $\qtame$ to $\cfid$: Consider $\bigoplus_{k=1}^{\infty} [0,k)$. 
  \end{itemize}
  \item 
  \begin{itemize}
  \item $\ffid \subset \fid$: Consider $[0,\infty)$.
  \item $\ffid \subset \cfid$: Consider $\bigoplus_{k=1}^{\infty} [0,k)$.
  \item $\cfid \subset \cid$: Consider $[0,\infty)$.
  \item $\fid \subset \cid$: Consider $\bigoplus_{k=1}^{\infty} [0,\infty)$.
  \item $\fid \subset \pfd$: Consider $\bigoplus_{k=0}^{\infty} [2^k,2^{k+1})$.
  \item $\cid \subset \id$: Consider $\bigoplus_{r \in \mathbb{R}} [0,\infty)$.
  \item $\id \subset \pers$: Consider $\prod_{k=1}^{\infty} [0,\infty)$. 
  \item $\eph \subset \id$, $\eph \subset \qtame$, $\pfd \subset \id$, $\qtame \subset \cid$, and $\qtame \subset \pers$ follow from Lemma~\ref{lem:enveloping-distance}. 
  \end{itemize}
\end{enumerate}
\end{proof}

\begin{remark}
  Together with Corollary~\ref{cor:env-zm-eph}, Lemma~\ref{lem:almost-inclusion-env}, and
Lemma~\ref{lem:enveloping-distance}, 
this proposition implies all of the pairwise enveloping distances between the sets and classes of persistence modules in Figure~\ref{fig:enveloping-distances}, except
  $E(\pfd,\qtame)$.
For example, $E(\id,\qtame)=0$, $E(\cid,\pfd)=0$, and $E(\cid,\qtame)=0$ by Lemmas \ref{lem:almost-inclusion-env} and \ref{lem:almost-inclusion}, and
$E(\fid,\cfid)=\infty$ by Lemma~\ref{lem:enveloping-distance}. 
\end{remark}

We end this section by showing that $E(\pfd,\qtame)=0$. First we give a definition.

\begin{definition}
  Let $M$ be a persistence module. Let $p\geq 0$. We define the \emph{$p$-persistent submodule} of $M$ by
  \begin{equation*}
    M^{(p)}(a) = \im M(a-p \leq a).
  \end{equation*}
For $a \leq b$, there is an induced map between objects $M^{(p)}(a)$ and $M^{(p)}(b)$ given by $M(a \leq b)$.
Since $M$ is a persistence module, so is $M^{(p)}$, and since $M^{(p)}(a)$ is a sub-vector space of $M(a)$ for all $a$, $M^{(p)}$ is a submodule of $M$.
\end{definition}

\begin{proposition} \label{prop:Mp}
  Let $M$ be a persistence module and let $p\geq 0$. Then $M$ and $M^{(p)}$ are $p$-interleaved.
\end{proposition}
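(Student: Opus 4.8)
The plan is to write down explicit interleaving morphisms between $M$ and $M^{(p)}$ at shift $p$ and verify the four interleaving diagrams. The natural candidates present themselves: in one direction, the inclusion $M^{(p)} \hookrightarrow M$ is a morphism of persistence modules (stated already when $M^{(p)}$ is defined), so composing with the structure maps of $M$ gives $\varphi_a : M^{(p)}(a) \to M(a+p)$ defined as the restriction of $M(a \leq a+p)$. In the other direction, for $x \in M(a)$ the element $M(a \leq a+p)(x)$ lies in $\im M(a \leq a+p) = M^{(p)}(a+p)$ by the very definition of the $p$-persistent submodule, so $M(a \leq a+p)$ corestricts to a linear map $\psi_a : M(a) \to M^{(p)}(a+p)$. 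Thus both $\varphi$ and $\psi$ are ``the same map $M(a \leq a+p)$'' viewed appropriately; naturality of each (the squares in \eqref{cd:morphism}) is immediate from functoriality of $M$ and the fact that the maps of $M^{(p)}$ are by definition those of $M$.

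Next I would check the parallelogram diagrams in \eqref{cd:parallelogram} and the triangle diagrams in \eqref{cd:triangle}. For the triangles: $(\psi_{a+p}) \circ (\varphi_a)$ sends $x \in M^{(p)}(a)$ first by $M(a \leq a+p)$ into $M(a+p)$ and then by $M(a+p \leq a+2p)$ into $M^{(p)}(a+2p)$; by functoriality this composite equals $M(a \leq a+2p)$ restricted to $M^{(p)}(a)$, which is exactly $(M^{(p)})(a \leq a+2p) = M^{(p)}\eta_{2p}$ at $a$. Symmetrically, $(\varphi_{a+p}) \circ (\psi_a)$ on $x \in M(a)$ equals $M(a \leq a+2p)(x) = M\eta_{2p}$ at $a$. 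The parallelograms reduce in the same way to the identity $M(b \leq b+p) \circ M(a \leq b) = M(a \leq a+p) \text{ (post-composed with } M(a+p\leq b+p))$, i.e.\ again just functoriality of $M$; one only needs to track that the restricted/corestricted maps agree with the ambient ones, which holds because $M^{(p)}$ is a submodule with the inherited transition maps. So every diagram commutes because it commutes already in $M$.

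I do not expect a serious obstacle here; the only thing requiring a moment's care is the well-definedness of the corestriction $\psi_a$, namely that $\im M(a \leq a+p) = M^{(p)}(a+p)$ rather than merely a subspace of it — but this is literally the defining equation $M^{(p)}(b) = \im M(b-p \leq b)$ with $b = a+p$. After that, everything is an application of the composition axiom (ii) for $M$. Hence $\varphi$ and $\psi$ constitute a $p$-interleaving, and $M$ and $M^{(p)}$ are $p$-interleaved as claimed. (As a remark for later use, this immediately gives $d_I(M, M^{(p)}) \leq p$, which is presumably the point in the subsequent proof that $E(\pfd,\qtame) = 0$: truncating a q-tame module's transition maps via a small $p$-persistent-type construction yields a pointwise finite-dimensional approximation.)
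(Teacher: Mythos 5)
Your proposal is correct and matches the paper's argument: define both interleaving morphisms as the structure maps $M(a \leq a+p)$ of $M$, viewed as a restriction to the submodule $M^{(p)}$ in one direction and a corestriction onto $M^{(p)}(a+p) = \im M(a \leq a+p)$ in the other, and then observe that every required square and triangle commutes because all arrows are maps internal to $M$ and so functoriality of $M$ applies. The only difference from the paper is the cosmetic one of which morphism is labeled $\varphi$ and which $\psi$.
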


\begin{proof}
  For $a \in \R$, define $\varphi_a:M(a) \to M^{(p)}(a+p)$ by $\varphi_a = M(a\leq a+p)$, and $\psi_a:M^{(p)}(a) \to M(a+p)$ by $\psi_a = M(a \leq a+p)$.
Then all the arrows in diagrams \eqref{cd:parallelogram} and \eqref{cd:triangle} are maps in $M$ and hence commute.
\end{proof}

\begin{corollary}
  $E(\pfd,\qtame)=0$.
\end{corollary}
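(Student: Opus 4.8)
The plan is to prove $E(\pfd,\qtame) = 0$ directly from the definition: since the enveloping distance is nonnegative (Lemma~\ref{lem:ed}), it suffices to show that for every q-tame module $M$ and every $s > 0$ there is a pointwise finite-dimensional module that is $s$-interleaved with $M$. The natural candidate is the $s$-persistent submodule $M^{(s)}$ introduced just above.

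First I would invoke Proposition~\ref{prop:Mp}, which gives that $M$ and $M^{(s)}$ are $s$-interleaved; this disposes of the interleaving requirement for free, so the only remaining point is that $M^{(s)} \in \pfd$. For this, recall $M^{(s)}(a) = \im M(a-s \leq a)$. Since $s > 0$ we have $a - s < a$, so q-tameness of $M$ says that $M(a - s \leq a)$ has finite rank; hence $M^{(s)}(a)$ is a finite-dimensional vector space for every $a \in \mathbb{R}$, i.e. $M^{(s)} \in \pfd$.

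Putting these together: given $M \in \qtame$ and $s > 0$, the module $A := M^{(s)}$ lies in $\pfd$ and is $s$-interleaved with $M$. Thus every $r \geq 0$ satisfies the condition in Definition~\ref{def:ed} for the pair $(\pfd,\qtame)$, and so $E(\pfd,\qtame) = 0$.

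There is no genuine obstacle here; the content is entirely in recognizing that the assignment $M \mapsto M^{(p)}$ simultaneously stays $p$-close to $M$ (Proposition~\ref{prop:Mp}) and trades the finite-rank condition defining $\qtame$ for the pointwise-finite-dimensionality condition defining $\pfd$. If one prefers to work with $s > p$ rather than $s = p$, one can note that $M$ is trivially $(s-p)$-interleaved with itself via its structure maps and then apply Lemma~\ref{lem:interleaving-additivity}, but choosing $p = s$ makes even this step unnecessary.
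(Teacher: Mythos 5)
Your proof is correct and follows the same route as the paper: construct $M^{(s)}$, observe it is pointwise finite-dimensional because q-tameness forces $\im M(a-s\leq a)$ to be finite-dimensional, and apply Proposition~\ref{prop:Mp} to get the $s$-interleaving. The only difference is that you spell out the $\pfd$-membership argument, which the paper compresses to ``by definition.''
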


\begin{proof}
  Let $M$ be a q-tame persistence module. Let $p>0$. 
Then by definition, $M^{(p)}$ is a pointwise finite-dimensional persistence module.
By Proposition~\ref{prop:Mp}, $M$ and $M^{(p)}$ are $p$-interleaved.
Thus, by definition, $E(\pfd,\qtame)=0$.
\end{proof}

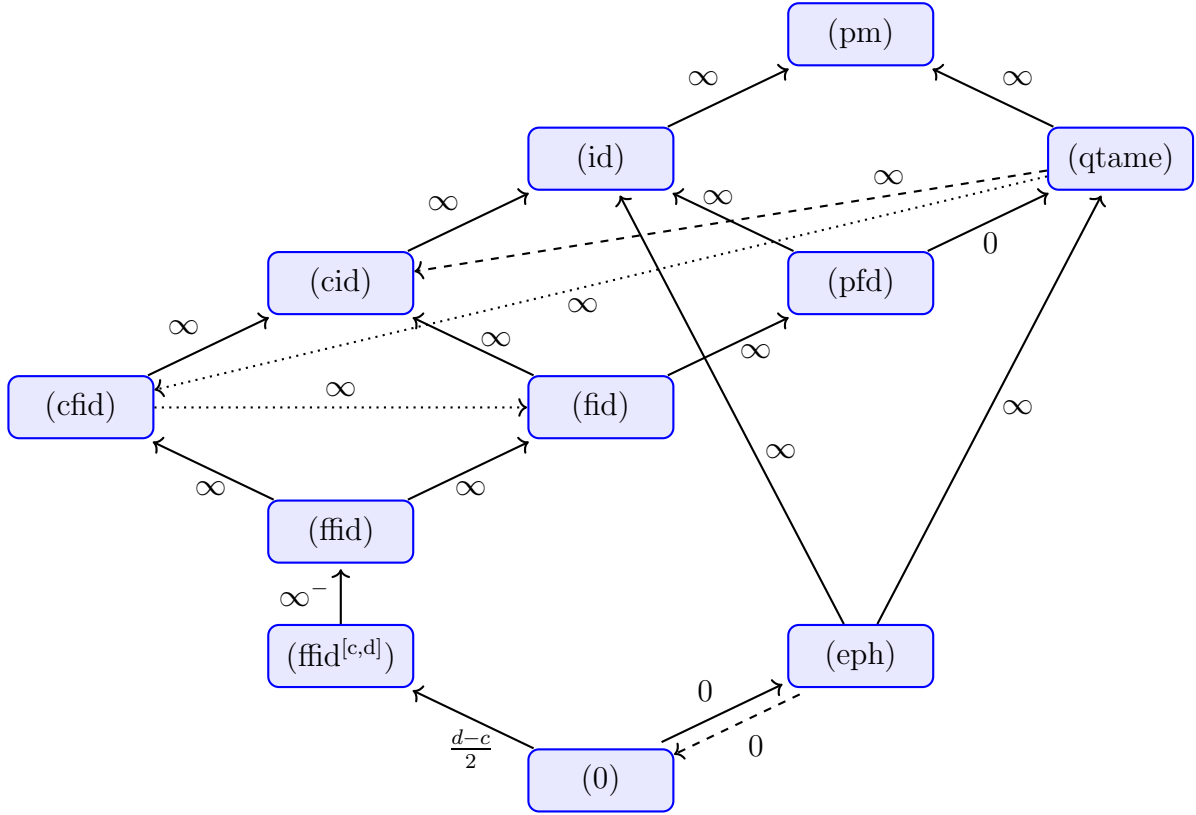
\begin{figure}
	\begin{tikzpicture} [
	auto,
	decision/.style = { diamond, draw=blue, thick, fill=blue!10,
		text width=4em, text badly centered,
		inner sep=1pt, rounded corners },
	block/.style    = { rectangle, draw=blue, thick, 
		fill=blue!9, text width=4em, text centered,
		rounded corners, minimum height=2em },
	line/.style     = { draw, thick, ->, shorten >=2pt },
	]
	\matrix [column sep=15mm, row sep=8mm] {
		&  &  & \node[block] (pers) {$\pers$};  &  &  \\
		&  &  \node [block] (id) {$\id$}; &  &  \node [block] (q-tame) {$\qtame$};  & \\
		&   \node [block] (cid) {$\cid$}; &  &  \node [block] (pfd) {$\pfd$};  &  &  \\
		\node [block] (cfid) {$\cfid$}; & & \node [block] (fid) {$\fid$}; & &  &  &\\
		&  \node [block] (ffid) {$\ffid$}; &  &  &  &  \\
		& \node [block] (ffids) {$\ffids$}; &  & \node [block] (eph) {$\eph$}; &  & \\
		&  & \node [block] (zm) {$\zm$};  &  &  & \\
	};
	\begin{scope} [every path/.style=line]
	\path (zm) -- node[below]{$\frac{d-c}{2}$} (ffids);
	\path [transform canvas={xshift=-.2em},transform canvas={yshift=.2em}] (zm) -- node{$0$} (eph);
	\path (eph) -- node[right]{$\infty$} (q-tame);
	\path (eph) -- node[pos=.4,right]{$\infty$} (id);
	\path (ffids) -- node{$\infty^-$} (ffid);
	\path (ffid) -- node[below]{$\infty$} (fid);
	\path (ffid) -- node[below]{$\infty$} (cfid);
	\path (cfid) -- node{$\infty$} (cid);
	\path (fid) -- node[above,pos=.3]{$\infty$} (cid);
	\path (fid) -- node[below,pos=.7]{$\infty$} (pfd);
	\path (cid) -- node{$\infty$} (id);
	\path (pfd) -- node[above,pos=.6]{$\infty$} (id);
	\path (pfd) -- node[below]{$0$} (q-tame);
	\path (id) -- node{$\infty$} (pers);
	\path (q-tame) -- node[above right]{$\infty$} (pers);
	\end{scope}

        \draw [dashed,thick,->] (q-tame) -- node[above,pos=.25]{$\infty$} (cid);
        \draw [dotted,thick,->] (q-tame) -- node[below,pos=.52]{$\infty$} (cfid);
        \draw [dashed,thick,->,transform canvas={xshift=.2em},transform canvas={yshift=-.2em}] (eph) -- node{$0$} (zm);
        \draw [dotted,thick,->] (cfid) -- node{$\infty$} (fid);

\end{tikzpicture}
\caption{Diagram of sets and classes of persistence modules. Solid arrows indicate inclusions, dashed arrows indicate \emph{almost inclusions}, and dotted arrows do not indicate any relationship. Annotations of arrows indicate \emph{enveloping distance} from the source to the target, given in Definitions \ref{def:ed} and \ref{def:red}.}
\label{fig:enveloping-distances}
\end{figure}

\subsection{Sets of persistence modules}
\label{sec:sets}

Next we consider whether the classes defined above are sets or proper classes.
We will use the following notation. 
Let $\overline{\mathbb{R}}:=\mathbb{R}\cup\{\pm\infty\}$ and    $\overline{\mathbb{N}}:=\mathbb{N}\cup \{\infty\}$. 
Given a set $X$, let $\mathcal{P}(X)$ denote its power set.
Let $\mathbb{I}$ be the set of all intervals in $\mathbb{R}$. 
We define a  map, $f: \mathbb{I} \longrightarrow \{1,2,3,4\}$ by
\[ f(I) = 
\begin{cases}  
1, & \inf I \not\in I, \sup I \not\in I \\
2, & \inf I \in I, \sup I \not\in I \\
3, & \inf I \not\in I, \sup I \in I \\
4, & \inf I \in I, \sup I \in I.
\end{cases}
\]

\begin{proposition} \label{prop:cid-set}
  The class $\cid$ is a set.
\end{proposition}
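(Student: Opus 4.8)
The plan is to exhibit an injection from $\cid$ (considered as a collection of isomorphism classes) into an honest set, which shows $\cid$ is not a proper class. The natural target is a set built from countable sequences of intervals. Concretely, by definition each $M \in \cid$ is isomorphic to $\bigoplus_{\alpha \in A} I_\alpha$ with $A$ countable and each $I_\alpha$ an interval module, so $M$ is determined up to isomorphism by the function assigning to each interval $J \in \mathbb{I}$ the multiplicity (in $\overline{\mathbb{N}}$) with which $J$ appears in the decomposition — this is well-defined because the decomposition into interval modules is unique (Krull–Remak–Schmidt / Azumaya, as used implicitly around Theorem~\ref{thm:structure} and \cite{ccbds}), and countability of $A$ forces all multiplicities to lie in $\overline{\mathbb{N}}$ and to be supported on a countable subset of $\mathbb{I}$. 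Hence $M$ is encoded by an element of the set of functions $\mathbb{I} \to \overline{\mathbb{N}}$, or more economically, by a countable (possibly finite) multiset of elements of $\mathbb{I}$.

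The key point I would make rigorous is that $\mathbb{I}$, the set of all intervals in $\mathbb{R}$, is genuinely a set: an interval is determined by the pair $(\inf I, \sup I) \in \overline{\mathbb{R}}^2$ together with the value $f(I) \in \{1,2,3,4\}$ recording which endpoints are included (this is precisely why the map $f$ was introduced just before the statement), so there is an injection $\mathbb{I} \hookrightarrow \overline{\mathbb{R}}^2 \times \{1,2,3,4\}$, and the latter is a set. Therefore the collection of all functions $g:\mathbb{I} \to \overline{\mathbb{N}}$ is a set (a subset of $\mathcal{P}(\mathbb{I} \times \overline{\mathbb{N}})$), and so is its subset $S$ consisting of those $g$ with countable support. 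The steps, in order: (1) record that $\mathbb{I}$ is a set via $f$ and the endpoint data; (2) form the set $S$ of countably-supported functions $\mathbb{I} \to \overline{\mathbb{N}}$; (3) define $\Phi:\cid \to S$ sending $M$ to its multiplicity function; (4) argue $\Phi$ is well-defined using uniqueness of the interval decomposition, and that countability of the index set $A$ guarantees the image lands in $S$; (5) argue $\Phi$ is injective, i.e. two countably-interval-decomposable modules with the same multiplicity function are isomorphic, again by uniqueness of the decomposition. Since $\cid$ injects into the set $S$, it is a set.

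The main obstacle — the only place requiring genuine care — is step (4)/(5): invoking uniqueness of decomposition into interval modules for modules that need not be pointwise finite-dimensional. The structure theorem quoted as Theorem~\ref{thm:structure} only asserts \emph{existence} of an interval decomposition for pfd modules, not uniqueness in the generality of $\cid$. I would handle this by appealing to the general Azumaya–Krull–Remak–Schmidt theorem: interval modules have local endomorphism rings, and a direct sum of modules with local endomorphism rings has a decomposition that is unique up to permutation and isomorphism of the summands; this is exactly the setting of \cite{Crawley-Boevey:2015} and is used in \cite{ccbds}. (If one wishes to avoid this, an alternative is to not pass to multiplicities at all: encode $M \cong \bigoplus_{\alpha \in A} I_\alpha$ directly as a function $A \to \mathbb{I}$ with $A$ a countable ordinal, i.e. as an element of $\bigcup_{n \le \omega} \mathbb{I}^{n}$, giving a surjection from a set onto $\cid$; a surjection from a set onto a class also forces the class to be a set, by replacement. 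This sidesteps uniqueness entirely and is perhaps the cleanest route.) Everything else is bookkeeping with the set-theoretic operations (power set, function sets, countable unions) that preserve sethood.
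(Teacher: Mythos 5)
Your proof takes essentially the same route as the paper: both encode a countably interval-decomposable module as a countably supported multiplicity function over intervals, record each interval by its endpoints $(\inf I,\sup I)$ together with the indicator $f(I)\in\{1,2,3,4\}$, and observe that this data lands in a genuine set (the paper uses $\mathcal{P}(\overline{\mathbb{R}}^2\times\{1,2,3,4\}\times\overline{\mathbb{N}})$, which is the same encoding up to bookkeeping). You are more careful than the paper's terse proof in flagging that well-definedness and injectivity of this map rely on uniqueness of the interval decomposition (Azumaya/Krull--Remak--Schmidt), and your alternative --- a surjection from $\bigcup_{n\le\omega}\mathbb{I}^{n}$ onto $\cid$, invoking Replacement --- is a valid and clean way to sidestep that reliance entirely.
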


\begin{proof} 
Consider the map 
\[
\cid \longrightarrow \mathcal{P}(\overline{\mathbb{R}}^2\times \{1,2,3,4\} \times \overline{\mathbb{N}}) 
\] 
defined by
\[
\bigoplus_{\alpha \in A} I_{\alpha}  \longmapsto
\bigcup_{\alpha \in A} \big[ \{(\inf I_{\alpha},\sup I_{\alpha})\} \times \{f(I_{\alpha})\} \times \{m(i)\}  \big] 
\] 
where $m(i)$ denotes the multiplicity of the direct summand $I_{\alpha}$. 
This map is an injection, hence $\cid$ is a set.
\end{proof}

\begin{corollary} Therefore the classes $\cfid$, $\fid$, $\ffid$, $\ffids$, and $\zm$ are also sets. 
\end{corollary}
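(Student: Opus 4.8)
The plan is to observe that each of the classes $\cfid$, $\fid$, $\ffid$, $\ffids$, and $\zm$ is a subclass of $\cid$, so that being a set follows from Proposition~\ref{prop:cid-set} together with the fact that any subclass of a set is a set. First I would recall from the Hasse diagram in Figure~\ref{fig:sets-and-classes} (and the preceding discussion of inclusions) that $\zm \subset \ffids \subset \ffid$, that $\ffid \subset \cfid$ and $\ffid \subset \fid$, that $\cfid \subset \cid$, and that $\fid \subset \cid$. Chaining these inclusions gives $\ffids \subset \cid$, $\ffid \subset \cid$, $\cfid \subset \cid$, $\fid \subset \cid$, and $\zm \subset \cid$.

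Next, since $\cid$ is a set by Proposition~\ref{prop:cid-set}, and each of $\cfid$, $\fid$, $\ffid$, $\ffids$, $\zm$ is a subclass of $\cid$, the axiom schema of separation (specification) in ZFC immediately yields that each of these subclasses is itself a set. Concretely, for a class $\A$ defined by a formula $\psi$ with $\A \subset \cid$, the set $\{x \in \cid : \psi(x)\}$ equals $\A$.

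I do not anticipate any real obstacle here: the only thing to check is that the inclusions $\A \subset \cid$ actually hold for each of the five classes, and these are exactly the arrows already present in Figure~\ref{fig:sets-and-classes}, established in the Hasse-diagram proposition above. So the proof is essentially a one-line appeal to "a subclass of a set is a set," modulo reading off the relevant inclusions from the figure.

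\begin{proof}
  By the Hasse diagram of Figure~\ref{fig:sets-and-classes}, each of the classes $\cfid$, $\fid$, $\ffid$, $\ffids$, and $\zm$ is a subclass of $\cid$. Since $\cid$ is a set by Proposition~\ref{prop:cid-set}, and any subclass of a set is a set, the result follows.
\end{proof}
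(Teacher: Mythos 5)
Your proposal is correct and is essentially the same argument the paper has in mind: each of $\cfid$, $\fid$, $\ffid$, $\ffids$, $\zm$ sits inside $\cid$ by the inclusions of Figure~\ref{fig:sets-and-classes}, and a subclass of a set is a set.
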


\begin{lemma} \label{lem:pfd}
  Each interval appears only finitely many times in the direct-sum interval-module decomposition of a pointwise finite-dimensional persistence module.
\end{lemma}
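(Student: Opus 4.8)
The plan is to fix a pointwise finite-dimensional persistence module $M$ and a single interval $J \subset \mathbb{R}$, and show that $J$ can occur only finitely many times as a summand in a fixed decomposition $M \isom \bigoplus_{\alpha \in A} I_\alpha$ (which exists by Theorem~\ref{thm:structure}). The key observation is that any interval $J$, being nonempty, contains some point $a \in \mathbb{R}$; and for any such point, the interval module $J$ has $J(a) = \gf$, so every summand isomorphic to $J$ contributes a one-dimensional subspace to the direct sum $\bigoplus_\alpha I_\alpha(a) \isom M(a)$.

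First I would pick $a \in J$. Then for each $\alpha \in A$ with $I_\alpha \isom J$, we have $I_\alpha(a) = \gf \neq 0$. Since $M(a) \isom \bigoplus_{\alpha \in A} I_\alpha(a)$ as $\gf$-vector spaces, and each such summand contributes a nonzero (in fact one-dimensional) subspace, the number of indices $\alpha$ with $I_\alpha \isom J$ is at most $\dim_{\gf} M(a)$, which is finite by the hypothesis that $M$ is pointwise finite-dimensional. Hence $J$ appears with finite multiplicity.

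There is essentially no obstacle here; the only point requiring any care is the bookkeeping that distinct summands contribute linearly independent (and in particular nonzero) subspaces in the direct sum, so that the count of copies of $J$ is bounded by $\dim M(a)$ rather than merely being finite-or-infinite. This is immediate from the definition of the direct sum of persistence modules, evaluated at $a$: $(\bigoplus_\alpha I_\alpha)(a) = \bigoplus_\alpha I_\alpha(a)$, and a direct sum of vector spaces containing infinitely many nonzero summands is infinite-dimensional. Since every nonempty interval $J$ contains a real number, this covers all cases, completing the proof.
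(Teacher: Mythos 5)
Your proof is correct and is essentially the same as the paper's, which observes in one line that $\bigoplus_{k=1}^{\infty} I \not\in \pfd$ for any interval $I$; your version simply spells out the underlying dimension count at a point $a \in J$, showing the multiplicity is bounded by $\dim_{\gf} M(a)$.
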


\begin{proof}
  For each interval $I$, $\bigoplus_{k=1}^{\infty} I \not\in \pfd$.
\end{proof}

\begin{proposition} \label{prop:pfd-set}
  The class $\pfd$ is a set.
\end{proposition}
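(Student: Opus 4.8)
The plan is to mimic the proof of Proposition~\ref{prop:cid-set}, exhibiting an injection from $\pfd$ into a genuine set. First I would invoke the Structure Theorem (Theorem~\ref{thm:structure}): every $M \in \pfd$ is isomorphic to a direct sum $\bigoplus_{\alpha \in A} I_{\alpha}$ of interval modules. For each interval $I \in \mathbb{I}$, set $m_M(I) := \abs{\{\alpha \in A \st I_{\alpha} = I\}}$, the number of summands equal to $I$ (using that two interval modules are isomorphic iff the underlying intervals coincide). By Lemma~\ref{lem:pfd}, each $m_M(I)$ is finite, so we obtain a function $m_M : \mathbb{I} \to \mathbb{N}$, with $m_M(I) = 0$ precisely when $I$ does not occur.

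Next I would define the map
\[
\pfd \longrightarrow \mathcal{P}\big(\overline{\mathbb{R}}^2 \times \{1,2,3,4\} \times \mathbb{N}\big), \qquad
\bigoplus_{\alpha \in A} I_{\alpha} \longmapsto \bigcup_{\alpha \in A} \big\{\big(\inf I_{\alpha},\ \sup I_{\alpha},\ f(I_{\alpha}),\ m_M(I_{\alpha})\big)\big\},
\]
exactly as in Proposition~\ref{prop:cid-set}, but recording finite multiplicities rather than elements of $\overline{\mathbb{N}}$. Equivalently, one may just send $M$ to $m_M \in \mathbb{N}^{\mathbb{I}}$; since $\mathbb{I}$, $\overline{\mathbb{R}}$, and $\mathbb{N}$ are sets, the codomain is a set in either formulation. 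The point that distinguishes this from $\cid$ is worth isolating: the index set $A$ may well be uncountable (for instance $\bigoplus_{r \in \mathbb{R}} [r,r] \in \pfd$), so we cannot bound the number of summands; what saves us is that each \emph{distinct} interval still contributes only finitely, so $m_M$ is a legitimate element of a set.

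It remains to check that this assignment is well defined on isomorphism classes and injective, and this is the only step beyond bookkeeping. Both facts rest on the uniqueness half of the decomposition theory: a pointwise finite-dimensional persistence module is, up to isomorphism, determined by — and determines — the multiset of intervals appearing in its decomposition (Crawley-Boevey~\cite{Crawley-Boevey:2015}; this is a Krull--Remak--Schmidt/Azumaya-type statement, available because interval modules have local endomorphism rings). Well-definedness: if $M \cong M'$, their interval decompositions have matching multiplicities, so $m_M = m_{M'}$. Injectivity: if $m_M = m_{M'}$, then $M$ and $M'$ are direct sums of the same interval modules with the same multiplicities, hence $M \cong M'$. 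This yields the required injection, so $\pfd$ is a set. The main obstacle, then, is purely the uniqueness input — making precise that the interval decomposition of a pfd module is unique up to reordering and isomorphism — after which the construction is identical in spirit to the one for $\cid$.
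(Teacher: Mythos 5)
Your proof is correct and takes essentially the same approach as the paper: invoke the Structure Theorem to decompose $M$ into interval modules, use Lemma~\ref{lem:pfd} to get finite multiplicities, and exhibit the same injection into $\mathcal{P}(\overline{\mathbb{R}}^2 \times \{1,2,3,4\} \times \mathbb{N})$. Your explicit appeal to uniqueness of the interval decomposition (Azumaya/Krull--Remak--Schmidt) to justify well-definedness and injectivity is a welcome sharpening of a point the paper passes over silently when asserting the map is an injection.
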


\begin{proof}
Let $M \in \pfd$. 
By Theorem~\ref{thm:structure}, $M \isom \bigoplus_{\alpha \in A} I_{\alpha}$ where $I_{\alpha}$ is an interval and $A$ is a set.  
By Lemma~\ref{lem:pfd}, we can define the following map.
\[
\pfd \longrightarrow \mathcal{P}(\overline{\mathbb{R}}^2\times \{1,2,3,4\} \times \mathbb{N}) 
\] 
\[
\bigoplus_{\alpha \in A} I_{\alpha}  \longmapsto
\bigcup_{\alpha \in A} \big[ \{(\inf I_{\alpha},\sup I_{\alpha})\} \times \{f(I_{\alpha})\} \times \{m(i)\}  \big]
\] 
where $m(i)$ denotes the multiplicity of the direct summand $I_{\alpha}$. 
This map is an injection, hence $\pfd$ is a set.
\end{proof}



\begin{proposition}
  The class $\eph$ is not a set.
\end{proposition}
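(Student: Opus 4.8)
The plan is to exhibit a proper class worth of pairwise non-isomorphic ephemeral modules, which is enough to conclude that $\eph$ is not a set. The cleanest route uses the one-point interval modules $[r,r]$: by Lemma~\ref{lem:eph-decomposition}, any module of the form $\bigoplus_{r \in S} [r,r]$ is ephemeral, since each structure map $v_a^b$ with $a<b$ is zero. So the task reduces to producing, for each cardinal $\kappa$, an ephemeral module whose ``size'' forces it to encode $\kappa$, so that the collection of these modules cannot be a set.

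Concretely, I would argue as follows. Fix a single real number, say $r=0$, and for each cardinal $\kappa$ consider the ephemeral module $M_\kappa := \bigoplus_{\alpha < \kappa} [0,0]$, i.e., the persistence module that is a $\gf$-vector space of dimension $\kappa$ concentrated at $0$ with all structure maps zero (for $\kappa$ finite or countable one could instead just note $M_\kappa(0)$ has dimension $\kappa$). Two such modules $M_\kappa$ and $M_\lambda$ are isomorphic as persistence modules if and only if $M_\kappa(0) \isom M_\lambda(0)$ as $\gf$-vector spaces, i.e., if and only if $\kappa = \lambda$. Hence $\{M_\kappa : \kappa \text{ a cardinal}\}$ is a collection of pairwise non-isomorphic elements of $\eph$ indexed by the proper class of all cardinals. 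If $\eph$ were a set, this sub-collection would be a set, and its image under $M_\kappa \mapsto \dim_\gf M_\kappa(0)$ would be a set of cardinals cofinal in the class of all cardinals, contradicting the fact that the cardinals form a proper class. Therefore $\eph$ is not a set. (Alternatively, and perhaps more in keeping with the paper's style, one can mirror the later arguments: the assignment $S \mapsto \bigoplus_{r \in S}[r,r]$ for $S \subseteq \mathbb{R}$ already embeds $\mathcal{P}(\mathbb{R})$ into $\eph$, which only shows $|\eph| \geq 2^{|\mathbb{R}|}$; the point-at-$0$ construction above is what actually defeats sethood, so I would use that.)

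The one genuine subtlety — and the step I would be most careful about — is justifying that a persistence module concentrated at a point, with prescribed vector space of arbitrary cardinal dimension, is a legitimate object of the category $\R \to \vect$, and that non-isomorphism of the underlying vector spaces is detected exactly at $a=0$. This is routine: the data of $M_\kappa$ is just a choice of $M_\kappa(0)$ together with the zero maps, the functor axioms are trivially verified, and an isomorphism of persistence modules is a pointwise isomorphism by definition, so it restricts to an isomorphism $M_\kappa(0) \isom M_\lambda(0)$. The only thing to flag explicitly is that we are using the existence of $\gf$-vector spaces of every cardinal dimension (free vector spaces on arbitrarily large index sets), which is standard. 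Everything else is bookkeeping.

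Thus the proof is short: invoke Lemma~\ref{lem:eph-decomposition} (or just the definition of ephemeral) to see each $M_\kappa \in \eph$, observe that isomorphism type is controlled by $\dim_\gf M_\kappa(0) = \kappa$, and conclude from the properhood of the class of cardinals that $\eph$ cannot be a set.
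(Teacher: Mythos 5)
Your proof is correct and matches the paper's argument exactly: both construct, for each cardinal $\kappa$, the ephemeral module $\bigoplus_{\alpha < \kappa}[0,0]$ concentrated at $0$, and conclude from pairwise non-isomorphism that the cardinals inject into $\eph$. The paper states this more tersely, but the construction and the logic are the same.
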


\begin{proof}
  For a cardinal $c$, let $F_c = \bigoplus_{\alpha \in c}[0,0]$. That is, $F_c$ is the $\gf$-vector space generated by $c$.
  For $c \not\isom d$, $F_c \not\isom F_d$. 
  Thus we have an injection from the proper class of cardinals into $\eph$. 
\end{proof}

\begin{corollary} \label{cor:not-set}
  Since $\eph$ is not a set, neither are \id\, \qtame\ and \pers.
\end{corollary}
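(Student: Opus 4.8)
The plan is to invoke the basic set-theoretic principle that every subclass of a set is a set; contrapositively, any class that has a proper class as a subclass is itself a proper class. So the entire content of the corollary is carried by the preceding proposition (that $\eph$ is not a set) together with the inclusions already recorded in the Hasse diagram.

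First I would list the relevant inclusions from Figure~\ref{fig:sets-and-classes}, established in the preceding proposition: $\eph \subset \id$, $\eph \subset \qtame$, $\id \subset \pers$, and $\qtame \subset \pers$. In particular, $\eph$ is a subclass of each of $\id$, $\qtame$, and $\pers$. (For completeness I would note that $\eph \subset \id$ is Lemma~\ref{lem:eph-decomposition}; that $\eph \subset \qtame$ holds because an ephemeral module has $v_a^b = 0$, hence of rank $0 < \infty$, for every $a < b$; and that the inclusions into $\pers$ are immediate.)

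Then, since $\eph$ is not a set, none of $\id$, $\qtame$, $\pers$ can be a set: if, say, $\id$ were a set, then $\eph$, being a subclass of $\id$, would be a set by the Axiom Schema of Separation, contradicting the previous proposition; the same reasoning applies verbatim to $\qtame$ and to $\pers$. I do not anticipate any obstacle here, as this is a one-line consequence of the structure of the poset in Figure~\ref{fig:sets-and-classes} and the fact just proved about $\eph$.
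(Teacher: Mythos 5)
Your proposal is correct and is exactly the argument the paper intends: the corollary has no separate proof in the paper because it is an immediate consequence of the inclusions $\eph \subset \id \subset \pers$ and $\eph \subset \qtame \subset \pers$ from Figure~\ref{fig:sets-and-classes} together with the fact that a subclass of a set is a set. Your elaboration matches this reasoning.
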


\subsection{Interval-decomposable persistence modules of arbitrary cardinality}

Motivated by the desire to have a set of persistence modules that contains all of the sets of persistence modules in Section~\ref{sec:sets} and the proofs of Proposition \ref{prop:cid-set} and \ref{prop:pfd-set}, we make the following definition.

\begin{definition} \label{def:kid}
  Given a cardinal $\kappa$, let $\kid$ denote the class of persistence modules isomorphic to $\bigoplus_{\alpha \in A} I_{\alpha}$ where $I_{\alpha}$ is an interval module and the cardinality of $A$ is at most $\kappa$. As a special case, and to avoid confusion with our previously defined notation, let $\rid$ denote the class of interval-decomposable persistence modules with at most the cardinality of $\mathbb{R}$-many summands.
\end{definition}

By definition, $\cid \subset \rid$ and by Lemma~\ref{lem:pfd}, $\pfd \subset \rid$.

\begin{proposition} \label{prop:kid-set}
  For any cardinal $\kappa$, the class $\kid$ is a set.
\end{proposition}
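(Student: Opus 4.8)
The plan is to mimic exactly the injection constructed in the proof of Proposition~\ref{prop:cid-set}, replacing the countable bound on multiplicities and on the index set by the cardinal $\kappa$. The key point is that every persistence module in $\kid$ is, by definition, isomorphic to a direct sum $\bigoplus_{\alpha \in A} I_{\alpha}$ of interval modules with $\abs{A} \leq \kappa$, and — since we are working with isomorphism classes — such a module is determined up to isomorphism by the multiset of interval modules appearing as summands, equivalently by a function assigning to each interval $I \in \mathbb{I}$ a cardinal $\leq \kappa$ recording how many times $I$ occurs. (One should note in passing that the decomposition of a module in $\kid$ into intervals need not be unique in general, but this does not matter: one simply fixes one such decomposition for each isomorphism class, or appeals to the fact that the collection of such multisets is a set, which suffices to bound the class.)

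First I would record the target set. Let $\kappa^{+}$ denote the successor cardinal of $\kappa$, so that every cardinal $\leq \kappa$ is an element of $\kappa^{+}$ (viewing cardinals as initial ordinals), and let $C_\kappa = \kappa^+$ regarded as the set of cardinals $\leq \kappa$. Then consider the map
\begin{equation*}
  \kid \longrightarrow \mathcal{P}\!\left(\overline{\mathbb{R}}^2 \times \{1,2,3,4\} \times C_\kappa\right)
\end{equation*}
sending $\bigoplus_{\alpha \in A} I_\alpha$ to $\bigcup_{\alpha \in A}\bigl[\{(\inf I_\alpha, \sup I_\alpha)\} \times \{f(I_\alpha)\} \times \{m(I_\alpha)\}\bigr]$, where $m(I_\alpha)$ is the multiplicity (a cardinal $\leq \kappa$) of the summand $I_\alpha$ in the chosen decomposition, and $f$ is the map defined just before Proposition~\ref{prop:cid-set}. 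Two interval modules are isomorphic if and only if they have the same underlying interval, and an interval $I \subset \mathbb{R}$ is determined by the pair $(\inf I, \sup I) \in \overline{\mathbb{R}}^2$ together with the value $f(I) \in \{1,2,3,4\}$ specifying which endpoints are included; hence the data recorded — the set of triples $((\inf I, \sup I), f(I), m(I))$ — determines the isomorphism class of $\bigoplus_\alpha I_\alpha$. Therefore the map is injective, and since its codomain is a set, so is $\kid$.

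I expect the only genuine subtlety — and the main thing to be careful about — is the set-theoretic bookkeeping: one must make sure that $C_\kappa$ is legitimately a set (it is, being $\kappa^+$, or alternatively a subset of $\mathcal{P}(\kappa)$ via the initial-ordinal representation), and that "the multiplicity of $I_\alpha$" is well-defined, which is why fixing a decomposition per isomorphism class, or arguing via multisets, is needed. Everything else is routine and parallels Proposition~\ref{prop:cid-set} verbatim. As a final remark, this also immediately gives $\cid \subset \rid$ and, by Lemma~\ref{lem:pfd}, $\pfd \subset \rid$, as already noted, so $\rid$ is the desired set containing all the sets of Section~\ref{sec:sets}.
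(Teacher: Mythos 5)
Your proof is correct and follows essentially the same route as the paper, which simply says to repeat the argument of Proposition~\ref{prop:cid-set} with $\overline{\mathbb{N}}$ replaced by $\kappa$. Your use of $\kappa^{+}$ (equivalently $\kappa + 1$) is in fact slightly more careful than the paper's terse phrasing, since a summand can occur with multiplicity exactly $\kappa$, and your caveat about well-definedness of multiplicities is resolved favorably: the interval decomposition is unique up to isomorphism (interval modules have local endomorphism rings, so Azumaya's theorem applies), so the injection is well-defined exactly as written.
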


\begin{proof}
  The proof is the same as the proof of Proposition~\ref{prop:cid-set}, replacing $\overline{\mathbb{N}}$ with $\kappa$.
\end{proof}

\section{Topological properties} \label{sec:top}

Since we are interested in studying topological spaces of persistence modules, we will for the most part restrict ourselves to the sets in Figure~\ref{fig:sets}.
We will consider the  basic  topological properties of these sets with the topology induced by the interleaving metric.


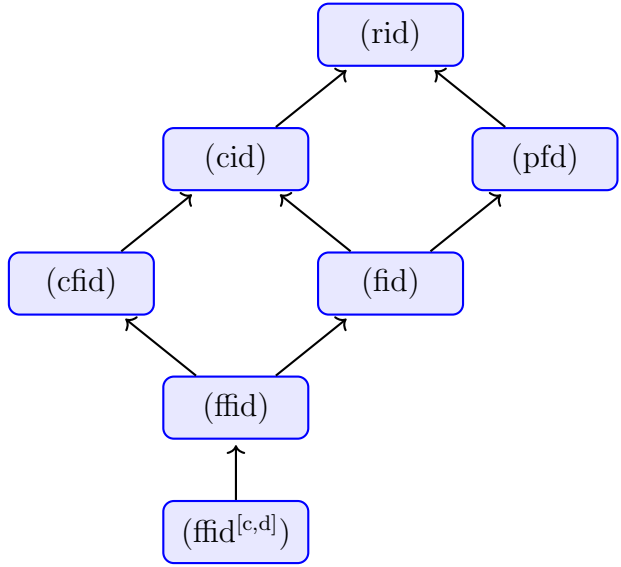
\begin{figure}
\begin{center}
\begin{tikzpicture} [
auto,
decision/.style = { diamond, draw=blue, thick, fill=blue!10,
	text width=4em, text badly centered,
	inner sep=1pt, rounded corners },
block/.style    = { rectangle, draw=blue, thick, 
	fill=blue!9, text width=4em, text centered,
	rounded corners, minimum height=2em },
line/.style     = { draw, thick, ->, shorten >=2pt },
]
\matrix [column sep=1mm, row sep=8mm] {
    & & \node [block] (rid) {$\rid$};\\
    &	\node [block] (cid) {$\cid$}; &  & \node [block] (pfd) {$\pfd$};  &  \\
    \node [block] (cfid) {$\cfid$}; & & \node [block] (fid) {$\fid$}; & &\\
	& \node [block] (ffid) {$\ffid$};  &  & \\	
	& \node [block] (ffids) {$\ffids$};      &     &  \\
};
\begin{scope} [every path/.style=line]
\path (cid) -- (rid);
\path (pfd) -- (rid);
\path (ffids) -- (ffid);
\path (ffid) -- (fid);
\path (ffid) -- (cfid);
\path (cfid) -- (cid);
\path (fid)  -- (cid);
\path (fid)  -- (pfd);
\end{scope}
\end{tikzpicture}
\caption{Sets of metric spaces, each with the topology induced by the interleaving metric.} \label{fig:sets}
\end{center}
\end{figure}

\subsection{Open subsets}
\label{sec:open-subsets}

In this section we consider which of the inclusion maps in Figure~\ref{fig:sets} are inclusions of open subsets.
Recall that in a pseudometric space $X$, a subset $A\subset X$ is said to be open if for all $a \in A$, there exists $\varepsilon>0$ such that $B_\varepsilon(a)\subset A$. \\

\begin{proposition} \label{prop:open}
  Among the inclusion maps in Figure~\ref{fig:sets}, only the inclusions $\ffid \incl \fid$ and $\cfid \incl \cid$ are inclusions of open subsets.
\end{proposition}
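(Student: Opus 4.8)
The plan is to go through the eight inclusion maps in Figure~\ref{fig:sets} one at a time, splitting them into the two that are claimed to be open and the six that are not. For the negative results, the strategy is uniform: given a set $\A \incl \B$ that we claim is not an inclusion of open subsets, produce a module $M \in \A$ together with, for every $\eps > 0$, a module $N_\eps \in \B \setminus \A$ with $d_I(M, N_\eps) < \eps$; this witnesses that no ball around $M$ stays inside $\A$. In fact the simplest choice almost everywhere is $M = \Zero$, the zero module, which lies in every one of these sets, and then we only need, for each small $\eps$, an element of $\B$ not in $\A$ that is $\eps$-interleaved with $\Zero$ (equivalently, within interleaving distance $\eps$ of it). Recall from the proof of Lemma~\ref{lem:eph-almost-inclusion}-style reasoning that an interval module $[r, r+2\eps)$ (or $[r,r]$, etc.) has interleaving distance at most $\eps$ from $\Zero$ via the zero maps.

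Concretely, here is how I would populate the six negative cases. For $\ffids \not\incl \ffid$ as an open inclusion: $\Zero \in \ffids$, and for any $\eps>0$ the interval module $[d, d+\eps) \in \ffid$ is $\eps/2$-interleaved with $\Zero$ but is not contained in $[c,d]$, so not in $\ffids$. For $\ffid \not\incl \fid$: take $M = [0,\infty) \in \fid$; for each $\eps>0$, the module $M \oplus [0,\eps)$ lies in $\fid$ but not in $\ffid$ (it has an infinite interval summand), and is $\eps/2$-interleaved with $M$ — alternatively and more cleanly, use $\Zero \in \ffid$ again is wrong since $\Zero \in \fid$ too; instead one notes the inclusion $\ffid \incl \fid$ fails to be open at $[0,\infty)$ itself. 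Let me organize it as: perturb a module in the larger set that is ``infinitely close'' to the smaller set but the perturbation forces it out. For $\ffid \not\incl \cfid$: $\Zero \in \ffid$, and $\bigoplus_{k=1}^\infty [0,\eps/2^k)$... but that has distance $\eps/2$ from $\Zero$ and lies in $\cfid$ not $\ffid$ — wait, we want to show $\ffid$ is not open in $\cfid$, so we need a point of $\ffid$ every ball around which leaves $\ffid$; $\Zero$ works with the witnesses $\bigoplus_{k\ge 1}[0,2^{-k}\eps)\in\cfid\setminus\ffid$. For $\cfid \not\incl \cid$ and $\fid \not\incl \cid$: again at a suitable module (e.g. $[0,\infty)$ for the $\fid$ case, $\Zero$ for the $\cfid$ case since $\bigoplus_{k\ge1}[0,2^{-k}\eps)$ and also non-finite-interval countable things lie in $\cid\setminus\cfid$), produce countable perturbations. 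For $\fid \not\incl \pfd$ and $\cid\not\incl\rid$, $\pfd\not\incl\rid$: these are the subtler ones — a $\fid$ module such as $\Zero$ has $\pfd$-modules $\bigoplus_{k\ge 1}[a_k,b_k)$ with $b_k - a_k \to 0$ fast enough and $a_k\to\infty$ arbitrarily close to it yet not $\fid$; and $\rid$ is the whole ambient universe here so every ball around any $\pfd$ or $\cid$ module contains modules with more than $\mathfrak{c}$ summands, obtained by adding $\bigoplus_{\alpha\in S}[r_\alpha,r_\alpha+2^{-1}\eps)$ for $S$ of cardinality $> \mathfrak c$ with the $r_\alpha$ spread out — but one must check such a sum still has all summands distinct as *isomorphism classes with multiplicities*, so one should instead spread the left endpoints so all intervals are pairwise non-isomorphic, giving genuinely $>\mathfrak c$ distinct interval types, hence not in $\rid$.

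For the two positive cases, $\ffid \incl \fid$ and $\cfid \incl \cid$, the key structural point is: if $M \in \fid$ and $M \notin \ffid$ then $M$ has a summand that is an unbounded (infinite-length) interval, and any module $N$ with $d_I(M,N)$ small must also have such a summand — because an unbounded interval module is at infinite interleaving distance from every bounded-interval module and hence (by the Converse Algebraic Stability Theorem and an argument on the decomposition) $N$ cannot be in $\ffid$; more precisely one shows there is a uniform $\delta>0$, depending on $M$, such that every $N$ within $\delta$ of $M$ is still not in $\ffid$. Wait — that is the wrong direction; to show $\ffid$ is open in $\fid$ I need: for every $M \in \ffid$ there is $\delta > 0$ with $B_\delta(M) \cap \fid \subseteq \ffid$. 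So the real claim is: a finitely-interval-decomposable module sufficiently close to one with all summands finite must itself have all summands finite. This holds because an unbounded interval summand $[a,\infty)$ of $N$ cannot be $\delta$-interleaved-compatible with a module all of whose (finitely many) summands are finite intervals, for $\delta$ smaller than something like half the minimal gap; I would make this precise using the analysis of interleavings of interval modules promised in the appendix (Section~\ref{sec:interl-interv-modul}) and Example~\ref{ex:morphism-intervals} — an $\eps$-interleaving between $M=\bigoplus_{k=1}^n I_k$ (all $I_k$ finite) and $N=\bigoplus_{j=1}^m J_j$ induces, after choosing $\eps$ small, a matching showing each $J_j$ is within $2\eps$ of some $I_k$ and in particular bounded. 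The same argument with ``$n$'' allowed to be countably infinite gives $\cfid$ open in $\cid$.

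I expect the main obstacle to be the two positive cases, specifically making rigorous the statement that a small interleaving between a finite(/countable) direct sum of \emph{finite} interval modules and another finite(/countable) direct sum of interval modules forces the latter's summands to be finite too — this needs a genuine structural/matching argument for interleavings of decomposable modules rather than a one-line estimate, and the right tool is the induced matching theorem (or the appendix's careful analysis of $\eps$-neighborhoods of interval modules, Section~\ref{sec:neighb-interv-modul}), applied with $\eps$ chosen below half the length of the shortest summand of $M$ and below half the least gap between summands. The negative cases, by contrast, are all routine once the right explicit witnessing families are written down; the only mild care needed is in the $\rid$ cases, where one must ensure the perturbing family genuinely pushes the cardinality of the set of isomorphism types of summands strictly above $\mathfrak c$.
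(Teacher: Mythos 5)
Your proposal has several genuine errors and internal contradictions.

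\textbf{Confusion about which inclusions are open.} You list both $\ffid \incl \fid$ and $\cfid \incl \cid$ in your ``six negative cases'' paragraph (trying to show they are \emph{not} open) and again in your ``two positive cases'' paragraph (trying to show they \emph{are} open). They are exactly the two the proposition asserts are open, and your negative-case analysis of them does not work. For $\ffid \incl \fid$ you base the argument at $M = [0,\infty)$, but $[0,\infty) \notin \ffid$, so ``$\ffid$ fails to be open at $[0,\infty)$'' is not a meaningful statement --- openness of $\ffid$ is only tested at points \emph{of} $\ffid$. For $\cfid \incl \cid$ you offer $\bigoplus_{k\geq 1}[0,2^{-k}\eps)$ as a witness in $\cid\setminus\cfid$, but that module is a countable direct sum of \emph{finite} intervals and so lies in $\cfid$; it does not witness anything.

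\textbf{The $\rid$ cases.} You propose perturbing by $\bigoplus_{\alpha\in S}$ of something with $\lvert S\rvert > \mathfrak{c}$ pairwise non-isomorphic intervals. There are only $\mathfrak{c}$ intervals in $\mathbb{R}$, so this is impossible; and more to the point, a module with more than $\mathfrak{c}$ summands lies \emph{outside} $\rid$, whereas the witness has to live \emph{in} $\rid$ (and outside $\cid$ or $\pfd$, respectively). The correct witnesses need at most $\mathfrak{c}$ summands: add $\bigoplus_{\mathbb{R}}[0,2\eps)$ for $\cid\subset\rid$, and $\bigoplus_{k\ge 1}[0,2\eps)$ for $\pfd\subset\rid$ (the latter is still in $\cid\subset\rid$ but kills pointwise finite-dimensionality).

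\textbf{The positive cases.} You reach for the induced matching theorem and a $\delta$ tied to half the minimal gap or minimal summand length of $M$. This misses the point that the relevant distance is literally \emph{infinite}, not merely bounded below: if $N\in\fid\setminus\ffid$, then $N$ has an unbounded interval summand, and an $\eps$-interleaving $\psi\colon N\to M[\eps]$ restricted to that summand maps into a finite direct sum of bounded intervals, hence is eventually zero, contradicting the triangle identity $\varphi[\eps]\psi = N\eta_{2\eps}$ on that summand. So $d_I(M,N)=\infty$ and \emph{every} ball $B_r(M)$ inside $\fid$ stays in $\ffid$, with no dependence on $M$'s gap structure. Moreover, the induced matching theorem as usually stated requires pointwise finite-dimensional or q-tame modules, which elements of $\cid$ need not be (e.g.\ $\bigoplus_{k\geq 1}[0,1)$), so your route has a genuine hole precisely in the $\cfid\incl\cid$ case, where the infinite-distance observation still goes through.

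The remaining four negative cases you discuss ($\ffids\not\incl\ffid$, $\ffid\not\incl\cfid$, $\fid\not\incl\cid$, $\fid\not\incl\pfd$) are correct in spirit and match the paper's strategy: base the argument at some point of the smaller class (the paper uses an arbitrary $M$; your choice of $\Zero$ also works since only one bad point is needed) and perturb by small finite-length intervals to escape the class.
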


\begin{proof}
  Let $M \in \ffid$ and $N \in \fid \setminus \ffid$. Then $N$ is isomorphic to a direct sum of interval modules, at least one of which is unbounded. It follows that $d_I(M,N) = \infty$. Thus $\ffid$ is an open subset of $\fid$. The same argument shows that $\cfid$ is an open subset of $\cid$.
  For each of the following inclusions $\A \subset \B$ we show that for all $M \in \A$ and for all $\eps > 0$, there is an $N \in \B \setminus \A$ such that $d_I(M,N) < \eps$. Therefore $\A$ is not an open subset of $\B$.
  \begin{itemize}
  \item $\ffids \subset \ffid$. Let $N = M \oplus [d,d+2\eps)$.
  \item $\ffid \subset \cfid$. Let $N = M \oplus \bigoplus_{k=1}^{\infty} [0,2\eps)$. 
  \item $\fid \subset \cid$. Let $N = M \oplus \bigoplus_{k=1}^{\infty} [0,2\eps)$.
  \item $\fid \subset \pfd$. Let $N = M \oplus \bigoplus_{k=1}^{\infty} [k,k+2\eps)$.
  \item $\cid \subset \rid$. Let $N = M \oplus \bigoplus_{\mathbb{R}}[0,2\eps)$.
  \item $\pfd \subset \rid$. Let $N = M \oplus \bigoplus_{k=1}^{\infty}[0,2\eps)$. 
  \end{itemize}
\end{proof}

\begin{remark}
  While $\ffids$ is not an open subset of $\ffid$, if we restrict $\ffid$ to direct sums of interval modules whose intervals are contained in an open interval $(c,d)$, then we obtain an open subset of $\ffid$.
\end{remark}



\subsection{Separation}

\begin{proposition} 
Any set of ephemeral persistence modules with the interleaving distance has the indiscrete topology.
\end{proposition}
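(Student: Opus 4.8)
The plan is to reduce everything to Lemma~\ref{lem:eph-almost-inclusion}, which says that a persistence module $M$ is ephemeral precisely when $d_I(M,\Zero)=0$. Given a set $S$ all of whose elements are ephemeral, and any two $M,N\in S$, the triangle inequality (axiom M3) yields $d_I(M,N)\le d_I(M,\Zero)+d_I(\Zero,N)=0+0=0$. So the first step establishes that any two elements of $S$ are at interleaving distance $0$ from each other.

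The second step unwinds the definition of the induced topology. Suppose $O\subseteq S$ is open and nonempty, and choose some $M\in O$. By definition of the topology induced by a pseudometric, there is an $r>0$ with $B_r(M)\subseteq O$. But since $d_I(M,N)=0<r$ for every $N\in S$, we have $B_r(M)=S$, hence $O=S$. Thus the only open subsets of $S$ are $\emptyset$ and $S$; that is, $S$ carries the indiscrete topology.

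I do not expect a genuine obstacle here: the argument is immediate once Lemma~\ref{lem:eph-almost-inclusion} is in hand, and amounts to the elementary observation that a pseudometric space in which all points are mutually at distance zero has only the indiscrete topology. The one point deserving a remark is the hypothesis that $S$ is a \emph{set} (so that ``topological space'' is literally meaningful); for a proper class of ephemeral modules one would instead phrase the conclusion in terms of the symmetric Lawvere space noted after the extended-pseudometric theorem, but that refinement is not needed for the statement as given.
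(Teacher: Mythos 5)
Your argument is correct and follows the paper's proof exactly: invoke Lemma~\ref{lem:eph-almost-inclusion} to get $d_I(M,\Zero)=0$ for each ephemeral $M$, apply the triangle inequality to conclude $d_I(M,N)=0$ for all $M,N\in S$, and observe that every open ball is then all of $S$. The extra remarks about unwinding the definition of the induced topology and about the set-versus-class distinction are fine but not a departure from the paper's route.
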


\begin{proof}
  Let $S$ be a set of ephemeral persistence modules.
  By Lemma~\ref{lem:eph-almost-inclusion}, each $M \in \eph$ has $d_I(M,0)=0$. So for $M,N \in S$, by the triangle inequality, $d_I(M,N)=0$.
  Thus for all $M \in S$ and for all $\eps>0$, $B_{\eps}(M) \supseteq S$.
\end{proof}

\begin{lemma} \label{lem:0-ball}
  Let $M$ be a persistence module let  $r \in \mathbb{R}$. 
  Then $d_I(M,M \oplus [r,r]) = 0$.
\end{lemma}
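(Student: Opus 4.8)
The plan is to show directly that for any persistence module $M$ and any $r \in \mathbb{R}$, the modules $M$ and $M \oplus [r,r]$ are $\eps$-interleaved for every $\eps > 0$, which gives $d_I(M, M\oplus[r,r]) = 0$. The key observation is that the one-point interval module $[r,r]$ is ephemeral, so by Lemma~\ref{lem:eph-almost-inclusion} it has $d_I([r,r],\Zero) = 0$; intuitively, adding an ephemeral summand should not change anything up to zero interleaving distance.

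First I would fix $\eps > 0$ and construct an explicit $\eps$-interleaving. For the pair of morphisms $\varphi: M \To (M \oplus [r,r])T_\eps$ and $\psi: (M\oplus[r,r]) \To M T_\eps$, I would take $\varphi_a: M(a) \to M(a+\eps) \oplus [r,r](a+\eps)$ to be $M(a \leq a+\eps)$ followed by the inclusion of $M(a+\eps)$ into the first summand (the $[r,r]$-component of $\varphi_a$ being zero), and $\psi_a: M(a) \oplus [r,r](a) \to M(a+\eps)$ to be the projection onto $M(a)$ followed by $M(a \leq a+\eps)$ (so $\psi_a$ ignores the $[r,r](a)$ component entirely). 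One then checks the four commuting diagrams \eqref{cd:parallelogram} and \eqref{cd:triangle}: since all the maps involved are built from structure maps of $M$ together with projections/inclusions, and the $[r,r]$ components are always either zero or killed, the parallelograms and triangles reduce to the functoriality of $M$, exactly as in the proof of Proposition~\ref{prop:rad} or Proposition~\ref{prop:Mp}. In particular $(\psi T_\eps)\varphi$ acts on $M(a)$ as $M(a \leq a+2\eps)$, which is $M\eta_{2\eps}$, and $(\varphi T_\eps)\psi$ sends $M(a)\oplus[r,r](a)$ into $M(a+2\eps)\oplus[r,r](a+2\eps)$ with the $M$-component being $M(a\leq a+2\eps)$ on the first coordinate and zero on the second, and the $[r,r]$-component being zero; this agrees with $(M\oplus[r,r])\eta_{2\eps}$ provided $\eps \neq 0$, since then $[r,r](a \leq a+2\eps)$ is the zero map anyway.

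An even cleaner route, which I would likely use instead to keep the proof short, is to invoke the Converse Algebraic Stability Theorem together with Lemma~\ref{lem:eph-almost-inclusion}: write $M \oplus [r,r]$ and $M \cong M \oplus \Zero$ as direct sums indexed by the same two-element set, with summands $M, [r,r]$ and $M, \Zero$ respectively. Since $d_I(M,M) = 0$ and $d_I([r,r],\Zero) = 0$ (as $[r,r]$ is ephemeral, by Lemma~\ref{lem:eph-almost-inclusion}), the Converse Algebraic Stability Theorem gives $d_I(M \oplus [r,r], M \oplus \Zero) \leq \sup\{0,0\} = 0$, and $M \oplus \Zero \cong M$, so $d_I(M, M\oplus[r,r]) = 0$.

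Neither approach presents a real obstacle; the only mild subtlety is the degenerate nature of the $[r,r]$ summand, which is why the argument only yields an $\eps$-interleaving for $\eps > 0$ rather than an honest isomorphism — but that is exactly what is needed since $d_I$ is an infimum. I expect the whole proof to be two or three lines once the Converse Algebraic Stability Theorem is cited.
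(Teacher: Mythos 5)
The paper states Lemma~\ref{lem:0-ball} without providing any proof, so there is no argument of the authors' to compare against; the lemma is evidently treated as immediate. Both of your routes are correct and fill the implicit gap. The direct construction is sound: with $\varphi_a$ including $M(a\leq a+\eps)$ into the $M$-summand and $\psi_a$ projecting away the $[r,r]$-summand, naturality reduces to functoriality of $M$, and the triangle identities hold because $[r,r](a\leq a+2\eps)$ is the zero map for every $\eps>0$, exactly as you observe. The second route is the cleaner one and is probably what the authors had in mind implicitly: $[r,r]$ is ephemeral, so $d_I([r,r],\Zero)=0$ by Lemma~\ref{lem:eph-almost-inclusion}, and then the Converse Algebraic Stability Theorem applied to $M\oplus[r,r]$ versus $M\oplus\Zero\cong M$ gives $d_I(M\oplus[r,r],M)\leq \sup\{0,0\}=0$. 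Either argument is acceptable; the direct one is more elementary and self-contained, while the second one buys brevity by reusing two already-established facts.
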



A topological is said to be a \emph{$T_0$-space} (or a \emph{Kolmogorov space}), if for any pair of distinct elements in the space there exists at least one open set which contains one of them but not the other.

\begin{proposition} 
  Let $c < d$. Then \ffids\ is not a $T_0$-space. 
\end{proposition}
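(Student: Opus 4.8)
The plan is to exhibit two distinct elements of $\ffids$ that cannot be separated by any open set, which amounts to finding $M \not\isom N$ in $\ffids$ with $d_I(M,N) = 0$; any such pair witnesses the failure of $T_0$, since every open ball around $M$ contains $N$ and vice versa. The natural candidates come from Lemma~\ref{lem:0-ball}: take any nonzero $M \in \ffids$ (for instance the interval module $[c,d]$), pick a point $r \in [c,d]$, and set $N = M \oplus [r,r]$. Then $N \in \ffids$ as well, since $[r,r] \subset [c,d]$ is a finite interval and $N$ is still a finite direct sum of interval modules each contained in $[c,d]$.

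First I would invoke Lemma~\ref{lem:0-ball} to conclude $d_I(M,N) = d_I(M, M \oplus [r,r]) = 0$. (If one prefers to argue directly: the module $[r,r]$ is ephemeral, so $d_I([r,r],0) = 0$ by Lemma~\ref{lem:eph-almost-inclusion}, and then by the Converse Algebraic Stability Lemma applied to the decompositions $M \oplus 0$ and $M \oplus [r,r]$ we get $d_I(M, M\oplus[r,r]) \le \sup\{d_I(M,M), d_I(0,[r,r])\} = 0$.) Next I would verify that $M \not\isom N$: the interval-module decomposition of a persistence module is unique up to reordering by the Krull--Schmidt/Azumaya-type uniqueness underlying Theorem~\ref{thm:structure}, so $M$ and $M \oplus [r,r]$ have different multisets of interval summands and hence are non-isomorphic. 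Since we are working with isomorphism classes throughout, $M$ and $N$ are genuinely distinct points of $\ffids$.

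Finally, I would assemble the conclusion: suppose $O$ is any open set in $\ffids$ containing $M$. Then $B_\eps(M) \subset O$ for some $\eps > 0$, and since $d_I(M,N) = 0 < \eps$ we have $N \in B_\eps(M) \subset O$. By symmetry, every open set containing $N$ also contains $M$. Hence no open set separates the distinct points $M$ and $N$, so $\ffids$ is not $T_0$.

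I do not expect any serious obstacle here; the only point requiring a little care is the non-isomorphism claim, which rests on uniqueness of interval decompositions for pointwise finite-dimensional (indeed, finitely interval-decomposable) modules — this is standard and already implicit in the paper's use of multiplicities in Propositions~\ref{prop:cid-set} and \ref{prop:pfd-set}. The rest is a direct application of Lemma~\ref{lem:0-ball} together with the definition of the topology induced by a pseudometric.
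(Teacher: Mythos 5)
Your proof is correct and follows essentially the same route as the paper: both apply Lemma~\ref{lem:0-ball} to produce a pair $M$, $M\oplus[r,r]$ in \ffids\ at interleaving distance zero and conclude that no open set can separate them. The only differences are cosmetic (the paper takes $M=[a,b)$ with $r=\frac{c+d}{2}$, you take $M=[c,d]$), and you helpfully spell out the non-isomorphism of $M$ and $M\oplus[r,r]$ via uniqueness of interval decompositions, which the paper leaves implicit.
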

\begin{proof} 
Apply Lemma~\ref{lem:0-ball} to $M=[a,b)$ where $c \leq a < b \leq d$, and $r = \frac{c+d}{2}$.
Then $M' = M \oplus [r,r] \in \ffids$
and there does not exist an open neighborhood $U$ of $M$ that does not contain $M'$ and vice versa. 
\end{proof}

Since \ffids\ is a subspace of any the other spaces in Figure~\ref{fig:sets}, we obtain the following.

\begin{corollary} \label{cor:t0}
  None of the spaces in  Figure~\ref{fig:sets} are $T_0$.
\end{corollary}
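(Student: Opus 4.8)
The plan is to deduce this immediately from the previous proposition together with the observation that $\ffids$ embeds isometrically (hence as a subspace) into each of the other spaces in Figure~\ref{fig:sets}. The key point is the general topological fact that the $T_0$ property is hereditary: if $X$ is a topological space and $A \subseteq X$ is a subspace that fails to be $T_0$, then $X$ itself fails to be $T_0$. Indeed, if $x,y \in A$ are distinct points that cannot be topologically distinguished in $A$, then any open set $U \subseteq X$ restricts to an open set $U \cap A \subseteq A$, so the failure to separate $x$ from $y$ in $A$ forces the same failure in $X$.

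So first I would note that by the definitions in Section~\ref{sec:class}, $\ffids$ is a subclass of each of $\ffid$, $\fid$, $\cfid$, $\cid$, $\pfd$, and $\rid$, and in each case the interleaving distance on the subclass is the restriction of the interleaving distance on the larger class. Hence $\ffids$ is a (topological) subspace of each of these spaces. Next I would invoke the preceding proposition, which exhibits two distinct elements $M = [a,b)$ and $M' = M \oplus [r,r]$ of $\ffids$ with $d_I(M,M') = 0$, so that every open ball around either one contains the other; thus these two points witness the failure of $T_0$ for $\ffids$. Finally, applying the hereditary property of $T_0$-ness recorded above, the failure propagates to every space in Figure~\ref{fig:sets}.

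There is essentially no obstacle here: the only thing to be slightly careful about is confirming that the subspace topology induced from the larger metric space really does agree with the metric topology on $\ffids$ coming from the restricted interleaving distance — but this is immediate, since open balls in a subspace of a pseudometric space are exactly the intersections of ambient open balls with the subspace. So the corollary is a one-line consequence of the proposition plus the subspace remark already made just before its statement.
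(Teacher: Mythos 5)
Your argument is precisely the one the paper gives: the preceding proposition shows $\ffids$ is not $T_0$, and since $\ffids$ sits inside each space in Figure~\ref{fig:sets} as a subspace (the restricted interleaving distance induces the subspace topology), the hereditary nature of the $T_0$ property yields the corollary. Your extra care in spelling out why failure of $T_0$ passes from a subspace to the ambient space, and why the metric and subspace topologies agree, is correct but is exactly what the paper's one-line remark is implicitly invoking.
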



\subsection{Compactness}
\label{sec:compactness}

Let $X$ be an extended pseudometric space. Then a subset  $S\subset X$ is \emph{totally bounded} if and only if for each $\eps>0$, there exists a finite subset $F=\{x_1,x_2,\ldots,x_n \} \subset X$  such that $S\subset \cup_{i=1}^n B_\eps(x_i)$. 
Such a union is called a \emph{finite $\eps$-cover}.

\begin{lemma}
  The space $\ffids$ is not totally bounded.
\end{lemma}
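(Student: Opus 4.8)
The plan is to exhibit, for a suitable fixed $\eps > 0$, an infinite family of elements of $\ffids$ that are pairwise at interleaving distance bounded below by some positive constant, which prevents the existence of a finite $\eps$-cover. Since $\ffids$ consists of finite direct sums of interval modules each contained in $[c,d]$, the natural candidates are the modules $M_n = \bigoplus_{k=1}^{n} [c,d]$ for $n \geq 1$ (or equivalently we can take $n$ copies of any fixed finite interval inside $[c,d]$). The key computation is a lower bound on $d_I(M_m, M_n)$ for $m \neq n$.

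First I would recall from the appendix (or establish directly from Example~\ref{ex:morphism-intervals} and the structure of interleavings of interval modules) that $d_I([c,d], 0) = \frac{d-c}{2}$, and more importantly that there is a positive lower bound on how well $\bigoplus_{k=1}^{m}[c,d]$ can be $\eps$-interleaved with $\bigoplus_{k=1}^{n}[c,d]$ when $m \neq n$: essentially, for small $\eps$ an $\eps$-interleaving between sums of copies of $[c,d]$ must match up the summands in a way that forces the multiplicities to agree, so that $d_I(M_m, M_n) \geq \delta$ for some $\delta > 0$ depending only on $d - c$ (one can take $\delta = \frac{d-c}{4}$, say, arguing that if $M_m$ and $M_n$ were $\eps$-interleaved with $\eps < \frac{d-c}{4}$ then composing the interleaving maps and using the triangle/rank constraints yields $m = n$). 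Then, setting $\eps = \frac{\delta}{2}$, the balls $B_\eps(M_n)$ are pairwise disjoint, so no finite collection of $\eps$-balls (centered anywhere in $\ffids$, since each such ball contains at most one $M_n$) can cover $\{M_n : n \geq 1\} \subset \ffids$. Hence $\ffids$ is not totally bounded.

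The main obstacle is the lower bound $d_I(M_m, M_n) \geq \delta > 0$ for $m \neq n$. This is where one must actually use the rigidity of maps between interval modules: a morphism $\bigoplus_{k=1}^m [c,d] \to \bigoplus_{k=1}^n [c,d+\text{shift}]$ is constrained by Example~\ref{ex:morphism-intervals}, and for a genuine $\eps$-interleaving with $\eps$ small the round-trip composition must equal the internal transition map of $M_m$, which on $[c,d]$ at parameter $c$ (shifted appropriately) is an isomorphism; a counting/rank argument on these isomorphisms forces $m \leq n$ and symmetrically $n \leq m$. I would lean on the appendix's analysis of interleavings of interval modules (Section~\ref{sec:interl-interv-modul}) to make this precise, or alternatively observe that $d_I$ restricted to such sums agrees with the bottleneck distance between the corresponding diagrams (via the isometry theorem), and the bottleneck distance between a diagram with $m$ copies of the point $(c,d)$ and one with $n$ copies is at least $\frac{d-c}{2}$ whenever $m \neq n$, since at least one point must be matched to the diagonal. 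This last route is probably the cleanest and I would use it, yielding $d_I(M_m, M_n) \geq \frac{d-c}{2}$, so any $\eps < \frac{d-c}{4}$ works and the family $\{M_n\}$ witnesses the failure of total boundedness.
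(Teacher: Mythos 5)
Your proposal is correct and follows essentially the same route as the paper: the paper takes $M_n = \bigoplus_{k=1}^n [c,d)$, notes $d_I(M_m, M_n) = \frac{d-c}{2}$ for $m \neq n$, and concludes that no finite $\eps$-cover exists for $\eps < \frac{d-c}{2}$. Your extra care in justifying the lower bound (via bottleneck distance and the isometry theorem) is sound but more than the paper spells out.
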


\begin{proof}
  Let $\eps < \frac{d-c}{2}$. 
  For $n\geq 0$ consider $M_n = \bigoplus_{k=1}^n [c,d)$. 
  Then for $m\neq n$, $d_I(M_m,M_n) = \frac{d-c}{2}$.
  Therefore $\ffids$ does not have a finite $\eps$-cover.
\end{proof}

An \emph{open cover} of a topological space $X$ is a collection of open sets $\mathcal{O}=\{O_i\}_{i \in I}$ of $X$ such that $\cup_{i\in I} O_i = X$.
A topological spaces is \emph{compact} if every open cover has a finite subcover.
We say that a topological space is \emph{locally compact} if each point has a compact neighborhood, where by a \emph{neighborhood} of a point $p \in X$ we mean a subset $V \subset X$ such that there exists an open set $p \in U \subset V$.

\begin{proposition} 
  Any of element in $\ffids$  does not have a compact neighborhood.
\end{proposition}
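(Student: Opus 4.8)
The plan is to show that no neighborhood of any $M \in \ffids$ is compact by exhibiting, inside an arbitrarily small ball around $M$, an infinite subset with no convergent subsequence, and then invoke the fact that in a (pseudo)metric space a compact set must be sequentially compact. More precisely, fix $M \in \ffids$ and let $V$ be any neighborhood of $M$; choose $\delta > 0$ with $B_\delta(M) \subseteq V$, and shrink $\delta$ so that also $\delta < \frac{d-c}{4}$.

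First I would construct the witnessing sequence. Following the idea in the ``not totally bounded'' lemma, set $J = [c,d)$ (or any fixed interval contained in $[c,d]$ with $d - c > 0$), and for $n \geq 1$ let
\begin{equation*}
  M_n = M \oplus \bigoplus_{k=1}^{n} [c', d')
\end{equation*}
where $[c',d') \subset [c,d]$ is a small interval with $\frac{d'-c'}{2} < \delta$ but $d' - c' > 0$; each $M_n$ lies in $\ffids$. Since $M_n = M \oplus (M_n \ominus M)$ and the summand we adjoin is $\frac{d'-c'}{2}$-interleaved with the zero module (an interval module $[c',d')$ is $\eps$-interleaved with $0$ for all $\eps > \frac{d'-c'}{2}$, by the appendix / a direct check), the Converse Algebraic Stability Lemma gives $d_I(M, M_n) \leq \frac{d'-c'}{2} < \delta$, so every $M_n \in B_\delta(M) \subseteq V$.

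Next I would show $(M_n)$ has no subsequence converging in $\ffids$. The key quantitative claim is that for $m \neq n$, $d_I(M_m, M_n) \geq \frac{d'-c'}{2} > 0$, bounded away from $0$; this uses the structure theory of interleavings of interval modules from the appendix (the nonzero-maps constraint of Example~\ref{ex:morphism-intervals}, together with a counting/cancellation argument distinguishing $M \oplus \bigoplus_1^m [c',d')$ from $M \oplus \bigoplus_1^n [c',d')$ at small interleaving scale — the $n$ copies of $[c',d')$ cannot all be matched to summands of $M_m$ within $\eps < \frac{d'-c'}{2}$). Consequently $(M_n)$ is a sequence in $V$ with all pairwise distances $\geq \frac{d'-c'}{2}$, hence no Cauchy subsequence, hence no convergent subsequence. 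If $V$ were compact, then since $\ffids$ is a pseudometric space $V$ would be sequentially compact, contradicting the existence of such a sequence. Therefore no neighborhood $V$ of $M$ is compact.

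\textbf{Main obstacle.} The routine part is the construction and the upper bound $d_I(M,M_n) < \delta$; the genuinely delicate step is the lower bound $d_I(M_m, M_n) \geq \frac{d'-c'}{2}$ when $M \neq 0$, i.e.\ ruling out that the fixed summand $M$ ``absorbs'' the discrepancy and lets $M_m$ and $M_n$ interleave at a smaller scale. I expect this to be handled either by a direct argument that an $\eps$-interleaving with $\eps < \frac{d'-c'}{2}$ would have to send the $[c',d')$-summands to $[c',d')$-summands (by the morphism constraints, an interleaving map from $[c',d')$ can only hit summands whose interval is ``close'' to $[c',d')$ in the appropriate sense, and at scale $< \frac{d'-c'}{2}$ such a summand must itself be a translate of $[c',d')$), reducing to counting multiplicities; or by choosing $[c',d')$ at a ``generic'' location inside $[c,d]$ disjoint from and far from every interval summand of $M$, so that interaction with $M$ is impossible at scale $< \delta$. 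Either route makes the separation argument go through; I would present the second (generic placement) as it is the shortest to write and avoids a full case analysis of interleavings of interval modules.
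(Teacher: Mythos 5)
Your proposal follows essentially the same route as the paper: both construct a sequence $M_n = M \oplus \bigoplus_{k=1}^n I$ with $I$ a small interval inside $[c,d]$, check that all $M_n$ land in an arbitrarily small ball around $M$, argue that the pairwise distances are bounded below by half the diameter of $I$, and conclude that no neighborhood of $M$ can be compact. The two points where the paper differs are worth noting. First, the paper finishes by exhibiting an explicit open cover $\{B_{\delta/6}(N) : N \in K\}$ with no finite subcover rather than by invoking sequential compactness; these are interchangeable since pseudometric spaces are first countable, so compact implies sequentially compact and your finish is valid. Second, and more substantively, you correctly flag the lower bound $d_I(M_m,M_n) \geq \frac{d'-c'}{2}$ as the delicate step and offer two candidate fixes (a direct interleaving argument, or generic placement of $[c',d')$ away from the summands of $M$). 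The paper handles this differently: it imposes the a priori constraint $\delta < \tfrac14 \min_j \diam I_j$ and then applies the algebraic stability theorem, so that $d_I(M_m,M_n) \geq d_B(M_m,M_n)$ and the extra copies of $I$ in the larger module must be matched to the diagonal at cost $\frac{\delta}{2}$, the constraint on $\delta$ ruling out cheaper matchings to summands $I_j$ of $M$. Your generic-placement idea would also work (one can place $[c',d')$ inside $[c,d]$ at positive bottleneck distance greater than $\frac{d'-c'}{2}$ from each of the finitely many $I_j$), but the paper's calibration of $\delta$ against $\min_j \diam I_j$ avoids any genericity discussion and makes the reduction to a multiplicity count immediate; in your write-up this is exactly the step that remains a sketch rather than a proof.
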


\begin{proof} 
Let $M \isom \bigoplus_{j=1}^q I_j$ with $I_j \subset [c,d]$. 
Suppose that $M$ has a compact neighborhood, $K$. 
Then there exists a real number $\eps > 0$ such that $M \in B_\eps(M) \subset K$. 

Choose $\delta>0$ such that $\delta < \eps$, $\delta < d-c$ and $\delta < \frac{1}{4} \min_{j} \diam I_j$.
Choose an interval $I$ of diameter $\delta$ contained in $[c,d]$.
Consider for $n \in \mathbb{N}$, the persistence modules $M_n= M \oplus \bigoplus_{k=1}^n I$.
Then for each $n$, $d_I(M,M_n) \leq \frac{\delta}{2}$ so that the set $\{M_n\}_{n\in \mathbb{N}}$ is contained in $B_\eps(M)$, and hence in $K$. 

Let $M_0 = M$.
Then by the algebraic stability theorem~\cite{ccsggo:interleaving}, $d_I(M_p,M_q) \geq \frac{\delta}{2}$ for all $p>q \geq 0$.
Now consider the open cover $\{B_{\frac{\delta}{6}}(N) \st N \in K\}$ of $K$. 
It does not have a finite subcover, since there does not exist a persistence module $N$ such that $B_{\frac{\delta}{6}}(N)$ contains $M_n$ and $M_m$ for $m \neq n$.
\end{proof}

\begin{corollary} \label{cor:locally-compact}
  All of the spaces in Figure~\ref{fig:sets} are not locally compact. 
\end{corollary}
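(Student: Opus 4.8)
The target statement is Corollary~\ref{cor:locally-compact}: none of the spaces in Figure~\ref{fig:sets} are locally compact. The plan is to leverage the immediately preceding proposition, which shows that no element of $\ffids$ has a compact neighborhood, together with the fact that $\ffids$ sits as a subspace inside each of the other spaces in Figure~\ref{fig:sets}.

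\smallskip

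First I would note that local compactness is being used in the sense that \emph{every} point has a compact neighborhood, so to show a space $X$ fails to be locally compact it suffices to exhibit a single point with no compact neighborhood. I would pick any space $X$ appearing in Figure~\ref{fig:sets} ($\ffids$, $\ffid$, $\fid$, $\cfid$, $\cid$, $\pfd$, or $\rid$) and a persistence module $M \in \ffids \subseteq X$, say $M = [c,d)$. The key claim is that $M$ has no compact neighborhood in $X$. Suppose, for contradiction, that $K \subseteq X$ is a compact neighborhood of $M$ in $X$; then there is $\eps > 0$ with $B_\eps^X(M) \subseteq K$. The family $\{M_n = M \oplus \bigoplus_{k=1}^n I\}_{n \in \mathbb{N}}$ from the proof of the preceding proposition all lie in $\ffids \subseteq X$ (choosing $I$ as there), all lie in $B_\eps^X(M)$, hence in $K$, and satisfy $d_I(M_p, M_q) \geq \frac{\delta}{2}$ for $p \neq q$ by the algebraic stability theorem. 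Then $\{B_{\delta/6}^X(N) : N \in K\}$ is an open cover of $K$ with no finite subcover, contradicting compactness of $K$. Therefore $M$ has no compact neighborhood in $X$, so $X$ is not locally compact.

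\smallskip

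The only genuine point to verify is that open balls in $\ffids$ are compatible with open balls in the larger space $X$ — that is, $\ffids$ carries the subspace topology from $X$, which holds because the interleaving distance on $\ffids$ is the restriction of the interleaving distance on $X$, and the topology induced by an extended pseudometric restricts to the subspace topology on any subset. With this observation, the argument is essentially a verbatim transfer of the proof of the preceding proposition, now carried out with balls taken in $X$ rather than in $\ffids$; nothing new needs to be computed. I do not expect any real obstacle: the main proposition does all the work, and the corollary is a short deduction.

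\smallskip

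So the proof reduces to a single sentence of the form: ``$\ffids$ is a subspace of each of the spaces in Figure~\ref{fig:sets}, and by the previous proposition no element of $\ffids$ has a compact neighborhood, hence none of these spaces is locally compact.'' I would present it exactly that concisely.
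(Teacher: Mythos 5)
Your proof is correct and follows the same route as the paper: it reduces to the preceding proposition about elements of $\ffids$ lacking compact neighborhoods, via the observation that $\ffids$ embeds isometrically (hence as a subspace) in every space of Figure~\ref{fig:sets}. You add a worthwhile bit of rigor the paper leaves implicit — since $\ffids$ is not closed or open in these ambient spaces, you cannot formally transfer "no compact neighborhood in $\ffids$" to "no compact neighborhood in $X$," so you correctly rerun the ball-and-cover argument with balls taken in $X$; this makes the deduction airtight.
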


	

An open covering $\mathcal{O}=\{O_i\}_{i \in I}$ of $X$ is \emph{locally finite} if every $x\in X$ has a neighborhood which has a nonempty intersection with only  finitely many of the open sets $\{O_i\}$.  Given an open cover $\{O_i\}_{i \in I}$ of $X$, another open cover $\mathcal{V}=\{V_j\}_{j\in J}$ is called a \emph{refinement} of $\mathcal{O}$ if for each $V$ in $\mathcal{V}$, there exists $O \in \mathcal{O}$ such that $V \subset O$. A topological space $X$ is said to be a \emph{paracompact} if every open covering admits a locally finite refinement.

\begin{lemma} \label{lem:paracompact}
  An extended pseudometric space is paracompact.
\end{lemma}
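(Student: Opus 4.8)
The plan is to adapt the classical proof that every metric space is paracompact — Stone's theorem — and check that nothing in it uses either the triangle-style separation property or the finiteness of distances. First I would reduce to a convenient form: let $X$ be an extended pseudometric space with extended pseudometric $d$. Recall that $d(x,y)<\infty$ is an equivalence relation, so $X$ partitions into open-and-closed pieces $X = \bigsqcup_{\lambda} X_\lambda$, each of which is an (honest, finite-valued) pseudometric space. Since a disjoint union of paracompact spaces is paracompact, it suffices to prove the statement for a pseudometric space, i.e.\ I may assume $d$ takes values in $[0,\infty)$.

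Next I would run the standard argument. Let $\mathcal{O} = \{O_i\}_{i \in I}$ be an open cover of $X$; fix a well-ordering $<$ of the index set $I$. For each $n \geq 1$ and each $i \in I$ define, by induction on $n$, the set $V_{i,n}$ to be the union of all open balls $B_{2^{-n}}(x)$ such that: (a) $i$ is the $<$-least index with $x \in O_i$; (b) $x \notin V_{j,m}$ for all $m < n$ and all $j$; and (c) $B_{3 \cdot 2^{-n}}(x) \subseteq O_i$. Each $V_{i,n}$ is open (a union of open balls, which are open by the remark in Section~\ref{sec:pseudometric}), and $V_{i,n} \subseteq O_i$, so $\mathcal{V} = \{V_{i,n}\}_{i \in I, n \geq 1}$ is a refinement. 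Then I would verify the two remaining claims exactly as in the metric case, since both are purely computational with the triangle inequality M3 and never invoke M1-as-a-metric or finiteness:
\begin{enumerate}
\item \emph{$\mathcal{V}$ is a cover.} Given $x \in X$, let $i$ be the $<$-least index with $x \in O_i$, and pick $n$ with $B_{3\cdot 2^{-n}}(x) \subseteq O_i$. If $x$ already lies in some $V_{j,m}$ with $m<n$ we are done; otherwise $x$ is one of the centres used to build $V_{i,n}$, so $x \in V_{i,n}$.
\item \emph{$\mathcal{V}$ is locally finite.} Given $x \in X$, pick $j$ and $n$ with $x \in V_{j,n}$, and pick $k$ with $B_{2^{-k}}(x) \subseteq V_{j,n}$. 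One shows: (i) for $m \geq n+k$, the ball $B_{2^{-n-k}}(x)$ meets no $V_{i,m}$; and (ii) for $m < n+k$, the ball $B_{2^{-n-k}}(x)$ meets $V_{i,m}$ for at most one $i$. Claim (i) uses that centres of $V_{i,m}$ with $m \geq n > $ (the stage at which $x$ got covered) are excluded by condition (b); claim (ii) uses that two centres $y \in V_{i,m}$, $y' \in V_{i',m}$ feeding distinct indices at the same stage $m$ must satisfy $d(y,y') > 2^{-m}$ by conditions (a) and (c), which via M3 forces $B_{2^{-n-k}}(x)$ to miss one of the corresponding balls. Intersecting $B_{2^{-n-k}}(x)$ with $B_{2^{-k}}(x)$ gives a neighbourhood of $x$ meeting only finitely many members of $\mathcal{V}$.
\end{enumerate}

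The only genuinely new point — and the thing I'd want to state carefully rather than wave at — is the reduction in the first paragraph: that the partition of an extended pseudometric space into its finite-distance classes is a partition into \emph{clopen} subspaces, so that paracompactness of the pieces yields paracompactness of the whole. This is where the ``extended'' enters, and it is the only place; everything after it is the verbatim Stone argument, which goes through because it only ever uses the triangle inequality and symmetry, never the non-degeneracy of a metric. So I do not anticipate a real obstacle, just the bookkeeping of the local-finiteness estimate, which I would present in the same telegraphic style as above rather than expanding every inequality.
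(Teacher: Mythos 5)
Your approach is correct in outline, but it is genuinely different from the paper's. The paper first quotients $X$ by the relation $d(x,y)=0$ to get an extended \emph{metric} space $Y$, decomposes $Y$ along the finite-distance relation into honest metric spaces, cites Stone's theorem for metric spaces from Munkres, reassembles, and then transports the locally finite refinement back through the open quotient map $\pi: X\to Y$ (using, implicitly, that open sets in a pseudometric space are saturated under $d$-zero equivalence so that $\pi^{-1}(\pi(U))=U$). You instead skip the quotient entirely: you do the clopen decomposition by the finite-distance relation directly on $X$ and then rerun Rudin's proof of Stone's theorem inside each piece, observing that the argument never invokes the separation axiom $d(x,y)=0\Rightarrow x=y$. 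Both routes work. Your version is self-contained and avoids the mild subtlety about lifting through an open quotient map; the paper's version is shorter because it outsources the hard combinatorics to the standard reference. Two small cautions on your sketch: the separation bound you state in claim (ii) should be $d(y,y')\geq 3\cdot 2^{-m}$ (using that if $i<i'$ then $y'\notin O_i$ while $B_{3\cdot 2^{-m}}(y)\subseteq O_i$), not merely $d(y,y')>2^{-m}$ --- with only the latter the triangle-inequality estimate does not close; and it is worth saying explicitly that condition (b) is well posed in a pseudometric space because $V_{j,m}$, being a union of balls, is saturated.
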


\begin{proof}
Let $(X,d)$ be an extended pseudometric space. Let
 $Y = X/{\sim}$ be the quotient space where the equivalence relation $\sim$ is defined on $X$ by $x\sim y \ \Leftrightarrow \ d(x,y)=0$. 
So $(Y,\rho)$ is an extended metric space where $\rho([x],[y])=d(x,y)$.
Let $\pi: X \to Y$ denote the quotient map.
Since $\pi$ maps the open ball $B_r(x)$ to the open ball $B_r([x])$ for all $x \in X$ and all $r>0$, it is an open map.

Now the equivalence relation on $Y$ given by $x \sim y \Leftrightarrow d(x,y)< \infty$ partitions $Y$ into a disjoint union of metric spaces, $Y = \coprod  Y_{\alpha}$.
Given an open cover $\mathcal{U}$ of $Y$, each open set in $\mathcal{U}$ is a disjoint union of open sets, each of which is in one of the $Y_{\alpha}$.
This gives a refinement  of $\mathcal{U}$ that is a disjoint union of open covers of each of the $Y_{\alpha}$.
Each of these metric spaces is paracompact~\cite[Theorem 41.4]{munkres:topology}.
Taking the disjoint union of the resulting locally finite refinements gives the desired locally finite refinement of $Y$.

Let $\mathcal{U}=\{U_i\}_{i \in I}$ be an open cover for $X$. Since $\pi$ is an open map $\{\pi(U_i)\}_{i\in I}$ forms an open cover for $Y$ and since $Y$ is paracompact there is a locally finite refinement $\mathcal{V}=\{V_j\}_{j\in J}$ for $\{\pi(U_i)\}_{i\in I}$. Then the open cover $\pi^{-1}(\mathcal{V})=\{\pi^{-1}(V_j)\}_{j\in J}$ is a locally finite refinement for $\mathcal{U}=\{U_i\}_{i \in I}$. Hence $X$ is paracompact.

\end{proof}

\subsection{Path Connectedness}



\begin{lemma} \label{lem:infinite}
  Let $S$ be an extended pseudometric space. Let $a,b \in S$ with $d(a,b)= \infty$. Then there does not exist a path in $S$ from $a$ to $b$.
\end{lemma}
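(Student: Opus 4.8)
The plan is to argue by contradiction using the fact that the image of a path is connected, together with the observation that $S$ partitions into pieces of finite diameter-classes. Suppose $\gamma : [0,1] \to S$ is a path with $\gamma(0) = a$ and $\gamma(1) = b$. Recall from the discussion of extended pseudometric spaces that the relation $x \approx y \Leftrightarrow d(x,y) < \infty$ is an equivalence relation, so $S$ is partitioned into subsets $S_\alpha$, each of which is an honest (finite-valued) pseudometric space, hence open in $S$: indeed, if $x \in S_\alpha$ then $B_1(x) \subseteq S_\alpha$, since $d(x,y) < 1$ forces $d(x,y) < \infty$. So each $S_\alpha$ is clopen in $S$.

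First I would restrict attention to the component of $S$ containing $a$. Let $U = \bigcup \{ S_\alpha : a \notin S_\alpha \}$ and $V = S_{\alpha_0}$, where $S_{\alpha_0}$ is the piece containing $a$. Then $U$ and $V$ are disjoint open sets (each is a union of the clopen pieces $S_\alpha$), $U \cup V = S$, and $a \in V$ while $b \in U$ because $d(a,b) = \infty$ means $a$ and $b$ lie in different pieces. Hence $\gamma^{-1}(U)$ and $\gamma^{-1}(V)$ are disjoint nonempty open subsets of $[0,1]$ covering $[0,1]$, contradicting the connectedness of $[0,1]$. Therefore no such path exists.

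The only step requiring any care is verifying that each piece $S_\alpha$ is open, and this is immediate from the triangle inequality as indicated above: a ball of radius $1$ around any point of $S_\alpha$ stays within $S_\alpha$. There is no real obstacle here; the argument is essentially the standard fact that path-connected subsets of a space lie within a single connected component, applied to the clopen partition of an extended pseudometric space by its finite-distance equivalence classes.
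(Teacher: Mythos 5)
Your proof is correct, and it takes a genuinely different route from the paper's. The paper argues via \emph{compactness}: the image of a path is compact, so the cover by unit balls $\{B_1(x) : x \in \gamma\}$ has a finite subcover, and chaining through these finitely many balls (using the triangle inequality) forces $d(a,b) < \infty$, a contradiction. You argue via \emph{connectedness}: the finite-distance equivalence classes are clopen, so $a$ and $b$ lying in different classes yields a disconnection of $S$ separating $\gamma(0)$ from $\gamma(1)$, contradicting the connectedness of $[0,1]$. Your argument is arguably the cleaner of the two: it makes direct use of the partition of an extended pseudometric space into (pseudo)metric components that the paper already records in its background section, and it sidesteps the small implicit step in the paper's argument that the finitely many unit balls covering the connected image must actually form a chain. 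The paper's compactness argument has the small advantage of showing a bit more (any compact, and in particular any path-connected, subset has finite diameter), but for the statement at hand both are perfectly adequate.
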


\begin{proof}
Suppose there is a path $\gamma$ from 
$a$ to $b$ in $S$.
Then $\gamma$ has a compact image.
Therefore the cover $\{B_{1}(x) \st x \in \gamma\}$ should have a finite subcover, which by the triangle inequality contradicts 
$d(a,b) = \infty$.
\end{proof}


\begin{corollary}
  The spaces of persistence modules $\fid$, $\cid$, $\pfd$, $\rid$, and $\cfid$ are not path connected.
\end{corollary}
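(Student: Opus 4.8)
The plan is to exhibit, for each of the five spaces $\fid$, $\cid$, $\pfd$, $\rid$, and $\cfid$, two persistence modules at infinite interleaving distance from one another, and then invoke Lemma~\ref{lem:infinite}. Since path-connectedness of the whole space requires a path between \emph{every} pair of points, producing a single pair with $d_I = \infty$ in each space suffices.

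First I would observe that $\cfid$ contains the zero module $\Zero$ and, for instance, $[0,1) \in \cfid$. More to the point, a cleaner universal witness is available: each of the five classes contains $\Zero$, and each contains some bounded interval module, say $[0,1)$. But $d_I(\Zero,[0,1)) = \tfrac12 < \infty$, so that pair does not work. Instead, for $\fid$, $\cid$, $\pfd$, and $\rid$ I would use the half-infinite interval module $[0,\infty)$, which lies in all four of these classes (it is a single interval module, so it is in $\fid$; hence in $\cid$, $\pfd$, $\rid$). Then $d_I([0,\infty),\Zero) = \infty$ since no $\eps$-interleaving can exist (an interleaving would factor the identity on $\gf = [0,\infty)(a)$ through $0$, exactly as in the $[0,1)$ versus $[0,\infty)$ example in the excerpt, or more simply: $[0,\infty)$ is not $\eps$-interleaved with $\Zero$ for any $\eps$ because $[0,\infty)(a<b)$ is never zero). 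So $\Zero$ and $[0,\infty)$ are points of $\fid$, $\cid$, $\pfd$, $\rid$ at infinite distance, and Lemma~\ref{lem:infinite} gives non-path-connectedness of those four.

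For $\cfid$, the module $[0,\infty)$ is not available (its interval is not finite), so a different witness is needed. Here I would take $A = \Zero$ and $B = \bigoplus_{k=1}^{\infty}[0,k)$, or more simply $B = \bigoplus_{k=1}^\infty [0,1)$ — wait, that has finite distance to $\Zero$? No: $d_I(\bigoplus_{k=1}^\infty[0,1),\Zero) = \tfrac12$. The right choice is $B = \bigoplus_{k=1}^{\infty}[0,k)$: each summand $[0,k)$ is a finite interval, so $B \in \cfid$, but $d_I(B,\Zero) = \infty$ because $B$ contains summands $[0,k)$ with $k$ arbitrarily large, and $d_I([0,k),\Zero) = k/2 \to \infty$; a single $\eps$-interleaving of $B$ with $\Zero$ would restrict to one of $[0,k)$ with $\Zero$ for all $k$, which is impossible once $k > 2\eps$. (Concretely: the identity map $[0,k)(0 < k - \tfrac12)$ would have to factor through $\Zero$ when $\eps < (k-\tfrac12)/2$.) Thus $\Zero, B \in \cfid$ with $d_I = \infty$, and Lemma~\ref{lem:infinite} applies.

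The argument is routine; the only thing to be careful about is choosing witnesses that genuinely lie in each named class — in particular remembering that $\cfid$ forbids infinite intervals, so its witness must be an infinite direct sum of finite intervals with unbounded lengths rather than a single infinite interval. No step presents a real obstacle; the infinite-distance claims all reduce to the standard observation, already used repeatedly in the excerpt (e.g.\ in the $M=[0,1)$, $N=[0,\infty)$ example and in Lemma~\ref{lem:eph-almost-inclusion}), that an $\eps$-interleaving forces certain nonzero structure maps to factor through the zero module.
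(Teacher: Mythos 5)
Your proposal is correct and matches the paper's proof exactly: the paper also uses $\Zero$ and $[0,\infty)$ as the infinite-distance witnesses for $\fid$, $\cid$, $\pfd$, $\rid$, and $\Zero$ together with $\bigoplus_{k=1}^{\infty}[0,k)$ for $\cfid$, then invokes Lemma~\ref{lem:infinite}. The exploratory detours in your write-up (considering and rejecting $[0,1)$ and $\bigoplus[0,1)$) could be pruned, but the final argument is the same.
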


\begin{proof}
  The first four of these sets contain both $0$ and $[0,\infty)$ and $d_I(0,[0,\infty))= \infty$. The set $\cfid$ contains $0$ and
$\bigoplus_{k=1}^{\infty} [0,k)$ and 
$d_I(\bigoplus_{k=1}^{\infty} [0,k),0) = \infty$.
\end{proof}

\begin{lemma}
  Let $I$ be a finite interval. 
  There exists a path in $\ffids$ from $I$ to the zero module.
\end{lemma}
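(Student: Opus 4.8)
The plan is to write down an explicit path that continuously shrinks the interval $I$ down to the empty interval (i.e.\ to $\Zero$), keeping every intermediate module a single interval module whose interval lies in $[c,d]$, so that the whole path stays inside $\ffids$. Write $\alpha=\inf I$ and $\beta=\sup I$; since $I$ is a finite interval with $I\in\ffids$ we have $c\le\alpha\le\beta\le d$. For $t\in[0,1)$ let $\gamma(t)$ be the interval module whose underlying interval has infimum $\alpha$, supremum $\alpha+(1-t)(\beta-\alpha)$, and the same membership status of each of its two endpoints as $I$ (so $\gamma(t)$ has the same ``type'' as $I$); and set $\gamma(1)=\Zero$.

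First I would record the routine bookkeeping. We have $\gamma(0)=I$, since the endpoints and their types agree with those of $I$; for $t\in[0,1)$ the interval underlying $\gamma(t)$ is contained in $[\alpha,\beta]\subseteq[c,d]$, so $\gamma(t)$ is a single finite interval module in $\ffids$; and $\gamma(1)=\Zero\in\ffids$. Thus $\gamma$ is a well-defined map $[0,1]\to\ffids$ with the correct endpoints, and everything reduces to checking continuity.

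For continuity on $[0,1)$, fix $s,t\in[0,1)$. The intervals underlying $\gamma(s)$ and $\gamma(t)$ have the same infimum $\alpha$ (with the same type there) and suprema differing by $\delta:=(\beta-\alpha)\lvert s-t\rvert$ (with the same type there). A direct interleaving construction --- take all comparison maps to be the structure maps of whichever of the two interval modules is ``wider'', shifted by $\delta$ --- shows $\gamma(s)$ and $\gamma(t)$ are $\delta$-interleaved, so $d_I(\gamma(s),\gamma(t))\le(\beta-\alpha)\lvert s-t\rvert$; hence $\gamma$ is Lipschitz, in particular continuous, on $[0,1)$. For continuity at $t=1$, note that for any interval module $J$ the all-zero maps give an $\varepsilon$-interleaving of $J$ with $\Zero$ whenever $\varepsilon>\tfrac12\diam J$: the only interleaving condition from Definition~\ref{def:interleaving} that is not automatic is $J(a\le a+2\varepsilon)=0$ for all $a$, and this holds since $2\varepsilon>\diam J$. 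Therefore $d_I(\gamma(t),\Zero)\le\tfrac12\diam\gamma(t)=\tfrac12(1-t)(\beta-\alpha)\to 0$ as $t\to1^{-}$, so $\gamma$ is continuous on all of $[0,1]$ and is the required path from $I$ to $\Zero$.

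The only non-formal ingredient is the elementary estimate that shifting an endpoint of an interval module by $\delta$ moves the module by at most $\delta$ in interleaving distance; I expect this, together with the (small) case-checking over the possible endpoint types of $I$, to be the only real --- and still modest --- obstacle. It can be proved in a couple of lines straight from Definition~\ref{def:interleaving} by writing down the interleaving maps from the structure maps of the wider interval, or by appealing to the appendix's description of $\varepsilon$-neighborhoods of interval modules. If one prefers a slightly different route, one can instead route the path through the point module $[\alpha,\alpha]$, which is ephemeral and hence satisfies $d_I([\alpha,\alpha],\Zero)=0$ by Lemma~\ref{lem:eph-almost-inclusion}, but this is not needed.
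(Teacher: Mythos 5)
Your proposal is correct and takes essentially the same approach as the paper's proof: define an explicit one-parameter family of interval modules whose endpoints interpolate linearly to collapse $I$ to the empty interval, observe the path stays in $\ffids$, and conclude continuity from a Lipschitz bound on the interleaving distance. The only cosmetic difference is that the paper shrinks $I$ symmetrically from both ends toward its midpoint (yielding Lipschitz constant $\tfrac{1}{2}\diam I$ rather than your $\diam I$); your one-sided version works equally well, and your separate treatment of continuity at $t=1$ via $d_I(\gamma(t),\Zero)\le\tfrac12\diam\gamma(t)$ is a reasonable way to handle the endpoint.
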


\begin{proof}
  Let $c = \inf I$ and $d = \sup I$. 
  Let $M^{(0)} = I$ and $M^{(1)} = \Zero$. 
  For $0 < t < 1$, let $M^{(t)} = [c + t\frac{d-c}{2}, d - t\frac{d-c}{2})$.
  Then for $0 \leq s \leq t \leq 1$, $d_I(M^{(s)},M^{(t)}) = (t-s)\frac{d-c}{2}$.
  Thus $\gamma(t) = M^{(t)}$ is a (continuous) path from $I$ to $0$.
\end{proof}

With a similar argument we will show the following.

\begin{proposition} \label{prop:path-component-fid}
  The path component of the zero module in $\fid$ is $\ffid$.
\end{proposition}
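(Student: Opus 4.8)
The plan is to show a two-way inclusion: every module in $\ffid$ is path-connected to the zero module inside $\fid$, and conversely any module in $\fid$ that is path-connected to $0$ inside $\fid$ must in fact lie in $\ffid$.

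For the first inclusion, let $M \in \ffid$, so $M \isom \bigoplus_{k=1}^N I_k$ with each $I_k$ a finite interval. The idea is to shrink each interval summand linearly to a point and then kill it. Concretely, for each $k$ write $c_k = \inf I_k$, $d_k = \sup I_k$, and for $0 \le t < 1$ set $I_k^{(t)} = [c_k + t\tfrac{d_k - c_k}{2}, d_k - t\tfrac{d_k - c_k}{2})$ (using the appropriate endpoint conventions to match $I_k$), with $I_k^{(1)} = \Zero$; then let $M^{(t)} = \bigoplus_{k=1}^N I_k^{(t)}$. By the appendix computations on interleavings of interval modules (or a direct check), $d_I(I_k^{(s)}, I_k^{(t)}) \le (t-s)\tfrac{d_k - c_k}{2}$ for $s \le t$, and then the Converse Algebraic Stability Theorem gives $d_I(M^{(s)}, M^{(t)}) \le (t-s)\max_k \tfrac{d_k-c_k}{2}$, so $\gamma(t) = M^{(t)}$ is a continuous path in $\ffid \subset \fid$ from $M$ to $0$. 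Hence $\ffid$ lies in the path component of $0$.

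For the reverse inclusion, suppose $M \in \fid$ lies in the path component of $0$; I must show every interval summand of $M$ is bounded. Suppose not: then $M \isom M' \oplus J$ where $J$ is an unbounded interval (one of $[a,\infty)$, $(a,\infty)$, $(-\infty,b]$, $(-\infty,b)$, or all of $\mathbb{R}$). The key point is that such a $J$ is at infinite interleaving distance from $0$, and moreover from any module in $\fid$ all of whose unbounded summands are "incompatible" — more cleanly, $d_I(M, 0) = \infty$ whenever $M$ has an unbounded summand, because an $\eps$-interleaving of $M$ with $0$ would force $M(a \le b)$ to be zero for all $a < b$ via the triangle-type diagrams (as in Lemma~\ref{lem:eph-almost-inclusion}), whereas an unbounded interval summand contributes a nonzero such map for suitable $a < b$ regardless of $\eps$. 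Since $d_I(M,0) = \infty$, Lemma~\ref{lem:infinite} shows there is no path in $\fid$ (or any extended pseudometric space containing both) from $M$ to $0$, a contradiction. Therefore $M \in \ffid$.

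The main obstacle is making the reverse direction airtight: I need that an unbounded summand really does obstruct finite interleaving distance to $0$ uniformly in $\eps$, and — if I wanted the sharper statement that the path component equals $\ffid$ rather than just that it is contained in the set of modules with finite distance to $0$ — I should note that within $\fid$, finite interleaving distance to $0$ is already equivalent to membership in $\ffid$ (two modules with all-bounded summands are at finite distance since each is at finite distance to $0$, by the triangle inequality and $d_I(I,0) \le \tfrac{1}{2}\diam I < \infty$). Combining: the path component of $0$ in $\fid$ is exactly the set of modules at finite distance from $0$, which is exactly $\ffid$, and it is path-connected by the first part.
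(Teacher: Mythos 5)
Your proof follows the same strategy as the paper: contract each finite interval summand toward its midpoint for the forward inclusion, and invoke Lemma~\ref{lem:infinite} together with $d_I(M,0)=\infty$ for any module with an unbounded summand for the reverse inclusion. One small wording slip (an $\eps$-interleaving with $0$ forces $M(a\le b)=0$ only for $b-a\ge 2\eps$, not for all $a<b$) does not affect the argument, since an unbounded summand yields a nonzero $M(a\le b)$ for $b-a$ arbitrarily large, ruling out every $\eps$.
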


\begin{proof}
  By Lemma~\ref{lem:infinite}, the path component of $\Zero$ in $\fid$ is contained in $\ffid$. It remains to show that any $M \in \ffid$ is path connected to $\Zero$.

Let $M \isom \bigoplus_{k=1}^N I_k$, where $I_k$ is a finite interval. For $1 \leq k \leq N$, let $c_k = \inf I_k$ and $d_k = \sup I_k$. 
Let $M^{(0)} = M$ and $M^{(1)}=0$.
For $0<t<1$, let $M^{(t)} = \bigoplus_{k=1}^N [c_k + t \frac{d_k-c_k}{2}, d - t \frac{d_k-c_k}{2})$.
Then for $0 \leq s \leq t \leq 1$, $d_I(M^{(s)},M^{(t)}) \leq (t-s)\max_{1 \leq k \leq N} \frac{d_k-c_k}{2}$.
So $M^{(t)}$ is a continuous path from $M$ to $\Zero$.
\end{proof}

\begin{remark}
  It is not the case that the path component of the zero module in $\cid$ is $\cfid$, since $\cfid$ is not path connected.
Since infinite intervals have infinite distance from the zero module, the path component of the zero module in $\cid$ is the same as the path component of the zero module in $\cfid$.
\end{remark}

\begin{proposition} \label{prop:path-component-cfid}
  The path component of $0$ in $\cfid$, $\pfd$, and $\rid$ consists of modules $\bigoplus_{\alpha \in A} I_{\alpha}$, where $\sup_{\alpha \in A} \length(I_{\alpha}) < \infty$.
\end{proposition}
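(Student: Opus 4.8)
The plan is to prove the characterization in two directions, establishing set-theoretic equality of the path component of $0$ with the class $\mathcal{C} := \{\bigoplus_{\alpha \in A} I_{\alpha} \st \sup_{\alpha \in A} \length(I_{\alpha}) < \infty\}$ in each of the three spaces $\cfid$, $\pfd$, and $\rid$. (Note that in $\cfid$ this set consists of countable direct sums of finite intervals with bounded lengths, in $\pfd$ of direct sums that are pointwise finite-dimensional with bounded lengths, and in $\rid$ of direct sums of at most continuum-many finite intervals with bounded lengths.) For the containment of the path component in $\mathcal{C}$: suppose $M$ is path connected to $0$. First, $M$ cannot contain an infinite interval as a summand, for if $[a,\infty)$ (or a half-line) were a summand then $d_I(M,0) = \infty$ and Lemma~\ref{lem:infinite} forbids a path; so every summand is a finite interval. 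Second, I must rule out $\sup_{\alpha} \length(I_\alpha) = \infty$. If the lengths of the summands of $M$ were unbounded, I claim again $d_I(M,0) = \infty$: an $\eps$-interleaving of $M$ with $0$ would force $M(a \leq a + 2\eps)$ to be the zero map for all $a$ (it factors through $0(a+\eps) = 0$), but a summand $I_\alpha$ of length greater than $2\eps$ has $I_\alpha(a \leq a+2\eps) \neq 0$ for suitable $a$, contradiction. Hence no finite interleaving exists and Lemma~\ref{lem:infinite} again applies.

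For the reverse containment — that every $M = \bigoplus_{\alpha \in A} I_\alpha \in \mathcal{C}$ is path connected to $0$ — I would mimic the construction in Proposition~\ref{prop:path-component-fid}, scaling all intervals toward their midpoints simultaneously. Let $L = \sup_{\alpha} \length(I_\alpha) < \infty$. For each $\alpha$ write $c_\alpha = \inf I_\alpha$, $d_\alpha = \sup I_\alpha$, and $m_\alpha = \tfrac{c_\alpha + d_\alpha}{2}$. Set $M^{(0)} = M$, $M^{(1)} = 0$, and for $0 < t < 1$ let $M^{(t)} = \bigoplus_{\alpha \in A} I_\alpha^{(t)}$ where $I_\alpha^{(t)}$ is the interval obtained by shrinking $I_\alpha$ toward $m_\alpha$ by a factor $(1-t)$ (keeping the same endpoint-inclusion type $f(I_\alpha)$, so that the summand stays a legitimate finite interval module). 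Each $M^{(t)}$ lies in the appropriate space: it is a direct sum of finite intervals indexed by the same set $A$, so it is in $\cid$ if $A$ is countable, in $\rid$ if $|A| \leq |\mathbb{R}|$, and — since shrinking intervals can only decrease pointwise dimensions — it is in $\pfd$ if $M$ was. For continuity and well-definedness of the path, the key estimate is that $d_I(I_\alpha^{(s)}, I_\alpha^{(t)}) \leq (t-s)\tfrac{\length(I_\alpha)}{2} \leq (t-s)\tfrac{L}{2}$ for $0 \leq s \leq t \leq 1$ (this is the interval-module computation underlying the appendix; the two nested intervals are $\tfrac{(t-s)\length(I_\alpha)}{2}$-interleaved). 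By the Converse Algebraic Stability Theorem, $d_I(M^{(s)}, M^{(t)}) \leq \sup_\alpha d_I(I_\alpha^{(s)}, I_\alpha^{(t)}) \leq (t-s)\tfrac{L}{2}$, which is finite and tends to $0$ as $t \to s$; hence $\gamma(t) = M^{(t)}$ is a continuous path from $M$ to $0$.

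The one subtlety to handle carefully — and the step I expect to be the main obstacle — is that the boundedness hypothesis $L < \infty$ is exactly what makes the path continuous: without a uniform bound on the lengths, the interleaving distance $d_I(M^{(s)}, M^{(t)})$ controlled summand-by-summand need not go to $0$ (indeed it can be infinite), which is consistent with the first half of the proof showing such $M$ is not in the path component. So the two halves fit together precisely around this uniform bound, and I would make sure the endpoint behavior at $t = 1$ is also covered by the same estimate (as $t \to 1$, every $I_\alpha^{(t)}$ degenerates, and $d_I(M^{(t)}, 0) \leq (1-t)\tfrac{L}{2} \to 0$, using that each degenerate or tiny interval is close to $0$ uniformly). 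A final minor point: when $0 \in \{c_\alpha, d_\alpha\}$ coincidences or empty index sets occur, the formulas still make sense, and I would remark that the case $A = \emptyset$ gives $M = 0$ trivially.
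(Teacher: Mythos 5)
Your proposal is correct and follows essentially the same approach as the paper: rule out unbounded summand lengths by showing $d_I(M,0)=\infty$ and invoking Lemma~\ref{lem:infinite}, then build the path by shrinking each interval linearly toward its midpoint, with the uniform bound $L$ and the Converse Algebraic Stability Theorem giving the continuity estimate $d_I(M^{(s)},M^{(t)})\le (t-s)L/2$. The paper's proof is terser (it defers to the $\fid$ construction with $\max$ replaced by $\sup$), but your added justifications — why unbounded lengths give infinite distance, and why $M^{(t)}$ stays in $\cfid$, $\pfd$, $\rid$ respectively — are the right ones to spell out; the only slip is writing $\cid$ where you mean $\cfid$.
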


\begin{proof}
Let $M = \bigoplus_{\alpha \in A} I_{\alpha}$. 
If $\sup_{\alpha \in A} \length(I_{\alpha}) = \infty$ then $d_I(0,M) = \infty$ and $M$ is not in the path component of $0$. 
If $\sup_{\alpha \in A} \length(I_{\alpha}) < \infty$ then the proof of Proposition~\ref{prop:path-component-fid} (replacing $\max$ with $\sup$) shows that $M$ is in the path component of $0$.
\end{proof}


The paths in the previous proposition may be used to show that the following spaces are nullhomotopic.

\begin{proposition} \label{prop:contractible}
   The spaces $\ffids$ and $\ffid$ and the path component of $\Zero$ of $\cfid$, $\pfd$ and $\rid$ are contractible to the zero module.
\end{proposition}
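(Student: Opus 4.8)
The plan is to exhibit an explicit contracting homotopy built from the radial-shrinking paths of Proposition~\ref{prop:path-component-cfid}. For a module $M \isom \bigoplus_{\alpha \in A} I_\alpha$ with $c_\alpha = \inf I_\alpha$, $d_\alpha = \sup I_\alpha$, define $H(M,t) = \bigoplus_{\alpha \in A} [c_\alpha + t\frac{d_\alpha - c_\alpha}{2}, d_\alpha - t\frac{d_\alpha - c_\alpha}{2})$ for $0 \le t < 1$ and $H(M,1) = \Zero$, exactly as in the proofs of Propositions~\ref{prop:path-component-fid} and~\ref{prop:path-component-cfid}. I first need to check this is well-defined on each of the five spaces: for $\ffids$ the shrunken intervals still lie in $[c,d]$ and there are still finitely many of them, so $H(M,t) \in \ffids$; for $\ffid$ they remain finite and finitely many; and for the path components in question, since $\sup_\alpha \length(I_\alpha) =: L < \infty$, the shrunken module again has supremum of lengths $\le L < \infty$ and lies in the same space (using $\cfid \subset \pfd$ compatibility via Lemma~\ref{lem:pfd} for the $\rid$ case, and that shrinking a countable/pfd decomposition preserves countability/pointwise finite-dimensionality).

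The key estimate is continuity of $H$, and here the Converse Algebraic Stability Theorem does the work. For fixed $M$ and $0 \le s \le t \le 1$, each summand $[c_\alpha + s\frac{d_\alpha-c_\alpha}{2}, d_\alpha - s\frac{d_\alpha-c_\alpha}{2})$ and $[c_\alpha + t\frac{d_\alpha-c_\alpha}{2}, d_\alpha - t\frac{d_\alpha-c_\alpha}{2})$ are $(t-s)\frac{d_\alpha - c_\alpha}{2}$-interleaved (endpoints move by exactly that amount; in the $\ffids$ case one also checks the degenerate $t=1$ summand), so by the Converse Algebraic Stability lemma $d_I(H(M,s),H(M,t)) \le (t-s)\sup_\alpha \frac{d_\alpha-c_\alpha}{2} = (t-s)\frac{L}{2}$. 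Combined with joint continuity in $M$: if $M$ and $M'$ are $\eps$-interleaved then applying the same shrinking to matched summands shows $H(M,t)$ and $H(M',t)$ are $\eps$-interleaved for every $t$ (the interleaving morphisms restrict to the sub-intervals), so $d_I(H(M,t),H(M',t)) \le d_I(M,M')$. A triangle-inequality combination of these two bounds gives $d_I(H(M,s),H(M',t)) \le d_I(M,M') + |t-s|\frac{L}{2}$, which on each space (where $L$ is uniformly bounded — for $\ffids$ and $\ffid$ trivially, for the path components by definition of the component) yields continuity of $H: X \times [0,1] \to X$. Since $H(\cdot,0) = \mathrm{id}$ and $H(\cdot,1) = \Zero$, this is the desired contraction.

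The main obstacle is the joint continuity, specifically handling the $\rid$ and $\pfd$ path components where the index set $A$ may have continuum cardinality and the interleaving between $M$ and $M'$ need not respect the interval decompositions summand-by-summand. The cleanest fix is to avoid matching summands of $M$ and $M'$ directly: instead bound $d_I(H(M,t),H(M',t)) \le d_I(H(M,t),M) + d_I(M,M') + d_I(M',H(M',t)) \le t\frac{L}{2} + d_I(M,M') + t\frac{L}{2}$, which only uses the single-module estimate from the previous paragraph (where the decomposition is fixed and the Converse Algebraic Stability lemma applies cleanly) plus the triangle inequality, never an interleaving between two different decomposed modules. This suffices for continuity of $H$ and completes the proof.
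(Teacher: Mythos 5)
Your homotopy $H$ and the in-$t$ estimate $d_I(H(M,s),H(M,t))\le|t-s|\,d_I(M,0)$ (from the Converse Algebraic Stability lemma applied summand-by-summand in a fixed decomposition) agree with the paper's. The gap is the step you call ``the cleanest fix.'' There you abandon the needed bound $d_I(H(M,t),H(M',t))\le d_I(M,M')$ in favor of a triangle-inequality route through the \emph{unshrunk} modules, giving $d_I(H(M,t),H(M',t))\le d_I(H(M,t),M)+d_I(M,M')+d_I(M',H(M',t))\le t\,d_I(M,0)+d_I(M,M')+t\,d_I(M',0)$. This does not prove continuity of $H$ at a point $(M,t)$ with $t>0$ and $M\ne 0$: as $M'\to M$ the right-hand side tends to $2t\,d_I(M,0)$, a fixed positive quantity, not to $0$. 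Routing through $M$ and $M'$ is exactly what you cannot afford when $t$ is away from $0$, because that detour has length on the order of $d_I(M,0)$. The paper's proof supplies, without any detour, a direct bound on the shrunk modules, $d_I(M^{(s)},N^{(s)})\le s\,d_I(M,N)$, and then takes $\delta=\eps/(1+d_I(M,0))$. It is precisely that direct control of $d_I(H(M,\cdot),H(N,\cdot))$ in terms of $d_I(M,N)$ alone --- the very thing your fix was designed to sidestep --- that closes the $\eps$--$\delta$ argument; your proof never supplies a valid substitute for it, so joint continuity of $H$ is not established.

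A secondary misstatement: you claim $L=\sup_\alpha\length(I_\alpha)$ is ``uniformly bounded \dots\ for the path components by definition of the component.'' It is not; on the path component of $0$ in, e.g., $\cfid$, each $M$ has finite $L_M$ but $L_M$ is unbounded over the component. Only local boundedness holds, via $\tfrac{L_N}{2}=d_I(N,0)\le d_I(N,M)+d_I(M,0)$, which is in any case all that pointwise continuity requires --- so this point is repairable, but the deeper issue in the preceding paragraph is not.
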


\begin{proof}
  Let $S$ denote either $\ffids$, $\ffid$ or the path component of $\Zero$ in $\cfid$, $\pfd$, or $\rid$.
Assume $M \isom \bigoplus_{k \in A} I_k$, where $A$ is countable.
Let $c_k = \inf I_k$, $d_k = \sup I_k$ and let $h_k = \frac{d_k-c_k}{2}$.
Let $M^{(0)} = M$, $M^{(1)} = 0$ and for $0<t<1$, $M^{(t)} = \bigoplus_{k \in A} [c_k+th_k,d-th_k)$.

We will use these paths to construct a homotopy from the identity map on $S$ to the constant map to the zero module. 
Define $H: S \times [0,1] \to S$ by $(M,t) \mapsto M^{(t)}$.
Let $H_t = H(-,t)$.
Then $H_0 = \Id_S$ and $H_1 = 0$.
It remains to show that $H$ is continuous.
Let $(M,t) \in S\times [0,1]$.
Given $\eps>0$, choose $\delta = \frac{\eps}{1+d_I(M,0)}$.
Let $d$ denote the product metric on $S\times [0,1]$.
Whenever $(N,s) \in S\times [0,1]$ satisfies $d((M,t),(N,s))< \delta$, $d_I(M,N)< \delta$ and $\abs{t-s}<\delta$.
Furthermore
\begin{multline*}
d_I(M^{(t)},N^{(s)}) \leq d_I(M^{(t)},M^{(s)}) + d_I(M^{(s)},N^{(s)}) \\
\leq \abs{t-s} d_I(M,0) + s d_I(M,N)
\leq \delta d_I(M,0) + \delta = \eps
\end{multline*}
which completes the proof. 
\end{proof}

\subsection{Separability}

A topological space is said to be \emph{separable} if it has a countable dense subset.

\begin{theorem} \label{thm:separable}
  The spaces $\fid$, $\ffid$ and $\ffids$ are separable. 
\end{theorem}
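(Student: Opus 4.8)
The plan is to exhibit, for each of the three spaces, an explicit countable subset and show it is dense. The key observation is that every module in $\ffid$ (hence in $\ffids$) is a finite direct sum $\bigoplus_{k=1}^N I_k$ of finite interval modules, and the interleaving distance between two such sums can be controlled by perturbing the endpoints of each interval by a small amount. So I would take the candidate dense set to be the collection of all finite direct sums of interval modules $J$ whose endpoints are \emph{rational} (and, for the appropriate one of the four "type" cases, whether each endpoint is included or not). This is a countable set: it injects into the set of finite subsets of $\mathbb{Q}^2\times\{1,2,3,4\}\times\mathbb{N}$, which is countable, by the same bookkeeping used in the proof of Proposition~\ref{prop:cid-set}. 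For $\ffids$ one intersects this with modules whose intervals lie in $[c,d]$; one must be slightly careful to allow rational endpoints arbitrarily close to the (possibly irrational) values $c$ and $d$, or simply use the endpoints $c,d$ themselves together with rationals strictly between — either way the set stays countable.

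The density step is the heart of the argument. Given $M \isom \bigoplus_{k=1}^N I_k \in \ffid$ and $\eps>0$, I would for each $k$ choose an interval $J_k$ with rational endpoints (and matching endpoint-inclusion type, or whichever type is convenient) such that $J_k$ and $I_k$ are $\eps$-interleaved; concretely, pick rationals within $\eps/2$ of $\inf I_k$ and of $\sup I_k$ so that the resulting $J_k$ is a genuine interval and $d_I(I_k,J_k)<\eps$. This uses the elementary fact — which can be read off from the appendix's study of interleavings of interval modules, or checked directly — that nudging the endpoints of a finite interval module by less than $\delta$ produces a module $\delta$-interleaved with the original. Then by the Converse Algebraic Stability Theorem (the lemma quoted just before Section~2.4), $\bigoplus_{k=1}^N J_k$ is $\eps$-interleaved with $M$, so $d_I\big(M,\bigoplus_k J_k\big)\le\eps$, and $\bigoplus_k J_k$ lies in our countable set. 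Since $\eps$ was arbitrary, the set is dense. For $\ffids$ one additionally checks that the approximating intervals $J_k$ can be kept inside $[c,d]$: shrink rather than expand, i.e. choose the rational (or $c$, $d$) endpoints so that $J_k\subseteq[c,d]$ still holds, which is always possible since $I_k\subseteq[c,d]$ and we may move endpoints inward; one only needs the diameter not to collapse, which is fine for $\eps$ small, and for larger $\eps$ the distance bound is automatic anyway (e.g. $d_I(M,0)\le\frac{d-c}{2}$).

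The main obstacle, such as it is, is purely bookkeeping: making the countable approximating set genuinely countable while still dense, and handling the four endpoint-type cases and the boundary intervals $[c,d]$, $(c,d)$, $[c,d)$, $(c,d]$ in $\ffids$ uniformly. There is no real topological difficulty — the whole point is that finiteness of the decomposition plus the additivity lemma reduces everything to the one-interval case, where approximating a real interval by a rational one is trivial. I would present the $\ffid$ case in full and then remark that $\ffids$ is the same argument with the extra constraint that approximations be taken inside $[c,d]$ (moving endpoints inward), and that separability of $\ffids$ and $\ffid$ also follows formally once one notes $\ffids\subset\ffid$ and that a subspace of a separable metric space is separable — though since we are in the pseudometric setting it is cleanest to just give the explicit dense set in each case.
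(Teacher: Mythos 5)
Your approach is essentially the paper's: take the countable set of finite direct sums of interval modules with rational endpoints and verify density interval-by-interval, using the additivity of interleavings (Converse Algebraic Stability) to reduce to the single-interval case. Two points are worth noting.

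First, you never actually treat $\fid$. The dense set you describe injects into finite subsets of $\mathbb{Q}^2\times\{1,2,3,4\}\times\mathbb{N}$ and so contains only modules supported on bounded intervals; any such module is at \emph{infinite} interleaving distance from any module in $\fid\setminus\ffid$ (one with an unbounded summand, e.g.\ $[0,\infty)$), so this set is not dense in $\fid$. The fix is exactly what the paper does --- allow $\pm\infty$ as endpoints --- but it has to be stated; as written the proposal only proves separability of $\ffid$ and $\ffids$. Your closing remark about deducing separability of subsets formally from $\ffids\subset\ffid$ doesn't help here, since $\fid$ is the \emph{larger} space, not a subspace of something you've handled.

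Second, the four endpoint-inclusion types are unnecessary bookkeeping: two interval modules with the same infimum and supremum are $\eps$-interleaved for every $\eps>0$ (Example~\ref{intzero}, or Theorem~\ref{thm:interleaving-interval}), hence at interleaving distance $0$. The paper therefore uses only open intervals $(p,q)$ with $p,q\in\mathbb{Q}$, which slightly simplifies both the countability bookkeeping and the $\ffids$ boundary discussion (open rational subintervals of $[c,d]$ automatically avoid the endpoints $c,d$, whether rational or not). Your version is not wrong, just more elaborate than needed.
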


\begin{proof} 
First we will show that $\ffid$ is separable.
Let 
\begin{equation} \label{eq:Dn}
  D_n =\big \{\bigoplus_{i=1}^n (p_i,q_i) \in \fid \ | \  p_i, q_i \in \mathbb{Q}, \ p_i < q_i \big\}
\end{equation}
and then consider \[ D=\bigcup_{i=1}^\infty D_n.\] 
Then $D$ is countable and $D$ is dense in $\ffid$ since every open ball of every persistence module in  $\ffid$ contains an element of $D$.

This proof also works for $\fid$ if we allow the intervals in \eqref{eq:Dn} to be infinite, and it works for $\ffids$ if we  restrict the intervals in \eqref{eq:Dn} to be subintervals of $[c,d]$.
\end{proof}

\begin{theorem} \label{thm:not-separable}
The spaces $\cfid$, $\cid$, $\pfd$, and $\rid$ are not separable. 
The same is true for the subspace of $\cid$ with finite distance to $0$ (which equals the subspace of $\cfid$ with finite distance to $0$), and for $\cid \cap \qtame$ and $\cfid \cap \qtame$. 
\end{theorem}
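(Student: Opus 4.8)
The plan is to exhibit an uncountable subset $S$ of $\ffid$ (hence of every space listed) that is $\frac{1}{2}$-separated, i.e. any two distinct elements have interleaving distance bounded below by a fixed positive constant. Such a set cannot meet a countable dense set, since the balls of radius $\frac{1}{4}$ around its points are disjoint and each must contain a point of any dense set. All of the named spaces contain $\cfid \cap \qtame$ and contain such an $S$, so non-separability for each follows at once; the only care needed is that $S$ sits inside the smallest of the listed spaces, namely $\cfid \cap \qtame$ and the subspace of $\cid$ (equivalently $\cfid$) with finite distance to $0$.

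\textbf{Construction of $S$.} For each subset $T \subseteq \mathbb{N}$ (or each infinite binary string), set
\[
M_T = \bigoplus_{k \in T} \left[\, 2k,\ 2k+1 \,\right).
\]
Each $M_T$ is a countable direct sum of finite interval modules, so $M_T \in \cfid$; since the summands have pairwise disjoint supports contained in $[2k,2k+1)$, the map $M_T(a < b)$ has rank at most $1$ for every $a<b$, so $M_T \in \qtame$; and $d_I(M_T, 0) \leq \frac{1}{2} < \infty$. Thus $\{M_T\}_T$ lies in $\cfid \cap \qtame$ and in the finite-distance-to-$0$ subspace. The family $\{T \subseteq \mathbb{N}\}$ is uncountable, so it suffices to show $d_I(M_T, M_{T'}) \geq \frac{1}{2}$ whenever $T \neq T'$. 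Pick $j$ in the symmetric difference, say $j \in T \setminus T'$. Restricting attention to the ``coordinate'' near $2j$: the interval $[2j,2j+1)$ is a summand of $M_T$, while $M_{T'}$ restricted to $(2j-1, 2j+2)$ is the zero module (its other summands are supported on disjoint intervals $[2k,2k+1)$ with $k \neq j$, none of which meets $(2j-1,2j+2)$). If $M_T$ and $M_{T'}$ were $\eps$-interleaved for some $\eps < \frac{1}{2}$, then by the usual triangle argument the composite $M_T(2j < 2j + 2\eps)$ would factor through $M_{T'}(2j+\eps) = 0$; but $2\eps < 1$, so $M_T(2j < 2j+2\eps)$ is the identity on $\gf$ on the summand $[2j,2j+1)$, a contradiction. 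Hence $d_I(M_T, M_{T'}) \geq \frac{1}{2}$.

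\textbf{From the separated set to non-separability.} If $Q$ were a countable dense subset of any space $X$ containing $S$, then for each $T$ the ball $B_{1/4}(M_T)$ would contain a point $q_T \in Q$; by the triangle inequality and the $\frac{1}{2}$-separation the assignment $T \mapsto q_T$ is injective, contradicting countability of $Q$. This kills separability for $\cfid \cap \qtame$ and for the finite-distance-to-$0$ subspaces; since $\cfid \cap \qtame \subseteq \cfid, \cid, \pfd, \rid$ and separability is inherited by... is \emph{not} in general inherited by subspaces, so one should instead argue directly: the set $S$ also lies in each of $\cfid$, $\cid$, $\pfd$, $\rid$, and the same ball-packing argument applies verbatim in each of those spaces. (Each $M_T$ is pointwise finite dimensional and countably interval-decomposable, so $S \subseteq \pfd \cap \cfid \subseteq \cid \cap \rid$.)

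\textbf{Main obstacle.} There is no deep obstacle here; the only thing to get right is the separation estimate, i.e. verifying carefully via the interleaving triangle/parallelogram diagrams that intervals with disjoint ``gaps'' of width $\geq 1$ between them cannot be interleaved below $\frac{1}{2}$, and making sure the chosen family of modules genuinely lies in the smallest relevant space ($\cfid \cap \qtame$ with finite distance to $0$) so that a single construction handles all the cases at once. One may wish to cite Example~\ref{ex:morphism-intervals} and the appendix's analysis of interleavings of interval modules (Section~\ref{sec:interl-interv-modul}) to shorten the separation computation.
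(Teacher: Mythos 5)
Your proof is correct and follows essentially the same strategy as the paper: exhibit an uncountable family in $\cfid\cap\qtame$ (with uniformly bounded distance to $0$) that is uniformly separated in the interleaving distance, and note that any dense set would have to inject into it via a ball-packing argument. The paper's family is indexed by binary sequences $\alpha$ with $M_\alpha=\bigoplus_n I_n^{(\alpha)}$, where $I_n^{(\alpha)}$ is one of two overlapping intervals depending on $\alpha_n$, giving pairwise distance exactly $1$; your version indexes by subsets of $\mathbb{N}$ with presence/absence of disjoint intervals $[2k,2k+1)$, giving distance $\geq \tfrac12$ — slightly simpler to verify but the same idea.
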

\begin{proof} 
We assign to each binary sequence, $\alpha=(\alpha_n)_{n\geq 1}$ where $\alpha_n\in \{0,1\}$, a persistence module. See Figure~\ref{fig:binary}.
For $n \geq 1$, define 	
\begin{equation*}
I_{n}^{(\alpha)} = \begin{cases}
[2n-1,2n+1),  & \alpha_n=0 \\
[2(n-1),2n+2),  &  \alpha_n=1
\end{cases}
\end{equation*} and let 
$M_\alpha=\bigoplus_{n=1}^{\infty} I_n^{(\alpha)}$.  
Then $M_{\alpha}$ is a persistence module in $\cfid$, $\cid$, $\pfd$, $\rid$, and $\qtame$
and $d_I(M_{\alpha},0) \leq 2$.

\begin{figure}
\begin{center}
\begin{tikzpicture}[point/.style={fill,circle,inner sep=1.7pt,blue}]
\draw[->] (0,0) -- (7,0)node[right]{$x$};
\draw[->] (0,0) -- (0,7)node[above]{$y$};
\foreach \x [evaluate=\x as \lab using int(2*\x)] in {1,2,3,4,5,6}{
\draw[thin](\x,-0.07)--node[below=0.3mm]{$\lab$}++(0,0.2); 
\draw[thin](-0.07,\x)--node[left=0.3mm]{$\lab$}++(0.2,0);  
}
\draw[->,thick,red](0,0)--node[above=3mm,right]{}(7,7); 
\node[point] at (0,2){}; 
\node[point] at (1,3){};
\node[point] at (2,4){};
\node[point] at (3,5){};
\node[point] at (4,6){};
		
\node[point] at (0.5,1.5){};
\node[point] at (1.5,2.5){};
\node[point] at (2.5,3.5){};
\node[point] at (3.5,4.5){};
\node[point] at (4.5,5.5){};
		
\draw[-,thick,blue](4.5,6.5)--node[above right=3mm]{$1$}(5,7); 
\draw[-,thick,blue](5,6)--node[above right=3mm]{$0$}(5.5,6.5); 
\end{tikzpicture}

\caption{Persistence modules corresponding to binary sequences, which are used in the proof of Theorem~\ref{thm:not-separable}.} \label{fig:binary}
\end{center}
\end{figure}
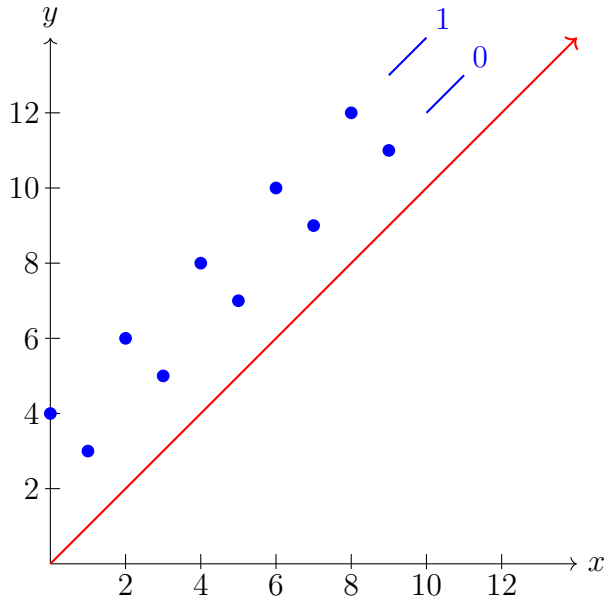

The set $\{M_\alpha \ | \ \alpha  \ \text{is \ a \ binary \ sequence} \}$ is uncountable and for all pairs of binary sequences $\alpha\neq \beta$, we have  $d_I(M_\alpha,M_\beta) = 1$.  Then any dense subset of $\cfid$, $\cid$, $\pfd$, or $\rid$, contains a point in an open ball centered at each $M_\alpha$ of radius $\frac{1}{2}$ and thus cannot be countable.
The same is true 
for the subspace of $\cid$ with finite distance to $\Zero$, and
for $\cid \cap \qtame$ and $\cfid \cap \qtame$. 
\end{proof}

\subsection{Countability}

A topological space is said to be a \emph{first countable} if it has a countable basis at each of its points.

\begin{lemma} \label{lem:first-countable}
  An extended pseudometric space is first countable.
\end{lemma}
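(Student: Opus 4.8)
The plan is to show that at every point $x$ of an extended pseudometric space $(X,d)$, the collection of balls with rational radius forms a countable neighborhood basis. Concretely, I would set $\mathcal{B}_x = \{ B_{1/n}(x) \mid n \in \mathbb{N}, n \geq 1\}$ and argue this is a countable basis at $x$.

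The argument has two steps. First, each $B_{1/n}(x)$ is an open set containing $x$, which is immediate from the discussion in Section~\ref{sec:pseudometric}: open balls are open and $d(x,x) = 0 < 1/n$. Second, given any open set $O$ with $x \in O$, by the definition of the topology there exists $r > 0$ with $B_r(x) \subset O$; choosing $n$ with $1/n < r$ (possible since $1/n \to 0$) gives $B_{1/n}(x) \subset B_r(x) \subset O$ by the triangle inequality. Hence every neighborhood of $x$ contains a member of $\mathcal{B}_x$, so $\mathcal{B}_x$ is a neighborhood basis at $x$, and it is countable. Since $x$ was arbitrary, $X$ is first countable.

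There is no real obstacle here; this is the standard metric-space argument, and the only point worth noting is that extendedness (allowing $d(x,y) = \infty$) causes no trouble, since the balls $B_{1/n}(x)$ have finite radius and the reasoning only uses finite radii. The presence of non-Hausdorff behavior (points at distance $0$) is likewise irrelevant to first countability, which is purely about neighborhood bases at individual points. I would simply write out the two steps above in a few lines.

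\begin{proof}
  Let $(X,d)$ be an extended pseudometric space and let $x \in X$. We claim that $\mathcal{B}_x = \{B_{1/n}(x) \mid n \geq 1\}$ is a countable basis at $x$. Each $B_{1/n}(x)$ is an open set containing $x$, since open balls are open and $d(x,x) = 0 < 1/n$. Now let $O$ be any open set with $x \in O$. By the definition of the topology there exists $r > 0$ with $B_r(x) \subset O$. Choose $n \geq 1$ with $1/n < r$. Then for $y \in B_{1/n}(x)$ we have $d(x,y) < 1/n < r$, so $B_{1/n}(x) \subset B_r(x) \subset O$. Hence every neighborhood of $x$ contains a member of $\mathcal{B}_x$, so $\mathcal{B}_x$ is a countable basis at $x$. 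Since $x$ was arbitrary, $X$ is first countable.
\end{proof}
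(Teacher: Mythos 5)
Your proof is correct and takes essentially the same approach as the paper, which uses precisely the countable collection $\{B_{1/n}(x) \mid n \in \mathbb{N}\}$ as the local base at $x$; you have simply spelled out the (easy) verification that the paper leaves implicit.
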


\begin{proof}
  Let $x$ be a point in the space. 
  Then the countable collection of open balls $\{B_{\frac{1}{n}}(x) \ | \ n\in \mathbb{N}\}$ is the desired local base at $x$.
\end{proof}

A space $X$ is \emph{compactly generated} if 
a set $A \subset X$ is open if each $A \cap C$ is open in $C$ for each compact subspace $C \subset X$.
Equivalently, a set $B \subset X$ is closed if each $B \cap C$ is closed in $C$ for each compact subspace $C \subset X$.
The following is well known. 

\begin{lemma} \label{lem:compactly-generated}
   If a space is first countable then it is compactly generated.
\end{lemma}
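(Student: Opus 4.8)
The claim is a standard point-set fact: first countable implies compactly generated. I would prove the equivalent closed-set formulation. One direction is trivial: if $B \subset X$ is closed, then $B \cap C$ is closed in $C$ for every subspace $C$, compact or not, since the subspace topology pulls back closed sets. So the content is the converse. Suppose $B \subset X$ has the property that $B \cap C$ is closed in $C$ for every compact $C \subset X$; I want to show $B$ is closed, i.e. that $B$ contains all its limit points (equivalently, since $X$ is first countable, that $B$ is sequentially closed and hence closed).

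\textbf{Key steps.} First I would recall that in a first countable space, a set is closed if and only if it is sequentially closed: if $x_n \to x$ with $x_n \in B$ for all $n$, then $x \in B$. (This is the standard sequence lemma, which uses the countable nested local base at $x$.) So it suffices to take a convergent sequence $(x_n)$ in $B$ with limit $x \in X$ and show $x \in B$. The trick is to package the sequence together with its limit into a compact set: let $C = \{x_n \st n \geq 1\} \cup \{x\}$. I would verify that $C$ is compact --- given any open cover of $C$, some open set $U$ contains $x$, hence contains all but finitely many $x_n$ by convergence, and the remaining finitely many points are covered by finitely many more open sets. Now apply the hypothesis: $B \cap C$ is closed in $C$. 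Since each $x_n \in B \cap C$ and $x_n \to x$ within the space $C$, and $B \cap C$ is a closed subset of $C$, the limit $x$ lies in $B \cap C$, hence $x \in B$. This shows $B$ is sequentially closed, hence closed, hence $X$ is compactly generated.

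\textbf{Anticipated obstacle.} There is really no hard step here --- the argument is entirely routine --- but the one place to be careful is making sure the two ingredients (the sequence lemma for first countable spaces, and compactness of a convergent-sequence-plus-limit set) are stated in a self-contained way, since the paper has promised to include proofs of such standard facts for the benefit of readers without a point-set topology background. I would therefore spell out both the compactness verification of $C$ and the reduction of closedness to sequential closedness, rather than citing them as ``well known.'' Everything works verbatim for extended pseudometric spaces; no special care is needed for the extended-distance or zero-distance features, since the only input used is first countability, which was established in Lemma~\ref{lem:first-countable}.
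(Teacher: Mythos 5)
Your proof is correct and is essentially the same argument as the paper's: both form the compact set consisting of a convergent sequence together with its limit and apply the closed-set hypothesis to it. The only cosmetic difference is in the framing — the paper starts from a limit point of $B$ and uses first countability to extract a sequence in $B$ converging to it, while you start from a convergent sequence in $B$ and reduce closedness to sequential closedness via the sequence lemma — but these are two ways of saying the same thing, and the core step (compactness of the sequence-plus-limit set, then closedness of $B \cap C$ in $C$) is identical.
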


\begin{proof}
  For $B \subset X$, assume that $B \cap C$ is closed in $C$ for each compact subspace $C \subset X$. Let $x$ be a \emph{limit point} of $B$. That is, every neighborhood of $x$ contains point of $B$ other than $x$.
Since $X$ is first countable, there is a sequence of points $(x_i)$ converging to $x$.
Now $(x_i) \cup \{x\}$ is compact, so by assumption $B \cap ((x_i) \cup \{x\})$ is closed in $(x_i) \cup \{x\}$. 
Since $(x_i) \subset B$ it follows that $x \in B$.
Therefore $B$ is closed.
\end{proof}

A topological space is said to be \emph{second countable} if it has a countable basis.
A topological space $X$ is said to be \emph{Lindel\"of} if every open cover of $X$ admits a countable subcover. 

\begin{lemma} \label{lem:tfae}
   For an extended pseudometric space the following properties are equivalent:
   \begin{enumerate}
   \item second countable; 
   \item separable; and
   \item Lindel\"of.
   \end{enumerate}
\end{lemma}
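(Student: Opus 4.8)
The plan is to prove the cycle of implications $(1)\Rightarrow(3)\Rightarrow(2)\Rightarrow(1)$, which is the cleanest route since each arrow is short. Throughout I would work directly in the extended pseudometric space $(X,d)$, using only that open balls form a base for the topology (established earlier in Section~\ref{sec:pseudometric}) and the elementary fact that $d(x,y)<\infty$ is an equivalence relation partitioning $X$ into pieces, each at infinite distance from the others.

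\textbf{$(1)\Rightarrow(3)$:} Suppose $X$ has a countable base $\mathcal{B}$. Given an open cover $\mathcal{U}$, for each $B\in\mathcal{B}$ that is contained in some member of $\mathcal{U}$, pick one such member $U_B$. The collection $\{U_B\}$ is countable; I would check it is a cover by noting that every $x\in X$ lies in some $U\in\mathcal{U}$, hence (since $\mathcal{B}$ is a base) in some $B\in\mathcal{B}$ with $B\subseteq U$, so $x\in B\subseteq U_B$. This is the standard argument and carries over verbatim.

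\textbf{$(3)\Rightarrow(2)$:} Suppose $X$ is Lindel\"of. For each $n\geq 1$, the cover $\{B_{1/n}(x)\st x\in X\}$ has a countable subcover $\{B_{1/n}(x^n_k)\st k\geq 1\}$. Let $D=\{x^n_k\st n,k\geq 1\}$, a countable set. To see $D$ is dense, take any $x\in X$ and $\eps>0$; choosing $n$ with $1/n<\eps$, we have $x\in B_{1/n}(x^n_k)$ for some $k$, so $d(x,x^n_k)<1/n<\eps$, i.e.\ $D$ meets $B_\eps(x)$.

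\textbf{$(2)\Rightarrow(1)$:} Suppose $D=\{d_k\}$ is countable and dense. I claim $\mathcal{B}=\{B_{1/n}(d_k)\st n,k\geq 1\}$ is a base. Given an open set $O$ and $x\in O$, pick $r>0$ with $B_r(x)\subseteq O$, choose $n$ with $2/n<r$, and by density pick $d_k\in B_{1/n}(x)$; then $x\in B_{1/n}(d_k)$, and for any $y\in B_{1/n}(d_k)$ the triangle inequality gives $d(x,y)\leq d(x,d_k)+d(d_k,y)<2/n<r$, so $B_{1/n}(d_k)\subseteq B_r(x)\subseteq O$.

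The main point to be careful about — and the only place the ``extended'' and ``pseudo'' aspects could conceivably matter — is that none of these arguments are disrupted by infinite distances or by distinct points at distance $0$. They are not: the balls $B_r(x)$ are defined only for finite real $r$, the triangle inequality holds with values in $[0,\infty]$, and density is phrased purely in terms of balls of finite radius, so a point $x$ at infinite distance from all of $D$ simply cannot occur once $D$ is dense. Thus I expect no real obstacle here; the content is entirely the classical metric-space argument, included (as the authors note) for the benefit of readers without a point-set background.
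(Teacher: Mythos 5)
Your proof is correct, but it traverses the cycle in the opposite direction from the paper: you prove $(1)\Rightarrow(3)\Rightarrow(2)\Rightarrow(1)$, whereas the paper proves $(1)\Rightarrow(2)\Rightarrow(3)\Rightarrow(1)$. The two share only the general strategy of cycling through the three conditions; the individual arrows are different. Concretely, the paper's $(3)\Rightarrow(1)$ (a countable subcover of the $1/n$-balls gives a countable base) is replaced in your proof by the rational-radius-ball base built from a countable dense set in $(2)\Rightarrow(1)$, and the paper's $(1)\Rightarrow(2)$ (pick one point from each basic open set) is replaced by your $(3)\Rightarrow(2)$ (collect the centers of the countable subcovers of the $1/n$-ball covers). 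Your direction has a practical advantage: the paper's $(2)\Rightarrow(3)$ step is stated very tersely — choosing one $U_i\in\mathcal U$ containing each $x_i$ and asserting that density makes $\{U_i\}$ a subcover — and as written it does not obviously go through, since the radii $r_i$ with $B_{r_i}(x_i)\subset U_i$ are not under control; the usual careful proof routes through second countability, as your $(2)\Rightarrow(1)\Rightarrow(3)$ composite effectively does. Your closing remark about infinite distances and pseudometric degeneracy being harmless is accurate and worth saying.
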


\begin{proof}
Let $X$ be an extended pseudometric space.

$(1) \To (2)$: 
Assume that $X$ has a countable basis $\{B_i\}$. For each $i$, choose $x_i \in B_i$. Then for each $x \in X$ and $r>0$, there exists $i$ such that $B_i \subset B_r(x)$. So $\{x_i\}$ is a countable dense subset of $X$.

$(2) \To (3)$:
Assume that $X$ has a countable dense subset $\{x_i\}$.
Let $\mathcal{U}$ be an open cover of $X$. 
For each $i$, choose $U_i \in \mathcal{U}$ with $x_i \in U_i$. 
Since $U_i$ is open, $U_i \supset B_{r_i}(x_i)$ for some $r_i>0$. 
Since $\{x_i\}$ is dense, $\{U_i\}$ is a countable subcover.

$(3) \To (1)$:  
Assume that $X$ has the Lindel\"of property.
For each $n \geq 1$, 
let $\mathcal{U}_n$ be a countable subcover of the open cover
$\{B_{\frac{1}{n}} (x) \ | \ x\in X\}$.
Then $\mathcal{U} := \cup_n \mathcal{U}_n$
is a countable basis for $X$.
%
\end{proof}

\subsection{Completeness}

An extended pseudometric space is said to be \emph{complete} if every Cauchy sequence converges (see the end of Section~\ref{sec:pseudometric}).

\begin{theorem} \label{thm:not-complete}
The spaces $\pfd$, $\fid$, $\ffid$ and $\ffids$ are not complete. 
\end{theorem}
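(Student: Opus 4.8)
The plan is to exhibit, in each of the four spaces $\pfd$, $\fid$, $\ffid$, $\ffids$, a Cauchy sequence that does not converge. Since $\ffids \subset \ffid \subset \fid \subset \pfd$ and each is a subspace of the next, it suffices to construct a single Cauchy sequence lying in $\ffids$ whose only candidate limits (in the ambient extended pseudometric sense) fail to lie even in $\pfd$; then the same sequence witnesses incompleteness of all four spaces simultaneously. The natural choice is a sequence of finite direct sums of small interval modules contained in $[c,d]$ whose ``limit'' has infinitely many summands of a fixed length, hence is not pointwise finite-dimensional.

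Concretely, fix $c<d$ and pick a sequence of pairwise disjoint finite subintervals $J_k \subset [c,d]$ with $\length(J_k) = \frac{d-c}{2^{k+1}}$ (say), so that $\sum_k \length(J_k) < \infty$. Define $M_n = \bigoplus_{k=1}^n J_k \in \ffids$. For $m > n$, the Converse Algebraic Stability Theorem gives $d_I(M_n, M_m) = d_I(0, \bigoplus_{k=n+1}^m J_k) \le \frac12 \max_{k>n}\length(J_k) = \frac{d-c}{2^{n+2}} \to 0$, so $(M_n)$ is Cauchy. The putative limit is $M_\infty = \bigoplus_{k=1}^\infty J_k$. First I would check that $d_I(M_n, M_\infty) \to 0$ by the same estimate, so $M_\infty$ is a limit in the ambient space of all persistence modules; then, by the triangle inequality, any limit $L$ in one of our four spaces would satisfy $d_I(L, M_\infty) = 0$.

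The main step is then to show that no element of $\pfd$ has interleaving distance $0$ from $M_\infty$, which will force the sequence to have no limit in $\pfd$ (nor in its subspaces $\fid, \ffid, \ffids$). Here I would arrange the $J_k$ so that they all contain a common point: replace the disjointness above with, e.g., $J_k = [a - \frac{d-c}{2^{k+1}}, a + \frac{d-c}{2^{k+1}}]$ for a fixed $a \in (c,d)$, keeping $\length(J_k)\to 0$ and $J_k \subset [c,d]$; the Cauchy estimate is unchanged. Now $M_\infty(a)$ is infinite-dimensional, so $M_\infty \notin \pfd$, but more is needed: I must rule out some \emph{other} $L \in \pfd$ with $d_I(L, M_\infty) = 0$. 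The cleanest route is to show directly that any persistence module $L$ that is $\eps$-interleaved with $M_\infty$ for all $\eps > 0$ must have $\dim L(a) = \infty$: an $\eps$-interleaving forces $\rank L(a-\eps \le a+\eps) \ge \rank M_\infty(a - 2\eps' \le a + 2\eps')$-type bounds via the interleaving triangles, and since every $J_k$ contains the interval $[a-\eps_0, a+\eps_0]$ for all small $\eps_0$, the relevant rank of $M_\infty$ is infinite for each fixed small threshold; pushing $\eps \to 0$ then shows $L(a)$ cannot be finite-dimensional.

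The step I expect to be the main obstacle is this last one: making precise that zero interleaving distance from $M_\infty$ is incompatible with pointwise finite-dimensionality. The subtlety is that $d_I(L,M_\infty)=0$ does not give a single interleaving, only $\eps$-interleavings for every $\eps>0$, so one must extract a uniform lower bound on $\dim L(a')$ for $a'$ ranging over a small interval around $a$, not at $a$ itself, and then argue that finite-dimensionality of $L$ at all such $a'$ is still violated — using that the ranks $\rank M_\infty(a - s \le a + s)$ are infinite for every $s > 0$ because infinitely many summands $J_k$ contain $[a-s, a+s]$. Once this rank-persistence bound through interleaving triangles (diagram~\eqref{cd:triangle}) is set up, the contradiction is immediate, and the theorem follows.
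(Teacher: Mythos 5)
Your plan matches the paper's: take $M_n = \bigoplus_{k=1}^n J_k$ where the $J_k$ are nested, shrinking intervals around a fixed point $a$, observe that the sequence is Cauchy because $\length(J_k)\to 0$, and argue that the ``limit'' $M_\infty = \bigoplus_k J_k$ is infinitely supported at $a$. You also correctly flag the crux as showing that no $L \in \pfd$ can satisfy $d_I(L, M_\infty) = 0$. But your proposed way through that step rests on a false claim: that ``the ranks $\rank M_\infty(a-s\le a+s)$ are infinite for every $s>0$ because infinitely many summands $J_k$ contain $[a-s,a+s]$.'' Precisely because $\length(J_k)\to 0$ --- which you \emph{must} have for the Cauchy estimate, since a tail summand of length at least $2L$ forces $d_I(M_n,M_m)\ge L$ --- only finitely many $J_k$ contain any fixed $[a-s,a+s]$ with $s>0$. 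So $\rank M_\infty(a-s\le a+s)$ is \emph{finite} for every fixed $s>0$; it is merely unbounded as $s\to 0$. As written, your chain of inequalities compares two finite quantities for each $\eps$ and yields no contradiction, and the ``uniform lower bound on $\dim L(a')$ over a neighborhood of $a$'' you aim for does not exist.

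The missing ingredient, which is exactly how the paper closes this gap, is a uniform bound supplied by the candidate limit $L$ rather than by $M_\infty$: since $L(a-\eps\le a+\eps)$ factors through $L(a)$, one has $\rank L(a-\eps\le a+\eps)\le \dim L(a) =: R$ \emph{for all $\eps>0$ simultaneously}, and $R<\infty$ because $L\in\pfd$. Combined with the interleaving inequality $\rank M_\infty(a-2\eps\le a+2\eps)\le\rank L(a-\eps\le a+\eps)$, which you had correctly, this forces $\rank M_\infty(a-2\eps\le a+2\eps)\le R$ for every $\eps>0$, contradicting the unboundedness as $\eps\to 0$ (with the paper's choice $J_k=[-\tfrac{1}{2^k},\tfrac{1}{2^k})$, that rank is roughly $\log_2(1/\eps)$). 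With this replacement your argument becomes essentially the paper's; without it, the step you yourself identified as the main obstacle is not actually surmounted.
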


\begin{proof}  
For $n \geq 0$, let $M_n = \bigoplus_{k=0}^n \left[-\frac{1}{2^k},\frac{1}{2^k}\right)$.
Then the sequence $(M_n) \subset \ffid \subset \fid \subset \pfd$, and $(M_n) \to M = \bigoplus_{k=0}^{\infty} \left[-\frac{1}{2^k},\frac{1}{2^k}\right)$, which is not in $\pfd$.

We claim that there is no $N \in \pfd$ such that $d_I(M,N)=0$.
Assume $N \in \pfd$.
Then $\rank N(0) = R < \infty$.
Thus for all $\eps > 0$, $\rank N(-\eps \leq \eps) \leq R$.
Now for all $\eps>0$, $M$ and $N$ are $\eps$-interleaved,
and thus $\rank M(-2\eps \leq 2\eps) \leq \rank N(-\eps \leq \eps) \leq R$, which is a contradiction.

If we adjust $M_n$ to lie in $[c,d]$, then the same argument shows that $\ffids$ is not complete.
\end{proof}

\begin{theorem} \label{thm:complete} 
  In the class of persistence modules and the class of q-tame persistence modules, every Cauchy sequence has a limit.
Furthermore, the space $\cid \cap \qtame$ is complete,
and so is $\cfid \cap \qtame$.
\end{theorem}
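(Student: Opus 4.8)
The plan is to prove the four assertions of Theorem~\ref{thm:complete} by first establishing a ``limit-building'' construction for Cauchy sequences of persistence modules, and then checking that this construction stays inside the relevant class. Given a Cauchy sequence $(M_n)$ in $\pers$, the first step is to pass to a rapidly-converging subsequence: by the usual trick, choose $n_1 < n_2 < \cdots$ so that $d_I(M_{n_i}, M_{n_{i+1}}) < 2^{-i}$, and it suffices (by the remark at the end of Section~\ref{sec:pseudometric}) to produce a limit of this subsequence. Relabel and assume $d_I(M_i, M_{i+1}) < 2^{-i}$ from the start, and fix $\eps_i$-interleavings with $\eps_i < 2^{-i}$ between $M_i$ and $M_{i+1}$, given by morphisms $\varphi^{(i)}, \psi^{(i)}$. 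The idea is then to build the limit $M$ as a (co)limit in the functor category $\vect^{\R}$ of a suitable diagram assembled from the shifted modules $M_i T_{\delta_i}$ for appropriate shrinking shifts $\delta_i = \sum_{j \geq i} \eps_j$, using the interleaving morphisms as the structure maps. This is the one place where, as the introduction warns, ``basic ideas from category theory'' enter: one forms the diagram whose objects are the $M_i$'s shifted so that all the $\varphi$'s and $\psi$'s become compatible, and takes its colimit $M$; functoriality of colimits and the triangle identities $(\psi T_\eps)\varphi = M\eta_{2\eps}$ from Definition~\ref{def:interleaving} force the resulting morphisms $M_i \to M T_{\delta_i}$ and $M \to M_i T_{\delta_i}$ to satisfy the interleaving relations, so that $d_I(M_i, M) \leq \delta_i \to 0$, i.e. $M_i \to M$.

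Next I would handle the q-tame case. If each $M_i$ is q-tame, I must check that the limit $M$ constructed above is again q-tame. Fix $a < b$ and set $\eta = (b-a)/3$; choose $i$ large enough that $\delta_i < \eta$. Since $M$ and $M_i$ are $\delta_i$-interleaved, the rank-stability estimate (the algebraic stability/interleaving inequality used already in Section~\ref{sec:compactness}) gives $\rank M(a \leq b) \leq \rank M_i(a - \delta_i \leq b + \delta_i) < \infty$ because $M_i$ is q-tame and $a - \delta_i < b + \delta_i$. Hence $M$ is q-tame, proving the second assertion.

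For the statement that $\cid \cap \qtame$ is complete, I would combine the previous two paragraphs with Theorem~\ref{thm:rad} and Proposition~\ref{prop:rad}. Given a Cauchy sequence $(M_n)$ in $\cid \cap \qtame$, the construction produces a limit $M \in \pers$ that is q-tame (by the q-tame argument) — but $M$ need not itself be countably interval-decomposable. Here is where $\rad$ does its work: by Theorem~\ref{thm:rad}, $\rad M \in \cid$ (and $\rad M \in \qtame$), and by Proposition~\ref{prop:rad}, $d_I(M, \rad M) = 0$. Therefore $\rad M \in \cid \cap \qtame$ and $d_I(M_n, \rad M) \leq d_I(M_n, M) + d_I(M, \rad M) = d_I(M_n, M) \to 0$, so $\rad M$ is a limit of $(M_n)$ inside $\cid \cap \qtame$. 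For $\cfid \cap \qtame$ the argument is the same once one observes that the radical of a q-tame module whose interleaving limit came from $\cfid$-modules has only finite intervals in its decomposition: the intervals of $\rad M$ are precisely those indexed by rational pairs $q < r$ with bounded $A_{q,r}$, and an unbounded interval together with the $\cfid$ hypothesis would force the distances $d_I(M_n, M)$ to fail to go to zero (an unbounded summand has infinite distance to every finite-interval module). Alternatively one intersects with the finite-distance-to-$0$ subspace to rule out infinite intervals directly.

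The main obstacle I expect is making the colimit construction of the limit module $M$ fully rigorous — in particular, verifying that the colimit of the shifted diagram of $M_i$'s, with the interleaving maps as structure morphisms, really does yield morphisms $M_i \to MT_{\delta_i}$ and $MT_{\delta_i} \to M_i$ satisfying the two triangle identities of an interleaving, and that the ``obvious'' choices of shifts $\delta_i = \sum_{j\geq i}\eps_j$ make all the requisite squares and triangles commute at the level of the diagram. Everything downstream (q-tameness via rank bounds, descent to $\cid$ and $\cfid$ via $\rad$) is then comparatively routine given the results already proved in Section~\ref{sec:set-theory}. One should also be slightly careful that passing to the rapidly-converging subsequence is legitimate: a Cauchy sequence has a limit iff one of its subsequences does, which is exactly the observation recorded at the end of Section~\ref{sec:pseudometric}.
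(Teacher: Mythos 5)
Your overall plan --- pass to a rapidly converging subsequence, build a candidate limit module by some (co)limit in the functor category, check q-tameness via a rank bound, and then descend to $\cid$ and $\cfid$ by replacing the candidate with its radical using Theorem~\ref{thm:rad} and Proposition~\ref{prop:rad} --- matches the paper's strategy, and your downstream steps are essentially the paper's own. The gap is precisely where you said you expected trouble: the construction of the candidate module and, in particular, why it is interleaved with the $M_k$.

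First, the diagram you describe does not exist as a chain. The interleaving morphisms $\varphi^{(i)}: M_i \To M_{i+1} T_{\eps_i}$ and $\psi^{(i)}: M_{i+1} \To M_i T_{\eps_i}$ \emph{both} shift forward in time by $\eps_i$. Setting $X_i := M_i T_{-\delta_i}$ with $\delta_i = \sum_{j\geq i}\eps_j$, you get $\varphi^{(i)} T_{-\delta_i}: X_i \to X_{i+1}$ on the nose (since $\delta_{i+1}=\delta_i - \eps_i$), but $\psi^{(i)} T_{-\delta_{i+1}}: X_{i+1} \to M_i T_{2\eps_i - \delta_i}$, which is not $X_i$; no choice of the $\delta_i$ makes both families of morphisms land on a common shifted chain, because the $\psi$'s necessarily overshoot by $2\eps_i$. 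Second, and more fundamentally, even once the correct index poset is set up (the paper's $(\mathbb{R}\times\mathbb{N},\leq)$), a single colimit --- i.e.\ the left Kan extension $A$ --- only gives you canonical morphisms $M_k T_{-1/2^k} \to A$ from the colimit cocone. The morphisms \emph{out of} the candidate toward $M_k T_{1/2^k}$, which you also need for an interleaving, do not come ``for free from functoriality of colimits''; they require a compatible cocone from the whole diagram into $M_k$, and checking this amounts to redoing the hard part of the proof. The paper's trick, which your proposal is missing, is to build \emph{both} the colimit $A$ (of the forward system) and the limit $B$ (of the backward system), and to set $M := \im(\theta)$ for the canonical map $\theta: A\to B$. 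The factorization $A \twoheadrightarrow M \hookrightarrow B$ then simultaneously provides $M_k \to MT_{1/2^k}$ (through $A$) and $M \to M_k T_{1/2^k}$ (through $B$), and the two triangle identities reduce to the commuting trapezoids depicted in Figures~\ref{fig:Cauchy} and~\ref{fig:horseshoe}. Your hedge ``(co)limit'' is well-founded: neither a colimit nor a limit alone does the job cleanly, and the image of the colimit-to-limit map is what you want.

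A minor side point: in the q-tame step your rank bound $\rank M(a\le b) \le \rank M_i(a - \delta_i \le b + \delta_i)$ has the interleaving shift going the wrong way; the factorization through $M_i$ gives $\rank M(a\le b) \le \rank M_i(a + \delta_i \le b - \delta_i)$ once $\delta_i < (b-a)/2$. This is harmless since either quantity is finite when $M_i$ is q-tame, but it is worth writing correctly.
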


\begin{proof}
Let $(M'_n)_{n\geq 1}$ be a Cauchy sequence of persistence modules.
For each $k \geq 0$, choose a natural number $n_k$ so that  $d_I(M'_m,M'_n)< \frac{1}{2^{k}}$ for all $m,n \geq n_k$. 
Let $M_k$ denote $M'_{n_k}$.
Thus $(M_k)$ is a subsequence of $(M'_n)$ so that for all $k \geq 0$, $M_k$ and $M_{k+1}$ are $\frac{1}{2^{k}}$-interleaved. By the definition of  interleaving, there exist natural transformations $\varphi_{k} : M_k  \Rightarrow M_{k+1}  T_{\frac{1}{2^{k}}} $ and $\psi_{k} : M_{k+1}  \Rightarrow M_k  T_{\frac{1}{2^{k}}}$ such that the triangles corresponding to \eqref{cd:triangle} commute.

Now we define shifted versions of $\varphi$ and $\psi$.
For $k \geq 0$,
let $\alpha^k = \varphi_k T_{-\frac{1}{2^{k-1}}}: M_{k-1} T_{-\frac{1}{2^{k-1}}} \To M_k T_{-\frac{1}{2^k}}$, and
$\beta^k = \psi_k T_{\frac{1}{2^{k}}}: M_{k} T_{\frac{1}{2^{k}}} \To M_{k-1} T_{\frac{1}{2^{k-1}}}$.
Let $a \in \mathbb{R}$.
For every $k \geq 1$, $\alpha^k_a: M_{k-1}(a-\frac{1}{2^{k-1}}) \to M_k(a-\frac{1}{2^k})$
and $\beta^k_a: M_k(a+\frac{1}{2^k}) \to M_{k-1}(a+\frac{1}{2^{k-1}})$.
Thus we have a direct system of vector spaces  
\begin{equation} \label{eq:direct}
  M_0(a - 1) \xto{\alpha^1_a} 
  M_1(a-\frac{1}{2}) \xto{\alpha^2_a} 
  M_2(a-\frac{1}{4}) \xto{\alpha^3_a}
  M_3(a-\frac{1}{8}) \xto{\alpha^4_a} \cdots
\end{equation}
and an inverse system of vector spaces 
\begin{equation} \label{eq:inverse}
  \cdots
  \xto{\beta^4_a} M_3(a+\frac{1}{8})
  \xto{\beta^3_a} M_2(a+\frac{1}{4})
  \xto{\beta^2_a} M_1(a+\frac{1}{2})
  \xto{\beta^1_a} M_0(a+1)
\end{equation}
given in Figure~\ref{fig:Cauchy}.
Note that it follows from the definition of interleaving that each of the trapezoids in Figure~\ref{fig:Cauchy} commute.
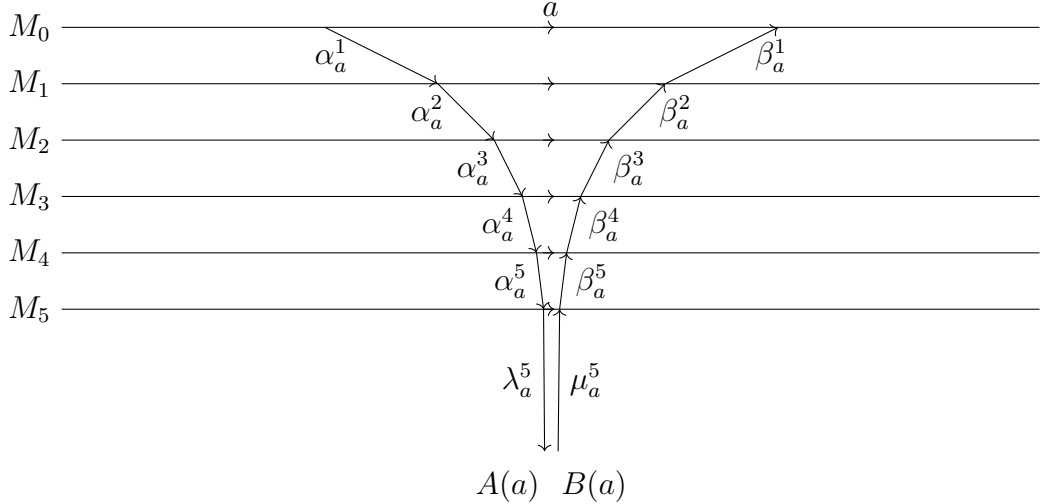
\begin{figure}[h!]
	\begin{tikzpicture}
	\draw (8,0) -- (21,0);
	\draw (8,-0.75) -- (21,-0.75);
	\draw (8,-1.5) -- (21,-1.5);
	\draw (8,-2.25) -- (21,-2.25);
	\draw (8,-3) -- (21,-3);
	\draw (8,-3.75) -- (21,-3.75);
	
	\node[left] at (8,0) {$M_0$};
	\node[left] at (8,-0.75) {$M_1$};
	\node[left] at (8,-1.5) {$M_2$};
	\node[left] at (8,-2.25) {$M_3$};
	\node[left] at (8,-3) {$M_4$};
	\node[left] at (8,-3.75) {$M_5$};
	\node[above] at (14.5,0) {$a$};
	\node[below left] at (14.5,-5.75) {$A(a)$};
	\node[below right] at (14.5,-5.75) {$B(a)$};
	
	\draw[->] (11.5,0) -- node[left=3mm]{$\alpha_a^1$} (13,-0.75);
	\draw[->] (13,-0.75) -- node[left=1.5mm]{$\alpha_a^2$} (13.75,-1.5);
	\draw[->] (13.75,-1.5) -- node[left=1mm]{$\alpha_a^3$} (14.125,-2.25);
	\draw[->] (14.125,-2.25) -- node[left=0.5mm]{$\alpha_a^4$} (14.3125,-3);
	\draw[->] (14.3125,-3) -- node[left=0.25mm]{$\alpha_a^5$} (14.40625,-3.75);
	\draw[->] (14.40625,-3.75) -- node[left]{$\lambda_a^5$} (14.42,-5.64);
	
	\draw[->] (14.6,-5.64) -- node[right]{$\mu_a^5$} (14.62,-3.75);
	\draw[->] (14.62,-3.75) -- node[right=0.25mm]{$\beta_a^5$} (14.71325,-3);
	\draw[->] (14.71325,-3) -- node[right=0.5mm]{$\beta_a^4$} (14.90075,-2.25);
	\draw[->] (14.90075,-2.25) -- node[right=1mm]{$\beta_a^3$} (15.27575,-1.5);
	\draw[->] (15.27575,-1.5) -- node[right=1.5mm]{$\beta_a^2$} (16.02575,-0.75);
	\draw[->] (16.02575,-0.75) -- node[right=3mm]{$\beta_a^1$} (17.52575,0);
	
	\draw[->]  (14.4,0) -- (14.55,-0);
	\draw[->]  (14.4,-0.75) -- (14.55,-0.75);
	\draw[->]  (14.4,-1.5) -- (14.55,-1.5);
	\draw[->]  (14.4,-2.25) -- (14.55,-2.25);
	\draw[->]  (14.4,-3) -- (14.55,-3);
	\draw[->]  (14.4,-3.75) -- (14.55,-3.75);
	
	\end{tikzpicture}
\caption{A direct system of vector spaces and an inverse system of vector spaces in a Cauchy sequence of persistence modules.}
\label{fig:Cauchy}
\end{figure}
Let $A(a)$ be the colimit (i.e. direct limit) of $\eqref{eq:direct}$, and
let $B(a)$ be the limit (i.e. inverse limit)  of $\eqref{eq:inverse}$.
For each $k \geq 0$, we have maps 
$\lambda^k_a: M_k(a-\frac{1}{2^k}) \to A(a)$ and
$\mu^k_a: B(a) \to M_k(a+\frac{1}{2^k})$.
By the universal properties of the colimit and the limit, we have a map $\theta_a: A(a) \to B(a)$, and
\begin{equation}
  \label{eq:lambda-theta-mu}
  \mu^k_a \theta_a \lambda^k_a = M_k(a-\textstyle\frac{1}{2^k} \leq a+\textstyle\frac{1}{2^k}).
\end{equation}
Let $M(a)$ denote the image of $\theta_a$.
Thus, $\theta_a$ factors as follows.
\begin{equation} \label{cd:theta-a}
  \begin{tikzcd}[row sep=small]
    A(a) \arrow[rr, "\theta_a"] \ar[dr,"\rho_a"'] &  & B(a)\\
    & M(a) \arrow[ur,"\iota_a"']
  \end{tikzcd}
\end{equation}

Now observe that all of these constructions are functorial.
Thus, we have persistence modules $A$, $B$ and $M$.
We also have natural transformations $\lambda^k: M_k T_{-\frac{1}{2^k}} \To A$  
and
$\mu^k: B \To M_k T_{\frac{1}{2^k}}$. 
In addition we have the following commutative diagram of natural transformations.
\begin{equation*} 
  \begin{tikzcd}[row sep=small]
    A \arrow[rr, "\theta"] \ar[dr,"\rho"'] &  & B\\
    & M \arrow[ur,"\iota"']
  \end{tikzcd}
\end{equation*}
These fit into the commutative diagram in Figure~\ref{fig:Cauchy-with-limit}, where we have corresponding arrows for all $a \in \mathbb{R}$.
\begin{figure}[h!]
	\begin{tikzpicture}
	\draw (8,0) -- (21,0);
	\draw (8,-0.75) -- (21,-0.75);
	\draw (8,-1.5) -- (21,-1.5);
	\draw (8,-2.25) -- (21,-2.25);
	\draw (8,-3) -- (21,-3);
	\draw (8,-3.75) -- (21,-3.75);
	\draw (8,-5.75) -- (21,-5.75);
	
	\node[left] at (8,0) {$M_0$};
	\node[left] at (8,-0.75) {$M_1$};
	\node[left] at (8,-1.5) {$M_2$};
	\node[left] at (8,-2.25) {$M_3$};
	\node[left] at (8,-3) {$M_4$};
	\node[left] at (8,-3.75) {$M_5$};
	\node[left] at (8,-5.75) {$A, B, M, \rad M$};
	\node[below] at (14.5,-5.75) {$a$};
	
	\draw[->] (11.5,0) -- (13,-0.75);
	\draw[->] (13,-0.75) -- (13.75,-1.5);
	\draw[->] (13.75,-1.5) -- (14.125,-2.25);
	\draw[->] (14.125,-2.25) -- (14.3125,-3);
	\draw[->] (14.3125,-3) -- (14.40625,-3.75);
	\draw[->] (14.40625,-3.75) -- (14.42,-5.64);
	
	\draw[->] (14.6,-5.64) -- (14.62,-3.75);
	\draw[->] (14.62,-3.75) -- (14.71325,-3);
	\draw[->] (14.71325,-3) -- (14.90075,-2.25);
	\draw[->] (14.90075,-2.25) -- (15.27575,-1.5);
	\draw[->] (15.27575,-1.5) -- (16.02575,-0.75);
	\draw[->] (16.02575,-0.75) -- (17.52575,0);
	
	\draw[->]  (14.4,0) -- (14.55,-0);
	\draw[->]  (14.4,-0.75) -- (14.55,-0.75);
	\draw[->]  (14.4,-1.5) -- (14.55,-1.5);
	\draw[->]  (14.4,-2.25) -- (14.55,-2.25);
	\draw[->]  (14.4,-3) -- (14.55,-3);
	\draw[->]  (14.4,-3.75) -- (14.55,-3.75);
	\end{tikzpicture}
			
\caption{A particular subsequence of a Cauchy sequence of persistence modules and some persistence modules in the limit.}
\label{fig:Cauchy-with-limit}
\end{figure}
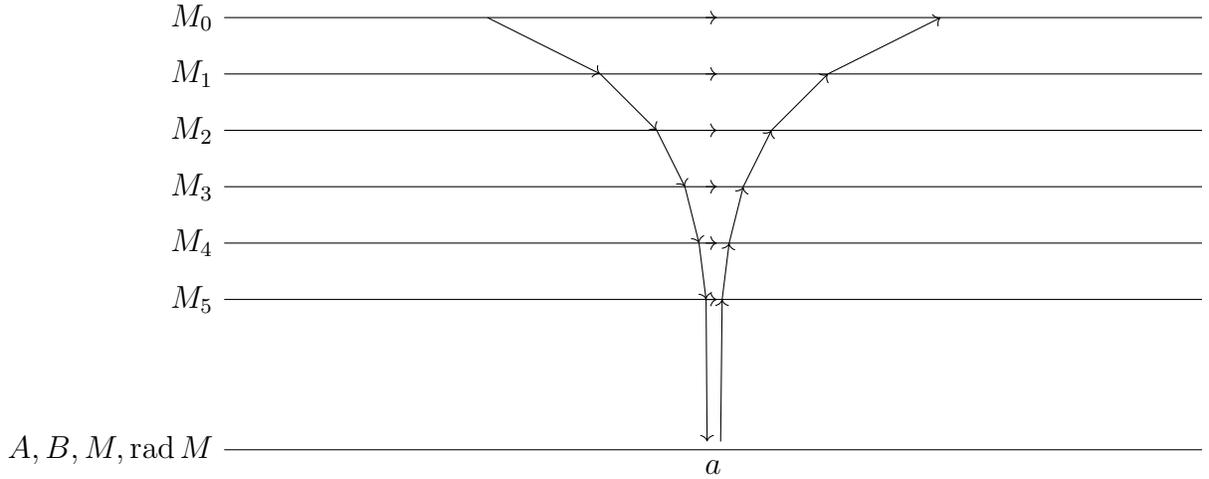

Let $a \in \mathbb{R}$ and $k\geq 1$. 
Define $b = a + \frac{1}{2^{k-1}}$.
Then we have the following bi-infinite sequence.
\begin{multline} \label{eq:nu}
  \cdots 
  \xto{\beta_a^{k+3}} M_{k+2}(a+\frac{1}{2^{k+2}}) 
  \xto{\beta_a^{k+2}} M_{k+1}(a+\frac{1}{2^{k+1}}) 
  \xto{\beta_a^{k+1}} M_{k}(a+\frac{1}{2^{k}}) \\
  \xto{\alpha_b^{k+1}} M_{k+1}(b-\frac{1}{2^{k+1}}) 
  \xto{\alpha_b^{k+2}} M_{k+2}(b-\frac{1}{2^{k+2}}) 
  \xto{\alpha_b^{k+3}} \cdots
\end{multline}
Notice that the left part of this sequence is an initial part of \eqref{eq:inverse}
and the right part of this sequence is a terminal part of \eqref{eq:direct}.
It follows that \eqref{eq:nu} has limit $B(a)$ and colimit $A(b)$, and there is an induced map
$\nu:B(a) \to A(b)$.
We obtain the commutative diagram in Figure~\ref{fig:horseshoe}.

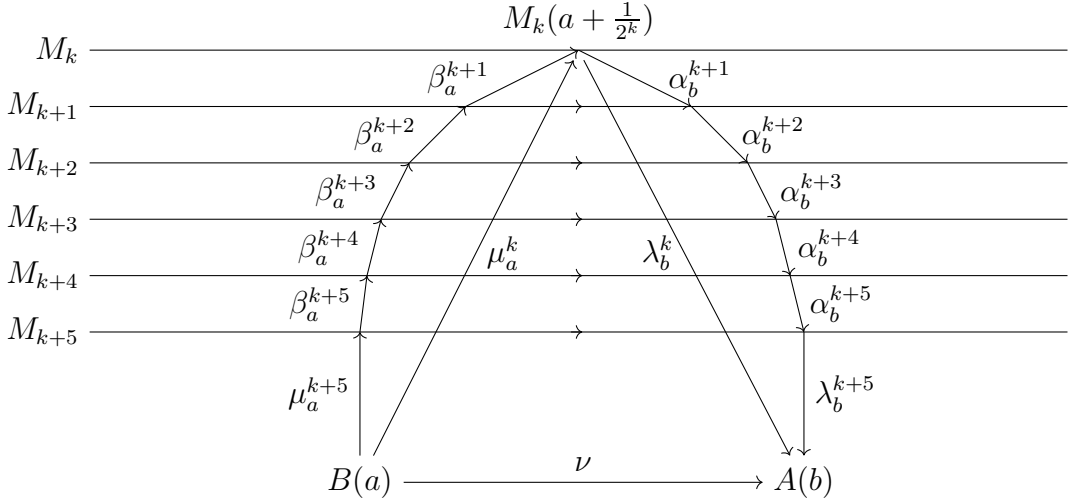
\begin{figure}[h!]
	\begin{tikzpicture}
	\draw (8,0) -- (21,0);
	\draw (8,-0.75) -- (21,-0.75);
	\draw (8,-1.5) -- (21,-1.5);
	\draw (8,-2.25) -- (21,-2.25);
	\draw (8,-3) -- (21,-3);
	\draw (8,-3.75) -- (21,-3.75);
	
	\node[left] at (8,0) {$M_k$};
	\node[left] at (8,-0.75) {$M_{k+1}$};
	\node[left] at (8,-1.5) {$M_{k+2}$};
	\node[left] at (8,-2.25) {$M_{k+3}$};
	\node[left] at (8,-3) {$M_{k+4}$};
	\node[left] at (8,-3.75) {$M_{k+5}$};
	\node[above] at (14.5,0) {$M_k(a+\frac{1}{2^k})$};
	\node (B) at (11.59375,-5.75) {$B(a)$};
	\node (A) at (17.49625,-5.75) {$A(b)$};
	
	\draw[<-] (14.5,0) -- node[left=3mm]{$\beta_a^{k+1}$} (13,-0.75);
	\draw[<-] (13,-0.75) -- node[left=1.5mm]{$\beta_a^{k+2}$} (12.25,-1.5);
	\draw[<-] (12.25,-1.5) -- node[left=1mm]{$\beta_a^{k+3}$} (11.875,-2.25);
	\draw[<-] (11.875,-2.25) -- node[left=0.5mm]{$\beta_a^{k+4}$} (11.6875,-3);
	\draw[<-] (11.6875,-3) -- node[left=0.25mm]{$\beta_a^{k+5}$} (11.59375,-3.75);
	\draw[<-] (11.59375,-3.75) -- node[left]{$\mu_a^{k+5}$} (B);
	\draw[->] (14.5,0) -- node[right=3mm]{$\alpha_b^{k+1}$} (16,-0.75);
	\draw[->] (16,-0.75) -- node[right=1.5mm]{$\alpha_b^{k+2}$} (16.75,-1.5);
	\draw[->] (16.75,-1.5) -- node[right=1mm]{$\alpha_b^{k+3}$} (17.125,-2.25);
	\draw[->] (17.125,-2.25) -- node[right=0.5mm]{$\alpha_b^{k+4}$} (17.3125,-3);
	\draw[->] (17.3125,-3) -- node[right=0.25mm]{$\alpha_b^{k+5}$} (17.49625,-3.75);
	\draw[->] (17.49625,-3.75) -- node[right]{$\lambda_b^{k+5}$} (A); 
	
	\draw[->]  (14.4,-0.75) -- (14.55,-0.75);
	\draw[->]  (14.4,-1.5) -- (14.55,-1.5);
	\draw[->]  (14.4,-2.25) -- (14.55,-2.25);
	\draw[->]  (14.4,-3) -- (14.55,-3);
	\draw[->]  (14.4,-3.75) -- (14.55,-3.75);
	\draw[->] (B) -- node[above]{$\nu$} (A);
        \draw[shorten >=1.5mm,->] (B) -- node[right]{$\mu_a^k$} (14.5,0);
        \draw[shorten >=1.5mm,<-] (A) -- node[left]{$\lambda_b^k$} (14.5,0);
	\end{tikzpicture}
\caption{The bi-infinite sequence in \eqref{eq:nu}, its limit and colimit, and three induced maps.}
\label{fig:horseshoe}
\end{figure}

By the universal properties of limit and colimit, we have the following commutative diagram.
\begin{equation*}
  \begin{tikzcd}
    A(a) \ar[d,"\theta_a"'] \ar[r,"A(a\leq b)"] & A(b) \ar[d,"\theta_b"] \\
    B(a) \ar[r,"B(a\leq b)"'] \ar[ur,"\nu"] & B(b)
  \end{tikzcd}
\end{equation*}
By the commutativity of the bottom right part of this diagram, we have that $\im B(a \leq b) \subset \im \theta_b$.
So we have the following commutative diagram.
\begin{equation*}
  \begin{tikzcd}[row sep=scriptsize]
    M(a) = \im \theta_a \ar[r,hookrightarrow,"\iota_a"] \ar[drr,"M(a\leq b)"'] & B(a) \ar[r,"\nu"] \ar[dr,"B(a\leq b)"] & A(b) \ar[d,"\rho_b"] \\
    & & \im \theta_b = M(b)
  \end{tikzcd}
\end{equation*}
Thus
\begin{equation} \label{eq:nu2}
  \rho_b \nu \iota_a = B(a \leq b)|_{\im \theta_a} = M(a\leq b).
\end{equation}

Now consider the following natural transformations.
\begin{equation} \label{eq:interleaving1}
  (\rho \lambda^k) T_{\frac{1}{2^k}} : M_k \To M T_{\frac{1}{2^k}} 
\end{equation}
\begin{equation} \label{eq:interleaving2}
  \mu^k \iota : M \To M_k T_{\frac{1}{2^k}}
\end{equation}
We claim that these natural transformations provide an interleaving (Section~\ref{sec:interleaving}).
That is, 
\begin{equation} \label{eq:interleaving-identities}
((\mu^k \iota) T_{\frac{1}{2^k}}) ((\rho \lambda^k) T_{\frac{1}{2^k}}) = 
M_k \eta_{\frac{1}{2^{k-1}}},
\text{ and }
((\rho \lambda^k) T_{\frac{1}{2^{k-1}}}) (\mu^k \iota) = M \eta_{\frac{1}{2^{k-1}}},
\end{equation}
where $\eta$ is the natural transformation defined in Section~\ref{sec:interleaving}.

A pair of natural transformations are equal if and only if their components are equal.
We remark that for a natural transformations $\alpha$ and $\beta$,
the natural transformation $\alpha T_x$ has components 
$(\alpha T_x)_a = \alpha_{a+x}$,
and the natural transformation $\beta \alpha$ has components $(\beta \alpha)_a = \beta_a \alpha_a$.

Let $a \in \mathbb{R}$. We will verify the identities in  \eqref{eq:interleaving-identities} using the $a$ component. For the left hand side of the first identity, we have
\begin{equation*}
  M(a) \xto{\lambda_{a+\frac{1}{2^k}}^k} A(a+\frac{1}{2^k}) \xto{\rho_{a+\frac{1}{2^k}}} M(a+\frac{1}{2^k}) \xto{\iota_{a+\frac{1}{2^k}}} B(a+\frac{1}{2^k}) \xto{\mu_{a+\frac{1}{2^k}}^k} M(a+\frac{1}{2^{k-1}}) = M(b).
\end{equation*}
Using \eqref{cd:theta-a} the composition of the inner two maps equals $\theta_{a+\frac{1}{2^k}}$.
Then using \eqref{eq:lambda-theta-mu} we see that the entire composition equals $M_k(a \leq a+\frac{1}{2^{k-1}})$, as desired.

For the left hand side of the second identity, we have
\begin{equation*}
  M(a) \xto{\iota_a} B(a) \xto{\mu_a^k} M(a+\frac{1}{2^k}) \xto{\lambda_{a+\frac{1}{2^{k-1}}}^k} A(a+\frac{1}{2^{k-1}}) \xto{\rho_{a+\frac{1}{2^{k-1}}}^k} M(a+\frac{1}{2^{k-1}}) = M(b).
\end{equation*}
Using the commutativity of the induced maps in Figure~\ref{fig:horseshoe}, the composition of the inner two maps equals $\nu$.
Then using \eqref{eq:nu2}, we see that the entire composition equals $M(a \leq b)$, as desired.

Thus \eqref{eq:interleaving1} and \eqref{eq:interleaving2} is a $\frac{1}{2^k}$-interleaving.
Therefore $M$ is a limit of the sequence $(M_k)$ and hence also a limit of the Cauchy sequence $(M'_n)$.
Thus any Cauchy sequence of persistence modules has a limit.

Now assume that each of the $M'_n$ are in $\qtame$.
We will show that $M \in \qtame$.
Let $a<b$, and choose $k\geq 0$ so that $\frac{1}{2^{k-1}} < b-a$.
Then the following diagram commutes.
\begin{equation*}
  \begin{tikzcd}[row sep=scriptsize]
    & M_k(a+\frac{1}{2^k}) \ar[rrr,"M_k(a+\frac{1}{2^k} < b-\frac{1}{2^k})"] & & & M_k(b-\frac{1}{2^k}) \ar[dr,"\rho_b \lambda_b^k"] \\
M(a) \ar[ur,"\mu_a^k \iota_a"] \ar[rrrrr,"M(a<b)"] & & & & & M(b)
  \end{tikzcd}
\end{equation*}
Since $M_k$ is q-tame, the top horizontal arrow has finite rank, and hence so does the bottom horizontal arrow.
Thus $M \in \qtame$. 
Therefore any Cauchy sequence of q-tame persistence modules has a limit.

Now since $M \in \qtame$,
by Theorem~\ref{thm:rad}, $\rad M \in \cid \cap \qtame$.
By Proposition~\ref{prop:rad}, $d_I(M, \rad M) = 0$.
Therefore by the triangle inequality, $\rad M$ is also a limit of the Cauchy sequence.
Thus $\cid \cap \qtame$ is complete.

Finally, assume that in addition, each $M'_n \in \cfid \cap \qtame$.
Since $M$ is $\frac{1}{2^k}$-interleaved with $M_k$, which does not contain any infinite intervals in its direct sum decomposition, neither does $M$.
Therefore $\rad M$ also does not contain any infinite intervals in its direct sum decomposition.
That is, $\rad M \in \cfid \cap \qtame$.
\end{proof}

Now we present a second, more concise proof of the main result in the previous proof.

\begin{proof}
  We may consider the diagram in Figure~\ref{fig:Cauchy} to be a functor $M: (\mathbb{R} \times \mathbb{N},\leq) \to \vect$, where 
$(\mathbb{R} \times \mathbb{N},\leq)$ is the poset generated by the inequalities $(a,k) \leq (b,k)$, where $a \leq b \in \mathbb{R}$ and $k \geq 0$, and $(a,k-1) \leq (a+\frac{1}{2^k},k)$ and $(a,k) \leq (a+\frac{1}{2^k},k-1)$, where $a \in \mathbb{R}$ and $k \geq 1$.

Now extend this poset to $(\mathbb{R} \times \overline{\mathbb{N}} , \leq)$, by adding the generating inequalities $(a,\infty) \leq (b,\infty)$ for all $a\leq b$, and $(a-\frac{1}{2^k},k) \leq (a,\infty)$ and $(a,\infty) \leq (a+\frac{1}{2^k},k)$.
\begin{equation*}
  \begin{tikzcd}[row sep=scriptsize]
    (\mathbb{R} \times \mathbb{N}, \leq) \arrow[r,"M"] \ar[d,hookrightarrow,"i"'] & \vect \\
    (\mathbb{R} \times \overline{\mathbb{N}},\leq) \ar[ur,dashrightarrow]
  \end{tikzcd}
\end{equation*}
We can extend the functor $M$ to $(\mathbb{R} \times \overline{\mathbb{N}},\leq)$ by taking either the left or the right Kan extension. We obtain functors corresponding to the diagram in Figure~\ref{fig:Cauchy-with-limit}, where $A = \Lan_{i}M$ and $B = \Ran_{i}M$. Then there is a canonical map $\theta: A \To B$, and the image of this map gives another extension of $M$. Abusing notation, let $M = \im(\theta): (\mathbb{R} \times \overline{\mathbb{N}},\leq) \to \vect$.

For $k \in \overline{\mathbb{N}}$, let $M_k = M(-,k)$. Then by construction, $M_{\infty}$ is $\frac{1}{2^k}$-interleaved with $M_k$.
Thus, $M_{\infty}$ is a limit of the Cauchy sequence.
\end{proof}

\subsection{Baire spaces}

Let $X$ be a topological space. A subspace $A \subset X$ has empty interior in $X$ if $A$ does not contain an open set in $X$.
The space $X$ is said to be a \emph{Baire space} if for any countable collection of closed sets in $X$ with empty interior in $X$, their union also has empty interior in $X$.

\begin{theorem}[Baire category theorem] \label{thm:baire}
  A complete extended pseudometric space is a Baire space.
\end{theorem}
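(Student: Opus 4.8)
The standard Baire category theorem for complete metric spaces proceeds by a nested-ball argument, and I would reduce the extended pseudometric case to it. The key observations are: (i) an extended pseudometric space $X$ decomposes as a disjoint union $X = \coprod_\alpha X_\alpha$ of its ``finite-distance components'' (the equivalence classes of the relation $d(x,y) < \infty$), and each $X_\alpha$ is both open and closed in $X$; (ii) passing to the quotient $Y = X/{\sim}$ by the relation $d(x,y) = 0$ gives a genuine metric space on each component, and the quotient map $\pi$ is open, continuous, and isometric on balls (as already noted in the proof of Lemma~\ref{lem:paracompact}); (iii) completeness of $X$ passes to completeness of each $Y_\alpha$. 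So it suffices to prove the theorem for a complete \emph{metric} space and then transfer.

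First I would record that being a Baire space is detected component-by-component: if $\{C_n\}$ is a countable family of closed subsets of $X$ with empty interior, then for each finite-distance component $X_\alpha$ (which is open in $X$), the sets $C_n \cap X_\alpha$ are closed in $X_\alpha$ with empty interior in $X_\alpha$ (an open set of $X_\alpha$ is open in $X$), so if each $X_\alpha$ is Baire then $\bigcup_n C_n$ has empty interior in each $X_\alpha$, hence in $X$. Next I would transfer along $\pi$: since $\pi$ is open and continuous with $d(x,y) = 0 \iff \pi(x) = \pi(y)$, a set $U \subset X$ is open iff $U = \pi^{-1}(\pi(U))$ with $\pi(U)$ open, and closed sets with empty interior in $X$ correspond to closed sets with empty interior in $Y_\alpha$; completeness of $X$ gives completeness of $Y_\alpha$ because a Cauchy sequence in $Y_\alpha$ lifts (choosing preimages) to a Cauchy sequence in $X$ whose limit projects to a limit downstairs. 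Thus the statement reduces to: a complete metric space is a Baire space.

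For the remaining metric-space statement I would give the classical proof, since the paper's stated intent is to keep the material self-contained. Let $Y$ be a complete metric space and $\{C_n\}_{n\geq 1}$ closed with empty interior; set $U_n = Y \setminus C_n$, each open and dense. It suffices to show $\bigcap_n U_n$ is dense (then $\bigcup_n C_n$ cannot contain any nonempty open set). Given a nonempty open $W$, inductively build a nested sequence of closed balls $\overline{B}(x_k, r_k) \subset U_k \cap B(x_{k-1}, r_{k-1})$ with $r_k \to 0$ (possible because each $U_k$ is open and dense), so that $(x_k)$ is Cauchy; its limit $x$ lies in every $\overline{B}(x_k,r_k)$, hence in $W \cap \bigcap_n U_n$. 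This completes the proof.

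\textbf{Main obstacle.} The nontrivial part is purely bookkeeping around the two reductions — checking that ``empty interior'' and ``closedness'' behave correctly under the open quotient map $\pi$ and under restriction to the clopen finite-distance components, and that completeness descends to $Y_\alpha$. None of this is deep, but it is where an unwarranted shortcut could slip in; the metric-space Baire argument itself is entirely standard and poses no difficulty.
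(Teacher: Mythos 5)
Your proof is correct, but it takes a genuinely different route from the paper's. The paper simply runs the classical nested-ball argument \emph{directly} in the extended pseudometric space: given closed sets $A_n$ with empty interior and a nonempty open $U$, it recursively chooses $x_{n}$ and shrinking radii $s_n\leq \tfrac{1}{2n}$ with $\overline{B_{s_{n+1}}(x_{n+1})}\subset \overline{B_{s_n}(x_n)}$ and $\overline{B_{s_n}(x_n)}\cap A_n=\emptyset$; the $(x_n)$ are then Cauchy, and since completeness was defined exactly so that Cauchy sequences converge (even without uniqueness of limits), a limit $x$ lies in every $\overline{B_{s_n}(x_n)}$, hence in $U\setminus\bigcup A_n$. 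Nothing in that argument uses finiteness of distances or the $T_0$ axiom, so no reduction is needed; the proof is short and keeps the paper self-contained (which was the authors' stated goal for the pseudometric-space lemmas). Your approach instead \emph{reduces} to the textbook Baire theorem: decompose into the clopen finite-distance components, pass to the metric quotient $Y=X/\!\sim$ along the open quotient map $\pi$, verify that closed-with-empty-interior and completeness transfer, and then cite the classical result. The steps you flag as bookkeeping are indeed all verifiable — open sets in a pseudometric space are automatically $\pi$-saturated, $\pi$ is open and continuous, each finite-distance class is clopen, and Cauchy sequences lift along $\pi$ — so your reduction is sound. The trade-off is that your route lets you treat the metric Baire theorem as a black box, at the cost of setting up the quotient machinery, whereas the paper's route avoids that machinery entirely by noticing the classical proof works verbatim. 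Either is acceptable; the paper's is shorter in context because the open quotient map is only mentioned, not fully developed, in Lemma~\ref{lem:paracompact}.
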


\begin{proof}
Let $X$ be an extended pseudometric space.
Let $\{A_n\}$ be a countable collection of closed sets in $X$ with empty interior in $X$.
We want to show that $\bigcup A_n$ has empty interior in $X$.
Let $U$ be an open set in $X$.
We will show that $U \not\subset \bigcup A_n$.
We need an $x \in U$ such that for all $n$, $x \not\in A_n$.
By assumption, there is a $x_1 \in U$ with $x_1 \not\in A_1$.
Since $U$ is open and $A_1$ is closed, there is an $r_1 \leq 1$ such that $B_{r_1}(x_1) \subset U$ and $B_{r_1}(x_1) \cap A_1 = \emptyset$.
Let $s_1 = \frac{r_1}{2}$.
Then $\overline{B_{s_1}(x_1)} \subset U$ and $\overline{B_{s_1}(x_1)} \cap A_1 = \emptyset$.
Given $B_{s_n}(x_n)$ with $\overline{B_{s_n}(x_n)} \cap A_n = \emptyset$,
then by assumption, there is a $x_{n+1} \in B_{s_n}(x)$ with $x_{n+1} \not\in A_{n+1}$.
Since $B_{s_n}(x)$ is open and $A_{n+1}$ is closed, there is an $r_{n+1} \leq \frac{1}{n+1}$ with $B_{r_{n+1}}(x_{n+1}) \subset B_{s_n}(y_n)$ and $B_{r_{n+1}}(x_{n+1}) \cap A_{n+1} = \emptyset$.
Let $s_{n+1} = \frac{r_{n+1}}{2}$.
Then $\overline{B_{s_{n+1}}(x_{n+1})} \subset \overline{B_{s_n}(y_n)}$ and $\overline{B_{s_{n+1}}(x_{n+1})} \cap A_{n+1} = \emptyset$.
Since $\overline{B_{s_1}(x_1)} \supset \overline{B_{s_2}(x_2)} \supset \overline{B_{s_3}(x_3)} \supset \cdots$ and $(s_n) \to 0$, $(x_n)$ is a Cauchy sequence in $X$.
Since $X$ complete, there exists a $x \in X$ such that $(x_n) \to x$. 
Since $x_n \in \overline{B_{s_1}(x_1)}$ for all $n$, $x \in \overline{B_{s_1}(x_1)} \subset U$.
Also, for all $n$, the sequence $x_n,x_{n+1},x_{n+2},\ldots$ in $\overline{B_{s_n}(x_n)}$ converges to $x$, so $x \in \overline{B_{s_n}(x_n)}$. Thus $x \not\in A_n$ for all $n$.
\end{proof}

\begin{corollary} \label{cor:baire}
  Hence $\cid \cap \qtame$ and $\cfid \cap \qtame$ are Baire spaces.
\end{corollary}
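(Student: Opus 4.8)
The plan is to obtain this as an immediate consequence of the two results just established: the Baire category theorem for complete extended pseudometric spaces (Theorem~\ref{thm:baire}) and the completeness statement of Theorem~\ref{thm:complete}. First I would note that $\cid \cap \qtame$ and $\cfid \cap \qtame$ are genuine sets — each is a subset of $\cid$, which is a set by Proposition~\ref{prop:cid-set} — and that each carries the interleaving distance, which is an extended pseudometric. So both are extended pseudometric spaces in the sense of Section~\ref{sec:pseudometric}.

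Next I would recall that Theorem~\ref{thm:complete} asserts precisely that $\cid \cap \qtame$ is complete and that $\cfid \cap \qtame$ is complete. Applying Theorem~\ref{thm:baire} to each of these complete extended pseudometric spaces then yields that each is a Baire space, which is exactly the claim. There is essentially no obstacle here: the work has already been done in the proofs of Theorems \ref{thm:baire} and \ref{thm:complete}, and the corollary is just the composition of those two implications. The only point worth a sentence of care is confirming that one is entitled to speak of these collections as topological (extended pseudometric) spaces at all, i.e.\ that they are sets rather than proper classes, which follows from the set-theoretic results of Section~\ref{sec:sets}.
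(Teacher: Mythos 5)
Your proposal is correct and follows exactly the route the paper intends: Theorem~\ref{thm:complete} gives completeness of $\cid \cap \qtame$ and $\cfid \cap \qtame$, and Theorem~\ref{thm:baire} then immediately yields the Baire property. The side remark about these being sets is a reasonable sanity check already guaranteed by Proposition~\ref{prop:cid-set}.
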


\subsection{Topological dimension} \label{sec:dim}

Let $X$ be a topological space.
A collection of subsets of $X$ has \emph{order} $m$ if there is a point in $X$ contained in $m$ of the subsets, but no point of $X$ is contained in $m+1$ of the subsets.
The \emph{topological dimension} of $X$ (also called the Lebesgue covering dimension) is the smallest number $m$ such that every open cover of $X$ has a  refinement (see Section~\ref{sec:compactness}) with order $m+1$.

\begin{theorem} \label{thm:embedding-cube}
  Let $N \geq 1$.  There exists an $\eps>0$ such that there is an isometric embedding of the cube $[0,\eps]^N$ with the $L^{\infty}$ distance into $\ffids$.
\end{theorem}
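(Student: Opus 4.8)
The plan is to exhibit, for suitable $\eps>0$, an explicit map from the cube $[0,\eps]^N$ into $\ffids$ and check that it is an isometry for the $L^\infty$ metric on the cube and the interleaving metric on the target. First I would pick $N$ pairwise disjoint closed subintervals of $[c,d]$, say $[c_1,d_1],\ldots,[c_N,d_N]$, each of the same length $\ell = \frac{d-c}{2N}$ (or any convenient common length), chosen so that consecutive intervals are separated by a gap of at least $\ell$; this forces $\eps := \frac{\ell}{2} = \frac{d-c}{4N}$ (or similar) to be small enough that shrinking any one interval by up to $2\eps$ keeps it inside $[c,d]$ and keeps all $N$ intervals pairwise ``far apart''. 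Then for $t = (t_1,\ldots,t_N) \in [0,\eps]^N$ define
\begin{equation*}
  \Phi(t) = \bigoplus_{k=1}^{N} \bigl[\, c_k + t_k,\; d_k - t_k \,\bigr),
\end{equation*}
an element of $\ffids$ since each summand is a finite subinterval of $[c,d]$. The goal is to show $d_I(\Phi(s),\Phi(t)) = \max_k |s_k - t_k|$ for all $s,t \in [0,\eps]^N$.

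The upper bound $d_I(\Phi(s),\Phi(t)) \leq \max_k |s_k-t_k|$ follows from the Converse Algebraic Stability Theorem stated in the excerpt: summand-by-summand, the intervals $[c_k+s_k, d_k-s_k)$ and $[c_k+t_k,d_k-t_k)$ are obtained from one another by shifting each endpoint by $|s_k-t_k|$, hence are $|s_k-t_k|$-interleaved (this is a standard shift-interleaving of interval modules, and is exactly the sort of computation carried out in the appendix on interleavings of interval modules, e.g. the paths constructed in Proposition~\ref{prop:path-component-fid}). Taking the direct sum gives a $\max_k|s_k-t_k|$-interleaving of $\Phi(s)$ and $\Phi(t)$.

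For the lower bound, fix the index $k^\ast$ achieving $\max_k |s_k-t_k|$ and let $\delta = |s_{k^\ast}-t_{k^\ast}|$. The key point is that because the $N$ base intervals are pairwise separated by more than $4\eps \geq 4\delta$, in any $\rho$-interleaving of $\Phi(s)$ and $\Phi(t)$ with $\rho < $ (half the gap) the interleaving morphisms cannot mix different summands: a nonzero morphism from an interval module $[a,b)$ shifted by $\rho$ into $[a',b')$ forces the intervals to overlap suitably (Example~\ref{ex:morphism-intervals} and the appendix), so the $k$-th summand of $\Phi(s)$ can only map to the $k$-th summand of $\Phi(t)$. Hence a small interleaving of $\Phi(s)$ and $\Phi(t)$ restricts to an interleaving of the corresponding single interval modules, and the interleaving distance between two interval modules $[c_{k^\ast}+s_{k^\ast}, d_{k^\ast}-s_{k^\ast})$ and $[c_{k^\ast}+t_{k^\ast}, d_{k^\ast}-t_{k^\ast})$ is exactly $\delta$ (the standard computation for interleaving distance of interval modules with the same ``shape'', again from the appendix). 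Therefore $d_I(\Phi(s),\Phi(t)) \geq \delta = \max_k |s_k-t_k|$.

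The main obstacle is the lower bound — specifically, rigorously ruling out ``cross terms'' in an interleaving between the two direct sums, i.e. showing that for $\rho$ below the chosen threshold no interleaving morphism can carry one summand's interval module into a different summand's. This requires the analysis of nonzero morphisms and interleavings between interval modules (the content of the appendix, Sections~\ref{sec:nonz-maps-interv} and~\ref{sec:interl-interv-modul}), together with the fact that the direct-sum decomposition into interval modules is essentially unique (Krull--Schmidt-type uniqueness for pfd modules, which is implicit in the Structure Theorem~\ref{thm:structure} and standard for these decompositions). Once the no-mixing property is established, everything reduces to the single-interval case, which is routine. I would also remark that since $\ffids$ embeds isometrically in every other space in Figure~\ref{fig:sets}, the same cube embeds in all of them, and conclude (Corollary~\ref{cor:top-dim}) that each of these spaces has infinite topological dimension.
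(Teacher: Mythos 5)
Your construction is essentially the paper's: choose $N$ well-separated subintervals of $[c,d]$, let the $k$-th coordinate of the cube control the size of the $k$-th summand, and verify isometry via the Converse Algebraic Stability Theorem (upper bound) plus the separation of the base intervals (lower bound). The only real difference is cosmetic — the paper fixes each left endpoint at $\tfrac{i}{N}$ and lets $x_i$ move only the right endpoint, whereas you shrink both endpoints symmetrically by $t_k$ — and both sketches leave the ``no mixing of summands'' step of the lower bound at the same informal level of detail.
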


\begin{proof}
  Assume $[c,d] = [0,1]$. The proof for the general case is similar.
Choose $\eps < \frac{1}{100N}$. 
Let $x = (x_1,\ldots,x_N) \in [0,\eps]^N$. 
We will define a map $x \mapsto M = M(x) = \bigoplus_{i=1}^N I_i$, where each interval $I_i = I_i(x_i)$ depends only on $x_i$.
We will choose $I_1,\ldots,I_N$ to be far from each other and far from the zero module but so that $I_i(x_i)$ is close to $I_i(x'_i)$ for any $x_i, x'_i \in [0,\eps]$.

For $1 \leq i \leq N$, let $I_i = \left[ \frac{i}{N}, \frac{i}{N} + \frac{1}{10N} + x_i \right)$.
Then $d_I(I_i(x_i),I_i(x'_i)) = \abs{x_i-x'_i} \leq \frac{1}{100N}$.
Also $d_I(I_i,0) \geq \frac{1}{20N}$.
Since for $i \neq j$, $I_i$ and $I_j$ are disjoint, 
and so we also have that $d_I(I_i,I_j) \geq \frac{1}{20N}$.
Therefore $d_I(M(x),M(x')) = \norm{x-x'}_{\infty}$.
\end{proof}

\begin{corollary} \label{cor:top-dim}
  The topological dimension of all of the topological spaces of persistence modules in Figure~\ref{fig:sets} is infinite.
\end{corollary}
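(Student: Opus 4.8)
The plan is to deduce this from the Cube Theorem (Theorem~\ref{thm:embedding-cube}) together with two standard facts of dimension theory: the topological (Lebesgue covering) dimension of the $N$-cube is exactly $N$, and covering dimension is monotone under passage to subspaces of metrizable spaces. First I would note that \ffids\ sits at the bottom of Figure~\ref{fig:sets}, so it is a metric subspace of each of \ffid, \fid, \cfid, \cid, \pfd, and \rid, with the subspace topology agreeing with the one induced by the restricted interleaving metric. Thus it suffices to show, for every $N\ge 1$, that each of these spaces contains a subspace of topological dimension at least $N$.

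Next, fix $N\ge 1$ and let $\eps=\eps(N)>0$ be as provided by Theorem~\ref{thm:embedding-cube}, so there is an isometric embedding $\iota\colon [0,\eps]^N \hookrightarrow \ffids$ (with $[0,\eps]^N$ carrying the $L^{\infty}$ metric). Being isometric, $\iota$ is a topological embedding, so $\iota([0,\eps]^N)$ is homeomorphic to $[0,\eps]^N$ and hence has topological dimension $N$. Since $\iota([0,\eps]^N)\subseteq \ffids \subseteq X$ for every space $X$ in Figure~\ref{fig:sets}, monotonicity of covering dimension yields $\dim X \ge N$. As $N$ was arbitrary, $\dim X=\infty$ for each such $X$.

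The only delicate point — and the one I expect to require the most care — is that the spaces in Figure~\ref{fig:sets} are pseudometric rather than metric (by Corollary~\ref{cor:t0} they are not even $T_0$), whereas the cleanest subspace theorem for covering dimension is usually stated for metrizable spaces. I would handle this exactly as in the proof of Lemma~\ref{lem:paracompact}, by passing to the metric quotient $Y=X/{\sim}$ with quotient map $\pi\colon X\to Y$. Since $\pi$ induces a bijection between the open sets of $X$ and those of $Y$, and covering dimension depends only on the lattice of open sets, $\dim X=\dim Y$; moreover the restriction of $\pi$ to the compact set $\iota([0,\eps]^N)$ is a continuous injection into the Hausdorff space $Y$ (injective because $\iota$ is isometric, so distinct cube points have positive interleaving distance), hence a homeomorphism onto its image. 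As $Y$ is metrizable and its image of the cube is homeomorphic to $[0,\eps]^N$, the metrizable subspace theorem gives $\dim Y\ge N$, so $\dim X\ge N$. One could instead simply invoke that covering dimension is monotone on subspaces of any pseudometrizable space — which follows from the metrizable case by the same open-set-lattice observation — citing Engelking's \emph{Dimension Theory} or Munkres for the underlying facts. Either way, no serious new work is needed; the content is entirely in Theorem~\ref{thm:embedding-cube}.
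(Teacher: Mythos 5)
Your argument is correct and is essentially the paper's proof: invoke Theorem~\ref{thm:embedding-cube} to place a copy of $[0,\eps]^N$ inside \ffids\ (hence inside each space of Figure~\ref{fig:sets}), and conclude via monotonicity of covering dimension that $\dim X\ge N$ for every $N$. The extra discussion of the pseudometric-versus-metric issue is a sound and welcome bit of care that the paper leaves implicit; it does not change the approach.
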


\begin{proof}
  Let $X$ be one of the spaces in Figure~\ref{fig:sets}. 
  Then by the previous theorem, for all $N \geq 1$, $\dim X \geq \dim [0,\eps]^N = N$. Thus $\dim X = \infty$.
\end{proof}

\section{Open questions}
\label{sec:questions}

We end with some unresolved questions.
\begin{itemize}
\item Are \cid\ and \cfid\ complete?
\item Can the results presented here be extended to multiparameter persistence modules and generalized persistence modules?
\end{itemize}

\subsection*{Acknowledgments}

The authors would like to that the anonymous referees for their helpful suggestions. 
In particular, we would like to thank the referee who contributed the proof that the enveloping distance from pointwise-finite dimensional persistence modules to q-tame persistence modules is zero.
We also thank Alex Elchesen for proofreading an earlier draft of the paper.
The first author would like to acknowledge the support of the Army Research Office, Award W911NF1810307, and the Southeast Center for Mathematics and Biology, an NSF-Simons Research Center for Mathematics of Complex Biological Systems, under National Science Foundation Grant No. DMS-1764406 and Simons Foundation Grant No. 594594.



\appendix

\section{The arithmetic of maps and interleavings of interval modules}
\label{sec:interval-arithmetic}

In this appendix, we give some basic results on interval modules, maps of interval modules, interleavings of interval modules, and neighborhoods of interval modules.

\subsection{Some relations between intervals}
\label{sec:interval-relations}

First we define some relations between intervals that will be useful in the following sections and describe some of their properties.

Recall that $I \subset \mathbb{R}$ is an \emph{interval} if $a, c \in I$ and $a \leq b \leq c$ then $b \in I$. It follows that the intersection of two intervals is an interval.

\begin{definition} \label{def:interval-po}
  For $A,B \subset \mathbb{R}$, define the relation $A \leq B$ if 
  \begin{enumerate}
  \item \label{it:int-po1} for all $a \in A$ there is a $b \in B$ such that $a \leq b$, and
  \item \label{it:int-po2} for all $b \in B$ there is an $a \in A$ such that $a \leq b$.
  \end{enumerate}
\end{definition}

\begin{lemma}
  This relation defines a partial order on intervals.
\end{lemma}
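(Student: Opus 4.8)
The plan is to verify the three defining properties of a partial order — reflexivity, antisymmetry, and transitivity — for the relation $\leq$ on intervals given in Definition~\ref{def:interval-po}. Reflexivity, $A \leq A$, is immediate: for any $a \in A$ we may take $b = a \in A$ with $a \leq a$, which gives both conditions~\eqref{it:int-po1} and~\eqref{it:int-po2} at once.

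For antisymmetry, suppose $A \leq B$ and $B \leq A$; we must show $A = B$. I would argue $A \subseteq B$ by contradiction (the reverse inclusion is symmetric). Suppose there is some $a \in A \setminus B$. Using $A \leq B$ applied to $a$, there is $b_1 \in B$ with $a \leq b_1$; since $a \notin B$ and $B$ is an interval, in fact $a < b_1$. Using $B \leq A$ applied to $b_1$, there is $a_1 \in A$ with $a_1 \leq b_1$. The key move is to find an element of $B$ that is strictly less than $a$ as well, so that $a$ lies strictly between two elements of $B$ and hence lies in $B$: apply $B \leq A$ condition~\eqref{it:int-po2}? — no; instead I would use $A \leq B$ condition~\eqref{it:int-po2} applied to $a$ is not what I want either. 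Let me reconsider: apply $B \leq A$'s condition~\eqref{it:int-po1} to no avail; the cleaner route is to apply $A \le B$'s condition \eqref{it:int-po2} with an appropriately chosen element. Concretely: take any $b \in B$. By $B \le A$ there is $a' \in A$ with $a' \le b$; by $A \le B$ there is $b' \in B$ with $a' \le b'$, and also (reusing $A\le B$ on $a$) we have $a < b_1 \in B$. The point is to produce $b_2 \in B$ with $b_2 \le a$: apply $A \le B$ condition~\eqref{it:int-po2} is stated for elements of $B$, so this does not directly give it. I would therefore instead exploit that $B \le A$ forces $\inf B \le \inf A$ in the appropriate sense and $A \le B$ forces $\sup A \le \sup B$ and $\inf A \le \inf B$ — unwinding the definitions shows $A$ and $B$ have the same infimum and supremum and the same membership of those endpoints, hence $A = B$.

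For transitivity, suppose $A \leq B$ and $B \leq C$; I must show $A \leq C$. For condition~\eqref{it:int-po1}: given $a \in A$, by $A \leq B$ there is $b \in B$ with $a \leq b$, and by $B \leq C$ there is $c \in C$ with $b \leq c$, so $a \leq c$. For condition~\eqref{it:int-po2}: given $c \in C$, by $B \leq C$ there is $b \in B$ with $b \leq c$, and by $A \leq B$ there is $a \in A$ with $a \leq b$, so $a \leq c$. Both checks are immediate chases.

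The main obstacle is antisymmetry: the two conditions in Definition~\ref{def:interval-po} are a mixed quantifier pair that encode, roughly, "$A$ does not stick out to the right of $B$" and "$A$ does not stick out to the left of $B$ at the sup end," and care is needed with open versus closed endpoints. The cleanest way to handle it rigorously is to translate $A \leq B$ into statements about $\inf$ and $\sup$ together with endpoint-membership, and I would isolate that translation as the technical heart of the argument rather than doing ad hoc element chases.
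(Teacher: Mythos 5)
Your reflexivity and transitivity verifications are correct and match the paper exactly: for reflexivity one takes $b=a$ in both conditions, and for transitivity one composes the two existential witnesses in each of the two conditions, which you do cleanly.

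The antisymmetry argument is where the proposal breaks down. You write ``apply $B \leq A$ condition~\eqref{it:int-po2}? --- no'' and abandon precisely the step that finishes the proof. Unwind the definition: in the relation $B \leq A$, the set $B$ plays the role of ``$A$'' in Definition~\ref{def:interval-po} and $A$ plays the role of ``$B$''; so condition~\eqref{it:int-po2} of $B \leq A$ says: for every $a \in A$ there is $b' \in B$ with $b' \leq a$. Combining this with condition~\eqref{it:int-po1} of $A \leq B$, which gives $b \in B$ with $a \leq b$, you have $b' \leq a \leq b$ with $b', b \in B$; since $B$ is an interval this forces $a \in B$. Hence $A \subseteq B$, and by symmetry $B \subseteq A$. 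This is the paper's argument, and it is short. Instead you meander through several blind alleys and then gesture at an unproved translation of the relation $\leq$ into statements about $\inf$, $\sup$, and endpoint membership; that translation is plausible but you neither state it precisely nor prove it (and endpoint cases, especially unbounded intervals, need care). As written, the antisymmetry half of the proof is not a proof. The fix is simply to use the step you dismissed.
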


\begin{proof}
  Let $A$, $B$, and $C$ be intervals. $A \leq A$ since for all $a \in A$, $a \leq a$. Assume $A \leq B$ and $B \leq A$. Let $a \in A$. Then 
by Definition \ref{def:interval-po} (\ref{it:int-po1}), there is $b \in B$ with $a \leq b$, and 
by Definition \ref{def:interval-po} (\ref{it:int-po2}), there is $b' \in B$ with $b' \leq a$. Since $B$ is an interval $a \in B$. Thus $A \subset B$. Similarly $B \subset A$.

Finally assume $A \leq B$ and $B \leq C$. 
For all $a \in A$ there is a $b \in B$ with $a \leq b$ and $c \in C$ with $b \leq c$. Thus $a \leq c$.
For all $c \in C$ there is a $b \in B$ with $b \leq c$ and $a \in A$ with $a \leq b$. Thus $a \leq c$. 
Therefore $A \leq C$.
\end{proof}

Let us define another relation.

\begin{definition}
  For $A,B \subset \mathbb{R}$, define $A \prec B$ if for all $a \in A$ and $b \in B$, $a \leq b$.
\end{definition}


\begin{lemma} \label{lem:interval-to}
   Let $I$ and $J$ be disjoint, nonempty intervals. Then $J \leq I$ iff $J \prec I$.
\end{lemma}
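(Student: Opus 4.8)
The plan is to prove the two implications separately, using the hypotheses that $I$ and $J$ are disjoint, nonempty intervals in $\mathbb{R}$. The easy direction is $J \prec I \Rightarrow J \leq I$: if every $a \in J$ and every $b \in I$ satisfy $a \leq b$, then for any fixed $a \in J$ and any fixed $b \in I$ (both sets nonempty) we immediately get the two conditions of Definition~\ref{def:interval-po}, namely that each element of $J$ is below some element of $I$ and each element of $I$ is above some element of $J$. So this direction requires only that both intervals be nonempty, not disjointness.

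For the converse $J \leq I \Rightarrow J \prec I$, I would argue by contradiction. Suppose $J \leq I$ but $J \not\prec I$, so there exist $a_0 \in J$ and $b_0 \in I$ with $b_0 < a_0$ (strict, since if $b_0 = a_0$ the intervals would not be disjoint). Now I want to produce a point in $J \cap I$, contradicting disjointness. Using $J \leq I$ condition~(\ref{it:int-po1}) applied to $a_0$, there is $b_1 \in I$ with $a_0 \leq b_1$. Then $b_0 < a_0 \leq b_1$ with $b_0, b_1 \in I$, so since $I$ is an interval, $a_0 \in I$; but $a_0 \in J$, contradicting $I \cap J = \emptyset$. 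Hence $J \prec I$.

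I expect the main (and only mild) obstacle to be making sure every case is covered: in particular checking that one really does get a \emph{strict} inequality $b_0 < a_0$ from the failure of $\prec$ combined with disjointness, and confirming that the interval property of $I$ (as recalled just before Definition~\ref{def:interval-po}: $x, z \in I$ and $x \leq y \leq z$ imply $y \in I$) applies with the inequalities in the right order. One should also note that condition~(\ref{it:int-po2}) of $J \leq I$ is not even needed for this direction — only~(\ref{it:int-po1}) is used — and that the roles could equally be played symmetrically, but the stated argument suffices. Everything else is routine.
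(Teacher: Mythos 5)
Your proposal is correct and takes essentially the same approach as the paper: the easy direction uses only nonemptiness, and the converse obtains $i < j \leq i'$ with $i, i' \in I$ and $j \in J$, forcing $j \in I$ by the interval property and contradicting disjointness. One tiny over-justification: the strict inequality $b_0 < a_0$ already follows from negating $a_0 \leq b_0$, with no appeal to disjointness needed at that step.
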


\begin{proof}
  See Figure~\ref{fig:disjoint}.
  Let $j \in J$. Then either condition implies that there is an $i \in I$ with $j \leq i$. The negation of either condition implies that there is an $i \in I$ with $i<j$. Since $I$ is an interval, this would imply that $j \in I$ which is a contradiction.
\end{proof}

\begin{figure}[h!]
	\begin{tikzpicture}
	\draw (13,0) -- (20,0);
	\draw (8.3,-0.75) -- (12,-0.75);

	\node[left] at (8,0) {$I$};
	\node[left] at (8,-0.75) {$J$};
	\end{tikzpicture}
        \caption{Two disjoint nonempty intervals.}
        \label{fig:disjoint}
\end{figure}
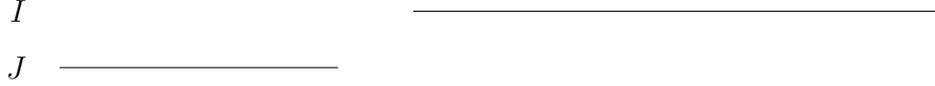

\begin{lemma} \label{lem:all-intervals}
  If $J$ and $I$ are intervals with $J \leq I$ then $J \setminus (I \cap J) = J \setminus I$ is an interval and $I \setminus (I \cap J) = I \setminus J$ is an interval.
\end{lemma}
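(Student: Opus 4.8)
The plan is to argue directly from the definitions, using the characterization of an interval as a subset of $\mathbb{R}$ closed under betweenness. I would prove that $J \setminus I$ is an interval; the statement for $I \setminus J$ follows by the symmetric argument (swapping the roles of $I$ and $J$ and using that $J \leq I$ gives us condition~(\ref{it:int-po2}) of Definition~\ref{def:interval-po}, which is exactly what is needed on the other side). Note first the elementary set-theoretic fact $J \setminus (I \cap J) = J \setminus I$, which requires no hypothesis.

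First I would dispose of trivial cases: if $J \subset I$, then $J \setminus I = \emptyset$, which is vacuously an interval, so assume $J \not\subset I$; similarly we may assume $I \cap J \neq \emptyset$, since if $I$ and $J$ are disjoint then $J \setminus I = J$ is already an interval. So fix $p, q \in J \setminus I$ with $p \leq r \leq q$, and I must show $r \in J \setminus I$. Since $J$ is an interval and $p,q \in J$, we get $r \in J$ immediately, so the content is showing $r \notin I$. Suppose for contradiction that $r \in I$. The key step is to produce an element of $I$ that witnesses a contradiction with $p \notin I$ or $q \notin I$. Pick any $x \in I \cap J$ (using our reduction that this set is nonempty). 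If $x \leq r$, then since $x, r \in I$ and $I$ is an interval, every point between $x$ and $r$ lies in $I$; combined with the relation $J \leq I$ I would use condition~(\ref{it:int-po1}) applied to $q \in J$ to find some $i \in I$ with $q \leq i$, and then $r \leq q \leq i$ with $r, i \in I$ forces $q \in I$, contradicting $q \in J \setminus I$. Symmetrically, if $r \leq x$, then using condition~(\ref{it:int-po2}) of $J \leq I$ — wait, more carefully: I would instead observe $p \leq r \leq x$ with $p \in J$, $x \in I \cap J \subset J$, so $r \in J$ (already known), and since $r, x \in I$ with $r \leq x$ all points between lie in $I$; then applying condition~(\ref{it:int-po1}) of $J \leq I$ to $p$ gives $i \in I$ with $p \leq i$, but that does not immediately help, so instead I would use that $I$ is an interval containing $r$ and $x$ together with the fact that $p < r$ and argue via a point of $I$ below $r$. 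The cleaner route: since $J \leq I$, condition~(\ref{it:int-po2}) says every element of $I$ has some element of $J$ below it — that is the wrong direction too.

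Let me restructure the main step as follows, which I expect to be the genuine obstacle to get exactly right. Assume $r \in I$ with $p \leq r \leq q$, $p,q \in J \setminus I$. Since $I \cap J \neq \emptyset$, pick $x \in I \cap J$; WLOG (by the left/right symmetry of the whole situation, swapping $\leq$ for $\geq$ is not legitimate since $J \leq I$ is asymmetric, so I will treat both subcases). Subcase $x \leq q$: then $r \leq q$, and I claim $q \in I$. Indeed $J \leq I$ condition~(\ref{it:int-po1}) applied to $q$ yields $i \in I$ with $q \leq i$; now $r \leq q \leq i$ and $r, i \in I$, so $q \in I$, contradiction. Subcase $q < x$: then since $p \in J$ and $x \in J$ with $p \leq x$ — actually I need $p \leq x$, which holds because $p \leq r \leq q < x$ — and I claim $p \in I$: apply $J \leq I$ condition~(\ref{it:int-po1}) to $p$ to get $i' \in I$ with $p \leq i'$; hmm, that gives an element above $p$, and I have $x \in I$ with... this still needs an element of $I$ below $p$, which is not guaranteed. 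So the clean statement must instead use: in subcase $q < x$, we have $p \le r \le q < x$ with $r, x \in I$, hence $q \in I$ (betweenness, $r \le q \le x$), contradiction with $q \in J\setminus I$ — no element-hunting needed. And in subcase $x \le q$: $x \le q$, $x \in I$, need another point of $I$ on the far side of $q$, which is supplied by condition~(\ref{it:int-po1}) of $J \le I$ applied to $q$. Thus the argument closes in both subcases, and the hypothesis $J \leq I$ is used exactly once (to push past the right endpoint). The symmetric claim for $I \setminus J$ is handled by the dual argument, this time invoking condition~(\ref{it:int-po2}) of $J \leq I$ — every $b \in I$ has an $a \in J$ with $a \leq b$ — to push past the *left* endpoint. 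I would write this up in a few lines, emphasizing the betweenness closure of $I$ and $J$ and the two directional conditions in Definition~\ref{def:interval-po} as the only tools.
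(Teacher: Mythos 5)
Your final argument is correct and is essentially the paper's proof: the content of the claim for $J\setminus I$ is that $b\notin I$ is preserved under betweenness, and you resolve it exactly as the paper does by applying condition~(\ref{it:int-po1}) of $J\le I$ to the right endpoint $q$ to obtain some $i\in I$ with $q\le i$, then invoking the interval property of $I$ on $r\le q\le i$; the claim for $I\setminus J$ is the dual, using condition~(\ref{it:int-po2}) on the left endpoint. The only caveat is that the excursion through $I\cap J\ne\emptyset$ and the case split on where $x\in I\cap J$ sits are superfluous: your ``subcase $x\le q$'' argument never actually uses $x$ and already handles the general situation, so the other subcase (and the nonemptiness reduction that enables it) can simply be deleted.
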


\begin{proof}
  Let $a,c \in J \setminus (I \cap J)$ and $a \leq b \leq c$.
  Since $J$ is an interval, $b \in J$. 
  Since $c \in J$ there is a $d \in I$ with $c \leq d$. Since $c \not\in I$ and $I$ is an interval, $b \not\in I$. Thus $b \in J \setminus (I \cap J)$.

  Let $a,c \in I \setminus (I \cap J)$ and $a \leq b \leq c$.
  Since $I$ is an interval $b \in I$.
  Since $a \in I$ there is a $x \in J$ with $x \leq a$. Since $a \not\in J$ and $J$ is an interval, $b \not\in J$. Thus $b \in I \setminus (I \cap J)$.
\end{proof}

\begin{lemma} \label{lem:i-leq-j}
   Let $I$ and $J$ be intervals with $J \leq I$. Then
   $J \setminus (I \cap J) \prec (I \cap J)$, and
   $(I \cap J) \prec I \setminus (I \cap J)$.
\end{lemma}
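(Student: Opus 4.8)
The plan is to prove each of the two claimed relations by contradiction, in each case invoking the appropriate one of the two defining conditions of $J \leq I$ together with the fact that $I$ and $J$ are intervals. As a preliminary remark, by Lemma~\ref{lem:all-intervals} all three sets $I \cap J$, $J \setminus (I \cap J)$ and $I \setminus (I \cap J)$ are intervals, so the statement is meaningful; if $I \cap J = \emptyset$ then both assertions hold vacuously, so one may assume $I \cap J \neq \emptyset$, although the argument below does not actually use this. Recall also that $a \in J \setminus (I \cap J)$ together with $a \in J$ forces $a \notin I$, and similarly $b \in I \setminus (I \cap J)$ forces $b \notin J$.

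For the first relation, I would take arbitrary $a \in J \setminus (I \cap J)$ and $x \in I \cap J$ and suppose, for contradiction, that $x < a$. Since $a \in J$ and $J \leq I$, Definition~\ref{def:interval-po}\,(\ref{it:int-po1}) produces an $i \in I$ with $a \leq i$. Then $x \leq a \leq i$ with $x, i \in I$, so since $I$ is an interval we get $a \in I$, contradicting $a \notin I$. Hence $a \leq x$ for all such $a, x$, which is exactly $J \setminus (I \cap J) \prec (I \cap J)$.

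For the second relation, I would take arbitrary $x \in I \cap J$ and $b \in I \setminus (I \cap J)$ and suppose, for contradiction, that $b < x$. Since $b \in I$ and $J \leq I$, Definition~\ref{def:interval-po}\,(\ref{it:int-po2}) produces a $j \in J$ with $j \leq b$. Then $j \leq b \leq x$ with $j, x \in J$, so since $J$ is an interval we get $b \in J$, contradicting $b \notin J$. Hence $x \leq b$ for all such $x, b$, which is exactly $(I \cap J) \prec I \setminus (I \cap J)$. There is no real obstacle here; the only point requiring care is selecting the correct half of the definition of $J \leq I$ in each case — condition (\ref{it:int-po1}) pushes an element of $J$ upward into $I$, whereas condition (\ref{it:int-po2}) supplies an element of $J$ lying below a given element of $I$ — and then using the interval property of $I$ (respectively $J$) to close the argument.
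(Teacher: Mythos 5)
Your proof is correct and takes essentially the same route as the paper: for each relation you assume a violation, invoke the appropriate half of Definition~\ref{def:interval-po} to produce a point of $I$ above (respectively a point of $J$ below) the offending element, and then use the interval property of $I$ (respectively $J$) to derive the contradiction. The paper's proof is word-for-word the same argument, differing only in notation.
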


\begin{proof}
  First note that if either $A$ or $B$ is empty then $A \prec B$.
   Suppose $j \in J \setminus (I \cap J)$ and $i \in I \cap J$ with $i < j$.
   Since $J \leq I$, there is an $i' \in I$ with $j \leq i'$. Since $I$ is an interval, $j \in I$, which is a contradiction.
   Thus, for all $j \in J \setminus (I \cap J)$ and for all $i \in I \cap J$, $j \leq i$. That is, $J \setminus (I \cap J) \prec (I \cap J)$.
   Similarly, let $j \in I \cap J$ and $i \in I \setminus (I \cap J)$ with $i < j$.
   Again, since $J \leq I$, there is a $j' \in J$ with $j'\leq i$. Since $J$ is an interval, $i \in J$, which is a contradiction.
\end{proof}

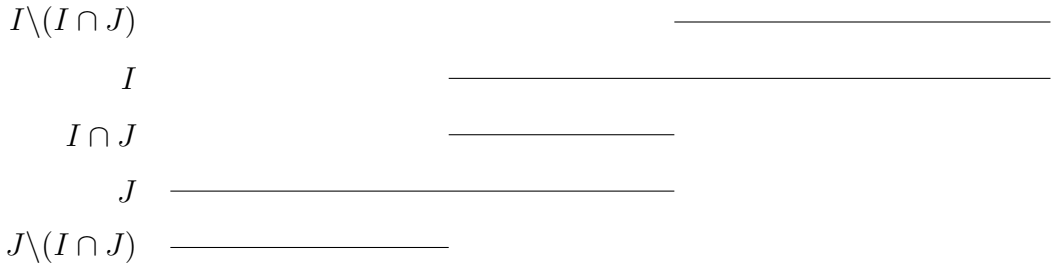
\begin{figure}[h!]
	\begin{tikzpicture}
	\draw (15,0) -- (20,0);
	\draw (12,-0.75) -- (20,-0.75);
	\draw (12,-1.5) -- (15,-1.5);
	\draw (8.3,-2.25) -- (15,-2.25);
	\draw (8.3, -3) -- (12,-3);

	\node[left] at (8,-0) {$I \backslash (I\cap J)$};
	\node[left] at (8,-0.75) {$I$};
	\node[left] at (8,-1.5) {$I\cap J$};
	\node[left] at (8,-2.25) {$J$};
	\node[left] at (8,-3) {$J \backslash (I \cap J)$};
	\end{tikzpicture}
\caption{The interval modules in Lemma~\ref{lem:all-intervals}, Lemma~\ref{lem:i-leq-j}, Proposition~\ref{prop:nonzero-map}, Lemma~\ref{lem:nonzero-map2}, and Corollary~\ref{cor:nonzero-map}.}
\end{figure}

\subsection{Nonzero maps of interval modules}
\label{sec:nonz-maps-interv}

In this section we characterize nonzero maps of interval modules.

\begin{proposition} \label{prop:nonzero-map}
   Let $I$ and $J$ be nonempty intervals. There is a nonzero map of persistence modules $f:I \to J$ if and only if $J \leq I$ and $I \cap J \neq \emptyset$.
\end{proposition}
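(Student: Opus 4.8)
The plan is to prove both directions by reducing to the interval-module structure. Recall an interval module $J$ has $J(a) = \gf$ for $a \in J$ and $0$ otherwise, with identity maps on the "inside". A morphism $f: I \to J$ is a collection of scalars $f_a \in \gf$ (the linear map $I(a) \to J(a)$, which is a map $\gf \to \gf$ when $a \in I \cap J$, and otherwise forced to be $0$), subject to the commuting squares \eqref{cd:morphism}: for $a \leq b$, $w_a^b \circ f_a = f_b \circ v_a^b$. So $f$ is nonzero iff $f_a \neq 0$ for some $a \in I \cap J$; in particular $I \cap J \neq \emptyset$ is necessary, and we may henceforth assume it.

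For the forward direction, suppose $f$ is nonzero, say $f_{a_0} \neq 0$ with $a_0 \in I \cap J$. I would first check the relation $J \leq I$ using Definition~\ref{def:interval-po}. For condition \eqref{it:int-po1} (every $a \in J$ has some $a' \in I$ with $a \leq a'$): take $a \in J$. If $a \leq a_0$ we are done since $a_0 \in I$. If $a > a_0$, consider the square for $a_0 \leq a$: $w_{a_0}^a \circ f_{a_0} = f_a \circ v_{a_0}^a$. Since $a_0, a \in J$, $w_{a_0}^a$ is the identity on $\gf$, so the left side is $f_{a_0} \neq 0$; hence $f_a \circ v_{a_0}^a \neq 0$, which forces $v_{a_0}^a \neq 0$, i.e. $a \in I$ (so take $a' = a$). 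For condition \eqref{it:int-po2} (every $a \in I$ has some $a' \in J$ with $a' \leq a$): take $a \in I$. If $a_0 \leq a$ we are done since $a_0 \in J$. If $a < a_0$, consider the square for $a \leq a_0$: $w_a^{a_0} \circ f_a = f_{a_0} \circ v_a^{a_0}$. Since $a, a_0 \in I$, $v_a^{a_0}$ is the identity on $\gf$, so the right side is $f_{a_0} \neq 0$; hence $w_a^{a_0} \circ f_a \neq 0$, forcing $f_a \neq 0$, and in particular $a \in J$ (take $a' = a$).

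For the converse, suppose $J \leq I$ and $I \cap J \neq \emptyset$. Define $f_a = 1 \in \gf$ for all $a \in I \cap J$ and $f_a = 0$ otherwise; I claim this is a morphism, and it is clearly nonzero. I need the square for each $a \leq b$ to commute. Both $v_a^b$ and $w_a^b$ are either identities or zero depending on whether the endpoints lie in $I$ resp. $J$. The only potential failure is when exactly one of $a, b$ lies in $I \cap J$. Using Lemma~\ref{lem:i-leq-j}, $I \cap J$ sits "between" $J \setminus (I\cap J)$ and $I \setminus (I \cap J)$ in the sense that $J\setminus(I\cap J) \prec (I\cap J) \prec I\setminus(I\cap J)$, and both differences are intervals by Lemma~\ref{lem:all-intervals}. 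With $a \leq b$: if $a \in I\cap J$ and $b \notin I \cap J$, then $b$ cannot be in $J\setminus(I\cap J)$ (that set is $\prec I\cap J \ni a \leq b$ would force... actually one checks $b \notin J$), so $b \notin J$, making $w_a^b = 0$ and $f_b = 0$, so both composites are $0$. If $b \in I \cap J$ and $a \notin I \cap J$, symmetrically $a$ cannot lie in $I\setminus(I\cap J)$, so $a \notin I$, making $v_a^b = 0$ and $f_a = 0$, so both composites are $0$. In all remaining cases $a, b$ are both in or both out of $I \cap J$ and the square commutes trivially.

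I expect the main obstacle to be the converse direction's verification that the square commutes — specifically, pinning down exactly which of the "boundary" cases can occur and invoking the right lemma (Lemmas~\ref{lem:all-intervals} and~\ref{lem:i-leq-j}) to rule out the bad ones; the forward direction is a direct diagram chase. If this turns out awkward, an alternative for the converse is to explicitly write $f$ as the composite $I \twoheadrightarrow (I \cap J) \hookrightarrow J$ of the natural quotient-then-inclusion maps of interval modules, which exist precisely because $I \cap J$ is an up-set in $I$ (as $(I\cap J) \prec I\setminus(I\cap J)$) and a down-set in $J$ (as $J\setminus(I\cap J) \prec I\cap J$).
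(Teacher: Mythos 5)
Your proof is correct and takes essentially the same approach as the paper: the forward direction is the same diagram chase verifying both conditions of Definition~\ref{def:interval-po}, and the converse constructs the same map (identity on $I \cap J$, zero elsewhere) and checks the commuting squares case by case. The only cosmetic difference is that you invoke Lemmas~\ref{lem:all-intervals} and \ref{lem:i-leq-j} to rule out the bad boundary cases, whereas the paper unfolds the conditions of Definition~\ref{def:interval-po} directly at that point; your closing remark that $f$ factors as $I \twoheadrightarrow I \cap J \hookrightarrow J$ is exactly what the paper records afterward as Corollary~\ref{cor:nonzero-map}.
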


\begin{proof}
   $(\To)$ Assume $f \neq 0$. Then there is an $a \in \mathbb{R}$ such that $0 \neq f_a:I(a) \to J(a)$. 
Without loss of generality, assume that $f_a = \Id$.
Thus $a \in I$ and $a \in J$. We need to check the conditions in Definition~\ref{def:interval-po}.

(\ref{it:int-po1}) For all $i \in I$ with $a \leq i$, the condition is satisfied by $a \in J$.
For all $i \in I$ with $i \leq a$, we have the following commutative diagram, \begin{equation*}
  \begin{tikzcd}[row sep=scriptsize]
    I(i) \ar[r,"\Id"] \ar[d,"f_i"'] & I(a) \ar[d,"f_a=\Id"] \\
    J(i) \ar[r] & J(a)
  \end{tikzcd}
\end{equation*}
which implies that $i \in J$, and thus $J(i\leq a) = \Id$, and therefore $f_i= \Id$.
(\ref{it:int-po2}) For all $j \in J$ with $j \leq a$, the condition is satisfied by $a \in I$.
For all $j \in J$ with $a \leq j$, we have the following commutative diagram, 
\begin{equation*}
  \begin{tikzcd}[row sep=scriptsize]
    I(a) \ar[r] \ar[d,"f_a=\Id"'] & I(j) \ar[d,"f_j"] \\
    J(a) \ar[r,"\Id"] & J(j)
  \end{tikzcd}
\end{equation*}
which implies that $j \in I$, $I(a \leq j) = \Id$, and $f_j= \Id$.

$(\From)$ Define $f:I \to J$ by $f_a=\Id$ if $a \in I \cap J$, and $f_a=0$ otherwise. We claim that $f$ is a natural transformation.
For $a \leq b$, we need to check that the diagram
\begin{equation*}
  \begin{tikzcd}[row sep=scriptsize]
    I(a) \ar[r] \ar[d,"f_a"] & I(b) \ar[d,"f_b"] \\
    J(a) \ar[r] & J(b)
  \end{tikzcd}
\end{equation*}
commutes. There are four cases to check.
If $a,b \in I \cap J$, then all four maps are the identity and thus the diagram commutes.
If $a,b \not\in I \cap J$ then both vertical maps are zero and thus the diagram commutes.

If $a \in I \cap J$ and $b \not\in I \cap J$ then by definition the left map is the identity and the right map is zero. 
If $b \in J$ then $b \not\in I$, which implies, since $I$ is an interval, that for all $c \geq b$, $c \not\in I$. But this contradicts Definition \ref{def:interval-po} (\ref{it:int-po1}). Therefore $b \not\in J$. Thus $J(b) = 0$ and hence the diagram commutes.

If $a \not\in I \cap J$ and $b \in I \cap J$, then $f_a=0$ and without loss of generality $f_b=\Id$. Again $a \in I$ implies $a \not\in J$, which implies that for all $c \leq a$, $c \not\in J$, which is a contradiction. Therefore $a \not\in I$ which implies that $I(a) = 0$ and thus the diagram commutes.
\end{proof}

\begin{lemma} \label{lem:nonzero-map2}
  Assume there is a nonzero map $f: I \to J$ of interval modules. Then (up to isomorphism) $f_a = \Id$ if $a \in I \cap J$ and $f_a = 0$ otherwise.
\end{lemma}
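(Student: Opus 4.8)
The plan is to pin down the value of $f$ at a single point, propagate it along $I\cap J$ using naturality, and then rescale by an automorphism of $J$ to turn it into the identity.

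First I would observe that, since $f\neq 0$, there is some $a_{0}\in\mathbb{R}$ with $f_{a_{0}}\neq 0$; in particular $I(a_{0})\neq 0$ and $J(a_{0})\neq 0$, so $a_{0}\in I\cap J$ and $I(a_{0})=J(a_{0})=\gf$. Hence $f_{a_{0}}$ is multiplication by some scalar $c\in\gf\setminus\{0\}$.

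Next I would show $f_{a}=c\cdot\Id$ for every $a\in I\cap J$. Fix such an $a$; then $I(a)=J(a)=\gf$, and $a$ and $a_{0}$ are comparable in $\mathbb{R}$, say $a\leq a_{0}$ (the other case is symmetric). Since $a,a_{0}\in I$ the map $I(a\leq a_{0})$ is the identity of $\gf$, and since $a,a_{0}\in J$ the map $J(a\leq a_{0})$ is the identity of $\gf$; the naturality square for $f$ at $a\leq a_{0}$,
\[
\begin{tikzcd}[row sep=scriptsize]
  I(a)\ar[r,"\Id"]\ar[d,"f_{a}"'] & I(a_{0})\ar[d,"f_{a_{0}}"] \\
  J(a)\ar[r,"\Id"] & J(a_{0}),
\end{tikzcd}
\]
then forces $f_{a}=f_{a_{0}}=c\cdot\Id$. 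On the other hand, if $a\notin I\cap J$ then $I(a)=0$ or $J(a)=0$, so $f_{a}=0$ automatically.

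Finally, the family of maps $c^{-1}\cdot\Id\colon J(a)\to J(a)$ (for $a\in J$) is an automorphism of the interval module $J$, since it commutes with every internal map of $J$ (each of which is an identity or zero) and is invertible at each object. Composing $f$ with this automorphism produces an isomorphic map $f'\colon I\to J$ with $f'_{a}=\Id$ for $a\in I\cap J$ and $f'_{a}=0$ otherwise, which is the claim. There is no real obstacle here: the whole argument is a one-square diagram chase together with the total-orderedness of $\mathbb{R}$; the only point to state carefully is that ``up to isomorphism'' refers precisely to this rescaling by an automorphism of the target $J$.
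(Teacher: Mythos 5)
Your proof is correct and follows essentially the same line as the paper's: locate a point where $f$ is nonzero, propagate that scalar along $I\cap J$ via the naturality squares (using that the structure maps of interval modules are identities on the interval), and observe $f_a=0$ off $I\cap J$. The only cosmetic difference is that you make explicit the rescaling automorphism $c^{-1}\cdot\Id$ of $J$, whereas the paper compresses this into a ``without loss of generality, $f_b=\Id$.''
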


\begin{proof}
  Assume $f \neq 0$. The there is a $b \in I \cap J$ such that $f_b$ is nonzero. Without loss of generality, we may assume that $f_b = \Id$. Let $a \leq b \leq c \in I \cap J$. We have the following commutative diagram,
\begin{equation*}
  \begin{tikzcd}[row sep=scriptsize]
    I(a) \ar[r,"\Id"] \ar[d,"f_a"] & I(b) \ar[d,"f_b=\Id"] \ar[r,"\Id"] & I(c) \ar[d,"f_c"] \\
    J(a) \ar[r,"\Id"] & J(b) \ar[r,"\Id"] & J(c)
  \end{tikzcd}
\end{equation*}
which implies that $f_a = \Id$ and $f_c=\Id$.
Thus $f_a=\Id$ for all $a \in I \cap J$.

If $a \not\in I \cap J$ then either $I(a)=0$ or $J(a)=0$, which implies that $f_a=0$.
\end{proof}

\begin{corollary} \label{cor:nonzero-map}
  Let $f:I \to J$ be a nonzero map of interval modules. Then the image of $f$ is $I \cap J$, the kernel of $f$ is $I \setminus (I \cap J)$, the cokernel of $f$ is $J \setminus (I \cap J)$, and $f$ factors as follows.
\begin{equation*}
  \begin{tikzcd}[row sep=scriptsize]
    I \ar[rr,"f"] \ar[dr,twoheadrightarrow] & & J \\
    & I \cap J \ar[ur,hook]
  \end{tikzcd}
\end{equation*}
\end{corollary}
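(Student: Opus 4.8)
The plan is to reduce everything to the pointwise normal form supplied by Lemma~\ref{lem:nonzero-map2}: after choosing suitable bases we may assume $f_a = \Id \colon \gf \to \gf$ whenever $a \in I \cap J$, and $f_a = 0$ otherwise. Recall that for any morphism $g\colon M \to N$ of persistence modules the assignments $a \mapsto \im g_a$, $a \mapsto \ker g_a$, and $a \mapsto \operatorname{coker} g_a$ again carry the structure of persistence modules --- here $\im g_a$ and $\ker g_a$ are sub-vector-spaces of $N(a)$ and $M(a)$ respectively on which the transition maps restrict, and $\operatorname{coker} g_a$ is a quotient of $N(a)$ onto which they descend --- and these are the image, kernel, and cokernel of $g$ in the category of persistence modules, which is abelian. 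So it suffices to compute these three modules for our $f$ and to identify them with interval modules.

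First I would read off the three modules from the normal form. We have $(\im f)(a) = \gf$ exactly when $a \in I \cap J$ and $(\im f)(a)=0$ otherwise, with all nonzero transition maps the identity; since the intersection of two intervals is an interval, $\im f$ is the interval module $I \cap J$. Likewise $(\ker f)(a) = \gf$ precisely for $a \in I \setminus (I \cap J)$ and $(\operatorname{coker} f)(a) = \gf$ precisely for $a \in J \setminus (I \cap J)$, again with identity transition maps inside the support. Proposition~\ref{prop:nonzero-map} applied to the nonzero map $f$ gives $J \leq I$, so Lemma~\ref{lem:all-intervals} shows that $I \setminus (I \cap J)$ and $J \setminus (I \cap J)$ are intervals (possibly empty, in which case the module is $\Zero$), and Lemma~\ref{lem:i-leq-j} locates them relative to $I \cap J$, which makes the verification that the transition maps are as claimed immediate. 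Hence $\ker f$ and $\operatorname{coker} f$ are the asserted interval modules.

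Finally, the factorization $I \twoheadrightarrow I \cap J \hookrightarrow J$ is the canonical epi--mono factorization of $f$ through its image: the corestriction $I \to \im f$ is pointwise surjective and the inclusion $\im f \to J$ is pointwise injective, and by the previous paragraph $\im f \isom I \cap J$. Pointwise these two maps are precisely the identity-or-zero maps described in Lemma~\ref{lem:nonzero-map2}; alternatively one may invoke Proposition~\ref{prop:nonzero-map} to see that nonzero maps $I \to I \cap J$ and $I \cap J \to J$ exist and Lemma~\ref{lem:nonzero-map2} to pin them down to the maps at hand. The argument is essentially routine; the only point needing care --- and the only place the hypothesis $f \neq 0$ is really used --- is ensuring that $I \cap J$ is nonempty and that $I \setminus (I \cap J)$ and $J \setminus (I \cap J)$ are genuinely intervals, which is exactly what Proposition~\ref{prop:nonzero-map} and Lemma~\ref{lem:all-intervals} provide.
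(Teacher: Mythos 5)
Your proof is correct, and it follows exactly the route the paper implicitly takes: the paper leaves this corollary without a written proof, intending it to follow immediately from the pointwise normal form of Lemma~\ref{lem:nonzero-map2} together with the interval structure supplied by Proposition~\ref{prop:nonzero-map} and Lemmas~\ref{lem:all-intervals} and~\ref{lem:i-leq-j}, which is precisely what you spell out.
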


\subsection{Interleavings of interval modules}
\label{sec:interl-interv-modul}

In this section we characterize interleavings of interval modules.

\begin{definition}
  Let $I$ be an interval and $\eps \in \mathbb{R}$. Define the \emph{shifted interval} $I[\eps]$ by $x \in I[\eps]$ if and only if $x+\eps \in I$.
For example, $[a,b)[\eps] = [a-\eps,b-\eps)$. 
\end{definition}

The next lemma follows immediately from the definitions.

\begin{lemma}
  If $I$ is a nonempty interval and $\eps \geq 0$, then $I[\eps] \leq I$.
\end{lemma}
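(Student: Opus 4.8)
The plan is to unpack the definition of the shifted interval and apply the characterization of the interval partial order from Definition~\ref{def:interval-po}. Recall that $x \in I[\eps]$ if and only if $x + \eps \in I$, so $I[\eps]$ is just $I$ translated to the left by $\eps \geq 0$. To show $I[\eps] \leq I$, I need to verify the two conditions of Definition~\ref{def:interval-po}: first, that for all $a \in I[\eps]$ there is a $b \in I$ with $a \leq b$; and second, that for all $b \in I$ there is an $a \in I[\eps]$ with $a \leq b$.

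For the first condition, given $a \in I[\eps]$, by definition $a + \eps \in I$. Since $\eps \geq 0$, we have $a \leq a + \eps$, so taking $b = a + \eps$ works. For the second condition, given $b \in I$, I want to produce some $a \in I[\eps]$ with $a \leq b$; the natural candidate is $a = b - \eps$, since then $a + \eps = b \in I$ gives $a \in I[\eps]$, and $a = b - \eps \leq b$ because $\eps \geq 0$. Both checks are immediate, which is why the lemma is asserted to follow immediately from the definitions. One should also note that $I[\eps]$ is nonempty exactly when $I$ is nonempty (the translation is a bijection of $\mathbb{R}$), so both conditions are vacuously or genuinely satisfiable as needed.

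Since there is essentially no obstacle here — the whole content is chasing the translation through two quantifier conditions — I would write the proof in one or two sentences. Here it is:

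\begin{proof}
  Let $a \in I[\eps]$, so that $a + \eps \in I$; since $\eps \geq 0$ we have $a \leq a+\eps$, which verifies Definition~\ref{def:interval-po}~(\ref{it:int-po1}) with $b = a+\eps$. Conversely, let $b \in I$; then $(b-\eps)+\eps = b \in I$, so $b - \eps \in I[\eps]$, and $b - \eps \leq b$ since $\eps \geq 0$, which verifies Definition~\ref{def:interval-po}~(\ref{it:int-po2}) with $a = b - \eps$. Hence $I[\eps] \leq I$.
\end{proof}
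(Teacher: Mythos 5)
Your proof is correct and is exactly the unwinding the paper has in mind: the paper declines to write it out, remarking that the lemma ``follows immediately from the definitions,'' and your two checks (take $b=a+\eps$ for condition~(\ref{it:int-po1}), and $a=b-\eps$ for condition~(\ref{it:int-po2})) are precisely that immediate verification.
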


\begin{definition}
  Let $M$ be a persistence module and let $\eps \in \mathbb{R}$.
  We define the \emph{shifted persistence module} $M[\eps]$ by $M[\eps](a) = M(a + \eps)$ and $M[\eps](a \leq b) = M(a + \eps \leq b + \eps)$. That is, $M[\eps] = MT_{\eps}$.
\end{definition}

We remark that these two definitions are compatible. If $I$ is an interval module and $\eps \in \mathbb{R}$, then the shifted persistence module $I[\eps]$ is the interval module on the interval $I[\eps]$.
Also note that $0[\eps] = 0$.

Let $I$ be an interval and $\eps \geq 0$. If $I \cap I[\eps] \neq \emptyset$, we denote the corresponding nonzero map from Proposition~\ref{prop:nonzero-map} 
by $I^{(\eps)}:I \to I[\eps]$. 
If $I$ and $I[\eps]$ are disjoint, we denote the zero map 
by $I^{(\eps)}:I \to I[\eps]$. 
In either case, $I^{(\eps)} = I\eta_{\eps}$.

\begin{definition}
  Given a map of persistence modules $\alpha: M \to N$ and $\eps \in \mathbb{R}$, define $\alpha[\eps]: M[\eps] \to N[\eps]$ by $\alpha[\eps]_a = \alpha_{a+\eps}$. That is, $\alpha[\eps] = \alpha T_{\eps}$.
\end{definition}

As a special case of Definition~\ref{def:interleaving}, we have the following.

\begin{definition} \label{def:interleaving-interval}
  Let $I$ and $J$ be interval modules and $\eps \geq 0$. Then $I$ and $J$ are $\eps$-interleaved if there exist maps $\varphi: I \to J[\eps]$ and $\psi: J \to I[\eps]$ such that $\psi[\eps]\varphi = I^{(2\eps)}$ and $\varphi[\eps]\psi = J^{(2\eps)}$.
\end{definition}

\begin{lemma} \label{lem:interval-subset}
  If intervals satisfy $K \leq J \leq I$ then $I \cap K \subset J$.
\end{lemma}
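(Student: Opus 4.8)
The plan is to unwind the definition of the partial order $\leq$ on intervals (Definition~\ref{def:interval-po}) and exploit the fact that an interval is closed under betweenness. Take an arbitrary $x \in I \cap K$; the goal is to produce two elements of $J$ that sandwich $x$, so that $x \in J$ follows from $J$ being an interval.

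First I would use $x \in K$ together with $K \leq J$: condition~(\ref{it:int-po1}) of Definition~\ref{def:interval-po} applied to $K \leq J$ gives some $b \in J$ with $x \leq b$. Next I would use $x \in I$ together with $J \leq I$: condition~(\ref{it:int-po2}) of Definition~\ref{def:interval-po} applied to $J \leq I$ gives some $a \in J$ with $a \leq x$. Now $a, b \in J$ and $a \leq x \leq b$, so since $J$ is an interval, $x \in J$. As $x \in I \cap K$ was arbitrary, $I \cap K \subset J$.

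There is really no obstacle here: the statement is immediate once the two relevant clauses of the definition are selected correctly (clause~(\ref{it:int-po1}) from $K \leq J$ and clause~(\ref{it:int-po2}) from $J \leq I$), and the only structural fact used is that intervals are convex. Note that the hypothesis $J \leq I$ is used only through its ``lower bound'' clause and $K \leq J$ only through its ``upper bound'' clause, which is exactly why the containment lands in $J$ and not in $I$ or $K$.

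\begin{proof}
  Let $x \in I \cap K$. Since $x \in K$ and $K \leq J$, by Definition~\ref{def:interval-po}~(\ref{it:int-po1}) there is a $b \in J$ with $x \leq b$. Since $x \in I$ and $J \leq I$, by Definition~\ref{def:interval-po}~(\ref{it:int-po2}) there is an $a \in J$ with $a \leq x$. Thus $a \leq x \leq b$ with $a,b \in J$, and since $J$ is an interval, $x \in J$. Therefore $I \cap K \subset J$.
\end{proof}
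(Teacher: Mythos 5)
Your proof is correct and is essentially the same as the paper's: both pick $x \in I \cap K$, use $K \leq J$ to find an element of $J$ above $x$, use $J \leq I$ to find an element of $J$ below $x$, and conclude $x \in J$ by convexity of intervals. Your version merely makes explicit which clause of Definition~\ref{def:interval-po} is invoked at each step, which the paper leaves implicit.
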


\begin{proof}
  Let $x \in I \cap K$. 
  Then there is a $j \in J$ such that $x \leq j$. Also, there is a $j' \in J$ with $j' \leq x$. Since $J$ is an interval, $x \in J$.
\end{proof}

\begin{lemma} \label{lem:intersection}
  If $K \leq J \leq I$ then $I \cap K = (I \cap J) \cap (J \cap K)$.
\end{lemma}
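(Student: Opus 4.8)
The plan is to notice that $(I \cap J) \cap (J \cap K) = I \cap J \cap K$, so the asserted identity is equivalent to $I \cap K = I \cap J \cap K$, i.e. to the single inclusion $I \cap K \subseteq J$. One direction is pure set theory: any element of $(I \cap J) \cap (J \cap K)$ lies in $I$ and in $K$, hence in $I \cap K$. So all the real content is the reverse inclusion, and this is exactly Lemma~\ref{lem:interval-subset} applied to the chain $K \leq J \leq I$.

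Concretely, I would argue as follows. First take $x \in I \cap K$. By Lemma~\ref{lem:interval-subset}, since $K \leq J \leq I$ we have $I \cap K \subseteq J$, so $x \in J$. Hence $x \in I \cap J$ and $x \in J \cap K$, so $x \in (I \cap J) \cap (J \cap K)$. Conversely, if $x \in (I \cap J) \cap (J \cap K)$ then in particular $x \in I$ and $x \in K$, so $x \in I \cap K$. Combining the two inclusions gives $I \cap K = (I \cap J) \cap (J \cap K)$.

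There is no genuine obstacle here — the lemma is a one-line consequence of Lemma~\ref{lem:interval-subset}, and the only place the hypothesis $K \leq J \leq I$ is used is inside that lemma (where it is combined with the fact that $J$ is an interval to squeeze $x$ between an element of $J$ below it and an element of $J$ above it). If one prefers a self-contained proof, one could instead inline that squeezing argument: for $x \in I \cap K$, pick $j \in J$ with $x \leq j$ (from $K \leq J$, applied to $x \in K$) and $j' \in J$ with $j' \leq x$ (from $J \leq I$, applied to $x \in I$), and conclude $x \in J$ because $J$ is an interval. Either way the write-up is a few lines.
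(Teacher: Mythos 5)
Your proof is correct and matches the paper's argument exactly: both observe that $(I \cap J) \cap (J \cap K) = I \cap J \cap K$, note that the inclusion into $I \cap K$ is trivial, and obtain the reverse inclusion $I \cap K \subseteq J$ from Lemma~\ref{lem:interval-subset}. The optional inlined squeezing argument you sketch is just an unwinding of that lemma's proof, not a different route.
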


\begin{proof}
  One direction is easy: $(I \cap J) \cap (J \cap K) = I \cap J \cap K \subset I \cap K$.
  The other direction follows from Lemma~\ref{lem:interval-subset}.
\end{proof}

\begin{proposition} \label{prop:interleaving}
  Let $I$ and $J$ be interval modules and $\eps \geq 0$. 
  If $J[\eps] \leq I$ and $I[\eps] \leq J$ then $I$ and $J$ are $\eps$-interleaved.
\end{proposition}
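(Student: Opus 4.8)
The plan is to write down the obvious candidate interleaving maps coming from Proposition~\ref{prop:nonzero-map} and then verify the two interleaving identities of Definition~\ref{def:interleaving-interval} componentwise, reducing all the bookkeeping about supports to Lemma~\ref{lem:intersection}.

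First I would dispose of the degenerate case: if $I$ is the empty interval (i.e.\ $I \cong \Zero$), then $J[\eps] \leq I$ forces $J[\eps] = \emptyset$, hence $J \cong \Zero$, and symmetrically $J \cong \Zero$ forces $I \cong \Zero$; in that case $I = J = \Zero$ and the zero maps give an $\eps$-interleaving. So assume $I$ and $J$ are nonempty. Since $J[\eps] \leq I$ by hypothesis, Proposition~\ref{prop:nonzero-map} together with Lemma~\ref{lem:nonzero-map2} gives a morphism $\varphi\colon I \to J[\eps]$ with $\varphi_a = \Id$ for $a \in I \cap J[\eps]$ and $\varphi_a = 0$ otherwise (this covers the case $I \cap J[\eps] = \emptyset$, where $\varphi = 0$). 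Symmetrically, $I[\eps] \leq J$ gives a morphism $\psi\colon J \to I[\eps]$ with $\psi_a = \Id$ for $a \in J \cap I[\eps]$ and $\psi_a = 0$ otherwise.

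Then I would check that $\psi[\eps]\varphi = I^{(2\eps)}$. At index $a$ the composite is $\psi[\eps]_a\varphi_a = \psi_{a+\eps}\varphi_a\colon I(a) \to I(a+2\eps)$, which equals $\Id$ exactly when $a \in I \cap J[\eps]$ and $a + \eps \in J \cap I[\eps]$ — equivalently, when $a$ lies in $(I \cap J[\eps]) \cap (J[\eps] \cap I[2\eps])$ — and equals $0$ otherwise. Shifting the hypothesis $I[\eps] \leq J$ by $\eps$ (shifting is an order-isomorphism of $\mathbb{R}$, so it preserves the relation of Definition~\ref{def:interval-po}) gives $I[2\eps] \leq J[\eps]$, so $I[2\eps] \leq J[\eps] \leq I$ and Lemma~\ref{lem:intersection} yields $(I \cap J[\eps]) \cap (J[\eps] \cap I[2\eps]) = I \cap I[2\eps]$. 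Since $I^{(2\eps)}_a = \Id$ for $a \in I \cap I[2\eps]$ and $0$ otherwise, the two morphisms agree componentwise, so $\psi[\eps]\varphi = I^{(2\eps)}$. The identity $\varphi[\eps]\psi = J^{(2\eps)}$ follows by the same computation with the roles of $I$ and $J$ interchanged, now shifting $J[\eps] \leq I$ by $\eps$ to obtain $J[2\eps] \leq I[\eps] \leq J$. By Definition~\ref{def:interleaving-interval}, $I$ and $J$ are $\eps$-interleaved.

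The only genuinely delicate point is the support bookkeeping in the third paragraph: checking that the composite of the two ``restrict to the intersection'' maps is precisely ``restrict to the triple intersection'', and that this triple intersection collapses to the support of $I^{(2\eps)}$ via Lemma~\ref{lem:intersection}. Everything else is formal manipulation of the definitions.
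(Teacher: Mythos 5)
Your proof is correct and follows essentially the same approach as the paper: define $\varphi$ and $\psi$ by ``identity on the intersection, zero elsewhere,'' then check the interleaving identities componentwise and collapse the triple intersection to $I \cap I[2\eps]$ (resp.\ $J \cap J[2\eps]$) via Lemma~\ref{lem:intersection}. You are slightly more explicit than the paper about the degenerate empty case and about noting that shifting preserves the relation $\leq$, but these are details the paper leaves implicit rather than a different method.
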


\begin{proof}



%
%
Define $\varphi:I \to J[\eps]$ by $\varphi_x = \Id$ if $x \in I \cap J[\eps]$ and $\varphi_x = 0$ otherwise. 
Similarly define $\psi: J \to I[\eps]$ by $\psi_x = \Id$ if $x \in J \cap I[\eps]$ and $\psi_x = 0$ otherwise.
We claim that these provide the desired $\eps$-interleaving.

First, $I^{(2\eps)}: I \to I[2\eps]$ is given by $I^{(2\eps)}_x = \Id$ if $x \in I \cap I[2\eps]$ and $I^{(2\eps)}_x = 0$ otherwise.
Next, $\psi[\eps]\varphi_x = \Id$ if $x \in (I \cap J[\eps]) \cap (J[\eps]\cap I[2\eps])$ and $\psi[\eps]\varphi_x = 0$ otherwise.
By Lemma~\ref{lem:intersection}, these maps are equal.
Similarly, $\varphi[\eps]\psi = J^{(2\eps)}$.
\end{proof}

Next, we define the erosion of a persistence module. Compare with~\cite{Patel:2016a,Puuska:2017}.

\begin{definition}
  Let $I$ be an interval or an interval module and $\eps \geq 0$. We define the \emph{$\eps$-erosion} of $I$ to be $I^{-\eps} = I[\eps] \cap I[-\eps]$.
\end{definition}

Note that $I[\eps] \leq I^{-\eps} \leq I[-\eps]$. See Figure~\ref{fig:erosion}.

\begin{figure}[h!]
	\begin{tikzpicture}
	\draw (10.3,0) -- (16.3,0);
	\draw (12.3,-0.75) -- (18.3,-0.75);
	\draw (12.3,-1.5) -- (14.3,-1.5);
	\draw (8.3,-2.25) -- (14.3,-2.25);
	\node[left] at (8,-0) {$I$};
	\node[left] at (8,-0.75) {$I[-\eps]$};
	\node[left] at (8,-1.5) {$I^{-\eps}$};
	\node[left] at (8,-2.25) {$I[\eps]$};
	\end{tikzpicture}
	\caption{An interval module and its erosion.}
	\label{fig:erosion}.
\end{figure}
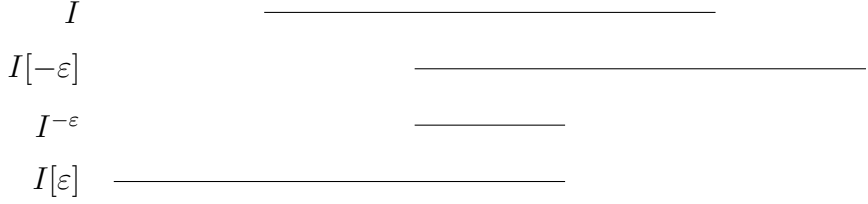

\begin{corollary} \label{cor:erosion}
  If $J[\eps] \leq I$ and $I[\eps] \leq J$ then $I^{-\eps} \subset J$ and $J^{-\eps} \subset I$.
\end{corollary}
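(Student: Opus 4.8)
The plan is to deduce this from Lemma~\ref{lem:interval-subset} after rewriting the hypotheses in a more convenient form. The key small observation is that the relation $\leq$ on subsets of $\mathbb{R}$ from Definition~\ref{def:interval-po} is invariant under translating both arguments by the same amount: if $A \leq B$ then $A[t] \leq B[t]$ for every $t \in \mathbb{R}$. This is immediate from the definition, since a common translation preserves all the inequalities $a \leq b$ witnessing conditions (\ref{it:int-po1}) and (\ref{it:int-po2}). Applying this to the hypothesis $J[\eps] \leq I$ with $t = -\eps$, and using $J[\eps][-\eps] = J$, gives $J \leq I[-\eps]$.

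Combining this with the other hypothesis $I[\eps] \leq J$ yields the chain of intervals $I[\eps] \leq J \leq I[-\eps]$. Lemma~\ref{lem:interval-subset}, applied with inner interval $K = I[\eps]$, middle interval $J$, and outer interval $I[-\eps]$ (it is indeed the outer one, since $I[\eps] \leq I[-\eps]$), then gives $I[-\eps] \cap I[\eps] \subset J$. By the definition of the $\eps$-erosion, $I^{-\eps} = I[\eps] \cap I[-\eps]$, so this is exactly $I^{-\eps} \subset J$. Since the two hypotheses $J[\eps] \leq I$ and $I[\eps] \leq J$ are symmetric under interchanging $I$ and $J$, the same argument with the roles of $I$ and $J$ swapped gives $J^{-\eps} \subset I$.

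If one prefers to bypass the shift-invariance remark, the inclusion can instead be obtained by a direct element chase in the style of the proof of Lemma~\ref{lem:interval-subset}: given $x \in I[\eps] \cap I[-\eps]$, so that $x+\eps \in I$ and $x-\eps \in I$, use condition (\ref{it:int-po2}) of $J[\eps] \leq I$ at $b = x-\eps \in I$ to produce $a \in J[\eps]$, i.e.\ $a+\eps \in J$, with $a+\eps \leq x$, and use condition (\ref{it:int-po1}) of $I[\eps] \leq J$ at $x \in I[\eps]$ to produce $b' \in J$ with $x \leq b'$; then $x$ lies between two elements of the interval $J$, hence $x \in J$.

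The only thing that requires care here is bookkeeping rather than a genuine obstacle: one must keep straight the directions of the shifts (note that $x \in I[-\eps]$ means $x-\eps \in I$) and which of the two clauses of Definition~\ref{def:interval-po} to invoke for each hypothesis. Everything else is formal. (Note that Proposition~\ref{prop:interleaving} already shows $I$ and $J$ are $\eps$-interleaved under these hypotheses, but since the conclusion is a purely set-theoretic statement about the underlying intervals, it is cleanest to argue with the intervals directly.)
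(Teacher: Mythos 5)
Your proof is correct and follows essentially the same route as the paper: the paper also passes from $J[\eps] \leq I$ and $I[\eps] \leq J$ to the chains $I[\eps] \leq J \leq I[-\eps]$ and $J[\eps] \leq I \leq J[-\eps]$ and then invokes Lemma~\ref{lem:interval-subset}. You merely make explicit, via the translation-invariance observation, the step that the paper leaves implicit, and your optional element chase is a correct unfolding of the same argument.
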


\begin{proof}
  It follows from the assumptions that we also have $J \leq I[-\eps]$ and $I \leq J[-\eps]$. So we have $J[\eps] \leq I \leq J[-\eps]$ and $I[\eps] \leq J \leq I[-\eps]$.
  The result follows from Lemma~\ref{lem:interval-subset}.
\end{proof}

\begin{theorem} \label{thm:interleaving-interval}
  Let $I$ and $J$ be interval modules and $\eps \geq 0$. Then $I$ and $J$ are $\eps$-interleaved if only if
  $I^{-\eps} \subset J$ and $J^{-\eps} \subset I$.
\end{theorem}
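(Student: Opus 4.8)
### Proof proposal

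The plan is to prove the biconditional in Theorem~\ref{thm:interleaving-interval} by reducing everything to the combinatorics of the partial order $\leq$ on intervals developed in Section~\ref{sec:interval-relations}, together with the characterization of nonzero maps of interval modules in Proposition~\ref{prop:nonzero-map}. The key preliminary reformulation is that, for intervals $I$ and $J$, the erosion conditions $I^{-\eps} \subset J$ and $J^{-\eps} \subset I$ are equivalent to the order conditions $J[\eps] \leq I$ and $I[\eps] \leq J$. Indeed, Corollary~\ref{cor:erosion} already gives one direction of this reformulation ($J[\eps] \leq I$ and $I[\eps] \leq J$ imply $I^{-\eps} \subset J$ and $J^{-\eps} \subset I$), and Proposition~\ref{prop:interleaving} then gives that these order conditions imply $\eps$-interleaving. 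So the content of the theorem that is not yet in hand is (a) the converse reformulation, that the erosion containments imply the order inequalities, and (b) the forward implication of the theorem, that an $\eps$-interleaving of $I$ and $J$ forces $I^{-\eps} \subset J$ and $J^{-\eps} \subset I$.

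For (b), the main step: suppose $\varphi: I \to J[\eps]$ and $\psi: J \to I[\eps]$ give an $\eps$-interleaving, so $\psi[\eps]\varphi = I^{(2\eps)}$ and $\varphi[\eps]\psi = J^{(2\eps)}$. First dispose of degenerate cases. If $I^{-\eps} = \emptyset$ then $I^{-\eps} \subset J$ trivially, and symmetrically for $J^{-\eps}$; so assume both erosions are nonempty. Then $I \cap I[2\eps] \supseteq I^{-\eps} \neq \emptyset$ (since $I^{-\eps}=I[\eps]\cap I[-\eps]$ and translating by $-\eps$ shows $I^{-\eps}[- \eps] \subseteq I\cap I[2\eps]$ after shifting — more directly, if $x\in I^{-\eps}$ then $x+\eps, x-\eps \in I$... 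I'll use: $I^{(2\eps)}\neq 0$ because $I^{-\eps}\neq\emptyset$ forces $I \cap I[2\eps] \neq\emptyset$), so $I^{(2\eps)}$ is a nonzero map. Hence both $\varphi$ and $\psi$ must be nonzero. By Proposition~\ref{prop:nonzero-map}, $\varphi \neq 0$ gives $J[\eps] \leq I$ and $\psi \neq 0$ gives $I[\eps] \leq J$. Then Corollary~\ref{cor:erosion} (whose hypotheses we have just verified) yields $I^{-\eps} \subset J$ and $J^{-\eps} \subset I$, which is exactly what we want. The subtle point here is the case analysis: when, say, $I^{-\eps}=\emptyset$ but $J^{-\eps}\neq\emptyset$, $\psi[\eps]\varphi$ could be the zero map, so we cannot conclude $\varphi\neq0$ — but then the conclusion $I^{-\eps}\subset J$ is vacuous, and we only need $J^{-\eps}\subset I$; one must check that $J^{-\eps}\neq\emptyset$ together with the interleaving still forces the relevant half of the order conditions. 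I expect this degenerate bookkeeping — rather than the main nonzero case — to be the main obstacle, and it should be handled by observing that $J^{(2\eps)}\neq 0$ forces $\varphi[\eps]\psi\neq0$, hence $\varphi\neq0$ and $\psi\neq0$, hence $I[\eps]\leq J$, hence by Lemma~\ref{lem:interval-subset} applied to $I[\eps]\leq J$ together with $J[\eps]\leq J[\eps]$... I'd instead note $J^{-\eps}\neq\emptyset$ already forces $J[2\eps]\cap J\neq\emptyset$, rerun the argument symmetrically, and only extract the one containment needed.

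For the converse direction (which, combined with Proposition~\ref{prop:interleaving}, finishes ``if''), I would show: if $I^{-\eps}\subset J$ and $J^{-\eps}\subset I$ then $J[\eps]\leq I$ and $I[\eps]\leq J$. It suffices to prove $J[\eps]\leq I$; the other follows by symmetry. Unwind the definitions: $J[\eps]\leq I$ means every point of $J[\eps]$ is $\leq$ some point of $I$ and every point of $I$ is $\geq$ some point of $J[\eps]$. Write $I=[\text{or }(\,a,b\,)\text{ or }]$ with endpoints $a=\inf I$, $b=\sup I$ and similarly for $J$; then $I^{-\eps}$ has endpoints roughly $\inf J - \eps$... wait, $I^{-\eps}=I[\eps]\cap I[-\eps]$ has endpoints $\inf I + \eps$ and $\sup I - \eps$. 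The containment $I^{-\eps}\subset J$ then forces $\inf J \leq \inf I + \eps$ (so $\inf J - \eps \leq \inf I$, i.e. $\inf(J[\eps]) \leq \inf I$) and $\sup I - \eps \leq \sup J$ (so $\sup(J[\eps]) = \sup J - \eps \geq \sup I - 2\eps$; combined with $J^{-\eps}\subset I$ giving $\sup J - \eps \leq \sup I$... ). Translating these endpoint inequalities into the two clauses of Definition~\ref{def:interval-po} is a short case check on which endpoints are attained, and is the kind of routine verification I would not write out in full. The one genuinely delicate spot is the treatment of empty erosions and of half-infinite or degenerate intervals, where ``$\inf$'' may be $-\infty$; here I would fall back on Lemma~\ref{lem:interval-to} and the elementary fact that $A\prec B \Leftrightarrow A\leq B$ for disjoint intervals, handling separately the case where $J[\eps]$ and $I$ overlap (endpoint comparison) and the case where they are disjoint (use $\prec$).
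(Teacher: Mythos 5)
Your forward direction $(\Rightarrow)$ is essentially the paper's argument: reduce to whether the structure maps $\varphi, \psi$ are nonzero, use Proposition~\ref{prop:nonzero-map} to get $J[\eps]\leq I$ and $I[\eps]\leq J$ in the nonzero case, and invoke Corollary~\ref{cor:erosion}. Your case organization (keying off whether the erosions $I^{-\eps}, J^{-\eps}$ are empty rather than whether $\varphi,\psi$ are zero) is a cosmetic variant and is fine; as you observe, if either $I^{-\eps}$ or $J^{-\eps}$ is nonempty then the corresponding $2\eps$-span map is nonzero, which forces both $\varphi$ and $\psi$ to be nonzero and the whole machinery kicks in.

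The converse direction, however, has a genuine gap. You propose to show in \emph{all} cases that $I^{-\eps}\subset J$ and $J^{-\eps}\subset I$ imply the order conditions $J[\eps]\leq I$ and $I[\eps]\leq J$, and then finish by Proposition~\ref{prop:interleaving}. That implication is false. Take $I=[0,1]$, $J=[5,6]$, $\eps=1$: then $I^{-\eps}=J^{-\eps}=\emptyset$, so both erosion containments hold vacuously, yet $J[\eps]=[4,5]\not\leq I=[0,1]$ (no point of $I$ is $\geq 4$). Falling back on Lemma~\ref{lem:interval-to} does not help here either, since $J[\eps]$ and $I$ are disjoint but $J[\eps]\prec I$ is also false. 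So when both erosions are empty one simply cannot recover the order inequalities, and Proposition~\ref{prop:interleaving} is not applicable. The paper handles exactly this case separately (its case (1)): when $I^{-\eps}=J^{-\eps}=\emptyset$, one shows directly that $\varphi=0$ and $\psi=0$ give an $\eps$-interleaving, because both $I^{(2\eps)}$ and $J^{(2\eps)}$ are zero. Your converse argument needs this extra case; the endpoint bookkeeping you sketched is fine for the remaining cases (both erosions nonempty, or exactly one nonempty), which is what the paper's cases (2)--(4) carry out.
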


\begin{proof}
  $(\Rightarrow)$ Let $\varphi$ and $\psi$ be an $\eps$-interleaving. If either $\varphi$ or $\psi$ are zero, then from Definition~\ref{def:interleaving-interval}, $I^{(2\eps)}$ and $J^{(2\eps)}$ are zero. It follows that $I^{-\eps}$ and $J^{-\eps}$ are both empty and the condition is satisfied. If both $\varphi$ and $\psi$ are nonzero, then by Proposition~\ref{prop:nonzero-map}, $J[\eps] \leq I$ and $I[\eps] \leq J$. The result follows from Corollary~\ref{cor:erosion}.

  $(\Leftarrow)$ We need to check four cases. (1) $I^{-\eps}$ and $J^{-\eps}$ are both empty. Then $I$ and $J$ are $\eps$-interleaved by $\varphi=0$ and $\psi=0$.

(2) $I^{-\eps}$ and $J^{-\eps}$ are both nonempty. Let $a \in I[\eps]$. Then there is an element $b \in I^{-\eps} \subset J$ with $a \leq b$. Let $b \in I$. Then there is an element $a \in I^{-\eps}[\eps] \subset J[\eps]$ with $a \leq b$. Let $a \in J[\eps]$. Then there is an element $b \in J^{-\eps} \subset I$ with $a \leq b$. Let $b \in J$. Then there is an element $a \in J^{-\eps}[\eps] \subset I[\eps]$ with $a \leq b$.
The result follows from Proposition~\ref{prop:interleaving}.

(3) $I^{-\eps}$ is nonempty and $J^{-\eps}$ is empty. 
Let $a \in I[\eps]$. Then there is $b \in I^{-\eps} \subset J$ with $a\leq b$.
Since $J$ is shorter than $I$, it follows that $I[\eps] \leq J$.
Let $b \in I$. Then there is $a \in I^{-\eps}[\eps] \subset J[\eps]$ with $a \leq b$.
Since $J$ is shorter than $I$, it follows that $J[\eps] \leq I$.
The result follows from Proposition~\ref{prop:interleaving}.

(4) is the same as the third case.
\end{proof}


\subsection{Neighborhoods of interval modules}
\label{sec:neighb-interv-modul}


Using Theorem~\ref{thm:interleaving-interval}, one obtains a complete characterization of the interval modules within distance $\eps$ of an interval module.

\begin{example}
	Consider the interval module $[a,b)$ and let $\eps \in [0,\frac{b-a}{2})$. 
Then an interval module $I$ is $\eps$-interleaved with $[a,b)$ if and only if $[a+\eps,b-\eps) \subset I \subset [a-\eps,b+\eps)$.
Furthermore $B_\eps([a,b))$ consists of those interval modules $I$ satisfying
	\[a-\eps < \inf I < a+\eps \quad \text{and} \quad b-\eps < \sup I < b+\eps. \] 
\end{example}

\begin{example}
  Consider the interval module $[a,b)$ and let $\eps \geq \frac{b-a}{2}$. Then an interval $I$ is $\eps$-interleaved with $[a,b)$ if and only if either $I \subset [a-\eps,b+\eps)$ or if for no $x \in \mathbb{R}$ do we have $[x-\eps,x+\eps] \subset I$.
Furthermore, $B_{\eps}([a,b))$ consists of those interval modules $I$ with either $a-\eps < \inf I$ and $\sup I < b+\eps$ or $\diam I < \eps$.
\end{example}

\begin{example}
  Consider the interval module $[a,\infty)$ and let $\eps \geq 0$. Then an interval module $I$ is $\eps$-interleaved with $[a,\infty)$ if and only if $[a+\eps,\infty) \subset I \subset [a-\eps,\infty)$. Furthermore $B_{\eps}([a,\infty))$ consists of interval modules $I$ satisfying
$a-\eps < \inf I < a + \eps$.
\end{example}


\end{document}